\def\noi{\noindent}
\def\pf{\noi{\bf Proof.\ \,}}
\def\eop{{$\square$}}
\def\qed{{$\square$}}
\def\em{\it }        
\def\labtt#1{\label {#1}  }
\def\labttr#1{\label {#1} \rm }
\def\refpp#1{(\ref {#1})}
\def\a{\alpha}
\def\b{\beta}
\def\o{\omega}
\def\th{\theta}
\def\vep{\varepsilon}
\def\L{\Lambda}
\def\CC{{\mathbb C}}
\def\FF{{\mathbb F}}
\def\QQ{{\mathbb Q}}
\def\ZZ{{\mathbb Z}}
\def\la{\langle}
\def\ra{\rangle}
\def\<{\langle}
\def\>{\rangle}
\def\bs{\it}            
\def\Aut{{\bs Aut}}
\def\ker{{\bs ker}}
\def\det{{\bs det}}
\def\half{{1 \over 2}}
\def\dual#1{#1^*}        
\def\dg#1{{\cal D}({#1})}
\def\brw#1{BRW^+(2^{#1})}
\def\cvcch{cvcc$\half$ }
\newcommand{\MM}{\mathbb{M}}
\def\bsays#1{{\bf *** BOB SAYS: #1 ***}}
\def\bob#1{{\bf *** BOB SAYS: #1 ***}}
\begin{document}

\newtheorem{mthm}{Main Theorem}
\newtheorem{thm}{Theorem}[section]
\newtheorem{prop}[thm]{Proposition}
\newtheorem{lem}[thm]{Lemma}
\newtheorem{rem}[thm]{Remark}
\newtheorem{coro}[thm]{Corollary}
\newtheorem{conj}[thm]{Conjecture}
\newtheorem{de}[thm]{Definition}
\newtheorem{hyp}[thm]{Hypothesis}

\newtheorem{nota}[thm]{Notation}
\newtheorem{ex}[thm]{Example}
\newtheorem{proc}[thm]{Procedure}



\newcommand{\dih}[2]{DIH_{#1}(#2)}
\newcommand{\EE}{EE_8}
\newcommand{\TA}{\mathcal{T\!\!A}}
\newcommand{\SA}{\mathcal{S\!\!A}}
\def\bsays#1{{\bf *** BOB SAYS: #1 ***}}
\def\bob#1{{\bf *** BOB SAYS: #1 ***}}

\def\chsays#1{{\bf *** Ching Hung SAYS: #1 ***}}

%

\begin{center}
{\Large \bf
Moonshine paths for $3A$  and $6A$ nodes of the extended $E_8$-diagram}
\end{center}

\begin{center}
\vspace{10mm}
Robert L.~Griess Jr.
\\[0pt]
Department of Mathematics\\[0pt] University of Michigan\\[0pt]
Ann Arbor, MI 48109  USA  \\[0pt]
{\tt rlg@umich.edu}\\[0pt]
\vskip 1cm

Ching Hung Lam
\\[0pt]
Institute of Mathematics \\[0pt]
Academia Sinica\\[0pt]
Taipei 10617, Taiwan\\[0pt]
{\tt chlam@math.sinica.edu.tw}\\[0pt]
\vskip 1cm

\end{center}

\begin{abstract}
We continue the program, begun in \cite{gl3cpath}, to make a moonshine path
between a node of the extended $E_8$-diagram and the Monster.
Our theory is a concrete model expressing some of the mysterious connections identified by John McKay, George Glauberman and Simon Norton.
In this article, we treat the
$3A$ and $6A$-nodes.    We determine the orbits of triples $(x,y,z)$ in the Monster where $z\in 2B$, $x, y \in 2A \cap C(z)$ and $xy\in 3A \cup 6A$.   Such $x, y$ correspond to a rootless $EE_8$-pair in the Leech lattice.
For the $3A$ and $6A$ cases, we shall say something about the ``half Weyl groups", which are proposed in  the Glauberman-Norton theory.  Most work in this article is with lattices, due to their connection with dihedral subgroups of the Monster.
These lattices are $M+N$, where $M, N$ is the relevant pair of $EE_8$-sublattices, and their annihilators in the Leech lattice.   The isometry groups of these four lattices are analyzed.
\end{abstract}

\newpage
\tableofcontents

\medskip

\section{Introduction}

Moonshine path theory \cite{gl3cpath}
is intended to understand the discoveries of McKay \cite{Mc}
and Glauberman-Norton \cite{gn} which connect
 the extended $E_8$-diagram  and the Monster, denoted $\MM$,
and place these relationships in a broader mathematical context.
The paths involve series of small steps,
each using established mathematical theories.
The introduction of  \cite{gl3cpath} has a detailed discussion of context, which involves lattices, vertex operator algebras (VOAs), Lie theory and finite groups.
In \cite{gl3cpath} and \cite{gl5A},  we treated the
cases of the $3C$-node and the $5A$-node in detail.
The present article treats the cases of the $3A$-node and the $6A$-node.

Let us first review the background and the main ideas in \cite{gl3cpath}.
It is well known (cf. \cite{c1}) that $2A$-involutions of
the Monster simple group $\MM$ satisfy a 6-transposition property, that
is, given a pair of $2A$-involutions $(x,y)$ in $\MM$, the product $xy$ has order $\leq 6$. John McKay \cite{Mc} noticed a remarkable
correspondence with  the extended $E_8$-diagram $\tilde{E_8}$ as follows.

\begin{equation}\label{McKay diagram}
\begin{array}{l}
  \hspace{184pt} 3C\\
  \hspace{186.2pt}\circ \vspace{-6.2pt}\\
   \hspace{187.5pt}| \vspace{-6.1pt}\\
 \hspace{187.5pt}| \vspace{-6.1pt}\\
 \hspace{187.5pt}| \vspace{-6.2pt}\\
  \hspace{6pt} \circ\hspace{-5pt}-\hspace{-7pt}-\hspace{-5pt}-
  \hspace{-5pt}-\hspace{-5pt}-\hspace{-5pt}\circ\hspace{-5pt}-
  \hspace{-5pt}-\hspace{-5pt}-\hspace{-6pt}-\hspace{-7pt}-\hspace{-5pt}
  \circ \hspace{-5pt}-\hspace{-5.5pt}-\hspace{-5pt}-\hspace{-5pt}-
  \hspace{-7pt}-\hspace{-5pt}\circ\hspace{-5pt}-\hspace{-5.5pt}-
  \hspace{-5pt}-\hspace{-5pt}-\hspace{-7pt}-\hspace{-5pt}\circ
  \hspace{-5pt}-\hspace{-6pt}-\hspace{-5pt}-\hspace{-5pt}-
  \hspace{-7pt}-\hspace{-5pt}\circ\hspace{-5pt}-\hspace{-5pt}-
  \hspace{-6pt}-\hspace{-6pt}-\hspace{-7pt}-\hspace{-5pt}\circ
  \hspace{-5pt}-\hspace{-5pt}-
  \hspace{-6pt}-\hspace{-6pt}-\hspace{-7pt}-\hspace{-5pt}\circ
  \vspace{-6.2pt}\\
  \vspace{-10pt} \\
  1A\hspace{23pt} 2A\hspace{23  pt} 3A\hspace{22pt} 4A\hspace{21pt}
  5A\hspace{21pt} 6A\hspace{20pt} 4B\hspace{19pt} 2B\\
\end{array}
\end{equation}
There are 9 conjugacy classes of such pairs $(x,y)$, and the orders of the 8
products $|xy|$, for $x\ne y$,  are the coefficients of the highest root in the
$E_8$-root system.  The 9 nodes are labeled with 9 conjugacy classes of
$\mathbb M$ containing the $xy$.

In 2001, George Glauberman and Simon Norton \cite{gn} enriched this theory by
adding details about the centralizers in the Monster of such pairs of
involutions and relations involving the associated modular forms. Let $(x,y)$
be such a pair and let $n(x,y)$ be its associated node.  Let $n'(x,y)$ be  the
subgraph of $\tilde E_8$ which is supported at the set of nodes complementary
to $\{n(x,y)\}$. If $z$ is a certain $2B$ involution which commutes with $\la x, y \ra$,
Glauberman and Norton
 give a lot of detail about $C(x, y, z)$.  In particular, they proposed that
$C(x, y, z)$ has a ``new'' relation to the extended $E_8$-diagram, namely that
$C(x, y, z)/O_2(C(x, y, z))$   looks roughly like ``half'' of the Weyl group
corresponding to the subdiagram  $n'(x,y)$.   This article shows that this new relation is not valid for the $6A$-case.  See our main theorems.   The relations for $3C$ and $5A$-cases are valid \cite{gl3cpath,gl5A}.

\subsection{About the proof}

The main idea of \cite{gl3cpath} is to transfer  a problem in group theory to a study of  certain subVOAs of the Moonshine VOA $V^\natural$ and some lattices of the Leech lattice.   Thus, our the articles on moonshine paths involve a mixture of techniques, finite group theory, internal analysis of lattices spanned by rootless $EE_8$-pairs and analysis of sublattices of the Leech lattice.
The bijection between $2A$ involutions of $\MM$ and conformal vectors of central charge $1/2$ (abbreviated as \cvcch) in the Moonshine VOA $V^\natural$  is foundational.   See the theory of Miyamoto involutions \cite{M4}.

The first observation is that the dihedral group $\la x,y\ra$ is uniquely determined
by the subVOA generated by the associated \cvcch $e'$ and $f'$ \cite{c1,LYY2}.
We noticed that the subdiagram $n'(x,y)$ defines an automorphism
$r=r(x,y)$ of exponential type in $Aut(V_{E_8})$ and one can construct a pair
of conformal vectors  $e$ and $f$ of central charge $1/2$ in
a lattice VOA $V_{\EE}$ by using $r$. We also explained in \cite{gl3cpath} that $r(x,y)$ is conjugate in $Aut(V_{E_8})$ to an automorphism $\hat{h}(x,y)$ in a torus normalizer in
$Aut(V_{E_8})\cong E_8(\CC)$. This approach led us to consider a pair of $\EE$-sublattices $M$ and $N$ in $E_8\perp E_8$. We showed that the pair $(M,N)$ can be isometrically embedded into the Leech lattice $\Lambda$ and  that the subVOA $\la e,
f\ra$ of $V_{\EE}$ generated by $e$ and $f$ can be embedded into the VOA
$V_\Lambda^+\subset V^\natural$.
Many properties of the dihedral group $\la \tau_e,
\tau_f\ra $ generated by the Miyamoto involutions can be studied by examining embeddings of the pair  $(M,N)$ in $\Lambda$. In particular, the centralizer $C(\tau_e, \tau_f, z)$ has a factor subgroup which looks like the common stabilizer of $M$ and $N$ in $O(\L)/\{\pm 1\}$  (see
Corollary \ref{Czef} and \ref{2E8DIH12}).

\subsection{Statements of main results.  }

\begin{mthm}[Proposition \ref{dih6b} and Theorem \ref{theorem3A}]
Let $x,y,z\in \MM$ such that $x, y\in 2A, xy\in 3A$, and $z\in
2B \cap C_{\MM}(x,y)$. Then the triple $(x,y,z)$ is unique up to conjugation by $\MM$. Moreover, there exist an $EE_8$ pair $(M,N)$ in $\L$ such that $M+N\cong \dih{6}{14}$ and the centralizer $C_{\MM}(x,y,z)$ has a homomorphism onto
$C_{O(\L)}(\la t_M, t_N\ra )/O_2(C_{O(\L)}(\la t_M, t_N\ra))$, which is isomorphic to ``half'' of the Weyl
group of type $A_2+ E_6$.
\end{mthm}

\begin{mthm}[Theorem \ref{6A1orbits} and \ref{6A2orbits}]
Consider triples $(x,y,z)$ so that  $x,y,z\in \MM$  $x, y\in 2A, xy\in 6A$, and $z\in
2B \cap C_{\MM}(x,y)$.
There are two orbits on the set of such triples $(x,y,z)$ under
conjugacy by  $\MM$.

\medskip

\textbf{Orbit $6A.1$:}  $(xy)^3\in O_2(C_\MM(z))$, and the triple $(x,y,z)$
is  conjugate to  $(\tau_{e_M}, \tau_{\varphi_{\a/2}(e_N)}, z)$, where $M$ and $N$ are $EE_8$-sublattices of $\L$, $M+N=\dih{6}{14}$ and $\a\in M\cap N(4)$.

\medskip

\textbf{Orbit $6A.2$:}
$(xy)^3\notin O_2(C_\MM(z))$, and the triple $(x,y,z)$
is  conjugate to  $(\tau_{e_M}, \tau_{e_N}, z)$, where $M$ and $N$ are $EE_8$-sublattices of $\L$ and $M+N=\dih{12}{16}$.
\end{mthm}

In all cases, the centralizer of $\la x, y, z\ra$ is  determined.

\begin{mthm}[Theorem \ref{theorem6A1}]
Suppose that the triple $(x, y, z)$ is in the orbit $6A.1$. Then $C_{\MM}(x,y,z)$ has a homomorphism onto
$$C_{Stab_{O(\L)}(\ZZ\a)} ( t_M, t_N)/ O_2(C_{Stab_{O(\L)}(\ZZ\a)} ( t_M, t_N))\cong PSU(4,2).2.$$
The kernel $K$ is a $2$-group of order $2^{11}$ and we have an exact sequence
\[
1\to \la z\ra \to K \to \{\varphi_\b\mid \b\in \L \text{  and  } \la \b, M+N\ra \in 2\ZZ\} \to 1.
\]
\end{mthm}

\begin{mthm}[Theorem \ref{theorem6A2}]
Suppose that the triple $(x, y, z)$ is in the orbit $6A.2$. Then the natural map $\th$ of  $C_{\MM}(x,y,z)$
to $C_{\MM}(z)/O_2(C_{\MM}(z))$ has image isomorphic to
$$C_{O(\L)}(t_M, t_N)/\la \pm 1\ra \cong (3\times 2.(Alt_4\times Alt_4).2).2.$$
The kernel $\tilde{K}$ of $\th$ is a group of order $2^{9}$ and the sequence
\[
1\to \la z\ra \to \tilde{K} \to \{\varphi_\b\mid \b\in \L \text{  and  } \la \b, M+N\ra \in 2\ZZ\} \to 1
\]
is exact.  (For a description of the group $C_{O(\L)}(t_M, t_N)$ as an index $2$
subgroup of $2.(Dih_6 \times O^+(4,3))$, see \refpp{theorem6A2} and
\refpp{contain5}.
\end{mthm}

We also show that $O(\L )$ has one orbit on ordered pairs $(M,N)$ of $EE_8$-sublattices so that $M+N$ has type $DIH_6(14)$ \refpp{unidih6}
and
one orbit on ordered pairs $(M,N)$ of $EE_8$-sublattices so that $M+N$ has type $DIH_{12}(16)$ \refpp{unidih12}.   There are analogous transitivity results in
\cite{gl3cpath}, \cite{gl5A}.

A striking feature of the Glauberman-Norton theory is that the stabilizer of a triple $(x,y,z)$ (modulo $O_2$) seemed to be roughly ``half'' the Weyl group
of the corresponding node of the extended $E_8$-diagram.    Our results so far confirm this for several nodes, \cite{gl3cpath,gl5A}  but this is not the case for the $6A$-node,
for either of the two orbits, 6A.1 or 6A.2.

The Weyl group associated to removal of the $6A$-node has shape $$Weyl(A_5) \times Weyl(A_2) \times Weyl(A_1)\cong Sym_5\times Sym_3 \times Sym_2.$$
The quotients $C_{\MM}(x,y.z)/O_2(C_{\MM}(x,y,z))$ are described in  Main Theorems 3 and 4.   Neither can be interpreted as half the above Weyl group.

Our moonshine path theories for the $3C$, $5A$, $3A$ and $6A$ cases have different degrees of confirmation of the Glauberman-Norton observations.   Aspects of the $3C$ path \cite{gl3cpath} are especially nice.

\bigskip

{\bf Acknowledgements.  }  First author acknowledges financial support from United States NSA grant H98230-10-1-0201 and hospitality from Academia Sinica in Taipei during visits in 2011 and 2012.

The second author thanks Taiwan National Science Council (NSC 100-2628-M-001-005-MY4)
and National Center for Theoretical Sciences of Taiwan for financial support

\begin{center}
{\bf \large Notation and Terminology}

\vspace{0.1cm}
\begin{tabular}{|c|c|c|}
   \hline
\bf{Notation}& \bf{Explanation} & \bf{Examples }  \cr
& & \bf{in text}\cr
   \hline\hline
$2A, 2B, 3A, \dots$ & conjugacy classes of the Monster: &  Introduction\cr
                  & the first number denotes the order & \cr
                  & of the elements and the second letter & \cr
                  & is arranged in descending order of & \cr
                  & the size of the centralizers &\cr \hline
$A_1,  \cdots , E_8$ &root lattice for root system &  \refpp{EE8}\cr
& $\Phi_{A_1}, \dots ,
\Phi_{E_8}$& \cr
   \hline
$AA_1, \cdots ,$ & lattice isometric to $\sqrt 2$ times &\cr
$ \EE$ &the lattice  $A_1, \cdots,  E_8$ & \refpp{EE8}\cr
\hline
$A\circ B$ & central product of  groups $A$ and $B$ & \refpp{ORp}     \cr \hline
\cvcch & conformal vector of central charge $1/2$ &  \refpp{dih6b}     \cr \hline
$\dg L$ & discriminant group of integral  & \refpp{Q}\cr
 &  lattice $L$: $\dg L = \dual L/L$& \refpp{Q'}\cr      \hline
$\dih{n}{k}$ & the sum of a $EE_8$ pair $(M,N)$ such  &  \refpp{Q} \cr
& that the corresponding SSD     & \cr
& involutions generate a dihedral of   & \cr
& order $n$ and $M+N$ has rank $k$ &\cr \hline
 $E$ & a particular $EE_8$-sublattice & \refpp{gmn} \cr \hline
 $\EE$-involution & SSD involution whose negated  &  \refpp{Q} \cr
& space is isometric to $\EE$ & \cr \hline
$e_M$ & a \cvcch associated to  & \refpp{def:phisubx} \cr
& an $EE_8$ sublattice $M$& \cr\hline
$G$ & complex isometry group of & \refpp{complexG}\cr
&the Coxeter-Todd lattice $K_{12}$& \cr \hline
$g_1$& an order 3 isometry in $O_3(O(K_{12}))$  & \refpp{Q} \cr \hline
$H$ &  the subgroup generated by complex   & \refpp{CR}\cr
& refections defined by norm $4$ vectors  of $K_{12}$ & \cr\hline
\end{tabular}
\end{center}

\begin{center}
\begin{tabular}{|c|c|c|}
   \hline
\bf{Notation}& \bf{Explanation} & \bf{Examples in text}  \cr
   \hline\hline
$J$ & a sublattice of $\L $ isometric to $K_{12}$ & \refpp{Q}\cr \hline
$\mathcal H$ & the hexacode & \refpp{CRandRSSD} \cr \hline
$K$ & a sublattice of $\L $ isometric to $K_{12}$ & \refpp{Q} \cr \hline
$K_{12}$ & the Coxeter-Todd lattice of rank 12&\refpp{coxetertodd}\cr \hline
$\L$ & the Leech lattice, rank 24 & \cr \hline
 $\MM$ &   Monster sporadic  group & Introduction \cr \hline
$\mu$& the natural surjection from & \refpp{zmuxi}\cr
&$C_\MM(z) \to Aut(V_\L^+)\cong 2^{24}.Co_1$& \cr\hline
$O(X)$ & the isometry group of  & \refpp{Q}, \cr
  &the quadratic space $X$&  \refpp{auto} \cr\hline
  $O_p(G)$ & largest normal $p$-subgroup of & Introduction \cr
  &  a finite group $G$ ($p$ is a prime) &  \cr\hline
  $O^{\vep}(2n,q)$ & orthogonal group of dimension& \refpp{CD1} \cr
  & $2n$ over $\FF_q$ of type
  $\vep = \pm$ & \cr \hline
$P_{N}$ & the orthogonal projection from  & \refpp{proj}\cr
& a lattice $L$ to $\QQ\otimes N^*$ & \refpp{M+N}\cr \hline
RSSD & relatively semi self dual & \refpp{rssd} \cr \hline
SSD &  semi self dual & \refpp{rssd} \cr \hline
  $t_M$ & SSD involution associated to $M$ & \refpp{Q} \cr \hline
  $\varphi_\a(u\otimes e^\b)$ & a \cvcch & \refpp{phisuba} \cr \hline
  $\tau_e$ & the Miyamoto involution defined  & \refpp{Miyamototau}\cr
  & by a \cvcch $e$&  \refpp{aa1inv}\cr \hline
$\xi$ & the natural surjection from  & \refpp{zmuxi} \cr
& $Aut(V_\L^+)\to O(\L)/\la \pm 1\ra$ & \cr \hline
$\o^\pm (\a)$ & a \cvcch associated to a norm 4 vector & \refpp{aa1type} \cr \hline
$z$ & an automorphism of $V^\natural$ such that & \refpp{zmuxi}\cr
& $z|_{V_\L^+}=1$ and $z|_{V_\L^{T,+}}=-1$& \cr \hline
 \end{tabular}
\end{center}


\section{Preliminary}

We review terminology about rational lattices and involutions.   For background, see \cite{gal}.

\begin{de}
Let $X$ be a subset of Euclidean space. Define $t_X$ to be the orthogonal
transformation which is $-1$ on $X$ and is $1$ on $X^\perp$.
\end{de}

\begin{de}\labttr{rssd}
A sublattice $M$ of an integral lattice $L$ is {\it RSSD (relatively semiselfdual)}
if and only if $2L\le M + ann_L(M)$. This implies that $t_M$
maps $L$ to $L$ and is equivalent to this property when $M$ is
a direct summand.

The property that $2\dual M \le M$ is called {\it SSD
(semiselfdual)}. It implies the RSSD property, but the RSSD
property is often more useful.
\end{de}

\begin{nota}\labtt{EE8}
We use $XX_n$ to denote the lattice which is isometric to $\sqrt{2}$ times the root lattice of type $X_n$. For example, $EE_8$ is the $\sqrt{2}$ times of the root lattice $E_8$.
\end{nota}

\subsection{Conformal vectors and the Monster}\labtt{sec:G2}

Next we shall recall some facts about conformal vectors of central charge $1/2$ (\cvcch) in the lattice type VOA $V_\L^+$ and the Moonshine VOA $V^\natural$.

\medskip

We use the standard notation for the lattice vertex operator algebra
\begin{equation}\labtt{VL}
V_L = M(1) \otimes \CC\{L\}
\end{equation}
associated with a positive definite even lattice $L$  \cite{FLM}. In particular,
${\mathfrak h}=\CC\otimes_{\ZZ} L$  is an abelian Lie algebra and we extend the
bilinear form to ${\mathfrak h}$ by $\CC$-linearity. Also, $\hat {\mathfrak
h}={\mathfrak h}\otimes \CC[t,t^{-1}]\oplus \CC k$ is the corresponding affine
algebra and $\CC k$ is the 1-dimensional center of $\hat{\mathfrak{h}}$. The
subspace $M(1)=\CC[\a_i(n)|1\leq i\leq d, n<0]$ for a basis $\{\a_1,
\dots,\a_d\}$ of $\mathfrak{h}$, where $\a(n)=\a\otimes t^n,$ is the unique
irreducible $\hat{\mathfrak h}$-module such that $\alpha(n)\cdot 1=0$ for all
$\alpha\in {\mathfrak h}$ and $n$ nonnegative, and
 $k$ acts as the scalar 1. Also, $\CC\{L\}=span
\{e^{\beta}\mid \beta\in L\}$ is the twisted group algebra of the additive group
$L$ such that $e^\b e^\a=(-1)^{\la \a, \b\ra} e^\a e^\b$ for any $\a, \b\in L$.
The vacuum vector $\mathbf{1}$  of $V_L$ is $1\otimes e^0$ and the Virasoro
element $\omega$ is $\frac{1}{2}\sum_{i=1}^d\beta_i(-1)^2\cdot \mathbf{1}$
where $\{\beta_1,..., \beta_d\}$ is an orthonormal basis of ${\mathfrak h}.$ For
the explicit definition of the corresponding vertex operators, we shall refer to
\cite{FLM} for details.

\begin{nota} \labtt{def:phisubx}
Let $M\cong \EE$. Define
\begin{equation}\labtt{eE}
 e_M=\frac{1}{16} \omega_M +\frac{1}{32}\sum_{\a\in M(4)} e^\a,
\end{equation}
where $\omega_M$ is the Virasoro element of $V_M$ and $M(4)=\{\a\in M|
\langle\a, \a\rangle = 4\}$. It is shown in \cite{dlmn} that $e_M$ is a simple
conformal vector of central charge $1/2$.

For $x\in M^*$, define a $\ZZ$-linear map
\[
\begin{split}
 \la x, \cdot\ra : M &\to \ \, \ZZ_2 \\
y & \mapsto \la x,y\ra  \mod 2.
\end{split}
\]
Clearly the map
\[
\begin{split}
\varphi: M^* & \longrightarrow \mathrm{Hom}_\ZZ(M, \ZZ_2)\\
         x &\longmapsto \la x, \cdot \ra
\end{split}
\]
is a group homomorphism and $Ker \varphi=2 M^* = M$.  For any $x\in
M^*=\frac{1}2 M$, $\la x, \cdot\ra $ induces an automorphism $\varphi_x$ of
$V_M$ given by
\labtt{phisubx}
\begin{equation}
\varphi_x(u\otimes e^\a) =(-1)^{\la x,\a\ra} u\otimes e^\a \quad
\text{ for } u\in M(1)\text{ and } \a \in M.
\end{equation}
Note that $ \varphi_x (e_M)$ is also a simple conformal vectors of central charge
$1/2$.
\end{nota}

\begin{rem}\labtt{Miyamototau}
Given a simple conformal vector $e$ of central charge $1/2$, one can define an
involutive automorphism $\tau_e\in Aut(V)$, called Miyamoto involution. If
$V=V^\natural$ is the Moonshine VOA, then $\tau_e$ defines a $2A$ involution
in the Monster \cite{M4}.
\end{rem}

\medskip

\begin{nota}\labtt{Ltheta}
Let $\L$ be the Leech lattice and $V_\L$  the lattice VOA associated with $\L$.
Let $\theta$ be a lift of the $-1$-isometry of $\L$ to $Aut(V_\L)$.
We use $V_{\Lambda}^{T}$ to denote the unique $\theta$-twisted module of $V_\L$
and  $V_{\Lambda}^+$,  $V_{\Lambda}^{T,+}$ to denote the fixed point subspaces of
$\theta$ in $V_\L$ and $V_\L^T$, respectively.
\cite{FLM}.
\end{nota}

\medskip

\begin{nota}\labtt{phisuba}
Let $\L$ be the Leech lattice. For each $\a\in L$, we define $\varphi_\a \in Aut(V_\L)$ by
\[
\varphi_\a(u\otimes e^\b) =(-1)^{\la \a,\b\ra} u\otimes e^\b \quad
\text{ for } u\in M(1)\text{ and } \b \in \L.
\]
Note that $\varphi_\a=\varphi_{\a'}$ if and only if $\a-\a'\in 2\L$. In addition, $\varphi_\a$ commutes with $\theta$ and thus $\varphi_\a$ also defines an automorphism on $V_\L^+$.
\end{nota}

\begin{nota}\labtt{zmuxi}
Let $\theta$, $V_{\Lambda}^+$, and $V_{\Lambda}^{T,+}$  be defined as in \refpp{Ltheta}.
Then the Moonshine VOA $V^\natural$  \cite{FLM} is constructed as a
$\ZZ_2$-orbifold of the Leech lattice VOA $V_\L$, i.e.,
\[
V^\natural=V_{\Lambda}^+\oplus V_{\Lambda}^{T,+}.
\]
Let $z$ be an involution of $V^\natural$ acting as $1$ and $-1$ on
$V_{\Lambda}^+$ and $V_{\Lambda}^{T,+}$ respectively.  Then  $z$ defines a
involution on $V^\natural$, which is in conjugacy class $2B$ \cite{FLM, M4}.
Recall from \cite{LS} that $$ C_{Aut(V^\natural)}(z)/\la z\ra \cong C_{Aut
(V_\L)}(\theta)/\la \theta\ra  \cong Aut (V_\L^+) $$ and the sequences
\[
1\longrightarrow \langle z\rangle\longrightarrow C_{Aut(V^\natural)}(z)
\overset{\mu}{\longrightarrow} Aut(
V_\Lambda^+)\to 1
\]
and
\[
1 \longrightarrow  \mathrm{Hom}(\Lambda, \ZZ_2) \longrightarrow
Aut(V_\Lambda^+)\ \overset{\xi}{\longrightarrow} \ O(\Lambda)/\langle\pm
1\rangle\longrightarrow  1.
\]
are  exact.
\end{nota}

The following results can also be found in \cite{LS}.
\begin{thm} [Theorem 5.18 of \cite{LS}]\labtt{aa1type}
Let $\a\in \L(4)$ and let
\[
e :=\omega^{+}(\a)= \frac{1}{16}\a(-1)^2\cdot 1 + \frac{1}{4} (e^\a +e^{-\a}).
\]
Then there is an exact sequence
\[
1 \to K  \to C_{Aut(V^\natural)} (z,\tau_e)/\la z\ra \to Stab_{O(\L)}(\ZZ \a)\cong Co_2\to 1,
\]
where $K \cong \{\varphi_\b \in Hom(\L, \ZZ_2) \mid \la \a, \b\ra \in 2\ZZ\}$.
\end{thm}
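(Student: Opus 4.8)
The plan is to push the entire computation into the $2B$-centralizer $C_{Aut(V^\natural)}(z)$ and then reduce it to a centralizer computation inside the extraspecial group sitting over $\la z\ra$. First, because $e=\o^+(\a)$ lies in $V_\L^+$ and is an even vector, the Miyamoto involution $\tau_e$ (see \refpp{Miyamototau}) preserves each summand of $V^\natural=V_\L^+\oplus V_\L^{T,+}$ and hence commutes with $z$; thus $\tau_e\in C_{Aut(V^\natural)}(z)$ and $C_{Aut(V^\natural)}(z,\tau_e)=C_{C_{Aut(V^\natural)}(z)}(\tau_e)$. I would then work through the surjection $\mu$ of \refpp{zmuxi}, which has kernel $\la z\ra$, so that everything takes place inside $Aut(V_\L^+)$ together with its central extension $C_{Aut(V^\natural)}(z)$.

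The second step is to identify $\mu(\tau_e)$. Decomposing $V_\L$ into modules $\bigoplus_n M(1)\otimes e^{\b+n\a}$ for the rank-one subVOA $V_{\ZZ\a}\cong V_{\sqrt2 A_1}$ and computing the eigenvalues of $\o^+(\a)_{(1)}$, one checks that the $\tfrac1{16}$-eigenspace of the Ising vector $\o^+(\a)$ occurs precisely on the cosets with $\la\a,\b\ra$ odd. Consequently $\tau_e$ acts on $u\otimes e^\b$ by $(-1)^{\la\a,\b\ra}$, i.e. $\mu(\tau_e)=\varphi_\a$ in the notation of \refpp{phisuba}. Under the identification $\mathrm{Hom}(\L,\ZZ_2)\cong\L/2\L$ this is the class $\bar\a=\a+2\L$, which is of type $2$ since $\a\in\L(4)$; note in particular $\xi(\varphi_\a)=1$, so the image of $\tau_e$ in $O(\L)/\la\pm1\ra$ is trivial and the real content lies one level down.

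The third step uses the structure of $C_{Aut(V^\natural)}(z)$: the $\mu$-preimage of $\mathrm{Hom}(\L,\ZZ_2)$ is the extraspecial group $\explus{12}$ with center $\la z\ra$, and the commutator form it induces on $\mathrm{Hom}(\L,\ZZ_2)\cong\L/2\L$ is $[\hat\varphi_\b,\hat\varphi_\g]=z^{\la\b,\g\ra}$, with $\tau_e$ one of the two involutory lifts of $\bar\a$. The computation is now purely group-theoretic: a lift $\hat\varphi_\b$ commutes with $\tau_e$ iff $\la\a,\b\ra\in2\ZZ$, so $C_{Aut(V^\natural)}(z,\tau_e)\cap\explus{12}$ has index $2$ in $\explus{12}$ and, modulo $\la z\ra$, equals $K=\{\varphi_\b\mid\la\a,\b\ra\in2\ZZ\}$. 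Since $\ker\xi=\mathrm{Hom}(\L,\ZZ_2)$, this $K$ is exactly the kernel of $\xi\mu$ restricted to $C_{Aut(V^\natural)}(z,\tau_e)/\la z\ra$.

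Finally I would compute the image. As $\xi(\varphi_\a)=1$, the constraint at the $Co_1=O(\L)/\la\pm1\ra$ level is not that $g$ centralize $\xi\mu(\tau_e)$ but that $g$ fix the class $\bar\a$: any lift $\hat g$ of such a $g$ conjugates $\tau_e$ to an involution over $g\bar\a=\bar\a$, hence to $\tau_e$ or $z\tau_e$, and in the latter case one corrects by multiplying $\hat g$ by some $\hat\varphi_\b$ with $\la\a,\b\ra$ odd. Thus the image of $\xi\mu$ is the full stabilizer $Stab_{O(\L)/\la\pm1\ra}(\ZZ\a)=Stab_{O(\L)}(\ZZ\a)/\la-1\ra$, the stabilizer in $Co_0=O(\L)$ of the norm-$4$ vector $\a$ taken modulo $\pm1$, which is $\cong Co_2$; this yields the asserted exact sequence. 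The main obstacle is the structural input of the third step, namely establishing the extraspecial commutator relation $[\hat\varphi_\b,\hat\varphi_\g]=z^{\la\b,\g\ra}$: this is exactly what forces the kernel to drop from $\mathrm{Hom}(\L,\ZZ_2)\cong 2^{24}$ to its index-$2$ subgroup $K$, since at the level of $V_\L^+$ every $\varphi_\b$ already commutes with $\mu(\tau_e)=\varphi_\a$, and it is only the interaction through $\la z\ra$ (equivalently, the twisted module $V_\L^{T,+}$ on which $\tau_e$ acts nontrivially) that obstructs commuting with $\tau_e$ itself when $\la\a,\b\ra$ is odd.
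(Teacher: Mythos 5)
This theorem is not proved in the paper at all: it is imported verbatim as Theorem 5.18 of \cite{LS}, so there is no internal argument to compare yours against. Your reconstruction is correct and is essentially the standard derivation one would extract from \cite{LS} and \cite{FLM}. The two genuine inputs you rely on are (i) the identification $\mu(\tau_e)=\varphi_\a$, which the paper also records separately as Remark \refpp{aa1inv}, and (ii) the structure of $O_2(C_{Aut(V^\natural)}(z))$ as an extraspecial group $2^{1+24}_+$ whose Frattini quotient is $\L/2\L$ with commutator form $[\hat\varphi_\b,\hat\varphi_\g]=z^{\la\b,\g\ra}$; you correctly flag (ii) as the structural input rather than proving it, which is reasonable for a quoted result since it is part of the standard construction of $C_{\MM}(2B)\cong 2^{1+24}.Co_1$. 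Given these, your kernel computation (index $2$ drop from $\mathrm{Hom}(\L,\ZZ_2)$ to $K$, forced only by the twisted sector) and your surjectivity argument (conjugating $\tau_e$ lands in $\{\tau_e,z\tau_e\}$ and the discrepancy is corrected by an odd $\hat\varphi_\b$) are both sound. The one step you should make explicit is the passage from ``$g$ fixes the class $\bar\a\in\L/2\L$'' to ``$g$ stabilizes $\ZZ\a$'': this needs the fact that a type-$2$ coset $\a+2\L$ contains exactly the two norm-$4$ vectors $\pm\a$, so that $Stab_{O(\L)}(\bar\a)=Stab_{O(\L)}(\ZZ\a)$; you allude to the coset being of type $2$ but do not draw this conclusion, and without it the exactness of the sequence at the right-hand term (image equal to, not merely containing, $Stab_{O(\L)}(\ZZ\a)$) is not established.
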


\begin{rem}\labtt{aa1inv}
Recall from \cite{LS} that for any $\a\in \L(4)$ and
\[
e =\omega^{\pm}(\a)= \frac{1}{16}\a(-1)^2\cdot 1 \pm \frac{1}{4} (e^\a +e^{-\a}),
\]
the Miyamoto involution $\tau_e$ acts on $V_\L^+$ as $\varphi_\a$ defined in \refpp{phisuba}.
\end{rem}

\begin{lem}\labtt{f10}
Let $M$ be a sublattice of $\Lambda$ isomorphic to $\EE$. Then the sequence
\[
1 \longrightarrow X \longrightarrow
Stab_{Aut(V_\Lambda^+)}(V_M^+) \overset{\xi}{\longrightarrow}
C_{O(\L)}(t_M)/\la \pm 1\ra \longrightarrow  1
\]
is  exact, where $X= \{\varphi_\a\in Hom(\Lambda, \ZZ_2)\mid \la \a , M\ra \in 2\ZZ\}$.
\end{lem}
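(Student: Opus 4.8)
The plan is to derive the stated sequence by restricting the global exact sequence
$$1 \to \mathrm{Hom}(\L,\ZZ_2) \to Aut(V_\L^+) \overset{\xi}{\to} O(\L)/\la \pm 1\ra \to 1$$
of \refpp{zmuxi} to the subgroup $Stab_{Aut(V_\L^+)}(V_M^+)$, and then identifying separately the image and the kernel of $\xi$ on this subgroup. Two standing facts drive everything: $M\cong\EE$ is SSD in $\L$, so $t_M\in O(\L)$ and the theory of \refpp{rssd} applies; and the lattice $M$ is intrinsic to the pair $(V_M^+,e_M)$, since by \refpp{def:phisubx} the set $M(4)=\{\gamma\in M\mid \la\gamma,\gamma\ra=4\}$ consists precisely of the $\gamma$ for which $e^\gamma$ occurs in $e_M$, and $M(4)$ spans $M$ (the norm $4$ vectors of $\EE$ are $\sqrt2$ times the $E_8$-roots). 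I will treat $Stab_{Aut(V_\L^+)}(V_M^+)$ as the stabilizer of $V_M^+$ together with its distinguished \cvcch $e_M$; this is the feature that forces the kernel to be $X$ rather than all of $\mathrm{Hom}(\L,\ZZ_2)$, and it parallels the role of $\tau_e$ in \refpp{aa1type}.

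First I would verify that $\xi$ sends this subgroup into $C_{O(\L)}(t_M)/\la\pm1\ra$ and compute the kernel. If $g$ fixes $V_M^+$ and $e_M$, the isometry $\bar g=\xi(g)$ (defined up to sign) maps $M$ to $M$, because $M$ is recoverable from $e_M$ as above; an isometry preserving $M$ preserves the $\pm1$-eigenspaces of $t_M$ and hence commutes with $t_M$, giving $\bar g\in C_{O(\L)}(t_M)/\la\pm1\ra$. The kernel of $\xi$ on the subgroup is its intersection with $\ker\xi=\mathrm{Hom}(\L,\ZZ_2)$, namely the lattice-trivial automorphisms $\varphi_\a$ that fix $e_M$. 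Since $\varphi_\a(e_M)=\frac1{16}\omega_M+\frac1{32}\sum_{\gamma\in M(4)}(-1)^{\la\a,\gamma\ra}e^\gamma$ by \refpp{def:phisubx}, and $M(4)$ spans $M$, we have $\varphi_\a(e_M)=e_M$ exactly when $\la\a,\gamma\ra\in 2\ZZ$ for all $\gamma\in M$, i.e.\ when $\la\a,M\ra\subseteq 2\ZZ$. Thus the kernel is precisely $X$, the exact analogue of the group $K$ in \refpp{aa1type}.

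The main obstacle is surjectivity of $\xi$ onto $C_{O(\L)}(t_M)/\la\pm1\ra$. Given such a $\bar g$, pick a representative $g\in O(\L)$; since $g$ commutes with $t_M$ it stabilizes $M$ (here $C_{O(\L)}(t_M)=Stab_{O(\L)}(M)$). The standard lifting of lattice isometries produces $\hat g\in Aut(V_\L)$ over $g$; as $gM=M$, the map $\hat g$ preserves $V_M$ and commutes with $\theta$, so it descends to $V_\L^+$ and stabilizes $V_M^+$. What remains, and what I expect to be delicate, is to normalize the lift so that $\hat g(e_M)=e_M$: a priori $\hat g(e_M)=\frac1{16}\omega_M+\frac1{32}\sum_{\gamma\in M(4)}\epsilon(\gamma)e^\gamma$ with signs $\epsilon(\gamma)=\pm1$ determined by the chosen lift, and one must show these signs have the coherent form $\epsilon(\gamma)=(-1)^{\la\b,\gamma\ra}$ for some $\b\in\frac12 M$. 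Granting this, replacing $\hat g$ by $\varphi_{\b}\hat g$ — which lies in $\ker\xi$ and so leaves $\bar g$ unchanged — yields a lift fixing $e_M$, proving surjectivity. Controlling this sign cocycle for lifts of $O(M)$ acting on the weight one and weight two parts of $V_M$ is the technical heart; the lifting theory underlying \refpp{zmuxi} and \refpp{aa1type} is what I would invoke. Assembling the three steps gives the asserted short exact sequence.
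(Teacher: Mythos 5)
The paper never proves this lemma --- it is imported from \cite{LS} (it sits in the block of quoted results following Theorem \refpp{aa1type}) --- so there is no in-text argument to compare against; I am judging your proposal on its own terms. Your overall strategy, restricting the ambient exact sequence of \refpp{zmuxi} to the stabilizer and identifying kernel and image separately, is the natural and correct one. The most important thing you did is to replace $Stab_{Aut(V_\L^+)}(V_M^+)$ by the stabilizer of the pair $(V_M^+,e_M)$, and your diagnosis there is right: every $\varphi_\a$, $\a\in\L$, acts diagonally on $M(1)\otimes\CC\{M\}$ and hence preserves $V_M^+$ setwise, so on the literal setwise stabilizer the kernel of $\xi$ is all of $\mathrm{Hom}(\L,\ZZ_2)\cong 2^{24}$ rather than $X\cong 2^{16}$. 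Your reading is also the one the paper actually uses downstream (in \refpp{2E8} and \refpp{corollary3A} the kernel computations always amount to ``$\varphi_\b$ fixes $e_M$ and $e_N$''). But you should present this as a correction of the statement rather than absorb it in passing, and you should then verify that the corrected lemma still delivers what Proposition \refpp{2E8} needs, namely a single $g$ stabilizing $V_M^+$ and $V_N^+$ \emph{simultaneously}; the paper extracts that from the setwise version and only afterwards normalizes $g(e_M)=\varphi_a(e_M)$ back to $e_M$. Your kernel computation ($M(4)$ spans $M$, so $\varphi_\a(e_M)=e_M$ iff $\la\a,M\ra\subseteq 2\ZZ$) and the identification $C_{O(\L)}(t_M)=Stab_{O(\L)}(M)$ (valid because $M$ is a direct summand) are fine.

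The genuine gap is the step you yourself flag as the technical heart of surjectivity: that the signs $\epsilon(\gamma)$ appearing in $\hat g(e_M)=\frac1{16}\omega_M+\frac1{32}\sum_{\gamma\in M(4)}\epsilon(\gamma)e^\gamma$ are coherent, i.e.\ of the form $(-1)^{\la\b,\gamma\ra}$ for some $\b\in\frac12M$. Appealing to ``the lifting theory underlying \refpp{zmuxi}'' does not by itself discharge this, and without it the surjectivity half is incomplete. There are two standard ways to close it. The first is the one the paper itself invokes in the proof of Proposition \refpp{2E8}: $\hat g(e_M)$ is a \cvcch of $\EE$-type in $V_M^+\cong V_{E_8}^+$, and by the classification of such vectors (\cite{O+102}, \cite{LS}) these are exactly the $\varphi_a(e_M)$ with $a\in M^*$. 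The second is direct: since $\la\,,\,\ra$ takes even values on $M\cong\EE$, the commutator of the twisted group algebra is trivial on $\CC\{M\}$, the section $\gamma\mapsto e^\gamma$ may be chosen multiplicative on $M$, and any two lifts of $g|_M$ to $\CC\{M\}$ then differ by a character of $M/2M$, which is $(-1)^{\la\b,\cdot\ra}$ for some $\b\in M^*$. With either of these inserted, and with the statement-level correction made explicit, your argument is complete.
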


\begin{thm}\labtt{thm:E8} Let $M$ be a sublattice of $\Lambda$ isomorphic to
$\EE$.  Let $e=\varphi_x(e_M)$ be defined as in Notation \refpp{def:phisubx}.
Then the centralizer $C_{\Aut V^\natural}(\tau_e,z)$   stabilizes the subVOA
$V_M^+$ and has the structure $2^{2+8+16}.\Omega^+(8,2)$. Moreover, the
map
$$\xi\circ \mu : C_{\Aut V^\natural}(\tau_{e},z) \to C_{O(\L)}(t_M)/\la \pm
1\ra$$ is surjective and $C_{\Aut V^\natural}(\tau_{e},z)$ acts on $V_M^+$ as
$\Omega^+(8,2)$, which is the quotient of the commutator subgroup of the
Weyl group of $E_8$ by its center.
\end{thm}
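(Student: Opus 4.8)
The plan is to mirror the argument behind Theorem \ref{aa1type}, replacing the rank-one datum $\ZZ\a$ by the rank-eight lattice $M\cong\EE$, and to push the entire computation down to the Leech lattice through the maps $\mu$ and $\xi$ of Notation \ref{zmuxi}. Write $C:=C_{\Aut V^\natural}(\tau_e,z)$. \textit{First I would reduce to $e=e_M$.} Since $M(4)$ spans $M$ and $\Lambda$ is unimodular, the reduction map $\Lambda/2\Lambda\to\mathrm{Hom}(M/2M,\ZZ_2)$ is onto, so there is $x'\in\Lambda$ with $\varphi_{x'}|_{V_M}=\varphi_x|_{V_M}$. The automorphism $\varphi_{x'}\in Aut(V_\Lambda^+)$ lifts through $\mu$ to some $\tilde g\in C_{\Aut V^\natural}(z)$, and $\tilde g(e_M)=\varphi_x(e_M)=e$, whence $\tau_e=\tilde g\,\tau_{e_M}\,\tilde g^{-1}$ and $C=\tilde g\,C_{\Aut V^\natural}(\tau_{e_M},z)\,\tilde g^{-1}$. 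As $\varphi_{x'}\in\ker\xi$, this conjugation changes neither the isomorphism type of $C$ nor the image of $\xi\circ\mu$, and it fixes $M$, $t_M$ and $C_{O(\Lambda)}(t_M)$; hence we may assume $e=e_M$ throughout.

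\textit{Descent to $Aut(V_\Lambda^+)$.} Because $C\le C_{\Aut V^\natural}(z)$ and $\ker\mu=\la z\ra$, we get $C/\la z\ra\cong\mu(C)$. Now $e_M\in V_\Lambda^+$ is $z$-fixed, and by the bijection $e\mapsto\tau_e$ between \cvcch and $2A$-involutions (Remark \ref{Miyamototau}) together with $g\,\tau_{e_M}\,g^{-1}=\tau_{g\cdot e_M}$, an element of $C_{\Aut V^\natural}(z)$ centralizes $\tau_{e_M}$ iff it fixes $e_M$. Inspecting weight two, $e_M=\tfrac1{16}\omega_M+\tfrac1{32}\sum_{\a\in M(4)}e^\a$ has $e^\a$-support exactly $M(4)$; since $M(4)$ spans $M$, fixing $e_M$ forces the image in $O(\Lambda)/\la\pm1\ra$ to preserve $M$, i.e. to centralize $t_M$, and to stabilize the canonical subVOA $V_M^+$ attached to $e_M$. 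Combining this with the theory of \cite{LS,dlmn} that recovers $V_M^+$ from $e_M$, we obtain $\mu(C)=\mathrm{Stab}_{Aut(V_\Lambda^+)}(V_M^+)$, exactly as in the rank-one case underlying Theorem \ref{aa1type}.

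\textit{Structure via Lemma \ref{f10}.} Lemma \ref{f10} now yields the exact sequence $1\to X\to\mu(C)\xrightarrow{\xi}C_{O(\Lambda)}(t_M)/\la\pm1\ra\to1$, with $X=\{\varphi_\a\mid\la\a,M\ra\in2\ZZ\}$; the surjectivity of $\Lambda/2\Lambda\to\mathrm{Hom}(M/2M,\ZZ_2)$ used above gives $|X|=2^{24-8}=2^{16}$, and $X$ acts trivially on $V_M^+$. In particular $\xi\circ\mu$ is onto $C_{O(\Lambda)}(t_M)/\la\pm1\ra$. It remains to compute this last group by lattice theory. Here $t_M$ has negated lattice $M\cong\EE$ (rank $8$) and fixed lattice $N=ann_\Lambda(M)$ (rank $16$), and $C_{O(\Lambda)}(t_M)$ is the common stabilizer of the orthogonal pair $(M,N)$, i.e. it respects the glue group $\Lambda/(M\perp N)$. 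I would show that the image $\bar t_M$ in $Co_1=O(\Lambda)/\la\pm1\ra$ is the ``$E_8$-type'' involution whose fixed lattice is $N$, and that $C_{O(\Lambda)}(t_M)/\la\pm1\ra\cong C_{Co_1}(\bar t_M)\cong 2^{1+8}_+.\Omega^+(8,2)$, of order $2^{9}|\Omega^+(8,2)|$: the normal $2^{1+8}$ is generated by $\bar t_M$ together with the glue group $\Lambda/(M\perp N)$, while the quotient $\Omega^+(8,2)$ is the image of the action on $M=\EE$, namely $\weh'/\la\pm1\ra$, the commutator subgroup of the Weyl group of $E_8$ modulo its center.

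\textit{Assembly and the action on $V_M^+$.} Splicing the two extensions gives $|C|=2\cdot2^{16}\cdot2^{9}\,|\Omega^+(8,2)|=2^{26}|\Omega^+(8,2)|$; collecting the $2$-layers — $2^{16}$ from $X$, the $2^{1+8}$ from $C_{O(\Lambda)}(t_M)$, and $\la z\ra$ — identifies $O_2(C)=2^{2+8+16}$ with $C/O_2(C)\cong\Omega^+(8,2)$, the asserted shape, and surjectivity of $\xi\circ\mu$ is immediate from Lemma \ref{f10}. Finally, the map $C\to Aut(V_M^+)$ kills $\la z\ra$ (which acts as $1$ on $V_\Lambda^+$), kills $X$, and kills the normal $2^{1+8}$ — its central $\bar t_M$ acts as $-1$ on $M$, hence as $\theta$, hence trivially on $V_M^+$, and the glue part likewise — so it factors through $C/O_2(C)\cong\Omega^+(8,2)$, realized on $M=\EE$ as $\weh'/\la\pm1\ra$. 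The crux of the whole argument is the lattice computation of the preceding paragraph: one must pin down $N=ann_\Lambda(\EE)$ and the glue $\Lambda/(M\perp N)$, prove the $M$-action image is exactly $\weh'$ (an index-two condition forced by the glue), and match the resulting extension with the involution centralizer $2^{1+8}_+.\Omega^+(8,2)$ in $Co_1$; by contrast, once Remark \ref{Miyamototau}, Theorem \ref{aa1type} and Lemma \ref{f10} are in hand the VOA steps are comparatively soft.
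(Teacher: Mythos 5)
The paper never proves Theorem \refpp{thm:E8}: it is imported from Lam--Shimakura \cite{LS} (and Griess \cite{O+102}), so there is no internal proof to measure yours against. Your reconstruction is, however, exactly in the style the authors use for the statements they do prove (Propositions \refpp{dih6b} and \refpp{2E8}): normalize $e$ to $e_M$ by a $\varphi_\beta$ with $P_M(\beta)\equiv x \pmod {2M}$, descend through $\mu$, identify $\mu(C)$ with a stabilizer inside $Aut(V_\Lambda^+)$, and feed it into Lemma \refpp{f10}; the order count and the identification of the $\Omega^+(8,2)$-action on $V_M^+$ then come out correctly. Two soft spots deserve mention. First, for the equality $\mu(C)=Stab_{Aut(V_\Lambda^+)}(V_M^+)$ (equivalently, for surjectivity of $\xi\circ\mu$) you need the reverse inclusion: an element $g$ stabilizing $V_M^+$ sends $e_M$ to $\varphi_a(e_M)$ for some $a\in M^*$, and must be corrected by a suitable $\varphi_\beta$ --- without changing its $\xi$-image --- so that it fixes $e_M$; this is precisely the adjustment carried out in Proposition \refpp{2E8}, but your text only argues the forward inclusion and waves at ``the theory of \cite{LS,dlmn}''. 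Second, the group-theoretic crux, $C_{O(\L)}(t_M)/\la\pm 1\ra\cong 2^{1+8}_+.\Omega^+(8,2)$ with quotient acting on $M$ as the commutator subgroup of $Weyl(E_8)$ modulo its center, is announced as something you ``would show'' rather than shown; it is the $2A$-involution centralizer computation in $Co_1$ (see \cite{G12}), and your description of the extraspecial group as ``generated by $\bar t_M$ together with the glue group $\Lambda/(M\perp N)$'' is loose, since glue vectors are not isometries --- the $2^{1+8}$ is the subgroup of $C_{O(\L)}(t_M)$ acting trivially on $M$ and as the lower frame group on $ann_\Lambda(M)$. Neither point is a wrong turn, and citing the lattice computation is no worse than what the paper itself does, but as written the decisive ingredient is asserted rather than proved.
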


The following result can be found in \cite[Appendix F]{GL} (see also
\cite{gl3cpath,gl5A}).

\begin{thm}
Except for $\dih{4}{15}$, every $\EE$ pair $(M,N)$ in Table 1 of \cite{GL} can be embedded into $\L$.
\end{thm}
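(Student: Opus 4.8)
The plan is to reduce the embedding of the pair $(M,N)$ to an embedding of the single lattice $L := M + N$: an isometric embedding of $L$ into $\L$ automatically carries along the two distinguished $\EE$-sublattices $M$ and $N$, and hence the whole $\EE$ pair, since $M$ and $N$ are intrinsic sublattices of $L$ whose associated involutions $t_M, t_N$ generate the prescribed dihedral group. Thus it suffices, for each of the finitely many types $\dih{n}{k}$ listed in Table 1 of \cite{GL}, to exhibit an isometric copy of $L$ inside $\L$. Because $\L$ is the unique even unimodular lattice of rank $24$ with no vectors of norm $2$ (Conway), the natural route is to realize $\L$ as a glued overlattice of $L \perp K$, where $K$ is a suitable complementary lattice of rank $24-k$.

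Concretely, I would invoke Nikulin's criterion for primitive embeddings into even unimodular lattices: a primitive embedding of the positive-definite even lattice $L$ of rank $k$ into an even unimodular lattice of rank $24$ exists precisely when there is a positive-definite even lattice $K$ of rank $24-k$ whose discriminant form is the negative of that of $L$. The first step is therefore to present each $L = M+N$ explicitly---its Gram data are determined by the $\EE$-pair structure recorded in Table 1---and to compute the discriminant group $\dg L = L^*/L$ together with its quadratic form. The second step is to produce a rootless complement $K$ realizing the opposite discriminant form; in the cases central to this paper the relevant complements are built from the Coxeter-Todd lattice $K_{12}$ and its sublattices, matching the annihilators $ann_\L(M+N)$ analyzed elsewhere in the article. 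Gluing $L \perp K$ along an anti-isometry $\dg L \to \dg K$ then yields an even unimodular lattice of rank $24$.

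To conclude that the glued lattice is $\L$ rather than one of the Niemeier lattices with roots, I would verify that the overlattice is rootless: since $L$ has minimum norm $4$ and $K$ is chosen rootless, one only has to check that no glue vector $v = \ell + \kappa$ with $\ell \in L^*$ and $\kappa \in K^*$ has norm $2$, a short finite computation from the two discriminant forms. Rootlessness together with even unimodularity of rank $24$ then forces the overlattice to be $\L$. Running this through the list reproduces the embeddings already used for the $3C$ and $5A$ nodes in \cite{gl3cpath,gl5A} and supplies them for $\dih{6}{14}$, $\dih{12}{16}$, and the remaining types.

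The main obstacle is the single exceptional type $\dih{4}{15}$, where the argument above must break down: with $k = 15$ the complement has rank $9$, and the point is that the opposite discriminant form $-q_L$ is not realized by any rootless positive-definite even lattice of rank $9$---equivalently, every gluing producing an even unimodular lattice of rank $24$ introduces a vector of norm $2$. Establishing this negative statement, rather than the positive embeddings, is the delicate part, and it is exactly the case-by-case discriminant-form and genus bookkeeping carried out in \cite[Appendix F]{GL}, which I would cite for the complete verification.
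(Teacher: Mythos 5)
The paper offers no internal proof of this statement: it is imported by citation to \cite[Appendix F]{GL}, and your argument ultimately rests on the same citation, so neither text constitutes an independent derivation. Your reduction to embedding the single lattice $L=M+N$ is sound (push the specific sublattices $M,N$ forward under the embedding; since $\EE$ is SSD, $t_M$ and $t_N$ automatically extend to isometries of $\L$ generating the same dihedral group), and the Nikulin-style gluing framework is a legitimate route. It is, however, genuinely different from what is actually done in \cite{GL} and reproduced for the two relevant cases in Appendix A of this paper: there the embeddings are exhibited concretely, realizing $M+N$ as $ann_\L(X)$ for a suitable sum $X$ of $AA_2$-summands in the hexacode-balanced model $AA_2^{12}\subset\L$, which has the added benefit of simultaneously identifying the annihilator $ann_\L(M+N)$ (e.g.\ $A_2\otimes D_4$ for $\dih{12}{16}$) that the rest of the paper needs. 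The abstract discriminant-form approach buys a uniform framework and a natural language for the negative case, but it needs care at two points you pass over: rootlessness of the glued overlattice depends on the choice of anti-isometry $\dg{L}\to\dg{K}$, not merely on the existence of a rootless $K$ in the complementary genus, so the norm-$2$ check must be run for a specific gluing (and a bad choice can land you in a Niemeier lattice with roots even when a good choice exists); and the non-embeddability of $\dih{4}{15}$ is asserted as a failure of the machinery rather than derived. Since both of these are precisely the case-by-case computations you defer to \cite[Appendix F]{GL}, your proposal should be read as a correct and coherent strategy outline, on the same logical footing as the paper's own one-line citation, rather than as a proof.
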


\medskip

\begin{de}\labtt{proj}
Let $L$ be an integral lattice and $N$ a sublattice. We denote the orthogonal projection of $L$ to $\QQ \otimes N^*$ by $P_{N}$.
\end{de}


\section{$3A$-triples}

In this section, we consider a triple of elements $x,y,z\in \MM$ such that $x, y\in 2A, xy\in 3A$ and $z\in
2B \cap C_{\MM}(x,y)$.  We shall show that there is only one orbit of such triples under the action of the Monster
and determine their centralizer in $\MM$.

\begin{lem}\labtt{xyztodih6a}
Let $x,y,z\in \MM$ be such that $x, y\in 2A, xy\in 3A$ and $z\in
2B \cap C_{\MM}(x,y)$. Then, there exists a pair  of $EE_8$-sublattices $(M,N)$ of $\L$ and $a\in M^*$, $b\in N^*$ such that
$M+N\cong \dih{6}{14}$ and the triple $(x,y,z)$ is conjugate to $(\tau_{\varphi_a(e_M)}, \tau_{\varphi_b(e_N)}, z)$ in $\MM$.
\end{lem}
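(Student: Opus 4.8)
The plan is to transport the triple $(x,y,z)$ into the lattice vertex operator algebra $V_\L^+ \subseteq V^\natural$ and to read off the pair $(M,N)$ from the two Miyamoto involutions. Since all $2B$-involutions of $\MM$ are conjugate, I first conjugate the triple so that $z$ is the standard involution of Notation \ref{zmuxi}. Then $V^\natural = V_\L^+ \oplus V_\L^{T,+}$, the quotient $C_\MM(z)/\la z\ra$ is identified with $Aut(V_\L^+)$ through $\mu$, and we have the further map $\xi : Aut(V_\L^+) \to O(\L)/\la \pm 1\ra$. After this normalization $x, y \in 2A \cap C_\MM(z)$, and both may be studied through their images under $\xi \circ \mu$.

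The second step is to realize $x$ and $y$ as Miyamoto involutions of \cvcch vectors lying in $V_\L^+$. By Remark \ref{Miyamototau} we have $x = \tau_{e'}$, $y = \tau_{f'}$ for \cvcch vectors $e', f'$, and because $z$ acts as $+1$ on $V_\L^+$, every \cvcch vector in $V_\L^+$ has Miyamoto involution commuting with $z$. Using the classification of \cvcch vectors in $V_\L^+$ from \cite{LS} — up to $Aut(V_\L^+)$ these are the norm-$4$ vectors $\omega^{\pm}(\a)$ of Theorem \ref{aa1type} and the $EE_8$-type vectors $\varphi_a(e_M)$ of Notation \ref{def:phisubx} — I would show, after conjugating by a suitable element of $C_\MM(z)$, that $x$ and $y$ are Miyamoto involutions of \cvcch vectors of these two forms. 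This reduction, together with the exclusion of twisted-sector representatives, is the technical heart of the argument.

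The type is then pinned down by a short order argument. The kernel of $\xi \circ \mu$ on $C_\MM(z)$ is a $2$-group: by Notation \ref{zmuxi} it is an extension of $Hom(\L, \ZZ_2)$ by $\la z\ra$. Hence the order-$3$ element $xy$ has nontrivial image under $\xi \circ \mu$. By Remark \ref{aa1inv} the Miyamoto involution of a norm-$4$ vector $\omega^{\pm}(\a)$ acts on $V_\L^+$ as $\varphi_\a \in Hom(\L, \ZZ_2) \le \ker \xi$, so a norm-$4$ representative maps trivially; therefore neither $x$ nor $y$ can be of norm-$4$ type and both must be of $EE_8$-type, say $x = \tau_{\varphi_a(e_M)}$ and $y = \tau_{\varphi_b(e_N)}$ with $M, N$ sublattices of $\L$ isometric to $EE_8$, $a \in M^*$ and $b \in N^*$. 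Via Lemma \ref{f10} and Theorem \ref{thm:E8} we then have $\xi \circ \mu(x) = t_M$ and $\xi \circ \mu(y) = t_N$, so that $t_M t_N = \xi \circ \mu(xy)$ is nontrivial of order dividing $3$; thus $\la t_M, t_N\ra$ is dihedral of order $6$ and $M + N$ is of type $\dih{6}{k}$ for some $k$.

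It remains to determine $k$ and to see that such a configuration indeed occurs in $\L$. The value $k = \rank(M+N) = 14$ follows from a direct lattice computation with the explicit shape of $e_M, e_N$, using the embedding of $EE_8$-pairs from Table 1 of \cite{GL}; the rank is exactly the invariant that separates the $3A$-node treated here from the $3C$-node, which is the other configuration whose SSD involutions $t_M, t_N$ generate a dihedral group of order $6$. I expect the principal obstacle to be the second step — controlling the $C_\MM(z)$-conjugacy class of the pair $(x,y)$ well enough to bring both \cvcch vectors simultaneously into $V_\L^+$ in the $EE_8$-form $\varphi_a(e_M), \varphi_b(e_N)$, and ruling out mixed or twisted-sector representatives. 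Once the pair is in this lattice form, the order argument and the determination of $\rank(M+N)=14$ are essentially routine.
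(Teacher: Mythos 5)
Your overall route is the paper's: normalize $z$ to the standard $2B$-involution, use the Miyamoto bijection to place the two conformal vectors in $V_\L^+$, invoke the classification of \cvcch in $V_\L^+$ from \cite{LS} into $AA_1$-type and $EE_8$-type, exclude the $AA_1$-type, and identify $M+N\cong \dih{6}{14}$ from the $3A$ condition via the inner product $\la e,f\ra$ and the tables of \cite{GL}. Two remarks. First, the step you single out as the technical heart is in fact immediate: since $z\tau_e z^{-1}=\tau_{ze}=\tau_e$ and $e\mapsto\tau_e$ is injective on \cvcch, one gets $ze=e$, hence $e\in (V^\natural)^z=V_\L^+$ with no further conjugation, and the classification in \cite{LS} says $e$ is then literally of one of the two forms, not merely up to the action of $Aut(V_\L^+)$; there is no twisted-sector case to rule out.

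Second, your exclusion of the norm-$4$ case has a small but genuine gap: the argument ``$\xi\circ\mu(xy)\ne 1$ while norm-$4$ representatives map trivially'' only contradicts the case where \emph{both} $x$ and $y$ are of $AA_1$-type. In the mixed case, say $x=\tau_{\omega^{\pm}(\a)}$ and $y=\tau_{\varphi_b(e_N)}$, one has $\xi\circ\mu(xy)=t_N\ne 1$, so no contradiction arises from nontriviality alone; you need the further observation that $t_N$ is an involution in $O(\L)/\la\pm 1\ra$ whereas the image of the order-$3$ element $xy$ must have order dividing $3$. The paper sidesteps the case split: if $e$ is of $AA_1$-type then $\tau_e=\varphi_\a$ lies in the normal elementary abelian subgroup $O_2(Aut(V_\L^+))=\mathrm{Hom}(\L,\ZZ_2)$, so $\tau_f\tau_e\tau_f=\varphi_\b$ and $(\tau_e\tau_f)^2=\varphi_\a\varphi_\b$ has order at most $2$, forcing $|\tau_e\tau_f|\in\{1,2,4\}$ regardless of the type of $f$. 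With that one-line repair your proof is correct and essentially coincides with the paper's.
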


\pf Up to conjugation, we may assume $z$ acts as $1$ on
$V_{\Lambda}^+$ and as $-1$ on $V_{\Lambda}^{T,+}$ and $(V^\natural)^z = V_\L^+$.

By the 1-1 correspondence between \cvcch in $V^\natural$ and 2A involutions  of $\MM$ \cite{Ho,M4}, we have
$x=\tau_e$ and $y=\tau_f$ for some \cvcch $e,f \in V^\natural$. Since $z$ centralizes $x$ and $y$, we have $\tau_{ze}=z\tau_ez^{-1} =\tau_e$ and $\tau_{zf}=z\tau_fz^{-1} =\tau_f$. Hence $e$ and $f$ are fixed by $z$ by the 1-1 correspondence.

Since $\tau_e\tau_f$ has order 3, both $e$ and $f$ must be of $EE_8$-type. That means there exists $EE_8$-sublattices $M$ and $N$ and $a\in M^*$ and $b\in N^*$ such that
$e=\varphi_a(e_M)$ and $f= \varphi_b(e_N)$ (see \refpp{def:phisubx}).

Recall that there are two types of \cvcch in $V_\L^+$. If $e=\omega^\pm(\a)$ is of $AA_1$-type, then $\tau_e= \varphi_\a$ on $V_\L^+$ and $\tau_e\in O_2(Aut(V_\L^+))= \{\varphi_\a\mid \a \in \L\} = \mathrm{Hom}(\Lambda, \ZZ_2)\cong 2^{24}$ (see \cite{LS},\refpp{phisuba} and \refpp{aa1inv}). Hence $\tau_f\tau_e\tau_f= \varphi_\b$ for some $\b\in \L$ and $(\tau_e\tau_f\tau_e\tau_f)^2= (\varphi_\a\varphi_\b)^2=1$. Therefore, $\tau_e\tau_f$ is of order $1,2$ or $4$.

Since $xy\in 3A$, $\xi\circ \mu(xy)= t_Mt_N$ has order $3$ and $\la e, f\ra =\frac{13}{2^{10}}$. Hence $M+N\cong \dih{6}{14}$ \cite{GL}.
\qed

\medskip

The following lemma can be obtained easily by direct calculation (see \cite{LYY2}, for example ).
\begin{lem}\labtt{inner}
Let $(M, N)$ be an $\EE$-pair such that $M+N\cong \dih{6}{14}$. Then
\[
  \la e_{M}, \varphi_b e_{N}\ra =
 \begin{cases}
\frac{13}{2^{10}} & \text { if } \la b, M\cap N \ra \in 2\ZZ, \\
\frac{5}{2^{10}} & \text { otherwise}.
\end{cases}
\]
\end{lem}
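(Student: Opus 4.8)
The plan is to reduce the inner product computation to the explicit formula for $e_M$ and $\varphi_b(e_N)$ coming from Notation \refpp{def:phisubx}. Recall that
\[
e_M = \tfrac{1}{16}\omega_M + \tfrac{1}{32}\sum_{\a\in M(4)} e^\a,
\qquad
\varphi_b(e_N) = \tfrac{1}{16}\omega_N + \tfrac{1}{32}\sum_{\b\in N(4)} (-1)^{\la b,\b\ra} e^\b,
\]
using that $\varphi_b$ fixes $M(1)$ pointwise (hence fixes $\omega_N$) and multiplies $e^\b$ by $(-1)^{\la b,\b\ra}$. The VOA invariant form is defined so that $\la\omega_M,\omega_N\ra$ and $\la e^\a,e^\b\ra$ can be read off from the grading: the degree-$2$ pairing gives $\la e^\a,e^\b\ra\ne 0$ only when $\a+\b=0$, and $\la\omega_M,\omega_N\ra$ is computed from the overlap of the Virasoro elements, i.e.\ from the rank of $M\cap N$ and the way the two Virasoro elements share the common part. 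First I would fix normalizations for the form on $V_\L$ restricted to degree $2$, so that all three contributions (Virasoro--Virasoro, Virasoro--$e^\b$, and $e^\a$--$e^\b$) have explicit numerical values.

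Next I would separate the inner product into the three pieces. The cross terms $\la\omega_M, e^\b\ra$ and $\la e^\a,\omega_N\ra$ vanish since $\omega$ lives in $M(1)$ while $e^\b$ does not, so they contribute nothing. The term $\la\omega_M,\omega_N\ra$ depends only on the geometry of the pair $(M,N)$ — specifically on $\dim(\QQ M\cap\QQ N)$ and the projection structure — and is \emph{independent} of $b$; since $M+N\cong\dih{6}{14}$ fixes this geometry, this contributes a fixed constant. The only $b$-dependent piece is the exponential term
\[
\tfrac{1}{32}\cdot\tfrac{1}{32}\sum_{\substack{\a\in M(4),\ \b\in N(4)\\ \a+\b=0}} (-1)^{\la b,\b\ra}\la e^\a,e^{-\a}\ra,
\]
where the constraint forces $\a=-\b\in M(4)\cap N(4)=(M\cap N)(4)$. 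Thus the count reduces to summing $(-1)^{\la b,\a\ra}$ over the norm-$4$ vectors of $M\cap N$, which for the $\dih{6}{14}$ configuration is a rank-$2$ lattice (a copy of $AA_2$, the $\sqrt2$-scaled $A_2$). This sum over the six roots of $A_2$ (scaled) is exactly where the dichotomy in Lemma \refpp{inner} comes from.

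The key step, and the main obstacle, is evaluating this signed root-sum as a function of the coset $b\bmod (M\cap N)$. When $\la b, M\cap N\ra\subseteq 2\ZZ$ all signs are $+1$ and the six vectors contribute with the full multiplicity, giving the larger value $\tfrac{13}{2^{10}}$; when $\la b,M\cap N\ra\not\subseteq2\ZZ$ the nontrivial character on $(M\cap N)/2(M\cap N)$ splits the six roots into $+$ and $-$ contributions, reducing the exponential term by a fixed amount and yielding $\tfrac{5}{2^{10}}$. The difference $\tfrac{13}{2^{10}}-\tfrac{5}{2^{10}}=\tfrac{8}{2^{10}}=\tfrac{1}{128}$ should match $2\cdot\tfrac{1}{32^2}$ times the change in the signed count, which is a clean consistency check I would use to pin down normalizations. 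I would carry out the character computation on $AA_2$ directly: there is essentially one nontrivial character type up to the Weyl/isometry action, so only one ``otherwise'' value can occur, confirming the two-case formula. Since the statement asserts this follows ``easily by direct calculation,'' I would organize it around these three contributions and present the root-sum evaluation explicitly rather than invoking further theory.
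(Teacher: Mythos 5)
Your proposal is correct and is exactly the ``direct calculation'' the paper alludes to (it gives no proof, only a reference to \cite{LYY2}): the cross terms vanish, the Virasoro--Virasoro term $\frac{1}{16^2}\la\omega_M,\omega_N\ra=\frac{7}{2^{10}}$ is independent of $b$, and the $b$-dependent piece is $\frac{1}{32^2}\sum_{\a\in(M\cap N)(4)}(-1)^{\la b,\a\ra}$, which equals $\frac{6}{2^{10}}$ or $\frac{-2}{2^{10}}$ according as the character is trivial or not on $(M\cap N)/2(M\cap N)$, yielding $\frac{13}{2^{10}}$ and $\frac{5}{2^{10}}$. No gaps; your consistency check on the difference $\frac{8}{2^{10}}$ also works out.
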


By Lemma \ref{inner}, we can refine the statement of Lemma \ref{xyztodih6a} as follows.

\begin{prop}\labtt{dih6b}
Let $x,y,z\in \MM$ be such that $x, y\in 2A, xy\in 3A$ and $z\in
2B \cap C_{\MM}(x,y)$. Then,  the triple $(x,y,z)$ is conjugate to $(\tau_{e_M}, \tau_{e_N}, z)$ in $\MM$,
where  $(M,N)$ is an $EE_8$-pair in $\L$ such that
$M+N\cong \dih{6}{14}$.
\end{prop}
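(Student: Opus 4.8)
The plan is to begin from Lemma \ref{xyztodih6a}, which already puts the triple in the form $(\tau_{\varphi_a(e_M)},\tau_{\varphi_b(e_N)},z)$ with $M+N\cong\dih{6}{14}$, $a\in M^*$, $b\in N^*$, and then to remove the two ``twists'' $\varphi_a,\varphi_b$ by conjugating with automorphisms that centralize $z$. The natural automorphisms to use are the $\varphi_\gamma$, $\gamma\in\L$, of Notation \refpp{phisuba}: these lie in $\mathrm{Hom}(\L,\ZZ_2)=O_2(Aut(V_\L^+))$ and hence lift to elements of $C_{\MM}(z)$. Since conjugation by $\varphi_\gamma$ sends $(\varphi_a(e_M),\varphi_b(e_N))$ to $(\varphi_{a+P_M\gamma}(e_M),\varphi_{b+P_N\gamma}(e_N))$, the whole problem becomes this: using the shifts $\gamma\mapsto(P_M\gamma,P_N\gamma)$ available from $\L$, move the pair of characters $(a,b)$ to $(0,0)$.

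First I would kill the twist on the $M$-factor. Because $\L$ is unimodular and $M$ is primitive, the orthogonal projection satisfies $P_M(\L)=M^*$; composing with $\varphi\colon M^*\to\mathrm{Hom}(M,\ZZ_2)$ (which is onto with kernel $2M^*=M$) shows that $\L\to\mathrm{Hom}(M,\ZZ_2)$, $\gamma\mapsto\la\gamma,\cdot\ra\bmod 2$, is surjective. Hence there is $\gamma_0\in\L$ with $\la\gamma_0,\cdot\ra\equiv\la a,\cdot\ra\pmod 2$ on $M$, and conjugating by $\varphi_{\gamma_0}$ (which acts trivially on $O(\L)$, so leaves $M,N$ and $z$ fixed) replaces the triple by $(\tau_{e_M},\tau_{\varphi_{b'}(e_N)},z)$ with $b'=b+P_N\gamma_0$. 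Since conjugation preserves the invariant form and Lemma \ref{xyztodih6a} gives $\la\varphi_a(e_M),\varphi_b(e_N)\ra=\frac{13}{2^{10}}$, we get $\la e_M,\varphi_{b'}(e_N)\ra=\frac{13}{2^{10}}$, so Lemma \ref{inner} forces the key constraint $\la b',M\cap N\ra\subset 2\ZZ$.

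It then remains to kill $b'$ without disturbing $e_M$ or $z$. I would look for $\gamma\in\L$ with (i) $\la\gamma,M\ra\subset 2\ZZ$, so that $\varphi_\gamma(e_M)=e_M$, and (ii) $\la\gamma,\cdot\ra\equiv\la b',\cdot\ra\pmod 2$ on $N$, so that $\varphi_\gamma\varphi_{b'}(e_N)=e_N$; conjugating by such a $\varphi_\gamma$ finishes the proof. To produce $\gamma$ I would pass to the nondegenerate $\FF_2$-space $\overline\L=\L/2\L$ with the reduced form, writing $\overline M,\overline N$ for the images of $M,N$. Condition (i) says $\gamma\in\overline{M}^\perp$ and (ii) says the restriction of $\gamma$ to $\overline N$ equals $\overline{b'}$. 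Because $M\cap N\subset M$, every $\gamma\in\overline{M}^\perp$ restricts on $N$ to a character vanishing on $\overline{M\cap N}$, so the image of $\overline{M}^\perp$ under restriction to $N$ is contained in the space of such characters; a dimension count ($\dim\overline{M}^\perp=16$, $\dim(\overline M+\overline N)^\perp=10$, giving image dimension $6$, matching $\dim\mathrm{Hom}(N,\FF_2)-\dim\overline{M\cap N}=8-2=6$) shows the two spaces coincide, provided $\overline M\cap\overline N=\overline{M\cap N}$. As $\overline{b'}$ vanishes on $\overline{M\cap N}$ by the constraint above, the desired $\gamma$ exists.

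The main obstacle is exactly the mod-$2$ lattice bookkeeping in the last step: one must verify that $M$ and $N$ are primitive (so that $P_M(\L)=M^*$ and $\dim\overline M=\dim\overline N=8$, $\dim\overline{M\cap N}=2$) and, crucially, that there is no ``accidental'' overlap $\overline M\cap\overline N\supsetneq\overline{M\cap N}$ modulo $2\L$. These are concrete computations inside the rank-$14$ lattice $\dih{6}{14}$ and its embedding in $\L$; I would carry them out in a single explicit model, which is legitimate because $O(\L)$ is transitive on ordered $EE_8$-pairs of type $\dih{6}{14}$ (the uniqueness statement \refpp{unidih6}). Everything else is routine: the surjectivity $P_M(\L)=M^*$ is standard for a primitive sublattice of a unimodular lattice, and the numerical input comes verbatim from Lemma \ref{inner}.
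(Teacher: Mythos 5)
Your argument is correct, and it diverges from the paper precisely at the decisive step. Both proofs first use $P_M(\L)=M^*$ (valid because $M$ is a direct summand of the unimodular $\L$) to normalize $a$ to $0$, and both then extract the constraint $\la b',M\cap N\ra\subseteq 2\ZZ$ from Lemma \refpp{inner}. To remove the remaining twist $\varphi_{b'}$, the paper does not argue by dimension count: it writes $b'=\frac{1}{2}\eta$ with $\eta\in ann_N(M\cap N)\bmod 2N$, sets $h=t_Nt_M$ (of order $3$, with $h(N)=M$), and exhibits the correcting vector explicitly as $\a=-h(\eta)\in M$, verifying $P_N(\a)=b'$ by the identity $\a=\frac{1}{2}(\eta+(h^2-h)\eta)$; since $M\cong\EE$ is doubly even, $\varphi_\a$ automatically fixes $e_M$. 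Your route instead solves the congruence over $\FF_2$, showing that restriction $\overline{M}^{\perp}\to\mathrm{Hom}(\overline{N},\FF_2)$ surjects onto the characters vanishing on $\overline{M\cap N}$. This is more conceptual and would work for any pair of primitive sublattices with primitive sum and intersection, at the price of the bookkeeping you flag. Note, however, that the one point you propose to settle in an explicit model, namely $\overline{M}\cap\overline{N}=\overline{M\cap N}$, needs no model and no appeal to \refpp{unidih6}: since $M+N$ is a direct summand of $\L$ of rank $14$ and $M\cap N$ is primitive of rank $2$, one has $\dim(\overline{M}+\overline{N})=\dim\overline{M+N}=14=8+8-2$, which forces the equality. (Citing \refpp{unidih6} would not be circular, as its proof does not use \refpp{dih6b}, but it appears later in the paper and is unnecessary here.) What the paper's construction buys is an explicit $\a\in M$, reused essentially verbatim in the proof of Proposition \refpp{2E8}; what yours buys is generality and independence from the special order-$3$ geometry of $\dih{6}{14}$.
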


\pf Since $(det(M),det(\L ))=1$ and $M$ is a direct summand of $\L$, there is $\b\in
\L$ such that $P_{M}(\b)\in a+2M$, where $P_{M}$ is the natural
projection from $\L$ to $M^*$. Then $\varphi_\b (\varphi_a(e_M))=e_M$. Hence, we may assume
$x=e_M$ and $y=\varphi_b(e_N)$. Since $\la x,y\ra = \la e_M, \varphi_b (e_N)= \frac{13}{2^{10}}$, we have $\la b, M\cap N\ra \in 2\ZZ$ by Lemma \refpp{inner} and $P_{(M\cap N)}(b)\in 2 (M\cap N)^*$.

Recall that $N^*=\frac{1}2 N$. Thus,  $b=\frac{1}2 \eta$ for some $\eta\in N$.
We may assume without loss that $\eta \in ann_{N}(M\cap N) \mod 2N$.
Let $h= t_Nt_M$. Then $h$ is of order $3$ and $h(N)=M$.
Define $\a = - h (\eta)\in M$. Then $\a = \frac{1}2( \eta + (h^2-h) \eta )$ and we have $P_{N}(\a) = b$. Thus,
\[
\varphi_\b(y)=e_N\quad \text{ and } \quad \varphi_\b(x)= \varphi_\b(e_M)=e_M
\]
as desired. \qed

\medskip

The next theorem is important to our study, which translates a problem
about $V^\natural$ to the study of the Leech lattice.

\begin{prop}\labtt{2E8}
Let $(M, N)$ be an $\EE$-pair in $\Lambda$ such that $M+N\cong \dih{6}{14}$. Then
\[
\xi\circ \mu:  C_{Aut(V^\natural)}(\tau_{e_{M}},\tau_{e_N},z)
\to C_{O(\L)}(t_M, t_N)/ \la \pm 1\ra
\]
is surjective.
\end{prop}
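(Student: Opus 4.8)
The plan is to reduce the claim to a surjectivity assertion about $\mathrm{Hom}(\L,\ZZ_2)$ and, ultimately, to a single mod-$2$ intersection computation inside $\L$. First I would check that $\xi\circ\mu$ really maps $C:=C_{Aut(V^\natural)}(\tau_{e_M},\tau_{e_N},z)$ into the stated target. Recall that $\xi\circ\mu$ sends $\tau_{e_M}$ to $t_M\la\pm 1\ra$ and $\tau_{e_N}$ to $t_N\la\pm 1\ra$ (this is how the SSD involution is attached to an $\EE$-sublattice; cf. Theorem \refpp{thm:E8}). Hence for $g\in C$ the image $\xi\circ\mu(g)$ commutes with both $t_M\la\pm1\ra$ and $t_N\la\pm1\ra$; since $t_M,t_N$ negate an $8$-space while $-t_M,-t_N$ negate a $16$-space, no conjugate of $t_M$ (resp. $t_N$) can equal $-t_M$ (resp. $-t_N$), so $\xi\circ\mu(g)$ genuinely lies in $C_{O(\L)}(t_M,t_N)/\la\pm 1\ra$. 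It remains to prove surjectivity.

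Given $\bar g\in C_{O(\L)}(t_M,t_N)/\la\pm 1\ra$, pick a representative $g\in O(\L)$, which by the same dimension argument centralizes both $t_M$ and $t_N$. Because $g$ centralizes $t_M$, Theorem \refpp{thm:E8} applied to $M$ furnishes $\hat g\in C_{Aut(V^\natural)}(\tau_{e_M},z)$ with $\xi\circ\mu(\hat g)=\bar g$; in particular $\hat g$ fixes $e_M$. Now $\hat g(e_N)$ is again a \cvcch in $V_\L^+$, and its Miyamoto involution $\hat g\tau_{e_N}\hat g^{-1}$ has $\xi\circ\mu$-image $g t_N g^{-1}\la\pm1\ra=t_N\la\pm1\ra$; so $\hat g(e_N)$ is of $\EE$-type associated to $N$, and by the description in Notation \refpp{def:phisubx} we have $\hat g(e_N)=\varphi_c(e_N)$ for some $c\in N^*$. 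The goal is to modify $\hat g$ by an element of $\ker(\xi\circ\mu)$ that still fixes $e_M$ so as to also fix $e_N$.

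To pin down $c$ I would compute $\la e_M,\hat g(e_N)\ra=\la \hat g^{-1}(e_M), e_N\ra=\la e_M, e_N\ra=\frac{13}{2^{10}}$ (using $\hat g(e_M)=e_M$ and Proposition \refpp{dih6b}); comparing with Lemma \refpp{inner} forces $\la c, M\cap N\ra\subseteq 2\ZZ$. I then look for $\b\in\L$ with $\la\b,M\ra\subseteq 2\ZZ$ and $\la\b,n\ra\equiv\la c,n\ra\pmod 2$ for all $n\in N$. The first condition guarantees that the lift of $\varphi_\b$ to $C_{Aut(V^\natural)}(z)$ fixes $e_M$ (hence centralizes $\tau_{e_M}$) and lies in $\ker(\xi\circ\mu)$, while the two conditions together give $\varphi_\b(\varphi_c(e_N))=e_N$; then $\varphi_\b\hat g\in C_{Aut(V^\natural)}(\tau_{e_M},\tau_{e_N},z)$ maps to $\bar g$, finishing the argument.

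The crux is the existence of such a $\b$. Reducing modulo $2$, write $\bar\L=\L/2\L$ with its $\FF_2$-valued form $b$, which is nondegenerate because $\L$ is unimodular. Since $M$ is a direct summand of $\L$ (see the proof of \refpp{dih6b}), $\bar M:=(M+2\L)/2\L\cong M/2M$ has dimension $8$, and the requirement $\la\b,M\ra\subseteq 2\ZZ$ says exactly that $\bar\b$ lies in $\bar M^{\perp}$ (orthogonal with respect to $b$), a space of dimension $16$. The pairing $b:\bar M^{\perp}\times\bar N\to\FF_2$ has right kernel $\bar N\cap(\bar M^{\perp})^{\perp}=\bar N\cap\bar M$, so as $\bar\b$ ranges over $\bar M^{\perp}$ the functional $n\mapsto\la\b,n\ra\bmod 2$ realizes precisely those functionals on $\bar N$ vanishing on $\bar M\cap\bar N$. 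The functional attached to $c$ vanishes on $\overline{M\cap N}$ by the constraint $\la c,M\cap N\ra\subseteq 2\ZZ$, so it is realizable as soon as $\bar M\cap\bar N\subseteq\overline{M\cap N}$; the reverse inclusion being automatic, what is needed is the single mod-$2$ identity $\bar M\cap\bar N=\overline{M\cap N}$, equivalently $(M+2\L)\cap(N+2\L)=(M\cap N)+2\L$. I expect this identity to be the main obstacle: it must be verified from the explicit structure of the $\dih{6}{14}$ configuration $M+N$ and its embedding in $\L$, whereas everything else above is formal.
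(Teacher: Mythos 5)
Your reduction is sound up to the final step, but as submitted the proof has a genuine gap: the entire existence argument for $\b$ rests on the identity $(M+2\L)\cap(N+2\L)=(M\cap N)+2\L$, which you explicitly leave unverified ("I expect this identity to be the main obstacle\dots"). Without it, nothing guarantees that the functional attached to $c$ is realized by some $\bar\b\in\bar M^{\perp}$, and the correction step fails. The good news is that this identity needs no explicit computation with the $\dih{6}{14}$ configuration: it follows from purity plus a dimension count. The sublattices $M$, $N$ and $Q=M+N$ are all direct summands of $\L$ ($Q=ann_\L(X)$ for $X\cong AA_2^5$ by Appendix A, and an annihilator is always a direct summand; $M\cap N$ is an intersection of pure sublattices, hence pure). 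Therefore $\dim\bar M=\dim\bar N=8$ and $\dim(\bar M+\bar N)=\dim\overline{M+N}=14$, so $\dim(\bar M\cap\bar N)=2=\dim\overline{M\cap N}$, and the automatic containment $\overline{M\cap N}\subseteq\bar M\cap\bar N$ is an equality. With that paragraph supplied, your argument closes.

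It is worth noting that the paper's own proof sidesteps this mod-$2$ linear algebra entirely. It enters via Lemma \refpp{f10} applied to both $M$ and $N$ at once (producing $g$ stabilizing $V_M^+$ and $V_N^+$, so that $g(e_N)=\varphi_b(e_N)$ is immediate from the classification of \cvcch in $V_N^+$), and then, at the correction step, it does not look for a general $\b\in\L$ orthogonal to $M$ mod $2$: it produces a vector $\a$ lying in $M$ itself with $P_{N}(\a)=b$, so that $\varphi_\a$ fixes $e_M$ automatically because $M\cong EE_8$ is doubly even. The construction of $\a$ is the content of the proof of Proposition \refpp{dih6b}: writing $b=\frac12\eta$ with $\eta\in ann_N(M\cap N) \bmod 2N$ (this is where $\la b, M\cap N\ra\in 2\ZZ$ is used) and letting $h=t_Nt_M$, which has order $3$ and carries $N$ to $M$, one takes $\a=-h(\eta)=\frac12(\eta+(h^2-h)\eta)\in M$. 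Your route is more generic and would apply without an order-$3$ isometry interchanging the two lattices, but it buys that generality at the cost of the purity computation above; the paper's route is shorter because it exploits the specific $Dih_6$ geometry.
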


\pf Let $s\in C_{O(\L)}(t_M, t_N)/ \la \pm 1\ra$. Then by Lemma \refpp{f10}, there
is $g\in Aut(V^\natural)$ such that $g$ stabilizes both $V_M^+$ and $V_N^+$
and $\xi\circ \mu (g)=s$.

By the classification of \cvcch in $V_M^+$ \cite{O+102,LS},
$g(e_M)=\varphi_a(e_M)$ for some $a\in M^*$. By the same argument as in Proposition \ref{dih6b}, we
 may also assume $g(e_M)=e_M$.

Since $g$ also stabilizes  $V_N^+$, we have $g(e_N)= \varphi_b(e_N)$ for some
$b\in N^*$.  Moreover,
\[
\la e_M, \varphi_y e_N \ra = \la g(e_M), g(e_N)\ra =  \la e_M, e_N\ra = \frac{13}{2^{10}}.
\]
Thus, $\la b, M\cap N\ra \in 2\ZZ$ by Lemma \refpp{inner}. Then by the same argument as in Proposition \ref{dih6b},
there is an $\a \in M$ such that $P_{N}(\a) = b$. Hence
\[
\varphi_\a (g(e_N)) =e_N \quad \text{ and } \quad
\varphi_\a (g(e_M)) =\varphi_\a (e_M)=e_M.
\]
Therefore,  $g'= \tau_{\omega^+(\a)} g \in C_{Aut(V^\natural)}(\tau_{e_{M}},\tau_{e_N},z)$ and
$\xi\circ \mu (g')=s$. \qed

\begin{coro}\labtt{Czef}
Let $(M, N)$ be an $\EE$-pair in $\Lambda$ such that $M+N\cong \dih{6}{14}$.
The centralizer  $C_{Aut(V^\natural)}(\tau_{e_{M}}, \tau_{e_{N}},z)$ contains a
subquotient isomorphic to the common stabilizer of $M$ and $N$ in
$O(\L)/\{\pm 1\}$.
\end{coro}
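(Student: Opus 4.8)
The plan is to deduce this directly from Proposition \ref{2E8}; the only nontrivial point is to identify the group called ``the common stabilizer of $M$ and $N$ in $O(\L)/\{\pm 1\}$'' with the target $C_{O(\L)}(t_M,t_N)/\la\pm 1\ra$ of the surjection established there. Once that identification is made, the corollary is immediate.

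First I would record the elementary fact linking the two descriptions of the stabilizer. For $g\in O(\L)$ one has $g\,t_M\,g^{-1}=t_{g(M)}$, since conjugating by $g$ the involution that acts as $-1$ on $\QQ\otimes M$ yields the involution acting as $-1$ on $\QQ\otimes g(M)$. Because $M$ is a direct summand of $\L$ (as $\gcd(\det(M),\det(\L))=1$, exactly as used in the proof of Proposition \ref{dih6b}), the sublattice $M$ is recovered from $t_M$ as its $-1$-eigenlattice $M=\L\cap(\QQ\otimes M)$; hence $t_{g(M)}=t_M$ if and only if $g(M)=M$. The same holds for $N$. Consequently the subgroup of $O(\L)$ that stabilizes both sublattices $M$ and $N$ is precisely $C_{O(\L)}(t_M)\cap C_{O(\L)}(t_N)=C_{O(\L)}(t_M,t_N)$, and passing to the quotient by the central $\{\pm 1\}$ (which stabilizes every sublattice) identifies the common stabilizer in $O(\L)/\{\pm 1\}$ with $C_{O(\L)}(t_M,t_N)/\la\pm 1\ra$.

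With this identification in hand, Proposition \ref{2E8} provides a surjective homomorphism $\xi\circ\mu$ from $C_{Aut(V^\natural)}(\tau_{e_M},\tau_{e_N},z)$ onto $C_{O(\L)}(t_M,t_N)/\la\pm 1\ra$. Thus the common stabilizer of $M$ and $N$ in $O(\L)/\{\pm 1\}$ is a homomorphic image, hence a quotient, of the centralizer; and a quotient is by definition a subquotient (take the whole centralizer as the subgroup and factor out the kernel of $\xi\circ\mu$). This yields the asserted subquotient isomorphic to the common stabilizer.

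I expect no genuine obstacle here, since all the substantive work is already packaged in Proposition \ref{2E8}. The only step demanding care is the identification above, where one must make explicit that ``stabilizes $M$'' and ``centralizes $t_M$'' coincide; this is exactly where the direct-summand property of the $EE_8$-sublattices (and hence $M=\L\cap\QQ M$) is silently invoked. It is also worth noting that the corollary asserts merely the \emph{existence} of a subquotient, so no control over the kernel of $\xi\circ\mu$ is required; that finer analysis, including the order of the $2$-group kernel, belongs to the later main theorems rather than to this corollary.
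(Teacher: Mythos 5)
Your proposal is correct and matches the paper's own (one-line) proof: the paper likewise observes that $C_{O(\L)}(t_M,t_N)$ is the common stabilizer of $M$ and $N$ and then invokes Proposition \refpp{2E8}. You simply spell out the identification (via $g\,t_M\,g^{-1}=t_{g(M)}$ and recovery of $M$ as the $-1$-eigenlattice) that the paper leaves implicit.
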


\begin{proof}
Since $C_{O(\L)}(t_M, t_N)$ is the common stabilizer of $M$ and $N$ in $O(\L)$,
we have the conclusion by \refpp{2E8}.
\end{proof}

\medskip

\subsection{$\dih{6}{14}$}
Next we shall compute the group $C_{O(\L)}(t_M, t_N)$. First we recall some notations and facts about $\dih{6}{14}$ from \cite{GL}.

\begin{nota}\labtt{Q}
1.
Let $M$ and $N$ be $EE_8$-sublattices of the Leech lattice $\L$ such that
$Q:=M+N$ is isometric to $\dih{6}{14}$ as obtained in \cite{GL}.  We have
$F:=M\cap N\cong AA_2$.

Let $t_M$ and $t_N$ be the SSD involutions associated to $M$ and $N$, respectively.
Then the subgroup $D:=\la t_M, t_N\ra$ generated by $t_M$ and $t_N$ is a dihedral group $Dih_6$.

2. Let $J:=ann_{Q}(F)$. Then  $J\cong K_{12}$
is isometric to the Coxeter-Todd lattice and $Q$ contains a sublattice isometric to $F\perp J$.

3. $\dg{Q}= 2^2\times 3^5$.

4.
Set $g:= t_Mt_N$. Then $g$ has order $3$ and it acts on $\L$ with trace $6$.
Let $K:= Fix_{\L}(g)$ be the fixed point sublattice of $g$ in $\L$.
Then $ann_{\L}(K)=J$ (see (2)). Moreover, $K\cong J \cong  K_{12}$.

5.
Let $g_1$ be an isometry of order 3 in $O(\L)$ such that  $g_1$ acts fixed point free on $K$ but
acts trivially on $J$. In this case, $g$ and $g_1$ generate an elementary abelian group
of shape
$3^2$ and
$gg_1$ has trace $-12$ on $\L$.
\end{nota}

Next we recall some basic properties of the Coxeter-Todd lattice $K_{12}$.
 It is well-known (cf. \cite{cs,CS2}) that $K_{12}$ can also be viewed as a rank $6$ complex lattice over the ring of Eisenstein integers $\ZZ[\omega]$ as follows:

\begin{nota}\labtt{coxetertodd}
Let $\omega:=(-1+\sqrt{-3})/2$ be a primitive cubic root of unity and let $\mathcal{E}:= \ZZ[\omega]$ be the ring of Eisenstein integers. Then $\mathcal{E}/2\mathcal{E} \cong \mathbb{F}_4$. Let $\sigma: \mathcal{E} \to \mathcal{E}/2\mathcal{E}$ be the natural quotient map    and $\mathcal{H}$ the hexacode over $\mathcal{F}_4$. Then the Coxeter-Todd lattice can be defined as the sublattice
\[
K_{12}= \{ (x_1, \dots, x_6)\in \mathcal{E}^6\mid  (\sigma(x_1), \dots,\sigma(x_6))\in \mathcal{H}\}.
\]
The \textit{norm} of a vector $v$  in $K_{12}$ is defined by $\la v, v\ra $, where $\la \, , \,\ra $ is the standard Hermitian inner product on $\CC^6$.
\end{nota}

By direct calculation, it is easy to show that $K_{12}$ has $756$ vectors of norm $4$, $4032$ vectors of norm $6$ and $20412$ vectors of norm $8$ \cite{CS2}.

\begin{nota}\labtt{complexG}
We denote the complex isometry group of $K_{12}$ by $G$. In other word, $G$ is the set of all
complex linear automorphisms of $\CC^6$ that preserve the norm and stabilize $K_{12}$.
\end{nota}

\begin{rem}\labtt{lambda}
Let $\rho$ be a root of unity in $\ZZ[\omega]$, i.e., $\rho=\pm 1, \pm \omega$ or $\pm \omega^2$, and let $\lambda_\rho$ be the linear map defined by $ v\to \rho\cdot v$. Then $\lambda_\rho$ defines a complex isometry on $K_{12}$ and clearly, it is contained in the center of $G$.  
\end{rem}

The following result can be found in \cite{CS2}.

\begin{thm}\labtt{CR}
Let $H$ be the subgroup generated by complex refections defined by the minimal (norm 4) vectors  of $K_{12}$.  Note that there are $126(=756/6)$ such reflections. Then

1. $H$ acts transitively on the sets of vectors of norms $4$, $6$ and $8$, respectively.

2. $H=G$.

3. The order of $G$ is $2^{9}{\cdot} 3^7{\cdot}5{\cdot}7$.

4. The center of $G$ is given by $\{\lambda_\rho\mid \rho=\pm 1, \pm \omega, \pm \omega^2\}$ and  has order 6.
\end{thm}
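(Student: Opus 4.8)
The plan is to reduce each of the four assertions either to a finite verification in the hexacode model of \refpp{coxetertodd} or to the structure theory of finite complex reflection groups, treating Part~1 first, then Parts~2 and~3 together, and finally Part~4. Since every element of $H$ is an isometry it permutes each norm shell among itself, so for Part~1 it suffices to exhibit a single $H$-orbit on the $756$ vectors of norm $4$, the $4032$ of norm $6$, and the $20412$ of norm $8$. For the norm $4$ vectors (the roots) I would fix a base root $v_0$ and show every root is carried to $v_0$ by a word in the reflections $r_w$ ($w$ a root): concretely, form the graph on the $126$ reflecting lines $\mathcal{E}^\times w$ in which two lines are adjacent when their reflections fail to commute (equivalently $\la w,w'\ra\neq 0$ with $w,w'$ non-proportional), and verify in coordinates that this graph is connected. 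Connectedness forces transitivity of $H$ on roots, and, because the roots span $\CC^6$, it forces $H$ (and hence $G$) to act irreducibly on $\CC^6$. Transitivity on the norm $6$ and norm $8$ shells then follows either from a direct orbit computation in the same model or, once $|H|$ is known, by orbit--stabilizer bookkeeping against the counts $4032$ and $20412$.

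For Parts~2 and~3 I would first observe that $H\trianglelefteq G$, since $g r_w g^{-1}=r_{gw}$ is again one of the defining reflections whenever $g\in G$ (as $gw$ is a root). To obtain the order and the equality $H=G$ I would identify $H$ as a complex reflection group: it has rank $6$, is irreducible, and is generated by the $126$ reflections $r_w$, which by the Shephard--Todd classification singles it out as the Mitchell group (number $34$ on their list), whose degrees $6,12,18,24,30,42$ yield $|H|=\prod_i d_i=2^{9}\cdot 3^{7}\cdot 5\cdot 7$. Since $G$ is the full complex isometry group, contains $H$, and has order $2^{9}\cdot 3^{7}\cdot 5\cdot 7$ with $G\cong 6{\cdot}PSU(4,3).2$ by \cite{CS2}, the inclusion $H\le G$ together with $|H|=|G|$ gives $H=G$ (Part~2) and the stated order (Part~3). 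An alternative that sidesteps the classification is an induction on rank: using transitivity on roots write $G=H\cdot Stab_G(\mathcal{E}v_0)$ and show that $Stab_G(\mathcal{E}v_0)$, acting on the rank $5$ complex lattice $ann_{K_{12}}(v_0)$, is already generated by the reflections fixing $v_0$ and so lies in $H$.

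Part~4 is the cleanest. By the irreducibility established above and Schur's lemma, $Z(G)$ consists of scalar maps $\lambda\cdot\mathrm{id}$. Such a scalar preserves $K_{12}$ only if both $\lambda$ and $\lambda^{-1}$ lie in $\mathcal{E}$, i.e. $\lambda\in\mathcal{E}^\times=\{\pm 1,\pm\omega,\pm\omega^2\}$; conversely each $\lambda_\rho$ ($\rho=\pm 1,\pm\omega,\pm\omega^2$) is central and preserves $K_{12}$ by \refpp{lambda}. Hence $Z(G)=\{\lambda_\rho\}$ has order $6$. The main obstacle is the content of Parts~2 and~3: establishing $|G|=2^{9}\cdot 3^{7}\cdot 5\cdot 7$ and $H=G$ rigorously rests either on recognizing $H$ among the finite complex reflection groups or on the explicit determination of $\mathrm{Aut}(K_{12})\cong 6{\cdot}PSU(4,3).2$ in \cite{CS2}; by contrast the transitivity checks of Part~1 and the centralizer argument of Part~4 are routine finite verifications.
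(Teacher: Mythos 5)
The paper does not actually prove \refpp{CR}: it is stated with the single line ``The following result can be found in \cite{CS2}'' and everything is delegated to Conway--Sloane's paper on the Coxeter--Todd lattice. Your proposal therefore takes a genuinely different route, namely reconstructing a proof from scratch: transitivity on roots via connectedness of the graph on the $126$ reflecting lines, identification of $H$ as Shephard--Todd group no.~$34$ (the Mitchell group) to get $|H|=\prod d_i=2^9\cdot 3^7\cdot 5\cdot 7$, and Schur's lemma plus the unit group $\mathcal{E}^\times$ of the Eisenstein integers for the center. This is essentially the classical argument underlying \cite{CS2}, and what it buys is self-containedness at the cost of invoking the Shephard--Todd classification. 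A few soft spots worth noting if you were to write this out in full: (i) the deduction $H=G$ from $H\trianglelefteq G$ alone is not enough --- you do need the order comparison $|H|=|G|$, and $|G|$ itself is exactly the external input from \cite{CS2} you are trying to avoid, so the classification route (or your rank-induction alternative, which in effect recomputes the line stabilizer as the order-$2^8\cdot 3^5\cdot 5$ group $O(R)$ appearing later in the paper) really is carrying the whole weight of Parts~2 and~3; (ii) pinning $H$ down as no.~$34$ requires more than ``irreducible of rank $6$ generated by reflections'' --- one must also use the count of $126$ order-$2$ reflections (equivalently $\sum(d_i-1)=126$) to exclude $Weyl(E_6)$, $Sym_7$ and the imprimitive families $G(de,e,6)$; and (iii) transitivity on the norm $6$ and norm $8$ shells by ``orbit--stabilizer bookkeeping'' still needs the stabilizer orders, so in practice it is a direct orbit computation. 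None of these is a fatal gap, but each is a place where ``finite verification'' conceals real work; the paper sidesteps all of it by citation.
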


\medskip

\begin{rem} \labtt{CRandRSSD} 1.  For each of the 756 minimal vectors $v\in K_{12}$, the sublattice $\ZZ[\omega]v \cong 2\ZZ[\omega]$ has exactly $6$ minimal vectors.
Moreover, $\ZZ[\omega]v \cong 2\ZZ[\omega]$ is isometric to $AA_2$ as an integral lattice.
Note also that $\ZZ[\omega] v \cong 2\ZZ[\omega]\cong AA_2$ is invariant under the action of $\lambda_\omega$. Therefore, we have exactly $126$  $\lambda_{\omega}$-invariant $AA_2$ sublattices in $K_{12}$ and a complex reflection on a minimal vector corresponds to a RSSD-involution associated to a $\lambda_{\omega}$-invariant $AA_2$-sublattice of $K_{12}$.

2. Let $\nu$ be the anti-automorphism defined by coordinatewise complex conjugation and let $\phi$ be the linear transformation on $\CC^6$ defined by the matrix
\[
\begin{pmatrix}
1&0&0&0&0&0\\
0&0&1&0&0&0\\
0&1&0&0&0&0\\
0&0&0&1&0&0\\
0&0&0&0&\omega&0\\
0&0&0&0&0&\bar{\omega}
\end{pmatrix}.
\]
Then the anti-automorphism $\nu\circ \phi$ defines an isometry
of the real lattice $K_{12}$. Note that $\nu\circ \phi$ preserves the hexacode $\mathcal{H}$ and is an involution (see Proposition 4.5 of \cite{G12}).  Adjoining this anti-automorphism to $G$ will give the real  isometry group of $K_{12}$ \cite[Section 4.9]{cs}.
\end{rem}

The next theorem follows from Theorem \ref{CR} and Remark \ref{CRandRSSD}.

\begin{lem} \labtt{H}
Let $g$ be an order 3 element in $O_3(O(K_{12}))$ and let $H$ be the
subgroup generated by RSSD involutions associated to $g$-invariant $AA_2$ sublattices of $K_{12}$.
Then $H$ is an index 2 subgroup of $O(K_{12})$ 
and its center has order 6.
Moreover, $H$ acts
transitively on the sets of vectors of norms $4$, $6$ and $8$, respectively.
\end{lem}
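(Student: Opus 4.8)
The plan is to recognize that the group $H$ in the statement is exactly the complex isometry group $G$ of Theorem \ref{CR}; once that is established, the three assertions are read off directly from Theorem \ref{CR} and Remark \ref{CRandRSSD}. The only point that really needs an argument is the identification of the $g$-invariant $AA_2$-sublattices, since $g$ is given only as an order $3$ element of $O_3(O(K_{12}))$ rather than as the explicit scalar $\lambda_\omega$.

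First I would pin down $g$. By Theorem \ref{CR} the scalar map $\lambda_\omega$ is a central element of $G$ of order $3$, and by Remark \ref{CRandRSSD} we have $O(K_{12})=\langle G,\nu\circ\phi\rangle$ with $G$ of index $2$. Since $\nu\circ\phi$ is antilinear it conjugates the scalar $\lambda_\omega$ to $\lambda_{\bar\omega}=\lambda_\omega^{-1}$, so $\langle\lambda_\omega\rangle$ is normal in $O(K_{12})$ and hence $\langle\lambda_\omega\rangle\subseteq O_3(O(K_{12}))$. For the reverse inclusion I would invoke the structure of $O(K_{12})$: a normal $3$-subgroup is odd, hence contained in $G$, and modulo the central $\langle\lambda_\omega\rangle$ the quotient is, up to a $2$-group, a nonabelian simple group whose Sylow $3$-subgroup already has order $3^6$ (consistent with $|G|=2^9\cdot 3^7\cdot 5\cdot 7$ in Theorem \ref{CR}), so it has no nontrivial normal $3$-subgroup and therefore $O_3(O(K_{12}))=\langle\lambda_\omega\rangle\cong\ZZ_3$. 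Thus any order $3$ element $g\in O_3(O(K_{12}))$ generates $\langle\lambda_\omega\rangle$, and a sublattice is $g$-invariant if and only if it is $\lambda_\omega$-invariant.

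Next I would identify the generators of $H$. By Remark \ref{CRandRSSD} the $\lambda_\omega$-invariant $AA_2$-sublattices of $K_{12}$ are precisely the $126$ lattices $\ZZ[\omega]v$ arising from the $756$ minimal vectors (grouped into orbits of size $6$ under the units of $\ZZ[\omega]$), and the RSSD involution $t_{\ZZ[\omega]v}$ associated to such a sublattice is exactly the complex reflection in $v$, namely $-1$ on the complex line $\CC v$ and $+1$ on its Hermitian orthogonal complement. By the previous step these coincide with the RSSD involutions attached to $g$-invariant $AA_2$-sublattices, so $H$ is generated by the $126$ complex reflections in minimal vectors, and Theorem \ref{CR} gives $H=G$.

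With $H=G$ in hand the three conclusions are immediate: $H=G$ has index $2$ in $O(K_{12})$ by Remark \ref{CRandRSSD} (the adjoined antilinear map $\nu\circ\phi$ lies outside the complex-linear group $G$); the center of $H=G$ has order $6$ by Theorem \ref{CR}; and $H=G$ acts transitively on the norm $4$, norm $6$ and norm $8$ vectors by Theorem \ref{CR}. I expect the only real obstacle to be the identification $O_3(O(K_{12}))=\langle\lambda_\omega\rangle$, i.e.\ verifying that the order $3$ element $g$ generates the same cyclic group as the central scalar $\lambda_\omega$; once this is settled the $g$-invariant $AA_2$-sublattices are forced to be the $126$ reflection-carrying sublattices and Theorem \ref{CR} supplies everything else.
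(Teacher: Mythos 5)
Your proposal is correct and follows exactly the route the paper intends: the paper gives no written proof, stating only that the lemma ``follows from Theorem \refpp{CR} and Remark \refpp{CRandRSSD}'', and your argument is precisely the missing bridge --- identifying $O_3(O(K_{12}))=\la \lambda_\omega\ra$ so that the $g$-invariant $AA_2$-sublattices are the $126$ lattices $\ZZ[\omega]v$ and the associated RSSD involutions are the complex reflections generating $G$. Everything then reads off from Theorem \refpp{CR} and Lemma \refpp{auto} as you say.
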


We also note that the discriminant group $\dg{K_{12}} = K_{12}^*/K_{12}\cong 3^6$. It forms a non-singular quadratic space of minus type with respect to the standard bilinear $\frac{1}{3}\ZZ/\ZZ(\cong \FF_3)$ form \cite{cs,CS2}. The isometry group $O(K_{12})$ acts on $\dg{K_{12}}$ as the  orthogonal group
$O^-(6,3)= \Omega^-(6,3).2^2=2.P\Omega^-(6,3).2^2$
and the kernel of the action is a subgroup of order $3$. Since $P\Omega^-(6,3) \cong PSU(4,3)$, we have the following result.

\begin{lem}[Section 4.9 of \cite{cs} and \cite{CS2}]\labtt{auto}
Let $K_{12}$ be the Coxeter-Todd lattice of rank $12$.  Then

1. $O(K_{12})$ has the order $2^{10}. 3^7.5.7$ and the shape  $ (6.PSU(4,3).2).2$.
2. The complex isometry group $G$ is an index 2 subgroup of $O(K_{12})$ and has the shape
$6.PSU(4,3).2$.
\end{lem}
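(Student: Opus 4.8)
The plan is to assemble the statement from three ingredients already in hand: the order and center of the complex isometry group $G$ from Theorem~\ref{CR}, the anti-isometry $\nu\circ\phi$ of Remark~\ref{CRandRSSD}(2), and the action of $O(K_{12})$ on the discriminant form $\dg{K_{12}}\cong\FF_3^6$ recorded just before the statement. Throughout I keep track of the scalar isometries $\lambda_\rho$.

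First I dispose of the orders and the index-$2$ claim of (2). By Remark~\ref{CRandRSSD}(2) the real isometry group is $O(K_{12})=\langle G,\nu\circ\phi\rangle$, where $\nu\circ\phi$ is an anti-linear involution. Conjugation by the anti-linear map $\nu\circ\phi$ sends $\CC$-linear isometries to $\CC$-linear isometries, so $\nu\circ\phi$ normalizes $G$; as $\nu\circ\phi$ is not itself $\CC$-linear, $\nu\circ\phi\notin G$ and $[O(K_{12}):G]=2$. With $|G|=2^9{\cdot}3^7{\cdot}5{\cdot}7$ from Theorem~\ref{CR}(3) this yields $|O(K_{12})|=2^{10}{\cdot}3^7{\cdot}5{\cdot}7$, the orders in (1) and (2).

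Next I analyze the surjection $\pi:O(K_{12})\to O(\dg{K_{12}})\cong O^-(6,3)$, whose kernel has order $3$. The crux of the proof—and the only step needing genuine computation rather than bookkeeping—is to identify $\ker\pi$ with the scalar group $\langle\lambda_\omega\rangle$. I would use that $K_{12}$ is $\sqrt{-3}$-modular over $\ZZ[\omega]$, so $\sqrt{-3}$ annihilates $\dg{K_{12}}$; since $\sqrt{-3}=2\omega+1$ gives $\omega-1=-\omega^2\sqrt{-3}\in(\sqrt{-3})$, multiplication by $\omega$ fixes $\dg{K_{12}}$ pointwise, hence $\lambda_\omega\in\ker\pi$, and equality follows as both groups have order $3$. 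On the other hand $\lambda_{-1}=-I$ acts as $-1$ on $\dg{K_{12}}$, so $\pi(\lambda_{-1})$ is the central element $-I$ of $O^-(6,3)$. Therefore $\pi^{-1}(\langle -I\rangle)=\langle\lambda_\omega,\lambda_{-1}\rangle=\langle\lambda_{-\omega}\rangle\cong\ZZ_6$, which is exactly the center $Z(G)$ of Theorem~\ref{CR}(4); it is normal in $O(K_{12})$, being central in $G$ and inverted by $\nu\circ\phi$.

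Finally I read off the shapes. By the third isomorphism theorem, $O(K_{12})/\langle\lambda_{-\omega}\rangle\cong O^-(6,3)/\langle -I\rangle\cong P\Omega^-(6,3).2^2\cong PSU(4,3).2^2$, so $O(K_{12})\cong 6.PSU(4,3).2^2=(6.PSU(4,3).2).2$, which is (1). For (2), $\pi(G)=G/\langle\lambda_\omega\rangle$ has order $4\,|PSU(4,3)|$, half of $|O^-(6,3)|=8\,|PSU(4,3)|$, so it is an index-$2$ subgroup of $O^-(6,3)$; because $\Omega^-(6,3)=2.PSU(4,3)$ is perfect it lies inside every index-$2$ subgroup, whence $\pi(G)=\Omega^-(6,3).2=2.PSU(4,3).2$. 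Dividing by $\pi(\lambda_{-1})=-I$ gives $G/\langle\lambda_{-\omega}\rangle\cong PSU(4,3).2$, i.e. $G\cong 6.PSU(4,3).2$, as claimed. The single delicate input is the modular fact that $\sqrt{-3}$ kills $\dg{K_{12}}$; the remainder is order arithmetic together with the perfectness of $\Omega^-(6,3)$.
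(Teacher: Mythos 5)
Your proof is correct and follows the same route the paper takes: the paper gives no independent proof of Lemma \refpp{auto}, citing Conway--Sloane and relying on the preceding paragraph's facts that $O(K_{12})$ acts on $\dg{K_{12}}\cong 3^6$ as $O^-(6,3)$ with kernel of order $3$ and that $P\Omega^-(6,3)\cong PSU(4,3)$. You simply flesh out that sketch (identifying the kernel as $\la\lambda_\omega\ra$ via $\theta$-modularity, pinning down $\pi(G)$ by perfectness of $\Omega^-(6,3)$), so the argument is a more detailed version of the same approach rather than a different one.
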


\begin{lem}
Let $J\cong K_{12}$  and $g$ be defined as in Notation \ref{Q}. Let $H$ be the subgroup of $O(J)$ generated  by RSSD involutions associated to $g$-invariant $AA_2$ sublattices in $J$. Then $H$ acts transitively on the set of all $g$-invariant $AA_2$ sublattices in $J$.
\end{lem}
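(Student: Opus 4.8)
The claim is that $H$ — the subgroup of $O(J)$ generated by RSSD involutions on $g$-invariant $AA_2$ sublattices — acts transitively on the set of all $g$-invariant $AA_2$ sublattices of $J \cong K_{12}$. Let me think about what's really being asserted.

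We have $J \cong K_{12}$, and $g$ is an order-3 isometry in $O_3(O(K_{12}))$. From Notation \ref{Q} and the surrounding discussion, $g$ acts fixed-point-freely on $K$ (wait, no — let me recheck). Actually $g = t_M t_N$ has order 3 and $K = \mathrm{Fix}_\L(g)$, with $\mathrm{ann}_\L(K) = J$, so $g$ acts **fixed-point-freely** on $J$. This means $g$ gives $J$ the structure of an Eisenstein lattice — $J$ is a rank-6 $\ZZ[\omega]$-module, which is exactly the complex structure on $K_{12}$ described in Notation \ref{coxetertodd}.

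So "$g$-invariant $AA_2$ sublattice" should mean an $AA_2$ sublattice closed under $g$, i.e., a $\ZZ[\omega]$-submodule that is an $AA_2$ as a real lattice. This is precisely a $\lambda_\omega$-invariant $AA_2$ sublattice — the same objects in Remark \ref{CRandRSSD}.

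**My proof plan.**

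The key is to identify the RSSD involutions on $g$-invariant $AA_2$'s with the complex reflections on minimal (norm-4) vectors, and then invoke the transitivity already established. Here's the chain:

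First, I'd establish the dictionary. By Remark \ref{CRandRSSD}(1), the $\lambda_\omega$-invariant $AA_2$ sublattices of $K_{12}$ are exactly the lattices $\ZZ[\omega]v$ for $v$ a minimal (norm-4) vector, and there are exactly 126 of them. The RSSD involution associated to such an $AA_2$ corresponds to the complex reflection on the corresponding minimal vector. Now $g$, being a fixed-point-free order-3 isometry, must be one of the $\lambda_\rho$ with $\rho$ a primitive cube root of unity (up to the identification of the Eisenstein structure) — or more carefully, $g$ generates the "$\omega$-multiplication" giving the complex structure, so "$g$-invariant" and "$\lambda_\omega$-invariant" coincide.

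Second, by Theorem \ref{CR}(1), the group $H$ (generated by these complex reflections, which equals $G$ by \ref{CR}(2)) acts transitively on the 756 norm-4 vectors. I need transitivity not on vectors but on the 126 sublattices $\ZZ[\omega]v$. Since each $AA_2 = \ZZ[\omega]v$ contains 6 minimal vectors (Remark \ref{CRandRSSD}(1)), and $H$ permutes the $\ZZ[\omega]v$ blockwise (because $H$ consists of isometries commuting with $g = \lambda_\omega$, so they send $g$-invariant sublattices to $g$-invariant sublattices), transitivity on the 756 vectors immediately forces transitivity on the 126 sublattices: if $v, v'$ are minimal vectors, some $h \in H$ maps $v$ to $v'$, hence maps $\ZZ[\omega]v$ to $\ZZ[\omega]v'$.

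**The main obstacle.**

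The one delicate point is verifying that $H$ genuinely consists of $g$-centralizing isometries, so that it acts on $g$-invariant sublattices at all, and that the definition of $H$ in this lemma (RSSD involutions on $g$-invariant $AA_2$'s) really matches $H$ in Lemma \ref{H} and the complex-reflection group of Theorem \ref{CR}. The subtlety: a complex reflection $r_v$ on a minimal vector commutes with $\lambda_\omega$ since $\lambda_\omega$ is central in $G$ (Theorem \ref{CR}(4)); and the RSSD involution $t_{\ZZ[\omega]v}$ is the real isometry underlying $r_v^{?}$ — one must confirm that negating the rank-2 real sublattice $\ZZ[\omega]v$ is the same as the relevant complex reflection, which follows because a complex reflection of order dividing 2 on a norm-4 Eisenstein-line negates that real plane. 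Once this identification is pinned down, transitivity is inherited directly from Theorem \ref{CR}(1) and Lemma \ref{H}. I'd phrase the proof as: $g$ gives $J$ its Eisenstein structure; the $g$-invariant $AA_2$'s are the $\ZZ[\omega]v$ with $v$ minimal; $H$ equals the complex reflection group $G$, which is transitive on minimal vectors and hence on these sublattices.
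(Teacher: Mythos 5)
Your proof is correct and follows essentially the same route as the paper's: the paper also reduces to transitivity of $H$ on norm-$4$ vectors (its Lemma on $H$, which rests on Theorem \refpp{CR}) and then observes that for $h\in H$ and a norm-$4$ vector $\a$ of a $g$-invariant $AA_2$ sublattice $A$, one has $hA=span_\ZZ\{h\a, gh\a\}$ because $g$ is central, so landing $h\a$ in $B$ forces $hA=B$. Your Eisenstein-module phrasing ($hA=h\,\ZZ[\omega]v=\ZZ[\omega]hv$) is just a restatement of that same step.
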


\begin{proof}
Let $A$ and $B$ be two $g$-invariant $AA_2$ sublattices. Let $\a$ be a norm 4 vector of $A$. Then there exists $h\in H$ such that $h\a\in B$ by Lemma \refpp{H}. Then $hA = span_\ZZ\{ h\a, h g\a\} =B$ since $B$ is $g$-invariant and $g\in Z(H)$.
\end{proof}

\begin{lem}\labtt{trans4}
The centralizer $C_{O(\L)}(g)$ is transitive on the set of all norm $4$ vectors in $J$.
\end{lem}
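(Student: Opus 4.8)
The plan is to lift the transitivity of $H$ on $J$ up to the centralizer $C_{O(\L)}(g)$ by extending each element of $H$ across the orthogonal complement $K = Fix_\L(g)$. First I would record the two ingredients already in hand. By Lemma \ref{H}, the group $H\le O(J)$ generated by RSSD involutions on $g$-invariant $AA_2$-sublattices acts transitively on the norm $4$ vectors of $J$; moreover, under the identification $J\cong K_{12}$ for which $g|_J$ is the order $3$ fixed-point-free scalar $\lambda_\omega$, Theorem \ref{CR} and Remark \ref{lambda} show that $g|_J\in Z(H)$. Hence it suffices to prove that every $h\in H$ extends to some $\tilde h\in C_{O(\L)}(g)$ with $\tilde h|_J=h$: transitivity of $C_{O(\L)}(g)$ on the norm $4$ vectors of $J$ then follows at once, since any such $\tilde h$ stabilizes $K$, hence stabilizes $J=ann_\L(K)$, and acts on it as $h$.

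The extension is carried out through the glue between $J$ and $K$. Since $K=Fix_\L(g)$, $ann_\L(K)=J$, and $K\cong J\cong K_{12}$ by Notation \ref{Q}, the Leech lattice $\L$ is a unimodular overlattice of $J\perp K$, and the glue is the graph of an anti-isometry $\psi:\dg{J}\to\dg{K}$ of the discriminant groups $\dg{J}\cong\dg{K}\cong 3^6$. A pair $(h,k)\in O(J)\times O(K)$ preserves this glue, and therefore extends to an isometry of $\L$, precisely when the induced maps $\bar h\in O(\dg{J})$ and $\bar k\in O(\dg{K})$ satisfy $\bar k=\psi\,\bar h\,\psi^{-1}$.

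Given $h\in H$, I would then produce a suitable partner $k\in O(K)$. The conjugate $\psi\,\bar h\,\psi^{-1}$ lies in $O(\dg{K})\cong O^-(6,3)$, and by Lemma \ref{auto} the homomorphism $O(K)\cong O(K_{12})\to O^-(6,3)$ is surjective, so I may choose $k\in O(K)$ with $\bar k=\psi\,\bar h\,\psi^{-1}$. The resulting extension $\tilde h\in O(\L)$ restricts to $h$ on $J$ and to $k$ on $K$. Finally, $\tilde h$ centralizes $g$: on $J$ we have $h\,(g|_J)=(g|_J)\,h$ because $g|_J=\lambda_\omega\in Z(H)$, while on $K$ the element $g|_K=1$ commutes with $k$ trivially. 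Thus $\tilde h\in C_{O(\L)}(g)$ and $\tilde h|_J=h$, which completes the argument.

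The step I expect to be the main obstacle is the gluing analysis: one must use that the discriminant form of $K_{12}$ is the $3^6$ space of minus type with $O(K_{12})$ inducing the full $O^-(6,3)$ (Lemma \ref{auto}), and verify that the anti-isometry $\psi$ intertwines the two discriminant actions so that $\psi\,\bar h\,\psi^{-1}$ genuinely lies in the image of $O(K)$. Once that surjectivity and compatibility are in place, the partner $k$ exists, and the centrality of $g|_J$ in $H$ does the rest.
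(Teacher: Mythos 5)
Your argument is correct, but it follows a genuinely different route from the paper's. The paper stays inside explicit Leech-lattice coordinates: it writes $g$ and $J$ in the hexacode-balanced (MOG) description, sorts the $756$ norm-$4$ vectors of $J$ into two shapes ($36$ with entries $\pm 4$ and $720$ supported on weight-$4$ hexacode words), checks that the visible subgroup $2^5.(3.S_6)\le C_{O(\L)}(g)$ is transitive on each shape separately, and then exhibits one explicit sextet-type isometry $\xi_{\mathcal T}$ commuting with $g$ that carries a vector of the first shape to one of the second. You instead take the transitivity of $H$ on minimal vectors of $K_{12}$ (Theorem \refpp{CR}, Lemma \refpp{H}) as the external input and lift each $h\in H$ to $C_{O(\L)}(g)$ by unimodular gluing: since $\L/(J\perp K)$ is the graph of an anti-isometry $\psi$ of discriminant groups and $O(K)\to O(\dg{K})\cong O^-(6,3)$ is onto by Lemma \refpp{auto}, a partner $k$ with $\bar k=\psi\bar h\psi^{-1}$ always exists, and centrality of $g|_J$ in $H$ together with triviality of $g|_K$ puts the extension in $C_{O(\L)}(g)$. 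This is cleaner and actually proves more: it shows directly that the image of $C_{O(\L)}(g)\to O(J)$ contains all of $H$, a fact the paper only reaches later in Lemma \refpp{Cg} via $EE_8$-sublattices whose existence rests on \refpp{trans4}, so your argument shortcuts that dependency without circularity. The one step you should make explicit is why $g|_J$ lies in $O_3(O(J))$, i.e.\ why the identification with $\lambda_\omega$ is legitimate: $g$ fixes $K$ pointwise, hence acts trivially on $\dg{K}$, hence trivially on $\dg{J}$ via $\psi$, and the kernel of $O(J)\to O(\dg{J})$ is exactly $\la g|_J\ra$ of order $3$. What the paper's computation buys in exchange is the finer orbit information under the smaller group $2^5.(3.S_6)$ and independence from the gluing formalism.
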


\begin{proof}
We shall use the notion of hexacode balance to
denote the codewords of the Golay code and the vectors in the Leech lattice \cite{G12,cs}.
Namely, we shall arrange the index set $\Omega=\{1,2, \dots, 24\}$ into a $4\times 6$ array such that the six columns form a sextet.

Since there is a unique conjugacy class of order $3$ element with trace $6$ on $\L$  \cite{atlas}, we may assume
\[
g= \begin{array}{|c c c c c c|}
\hline
& && &&  \\
\downarrow & \downarrow & \downarrow & \downarrow & \downarrow & \downarrow \\
\downarrow & \downarrow & \downarrow & \downarrow & \downarrow & \downarrow \\
\downarrow & \downarrow & \downarrow & \downarrow & \downarrow & \downarrow \\ \hline
\end{array}
\]
and
\[
J=\left \{ \left.
{\tiny
\begin{array}{|c|c|c|c|c|c|}
\hline
0&0&0&0&0&0\\ \hline
x_1& x_2& x_3&x_4&x_5&x_6 \\ \hline
y_1& y_2& y_3&y_4&y_5&y_6 \\ \hline
z_1& z_2& z_3&z_4&z_5&z_6\\ \hline
\end{array}}
\in \L \  \right | \
x_i+y_i+z_i = 0 \text{ for all } i=1,\dots, 6 \right\}.
\]
There are two types of norm $4$ elements in $J$:

\textbf{Type I}
\[
\pm \frac{1}{\sqrt{8}}\  {\tiny
\begin{array}{|r|r|r|r|r|r|}
\hline
0&0&0&0&0&0\\ \hline
4& 0&0&0&0&0 \\ \hline
-4& 0&0&0&0&0 \\ \hline
0&0&0&0&0&0 \\ \hline
\end{array}},
\]
where the $\pm 4$'s are supported on a tetrad  and on the second, third or fourth rows. There are $36$ vectors of this type;

\textbf{Type II}
\[
\pm \frac{1}{\sqrt{8}}\  {\tiny
\begin{array}{|r|r|r|r|r|r|}
\hline
0&0&0&0&0&0\\ \hline
2& 2&2&2&0&0 \\ \hline
-2&-2&-2&-2&0&0 \\ \hline
0&0&0&0&0&0 \\ \hline
\end{array}}.
\]
Each weight $4$ codeword of the Hexacode $\mathcal{H}$  will give $2^4$ vectors of this kind and there are
in total
$720 (=2^4\times 45)$ norm 4 vectors of this type.

Recall that the subgroup of $M_{24}$ that stabilizes the standard sextet and fixes the first row is given by automorphism group of the Hexacode $Aut(\mathcal{H})\cong 3.S_6$ \cite{cs,G12}.
Let $U$ be the subgroup generated by all $\varepsilon_\mathcal{O}$, where $\mathcal{O}$ is a union of any two tetrads of the standard sextet. Then $U\cong 2^5$  and we obtain a subgroup $U.Aut(\mathcal{H})\cong 2^5.(3.S_6)$, which commutes with $g$ and is transitive on each of these two types of norm $4$ vectors.

Now it remains to show there is an element in $C_{O(\L)}(g)$ which mixes these two types of vectors.

Let $\mathcal{T}=\{ T_1, \dots, T_6\} $ be a sextet given as below:
\[
{\small
\begin{array}{|r|r|r|r|r|r|}
\hline
*    & \circ &\bullet &\bullet&\bullet &\bullet \\ \hline
\circ & *    &\times & \times & \times &\times \\ \hline
\circ & *    & \diamond  &\diamond &\diamond  &\diamond\\ \hline
\circ & *    & & & &  \\ \hline
\end{array}}.
\]
We take $T_1$ to be the tetrad marked by $*$.

Let $\xi_{\mathcal{T}}$ be the linear map defined by
\[
\begin{split}
v_i & \to v_i -\frac{1}2 v_{T_1}\quad \text{ if }  i\in T_1,\\
v_i & \to \frac{1}2 v_{T} - v_i \quad \text{ if }  i\in T, T\in \mathcal{T}, T\neq T_1,
\end{split}
\]
where $\{\pm v_i\mid i\in \Omega=\{1, \dots,24\}\}$ is a standard frame of norm 8 vectors in  $\L$ and $v_S=\sum_{i\in S} v_i$ for any $S\subset \Omega$. Then $\xi_{\mathcal{T}}$ is an isometry of $\L$ (cf. \cite[p. 288]{cs} and \cite[p. 97]{G12}).

It is easy to see that $\xi_{\mathcal{T}}$ commutes with $g$ and that 
\[
\xi_T\left ( \ {\tiny
\begin{array}{|r|r|r|r|r|r|}
\hline
0&0&0&0&0&0\\ \hline
0& 0&0&0&0&4 \\ \hline
0& 0&0&0&0&-4 \\ \hline
0&0&0&0&0&0 \\ \hline
\end{array}}\,
\right)
=
{\tiny
\begin{array}{|r|r|r|r|r|r|}
\hline
0&0&0&0&0&0\\ \hline
0&0& 2& 2&2&-2 \\ \hline
0&0&-2&-2&-2&2 \\ \hline
0&0&0&0&0&0 \\ \hline
\end{array}}.
\]
Hence  $C_{O(\L)} (g)$ is transitive on the set of all norm 4 vectors in $J$.
\end{proof}

\begin{lem} [cf. \cite{cs,G12}]\labtt{trans}
The centralizer $C_{O(\L)}(g)$ is transitive on the set $\mathcal{A}=\{ A\subset J\mid A\cong AA_2\text{ and is $g$-invariant}\}$.
\end{lem}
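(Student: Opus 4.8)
The plan is to reduce the transitivity statement to the transitivity on norm $4$ vectors already obtained in Lemma \ref{trans4}. The key observation is that a $g$-invariant $AA_2$-sublattice of $J$ is completely determined by any single one of its minimal vectors. Indeed, by Notation \ref{Q} we have $J = ann_\L(Fix_\L(g))$, so $g$ acts on $J$ without nonzero fixed vectors; hence the restriction of $g$ to a rank $2$ lattice $A \cong AA_2$ is an order $3$ isometry with no fixed vector, i.e.\ rotation by $\pm 120^\circ$. Consequently, for any minimal (norm $4$) vector $\a \in A$ the image $g\a$ is again a minimal vector with $\la \a, g\a\ra = 4\cos(120^\circ) = -2$. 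The Gram matrix of $\{\a, g\a\}$ then has determinant $16 - 4 = 12$, which is the determinant of $AA_2$, so $A = \mathrm{span}_\ZZ\{\a, g\a\}$.

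Granting this, the argument is short. Let $A, B \in \mathcal{A}$ and choose minimal vectors $\a \in A$ and $\b \in B$. By Lemma \ref{trans4} there is $c \in C_{O(\L)}(g)$ with $c\a = \b$. Since $c$ commutes with $g$, I compute
\[
cA = \mathrm{span}_\ZZ\{c\a,\, cg\a\} = \mathrm{span}_\ZZ\{\b,\, g\b\} = B,
\]
where the first equality uses $A = \mathrm{span}_\ZZ\{\a, g\a\}$, the middle equality uses $cg = gc$, and the last equality applies the same spanning description to the $g$-invariant $AA_2$-lattice $B$ containing the minimal vector $\b$. This exhibits an element of $C_{O(\L)}(g)$ carrying $A$ to $B$, proving transitivity on $\mathcal{A}$.

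The substantive content is therefore all contained in Lemma \ref{trans4}, whose proof relies on the explicit hexacode/sextet coordinates for $\L$ and $J$ and on producing an isometry $\xi_\mathcal{T}$ that mixes the two types of norm $4$ vectors. The only point requiring care in the present deduction is the spanning claim $A = \mathrm{span}_\ZZ\{\a, g\a\}$, namely that $g$ really acts as a $120^\circ$ rotation with $\la \a, g\a\ra = -2$ on each such $A$; this is immediate from the fixed-point-freeness of $g$ on $J$ together with the rank $2$ structure of $AA_2$. Since no further case analysis is needed once norm $4$ transitivity is available, I expect no genuine obstacle here beyond correctly invoking Lemma \ref{trans4}.
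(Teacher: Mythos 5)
Your proof is correct and follows essentially the same route as the paper: reduce to the norm-$4$ transitivity of Lemma \refpp{trans4} and use $hA=\mathrm{span}_\ZZ\{h\a,hg\a\}=\mathrm{span}_\ZZ\{h\a,gh\a\}=B$. The only difference is that you make explicit the spanning claim $A=\mathrm{span}_\ZZ\{\a,g\a\}$ (via the determinant computation), which the paper leaves implicit; that is a correct and welcome detail.
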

\begin{proof}
Let $A, B\in \mathcal{A}$ and let $\a\in A$. Then by Lemma \refpp{trans4}, there exists $h\in C_{O(\L)}(g)$ such that $h \a \in B$. Since $h$ commutes with $g$ and $B$ is $g$-invariant, we have
$$ h A = span_{\ZZ}\{ h \a, hg\a\} = span_{\ZZ}\{ h \a, g h\a\} =B$$
 as desired.
\end{proof}

\begin{lem}\labtt{RSSDA2}
Let $A$ be a $g$-invariant $AA_2$-sublattice in $J$. There exists an $EE_8$-sublattice $E$ of $\L$ such that $E\cap J=A$. In this case, $E\cap K\cong EE_6$ \refpp{Q}.
\end{lem}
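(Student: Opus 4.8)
The plan is to reduce the statement to a single case by transitivity, and then manufacture one explicit example by exploiting the symmetry that interchanges the roles of $J$ and $K$.

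First I would invoke Lemma~\ref{trans}: the centralizer $C_{O(\L)}(g)$ acts transitively on the set $\mathcal{A}$ of $g$-invariant $AA_2$-sublattices of $J$. Every $h\in C_{O(\L)}(g)$ commutes with $g$, hence preserves both $K=Fix_\L(g)$ and $J=ann_\L(K)$, and carries $EE_8$-sublattices to $EE_8$-sublattices. Consequently, if I can produce one $A_0\in\mathcal{A}$ together with an $EE_8$-sublattice $E_0\subseteq\L$ satisfying $E_0\cap J=A_0$ and $E_0\cap K\cong EE_6$, then for an arbitrary $A\in\mathcal{A}$ I pick $h$ with $h(A_0)=A$ and set $E:=h(E_0)$; since $E\cap J=h(E_0\cap J)=A$ and $E\cap K=h(E_0\cap K)\cong EE_6$, both assertions follow simultaneously. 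Thus the whole lemma, including the final clause $E\cap K\cong EE_6$, reduces to exhibiting a single example.

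To build the example I would use the mirror of the data already in hand. For the original pair $(M,N)$ one has $M\cap K=F\cong AA_2$ (fixed pointwise by $g$) and $ann_M(F)=M\cap J\cong EE_6$, because $F\subseteq K$ and $A_2\perp E_6$ sits inside $E_8$ with index $3$. The element $g_1$ of Notation~\ref{Q}(5) plays the role of $g$ with $J$ and $K$ interchanged: it is trivial on $J$ and fixed-point-free on $K$. Running the $DIH_6(14)$ construction of \cite{GL} with $g_1$ in place of $g$ produces an $EE_8$-pair $(M_1,N_1)$ with $g_1=t_{M_1}t_{N_1}$ and common sublattice $F_1:=M_1\cap N_1\cong AA_2$; now $F_1\subseteq Fix_\L(g_1)=J$, while $ann_{M_1}(F_1)=M_1\cap K\cong EE_6$. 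I then take $E_0:=M_1$ and $A_0:=F_1$, which have exactly the required intersection pattern, and the remaining routine verifications are that the connecting glue between $A_0\subseteq J$ and $ann_{E_0}(A_0)\subseteq K$ is the diagonal $\ZZ_3$ forced by the order-$3^6$ gluing of $K\perp J$ inside $\L$ (so that $E_0\cong EE_8$), and that the glue vector lies in neither $J$ nor $K$ (so that $E_0\cap J$ and $E_0\cap K$ are exactly $AA_2$ and $EE_6$, not larger).

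The one point that needs genuine care, and which I expect to be the main obstacle, is guaranteeing that $A_0=F_1$ is truly $g$-invariant, i.e.\ a complex line for the Eisenstein action $g|_J$ in the sense of Remark~\ref{CRandRSSD}, rather than merely $g_1$-invariant. Since $g$ and $g_1$ commute and generate a group of shape $3^2$, the cleanest route is to arrange the $g_1$-configuration so that $g$ normalizes $\langle t_{M_1},t_{N_1}\rangle$; then $g$ permutes the three $EE_8$-sublattices of that configuration, hence permutes their pairwise intersections, all of which equal $F_1$, forcing $g(F_1)=F_1$. Producing such a compatibly chosen $g_1$ is most transparent in the hexacode-balance coordinates already used in the proof of Lemma~\ref{trans4}, where $J$, $K$ and $g$ are written down explicitly and $g_1$ can be taken as the $\omega$-multiplication of an Eisenstein structure on $K$ adapted to the chosen minimal $AA_2$; aligning these two commuting order-$3$ actions is precisely where the explicit model earns its keep.
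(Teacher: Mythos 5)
Your reduction step is exactly the paper's: both proofs transport a single example around by Lemma \ref{trans}, using that any $h\in C_{O(\L)}(g)$ preserves $J$ and $K$ and carries $EE_8$-sublattices to $EE_8$-sublattices. Where you diverge is in producing the base example. The paper simply asserts that an $EE_8$-sublattice $X$ with $X\cap J\cong AA_2$ exists, citing the explicit coordinates of Case 1 of Appendix A; you instead exploit the $J\leftrightarrow K$ symmetry, noting that $g_1$ (trivial on $J$, fixed-point-free on $K$) has trace $6$ and is therefore conjugate to $g$, so the $\dih{6}{14}$ analysis reruns with $g_1$ in place of $g$ to give a pair $(M_1,N_1)$ with $M_1\cap J=F_1\cong AA_2$ and $M_1\cap K\cong EE_6$. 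This is a legitimate alternative, and it has the advantage that the clause $E\cap K\cong EE_6$ comes out of the transport for free, whereas the paper's proof never addresses it. The one genuine soft spot — which you correctly identify as the main obstacle — is that Lemma \ref{trans} only applies if the base example's $AA_2$ lies in $\mathcal{A}$, i.e.\ is $g$-invariant (a $\ZZ[\omega]$-line for the complex structure $g|_J$), and being contained in $Fix_\L(g_1)=J$ does not give this. Your proposed fix (arrange that $g$ normalizes $\la t_{M_1},t_{N_1}\ra$, so that $g$ permutes the three $EE_8$-sublattices of $M_1+N_1$ and hence fixes their common intersection $F_1$) is plausible but is left as a sketch; to close it you would indeed have to descend to explicit coordinates, which is precisely what the paper's appeal to Appendix A does implicitly. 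So the two proofs carry the same unexpanded verification, just located in different places; yours is not wrong, but it is not more complete than the paper's on this point.
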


\begin{proof}
Clearly, there exists an $EE_8$-sublattice $X$ such that $X\cap J\cong AA_2$ (cf. Case 1:\,$\dih{6}{14}$ of Appendix A). By Lemma \refpp{trans}, $C_{O(\L)}(g)$ is transitive on $\mathcal{A}:=\{ A\subset J\mid A\cong AA_2\text{ and is $g$-invariant}\}$. Thus, there is an $h\in C_{O(\L)}(g)$ such that $h(X\cap J)=A$. Now take $E= hX$. Then $E\cap J = h X\cap J= h(X\cap J)=A$ since $hJ=J$.
\end{proof}

\begin{rem}\labtt{UniqE}
Let $E^1$ and $E^2$ be $EE_8$ sublattices of $\L$ such that $E^1\cap J=E^2\cap J\cong A$. Then $E^1+E^2 \cong \dih{6}{14}$ (cf. \cite{GL}). Moreover, $t_{E^1}t_{E^2}$ acts trivially on $J$ and hence $t_{E^1}t_{E^2}\in \la g_1\ra$. Therefore, $E^2=g_1^iE^1$ for some $i=1,2$.
\end{rem}

\begin{thm}\labtt{unidih6}
Let $M, N$ be $EE_8$-sublattices of $\L$ such that $M+N\cong \dih{6}{14}$. Then the pair $(M,N)$ is unique, up to the action of $O(\L)$.
\end{thm}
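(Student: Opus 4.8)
The goal is the stronger claim that $O(\L)$ is transitive on the ordered pairs $(M,N)$ of $EE_8$-sublattices with $M+N\cong\dih{6}{14}$, and the plan is to normalize such a pair to a single standard one by a chain of conjugations. The first invariant to pin down is the dihedral rotation $h:=t_Mt_N$, which has order $3$ and trace $6$ on $\L$ by Notation \ref{Q}. Since $O(\L)$ has a single conjugacy class of order-three isometries of trace $6$ (the fact used in Lemma \ref{trans4}, and the class to which $g_1$ also belongs, being trivial on the rank-$12$ lattice $J$ and fixed-point-free on $K$), I would first conjugate $(M,N)$ so that $h=g_1$. After this step $Fix_{\L}(g_1)=J$ and $ann_{\L}(J)=K$ are the standard lattices of Notation \ref{Q}.

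With the rotation normalized to $g_1$, the members of the pair lie in standard position relative to $J$ and $K$. Since $F:=M\cap N\cong AA_2$ is negated by both $t_M$ and $t_N$, it is fixed pointwise by $g_1=t_Mt_N$, hence $F\subseteq Fix_{\L}(g_1)=J$; comparing ranks (the negated lattice $M$ of $t_M$ meets $K$ in an $EE_6$ of rank $6$ and meets $J$ in rank $2$) gives $M\cap J=N\cap J=F$. The essential structural point to establish next is that $F$ is not merely a rank-two sublattice of $J$ but a \emph{$g$-invariant} $AA_2$-sublattice, i.e.\ a complex line for the Eisenstein structure on $J\cong K_{12}$ induced by $g$; this is exactly the hypothesis under which $F$ feeds into Lemma \ref{trans}, Lemma \ref{RSSDA2} and Remark \ref{UniqE}, and it is the content of the $\dih{6}{14}$ analysis recorded in \cite{GL} and Appendix A.

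Granting that $F$ is a $g$-invariant $AA_2$-sublattice of $J$, I would finish as follows. By Lemma \ref{trans}, $C_{O(\L)}(g)$ acts transitively on such sublattices, and every element of $C_{O(\L)}(g)$ preserves $J=ann_{\L}(Fix_{\L}(g))$; so after a conjugation by some $c\in C_{O(\L)}(g)$ I may assume $F$ equals a fixed standard $g$-invariant $AA_2$-sublattice $A$, whence $cM\cap J=cN\cap J=A$. By Lemma \ref{RSSDA2} and Remark \ref{UniqE}, the $EE_8$-sublattices meeting $J$ in $A$ are exactly $E,g_1E,g_1^2E$, and any two of them sum to a $\dih{6}{14}$-lattice; thus $\{cM,cN\}$ is a two-element subset of $\{E,g_1E,g_1^2E\}$. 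Conjugating by a power of $g_1$ I may take $cM=E$, reducing the pair to $(E,g_1E)$ or $(E,g_1^2E)$. Finally $t_E$ fixes $E$ setwise and conjugates $g_1$ to $g_1^2$, so $t_E$ carries $(E,g_1E)$ to $(E,g_1^2E)$; hence every pair is conjugate to the single standard pair $(E,g_1E)$, which is the desired uniqueness.

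The main obstacle is precisely the structural claim in the middle paragraph: that the rank-two intersection $F=M\cap N$ is a complex line in $J$ (equivalently $g$-invariant), rather than an arbitrary $AA_2$. Everything else is bookkeeping kept honest by two facts from Notation \ref{Q}, namely that $g$ and $g_1$ commute and that $g_1$ acts trivially on $J$: the former places $g_1$ in $C_{O(\L)}(g)$, so that the Remark \ref{UniqE} adjustment is realized by an ambient isometry, and the latter guarantees that the intermediate normalizations take place without disturbing the lattice $J$ along which $F$ is being straightened.
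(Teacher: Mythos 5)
Your proof is correct and follows essentially the same route as the paper's: normalize the order-$3$, trace-$6$ rotation $t_Mt_N$, use Lemma \ref{trans} to normalize the $AA_2$-intersection, and then invoke Lemma \ref{RSSDA2} and Remark \ref{UniqE} to reduce to the triple $\{E,g_1E,g_1^2E\}$, finishing with conjugation by $g_1$ and $t_E$ (your labels swap the roles of $J$ and $K$ relative to Notation \ref{Q}, but that is precisely the convention under which those lemmas apply verbatim). The one point you flag as an obstacle --- that $F=M\cap N$ is a complex line, i.e.\ invariant under the complementary order-$3$ element --- is likewise used without comment in the paper's own appeal to Lemma \ref{trans}, so your argument is no less complete than the original.
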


\begin{proof}
Let $(M,N)$ be such a pair. Then $A= M\cap N\cong AA_2$ and $g=t_Mt_N$ has order 3 and trace 6 on $\L$. Hence $K= Fix_{\L}(g) \cong K_{12}$ and $J= ann_{\L}(K)\cong K_{12}$.
Since such a $g$ is unique up to conjugacy, $K$ and $J$ are uniquely determined, up to the action $O(\L)$.

Now note that  $A\subset K$ since $A$ is the common $(-1)$-eigenlattice of $t_M$ and $t_N$. By Lemma \refpp{trans},  $(A, J)$ is unique, up to the action of $O(\L)$. Now by Lemma \refpp{RSSDA2} and Remark \refpp{UniqE}, the pair $(M,N)$ is unique up to the action of $O(\L)$.
\end{proof}

\begin{lem}[\cite{G12}]\labtt{Cg}
Let $g\in O(\L)$ be an element of order $3$ and trace $6$. Then
$C_{O(\L)}(g) $ has the shape $ (2\times 3^2).PSU(4,3).2$.
\end{lem}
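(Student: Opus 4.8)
The plan is to use the $g$-eigenspace decomposition of $\L$ to reduce the computation of $C_{O(\L)}(g)$ to the complex isometry group $G$ of $K_{12}$, which is already described in Lemma~\refpp{auto}. First I would record the eigenspace data. Since $g$ has order $3$ and trace $6$ on the rank-$24$ lattice $\L$, the eigenvalue multiplicities on $\L\otimes\RR$ are $12$ for $1$ and $6$ each for $\omega,\bar\omega$; hence $K=Fix_\L(g)$ and $J=ann_\L(K)$ both have rank $12$, $g$ acts trivially on $K$ and fixed-point-freely (as multiplication by $\omega$) on $J$, and $K\cong J\cong K_{12}$, exactly as in Notation~\refpp{Q}. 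Any $c\in C_{O(\L)}(g)$ commutes with $g$, hence preserves the $g$-eigenspaces and therefore stabilizes both $K$ and $J$; since $K\perp J$ spans $\L\otimes\QQ$, restriction gives a faithful map $C_{O(\L)}(g)\hookrightarrow O(K)\times O(J)$.

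Next I would study the restriction $\rho\colon C_{O(\L)}(g)\to O(J)$, $c\mapsto c|_J$. Because $c|_J$ commutes with $g|_J$, which is multiplication by $\omega$, it is $\ZZ[\omega]$-linear and preserves the form, so $\rho$ has image inside the complex isometry group $G=C_{O(J)}(g|_J)\cong 6.PSU(4,3).2$ of Lemma~\refpp{auto}. I would then show $\rho$ is surjective with central kernel $\la g_1\ra$. The key tool is the gluing description of the overlattice $\L\supset K\perp J$. Since $[\L:K\perp J]=3^6$ and both $\dg K,\dg J\cong 3^6$ carry the $O^-(6,3)$ form recorded before Lemma~\refpp{auto}, the unimodular lattice $\L$ is the graph of an anti-isometry $\psi\colon \dg K\to\dg J$, and a pair $(c_K,c_J)\in O(K)\times O(J)$ extends to $\L$ precisely when the induced maps on discriminant groups satisfy $\partial_J(c_J)=\psi\,\partial_K(c_K)\,\psi^{-1}$ (standard Nikulin glue theory).

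Using that $O(K)\to O(\dg K)=O^-(6,3)$ is surjective, for any $c_J\in G$ I can lift $\psi^{-1}\partial_J(c_J)\psi$ to some $c_K\in O(K)$; the resulting pair $(c_K,c_J)$ satisfies the glue condition, so it extends to an element of $C_{O(\L)}(g)$ mapping to $c_J$, giving surjectivity of $\rho$. For the kernel, $c|_J=1$ forces $\partial_K(c|_K)=1$, so $c|_K$ lies in the order-$3$ kernel of $O(K)\to O^-(6,3)$, which is generated by the scalar $\lambda_\omega$ on $K$, that is, by $g_1|_K$; conversely $g_1$ is trivial on $J$ and commutes with $g$, so $\ker\rho=\la g_1\ra\cong 3$. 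Moreover $g_1$ is central in $C_{O(\L)}(g)$ because every $c_K\in O(K)$ commutes with the scalar $\omega$ on $K$. Hence $C_{O(\L)}(g)$ is a central extension $\la g_1\ra\cdot G$ and $|C_{O(\L)}(g)|=3\,|G|=2^9 3^8 5\cdot 7$.

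Finally I would assemble the shape. I would identify the normal subgroup $2\times 3^2$ as $\rho^{-1}(Z(G))=\la g_1\ra\times\la -1,\,g\ra$, where $Z(G)=\la\lambda_{-\omega}\ra\cong 6$ (Theorem~\refpp{CR}, Remark~\refpp{lambda}) is generated by the images under $\rho$ of $-1$ and $g$; here $\la g_1\ra\cong 3$, $\la g\ra\cong 3$, $\la -1\ra\cong 2$ are mutually commuting, giving $2\times 3^2$. Then $C_{O(\L)}(g)/(2\times 3^2)\cong G/Z(G)\cong PSU(4,3).2$, which yields the claimed shape $(2\times 3^2).PSU(4,3).2$. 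The hard part will be the gluing step together with the bookkeeping of the extension: one must be careful that the discriminant-compatibility condition lifts correctly (i.e.\ that the needed index-$2$ behavior of $G$ on $\dg J$ is accounted for) and that the central elements $g,g_1,-1$ are correctly placed in the bottom layer, so that the quotient is genuinely $PSU(4,3).2$ rather than some isoclinic variant; alternatively this lemma may simply be quoted from \cite{G12}.
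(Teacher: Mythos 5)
Your argument is correct and arrives at the same structural conclusion as the paper ($\ker=\la g_1\ra$, image $\cong G\cong 6.PSU(4,3).2$), but the surjectivity step is done by a genuinely different method. The paper proves that the restriction $C_{O(\L)}(g)\to C_{O(J)}(g)=H$ is onto \emph{constructively}: by Lemma \refpp{RSSDA2} every $g$-invariant $AA_2$-sublattice $A$ of $J$ has the form $E\cap J$ for an $EE_8$-sublattice $E$ of $\L$; the SSD involution $t_E$ lies in $C_{O(\L)}(g)$ and restricts to $t_A$ on $J$; and these RSSD involutions generate $H=G$ by Lemma \refpp{H} and Theorem \refpp{CR}. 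You instead argue abstractly by glue theory: $\L/(K\perp J)$ is the graph of an anti-isometry $\dg{K}\to\dg{J}$, and since $O(K)\to O(\dg{K})\cong O^-(6,3)$ is onto (as recorded before Lemma \refpp{auto}), every $c_J\in G$ can be paired with some $c_K\in O(K)$ satisfying the glue condition; because $g|_K=1$, the resulting isometry of $\L$ automatically centralizes $g$. Both routes are sound, and the caveat you raise about the index-$2$ behaviour of $G$ on $\dg{J}$ is harmless precisely because the lift is taken in the full group $O(K)$, which surjects onto all of $O^-(6,3)$. The kernel computation is essentially identical to the paper's (trivial action on $\dg{J}$ forces trivial action on $\dg{K}$ via the gluing, and the kernel of $O(K_{12})\to O^-(6,3)$ is the order-$3$ group $\la g_1\ra$). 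Your explicit identification of the bottom layer $2\times 3^2$ as $\rho^{-1}(Z(G))=\la -1, g, g_1\ra$ is a useful addition that the paper leaves implicit. What the paper's route buys is reuse of the involutions $t_E$ elsewhere (e.g.\ in \refpp{CDdih6}); what yours buys is independence from the classification of $EE_8$-sublattices meeting $J$ in a prescribed $AA_2$.
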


\begin{proof}
First we note that $C_{O(\L)}(g) $ stabilizes both $K=Fix_\L(g)$ and $J=ann_\L(K)$ and hence
$C_{O(\L)}(g) $ acts on $J$ and induces a group homomorphism
$\varphi: C_{O(\L)}(g) \to C_{O(J)}(g) \cong H$.

Now let $A\in \mathcal{A}$. Then there is an $EE_8$-sublattice $E$ of $\L$ such that $E\cap J=A$.  In this case, the SSD involution $t_E$ acts as $t_A$ on $J$. Since the RSSD involutions $t_A, A\in \mathcal{A}$, generate $H$, we have $Im \varphi = H$.

\noindent \textbf{Claim:} $\ker \varphi = \la g_1 \ra$.

Proof. Let $\sigma\in O(\L)$ such that $\phi(\sigma)=id_J$, i.e., $\sigma$ fixes
$J$ pointwise. Thus $\sigma$ acts trivially on $J^*/J$.  Moreover, $\sigma$
acts on $K$ and must act trivially on $K^*/K$ since $\sigma$ preserves the gluing
map from $K\perp J$ to $\L$. By the discussion before Lemma \refpp{auto},
$\sigma \in \la g_1\ra$. Therefore, $C_{O(\L)}(g)/\la g_1\ra \cong H\cong
6.PSU(4,3).2$ and $C_{O(\L)}(g)$  has the shape  $(2\times 3^2).PSU(4,3).2$.
\end{proof}

\begin{rem}
We shall note that for any $EE_8$-sublattice $E$ of $\L$ such that $E\cap J\in \mathcal{A}$, the SSD involution $t_E$ inverts $g_1$ and so $t_E$ and $g_1$ generate a subgroup isomorphic to $Sym_3$.
\end{rem}

\begin{lem}\labtt{tranAA2perpAA2}
Let $g$ and $J$ be defined as in Notation \refpp{Q}. Then the centralizer $C_{O(\L)}(g)$ is transitive on the set
\[
\mathcal{B}=\{ A\perp B \subset J\mid A, B \text{ are $g$-invariant $AA_2$ sublattices of $J$}\}.
\]
\end{lem}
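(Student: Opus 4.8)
The plan is to reduce the assertion, in two stages, to a transitivity statement for the stabilizer of a single minimal vector line inside $J\cong K_{12}$. First I would pass from $C_{O(\L)}(g)$ to its image in $O(J)$. Recall from the proof of Lemma~\ref{Cg} that restriction to $J$ gives a homomorphism $\varphi\colon C_{O(\L)}(g)\to H$ with $Im\,\varphi=H$ and kernel $\la g_1\ra$, and that $g_1$ acts trivially on $J$ (Notation~\ref{Q}). Since the members of $\mathcal B$ are sublattices of $J$, the action of $C_{O(\L)}(g)$ on $\mathcal B$ depends only on the induced action on $J$, so it is equivalent to prove that $H$ is transitive on $\mathcal B$. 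By Lemma~\ref{trans} together with the surjectivity of $\varphi$, $H$ is already transitive on the set $\mathcal A$ of $g$-invariant $AA_2$ sublattices of $J$. Thus, given $A\perp B$ and $A'\perp B'$ in $\mathcal B$, I would first pick $h\in H$ with $hA=A'$; then $hB\perp A'$, and the whole lemma reduces to showing that $Stab_H(A)$ is transitive on
\[
\mathcal A_A=\{\,C\in\mathcal A\mid C\perp A\,\}.
\]

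Next I would translate this into the Eisenstein model of Notation~\ref{coxetertodd}. By Remark~\ref{CRandRSSD} the members of $\mathcal A$ are precisely the $126$ lines $\ZZ[\omega]v$ spanned by minimal (norm $4$) vectors $v$, with $g$ acting as $\lambda_\omega$. Two such lattices $\ZZ[\omega]v$ and $\ZZ[\omega]w$ are orthogonal as real lattices exactly when $\la v,w\ra=0$ for the Hermitian form: the real inner product of $\rho v$ and $\rho'w$ is $\mathrm{Re}(\bar\rho\rho'\la v,w\ra)$, and the vanishing of $\mathrm{Re}\la v,w\ra$ and $\mathrm{Re}(\bar\omega\la v,w\ra)$ forces $\la v,w\ra=0$. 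Hence $\mathcal A_A$ is the set of minimal vector lines contained in $ann_J(A)=v^{\perp}\cap J$, and the task is to show that $Stab_H(A)$ permutes them transitively.

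The heart of the proof is this last transitivity, and the key input I would use is that the action of $H=G\cong 6{\cdot}PSU(4,3){\cdot}2$ on the $126$ minimal vector lines has rank $3$, being the action of $PSU(4,3)$ on the cosets of a maximal subgroup $PSU(4,2)$, with line stabilizer $6{\cdot}PSU(4,2){\cdot}2$ of order $|G|/126=311040$. Granting this, $Stab_H(A)$ has exactly two orbits on the remaining $125$ lines, and on each orbit $|\la v,w\ra|^2$ is constant. Orthogonal lines do occur (for instance two Type~I lines of Lemma~\ref{trans4} supported on disjoint tetrads give an element of $\mathcal A_A$), so the value $0$ is attained on one of the two orbits; since the two orbits carry distinct inner-product values, that orbit is exactly $\mathcal A_A$. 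Therefore $Stab_H(A)$ is transitive on $\mathcal A_A$, which is the reduction we need. A tight-frame count fixes the two subdegrees: as $G$ is irreducible on $\CC^6$ (Theorem~\ref{CR}), the minimal vectors satisfy $\sum_w w\otimes\bar w=504\,I_6$ (the scalar pinned by the trace $756\cdot4=6\cdot504$), whence $\sum_w|\la v,w\ra|^2=2016$; subtracting the contribution $6\cdot16=96$ of the line $\ZZ[\omega]v$ leaves $320$ over the other $125$ lines, so the non-orthogonal orbit has size $b$ and Hermitian norm $k$ with $bk=320$, and the only admissible Eisenstein norm ($k\in\{1,3,4,7,9,12,13\}$, noting $8$ is not a norm and Cauchy--Schwarz excludes $\ge 16$) is $k=4$, $b=80$, giving subdegrees $1,45,80$.

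The one genuinely external ingredient, and hence the main obstacle, is the rank-$3$ property invoked above: the counting by itself only constrains the multiset of inner-product norms, and it yields a single orbit of orthogonal lines only once one knows that $Stab_H(A)$ has at most two orbits off $A$. I would secure this either by citing the classical rank-$3$ representation of $U_4(3)$ on $126$ points (subdegrees $1,45,80$, with associated strongly regular graph of parameters $(126,45,12,18)$), or, in the self-contained spirit of Lemma~\ref{trans4}, by an explicit hexacode computation: fixing $v$ of Type~I, I would list the $g$-invariant $AA_2$ sublattices of $ann_J(A)$ and exhibit, inside the monomial group $U.Aut(\mathcal H)$ together with those maps $\xi_{\mathcal T}$ from the proof of Lemma~\ref{trans4} that fix $A$, enough isometries to carry an arbitrary orthogonal $AA_2$ to a fixed standard one.
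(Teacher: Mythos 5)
Your argument is correct, but it follows a genuinely different route from the paper's. The paper reduces modulo $(g-1)J$: it identifies $J/(g-1)J\cong \dg J\cong 3^6$ with a nonsingular minus-type quadratic space over $\FF_3$ on which $O(J)$ acts as the full $O^-(6,3)$, notes that a member of $\mathcal B$ maps to a definite $2$-space, and invokes Witt's theorem for transitivity of $O(J)$ on $\mathcal B$; because $H$ has index $2$ in $O(J)$, it must then exhibit the anti-automorphism $\nu\circ\phi$ of Remark \refpp{CRandRSSD} inside $Stab_{O(J)}(A\perp B)\setminus H$ to descend transitivity to $H$, hence to $C_{O(\L)}(g)$. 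You instead work in the Hermitian model: after the same initial reduction to $H$ and the use of \refpp{trans} to fix the first summand, you derive transitivity of $Stab_H(A)$ on the $45$ orthogonal lines from the rank-$3$ action of $PSU(4,3)$ on the $126$ minimal-vector lines, with a tight-frame count that correctly forces the subdegrees $1,45,80$ (the $45$ agreeing with $|R(4)|/6=270/6$ from the proof of \refpp{OR}). What each approach buys: the paper's route is essentially self-contained given Witt's theorem, but needs the index-two patch at the end; yours avoids that patch entirely, since the rank-$3$ input already concerns a subgroup of $H$, but it rests on one classical external fact which you correctly isolate -- either of your proposed justifications (citing the $(126,45,12,18)$ strongly regular graph for $U_4(3)$, or an explicit hexacode computation in the style of \refpp{trans4}) would close it. It is worth noting that this graph is precisely the orthogonality graph on the $126$ square-norm points of the same $O^-(6,3)$ geometry the paper uses, so the two proofs are at bottom two packagings of the same structure.
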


\begin{proof}
Let $H$ be the subgroup generated by RSSD involutions associated to $g$-invariant $AA_2$ sublattices as defined in Lemma \refpp{H}. Then $H$ is an index 2 subgroup of $O(J)$. By Lemma \refpp{Cg}, the image of $C_{O(\L)}(g)$ in $O(J)$ is $H\cong 6.PSU(4,3).2$.

Recall that $J/(g-1)J\cong J^*/J\cong 3^6$ is a non-singular quadratic space of $(-)$-type.
The group $O(J)$ acts on $J/(g-1)J$ as the full orthogonal group $O^-(6,3)\cong 2.PSU(4,3).2^2$ while $C_{O(\L)}(g)$ acts on $J/(g-1)J$ as $2.PSU(4,3).2$.

In $J/(g-1)J$, the image of a $g$-invariant $AA_2$ sublattice is a non-singular $1$-space and the image of $g$-invariant $AA_2\perp AA_2$ is a definite $2$-space. By Witt Theorem, $O^-(6,3)$ is transitive on definite 2-spaces. Therefore, $O(J)$ is transitive on $\mathcal{B}$.

Recall from Notation \refpp{coxetertodd} that
\[
(2\ZZ[\omega])^6\subset J \subset \ZZ[\omega]^6.
\]
Let $A\perp B$ be the sum of the first and fourth copies of $2\ZZ[\omega]\cong AA_2$. Then
the anti-automorphism $\nu\circ \phi$ defined in Remark \refpp{CRandRSSD} gives  an isometry of the real lattice $K_{12}$ and by definition, it stabilizes $A\perp B$. Hence the index
$[ Stab_{O(J)}(A\perp B) : Stab_{H}(A\perp B)]=2$ and $C_{O(\L)}(g)$ is transitive on $\mathcal{B}$.
\end{proof}

\begin{thm} \labtt{CDdih6}
Let $M$ and $N$ be defined as in Notation \refpp{Q} and let $D$ be the dihedral group generated by $t_M$ and $t_N$. Then
$C_{O(\L)}(D) \cong (3\times 2\times PSU(4,2)). 2$.
\end{thm}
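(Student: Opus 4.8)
The plan is to reduce the problem to the centralizer of a single reflection inside the finite orthogonal group $O^-(6,3)$ attached to $J$, and then read off the structure. First I would record that $C_{O(\L)}(D)=C_{O(\L)}(g)\cap C_{O(\L)}(t_M)$: since $D=\langle g,t_M\rangle$ with $g=t_Mt_N$ and $t_M$ inverting $g$, an isometry centralizes $D$ iff it centralizes both $g$ and $t_M$, i.e. iff it lies in $C_{O(\L)}(g)$ and stabilizes the lattice $M$. Because $t_M$ inverts $g$ it preserves $K=Fix_\L(g)$ and $J=ann_\L(K)$, and the root-system fact $ann_{E_8}(A_2)\cong E_6$ gives $M\cap K=F\cong AA_2$ and $M\cap J=ann_M(F)\cong EE_6$, with $M=(M\cap K)\perp(M\cap J)$ up to finite index. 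Hence $t_M$ restricts to $J$ as the SSD involution $t_{M\cap J}$ of the $EE_6$-sublattice $M\cap J$.

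Next I would pass to the discriminant space, building on Lemma \ref{tranAA2perpAA2}: $C_{O(\L)}(g)$ acts on $J/(g-1)J\cong\dg J\cong 3^6$ (a nonsingular $\FF_3$-space of minus type) as the index-two subgroup $2.PSU(4,3).2$ of $O^-(6,3)$, and by comparison of orders with Lemma \ref{Cg} the kernel of this action is exactly $\langle g,g_1\rangle\cong 3^2$. The key local computation is that the $3$-part of $\dg{EE_6}$ is cyclic, so the image of $M\cap J$ in $\dg J$ is a nonsingular $1$-space $\langle\bar v\rangle$ and $t_{M\cap J}$ induces on $\dg J$ the reflection $r$ that is $-1$ on $\langle\bar v\rangle$ and $+1$ on $\langle\bar v\rangle^\perp$. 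Thus the image of $C_{O(\L)}(D)$ lies in $C_{O^-(6,3)}(r)\cap\bigl(2.PSU(4,3).2\bigr)$, and by Witt's theorem $C_{O^-(6,3)}(r)=O(\langle\bar v\rangle)\times O(\langle\bar v\rangle^\perp)\cong\langle\pm1\rangle\times O(5,3)$. Here I would invoke the exceptional isomorphism $\Omega(5,3)\cong PSp(4,3)\cong PSU(4,2)$ (equivalently $W(E_6)\cong PSU(4,2).2$), so that $O(5,3)\cong 2\times PSU(4,2).2$ and the simple core $PSU(4,2)$ appears.

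For the kernel of the action of $C_{O(\L)}(D)$ on $\dg J$ I would intersect $\langle g,g_1\rangle$ with $C_{O(\L)}(D)$. Now $g\notin C_{O(\L)}(D)$ because $t_M$ inverts it, whereas $g_1\in C_{O(\L)}(D)$: indeed $g_1$ is trivial on $J\supseteq M\cap J$, and acting as $\lambda_\omega$ on $K$ in the Eisenstein structure it preserves the minimal-vector line $F$ and hence commutes with $t_M|_K=t_F$. Therefore the kernel is precisely $\langle g_1\rangle\cong 3$. Assembling, $C_{O(\L)}(D)$ is an extension of an index-two subgroup of $\langle\pm1\rangle\times O(5,3)\cong 2\times 2\times PSU(4,2).2$ by the normal $\langle g_1\rangle\cong3$; folding the determinant/spinor condition collapses the two factors $2$ to a single $\langle-1\rangle$ together with an outer $.2$, yielding $(3\times 2\times PSU(4,2)).2$ of order $2^8\cdot3^5\cdot5=3\cdot|O(E_6)|$.

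The hard part will be surjectivity: I must show every isometry of $\dg J$ lying in $C_{O^-(6,3)}(r)\cap\bigl(2.PSU(4,3).2\bigr)$ is actually induced by an element of $O(\L)$ centralizing both $g$ and $t_M$. I would argue exactly as in Propositions \ref{dih6b} and \ref{2E8}: lift such an isometry to some $\sigma\in C_{O(\L)}(g)$ via Lemma \ref{tranAA2perpAA2}, note that $\sigma t_M\sigma^{-1}$ and $t_M$ induce the same reflection $r$ (so $\sigma(M\cap J)$ has the same image $\langle\bar v\rangle$ in $\dg J$ as $M\cap J$), and correct $\sigma$ by a suitable element of the kernel $\langle g,g_1\rangle$ so that it fixes $M\cap J$, hence $M$, on the nose. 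The remaining bookkeeping — determining which index-two subgroup of $C_{O^-(6,3)}(r)$ is realized and whether the top $.2$ inverts $g_1$ or extends $PSU(4,2)$ to $W(E_6)$ — is where I expect the real care to be needed; I would settle it by exhibiting explicit real isometries of $M\cap J\cong EE_6$ (its root reflections, the map $-1$, and a diagram automorphism) inside $C_{O(\L)}(g)$ and matching orders to confirm the claimed shape.
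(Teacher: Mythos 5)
Your strategy is genuinely different from the paper's, and its skeleton is sound. The paper never leaves the real lattices: it manufactures explicit elements of $C_{O(\L)}(D)$, namely the SSD involutions $t_E$ of $EE_8$-sublattices $E$ with $E\cap J$ a $g$-invariant $AA_2$ meeting $M\cap J$ in a norm $4$ vector, checks that each such $t_E$ restricts to a root reflection on $M\cap J\cong EE_6$ and on $F=M\cap K\cong AA_2$, and concludes that $C_{O(\L)}(D)$ induces the full groups $O(EE_6)\cong 2\times Weyl(E_6)$ and $O(AA_2)$ on those two lattices. You instead push everything into $\dg{J}\cong 3^6$ and compute $C_{O^-(6,3)}(r)\cong 2\times O(5,3)$ together with $\Omega(5,3)\cong PSU(4,2)$; those identifications and the order bookkeeping are correct. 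What the paper's method buys is that the surjectivity you defer to the end is handled constructively: exhibiting the reflections $t_E$ inside the centralizer shows at once that the image is as large as claimed, whereas your proposed correction of a lift $\sigma$ by an element of $\la g,g_1\ra$ so that it fixes $M\cap J$ requires a uniqueness statement for $EE_6$-sublattices of $J$ lying over a fixed line of $\dg{J}$ (an analogue of Remark \refpp{UniqE}) which is not in the paper's toolkit and would itself need proof.

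The one genuine gap is in your kernel computation. You assert $g_1\in C_{O(\L)}(D)$ on the grounds that $g_1|_K=\lambda_\omega$ ``preserves the minimal-vector line $F$''. That begs the question: far from every $AA_2$-sublattice of $K_{12}$ is $\lambda_\omega$-invariant (a fixed minimal vector $v$ has $82$ minimal vectors at inner product $-2$ with it, only two of which lie in $\ZZ[\omega]v$, so there are $5166$ sublattices isometric to $AA_2$ versus the $126$ invariant ones), so you must actually prove that $F=M\cap N$ is one of the invariant ones, equivalently that $t_M$ centralizes rather than inverts $g_1$. This point is essential, not cosmetic: $t_M$ necessarily normalizes $\la g_1\ra$, and if it inverted $g_1$ it would invert all of $\la g,g_1\ra$, the kernel of $C_{O(\L)}(D)$ acting on $\dg{J}$ would be trivial, and your order count would collapse (indeed $3^5$ would then fail to divide $|C_{O(\L)}(D)|$). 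The fact is true --- it follows from the $J\leftrightarrow K$, $g\leftrightarrow g_1$ symmetric versions of Lemma \refpp{RSSDA2} and the remark following Lemma \refpp{Cg}, or from the uniqueness in Theorem \refpp{unidih6} --- but it needs that argument, not an appeal to ``the minimal-vector line''. With that repaired, and with the deferred surjectivity and index-two bookkeeping carried out as you outline, your route does reach the stated shape $(3\times 2\times PSU(4,2)).2$.
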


\begin{proof}
Set $G = C_{O(\L)}(D)$. Since $g\in D$, $G$ stabilizes $K= Fix_\L(g)$ and $J=ann_{\L}(K)$. In addition, $G$ centralizes $t_M$ and hence it stabilizes the $(-1)$-eigenlattice of $t_M$, which is $M$. Therefore, $G$ acts on $F=M\cap K\cong AA_2$ and $M\cap J\cong EE_6$.

Let $\a\in M\cap J\cong EE_6$ be a norm 4 vector and $A= span_\ZZ\{\a, g\a\}$. Let $E$ be an $EE_8$ sublattice such that $E\cap J=A$. Then  $t_E$ acts as $t_{\la \a\ra}$ on $M\cap J$, which is a reflection at $\a$. Since $M\cap E\neq 0$ and $t_E$ commutes with $t_M$, we have
$M\cap E\cong AA_1^2$ or $DD_4$. Thus, $E\cap F= E\cap M \cap K\cong AA_1$ and $t_E$ acts as
a reflection at a root, also.  Moreover, the $-1$ map clearly acts on both $M\cap J$ and $F$. Thus, $G$ acts on $M\cap J$ as the full isometry group  $O(EE_6)=2\times Weyl(E_6) \cong 2\times PSU(4,2). 2$
and  acts as $O(AA_2)\cong 2. S_3$ on $F=M\cap K$. Hence $G= C_{O(\L)}(D)$ has the shape $(3\times 2\times PSU(4,2)). 2$.
\end{proof}

\begin{coro}\labtt{CDdih62}
Let $D$ be the dihedral group generated by $t_M$ and $t_N$. Then
$C_{O(\L)}(D)/O_2(C_{O(\L)}(D))\cong (3\times PSU(4,2)).2$, which is isomorphic to
``half'' of the Weyl group of type $A_2+ E_6$.
\end{coro}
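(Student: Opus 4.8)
The plan is to read the corollary off Theorem \ref{CDdih6}, where the structure $C_{O(\L)}(D)\cong (3\times 2\times PSU(4,2)).2$ is already in hand; what remains is purely group-theoretic, namely to locate $O_2$ and then to recognize the quotient as a half Weyl group. Write $G:=C_{O(\L)}(D)$ and let $G_0:=3\times 2\times PSU(4,2)$ be the index-$2$ normal subgroup of Theorem \ref{CDdih6}. First I would note that $-1\in O(\L)$ lies in $G$ and is central, so $\langle -1\rangle\cong 2$ is a nontrivial normal $2$-subgroup; hence $O_2(G)\neq 1$.

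The crux is to show $O_2(G)$ has order exactly $2$. Since the factor $3$ has odd order and $PSU(4,2)$ is simple and non-abelian, the largest normal $2$-subgroup of $G_0$ is its central $2$; thus $O_2(G)\cap G_0\le\langle -1\rangle$. As $[G:G_0]=2$, the only remaining possibility is a normal subgroup $P\trianglelefteq G$ of order $4$ with $P\cap G_0=\langle -1\rangle$. For a generator $\sigma$ of $P$ lying outside $G_0$, normality would force $[\sigma,h]\in\langle -1\rangle$ for every $h\in PSU(4,2)$, i.e. $\sigma$ would induce an inner-by-central, hence inner, automorphism of $PSU(4,2)$. But the outer $.2$ in Theorem \ref{CDdih6} is induced by reflections of $Weyl(E_6)$ lying outside its rotation subgroup $Weyl(E_6)^+\cong PSU(4,2)$, so it realizes the nontrivial outer (graph) automorphism of $PSU(4,2)$; this contradiction rules out $P$. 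Therefore $O_2(G)=\langle -1\rangle\cong 2$ and
\[
G/O_2(G)\cong (3\times PSU(4,2)).2 .
\]
I expect this exclusion of a larger normal $2$-group — confirming that the outer involution is genuinely outer on $PSU(4,2)$ and cannot be partially absorbed into $O_2$ — to be the one genuinely non-formal point in the argument.

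Finally I would identify $(3\times PSU(4,2)).2$ with half of $Weyl(A_2+E_6)$. Here $Weyl(A_2+E_6)=Weyl(A_2)\times Weyl(E_6)=S_3\times(PSU(4,2).2)$, with rotation subgroups $Alt_3\cong 3$ and $Weyl(E_6)^+\cong PSU(4,2)$. The half Weyl group is an index-$2$ subgroup of $Weyl(A_2+E_6)$, and the natural candidate in the sense of Glauberman--Norton is the kernel of the product of the two determinant characters,
\[
\{(w_1,w_2)\in Weyl(A_2)\times Weyl(E_6)\mid \det(w_1)\det(w_2)=1\},
\]
which consists of $3\times PSU(4,2)$ together with the coset of (odd permutation, reflection) pairs; this is exactly $(3\times PSU(4,2)).2$. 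A comparison of orders ($|G/O_2(G)|=155520$, half of $|Weyl(A_2+E_6)|=311040$) together with matching of the outer extension on each side — the diagonal involution inverting the order-$3$ factor and acting as the graph automorphism of $PSU(4,2)$ — then yields the asserted isomorphism, recording that the Glauberman--Norton ``half Weyl group'' prediction is valid for the $3A$-node.
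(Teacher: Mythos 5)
Your proposal is correct and follows the route the paper intends: the corollary is stated without proof as an immediate consequence of Theorem \ref{CDdih6}, and your argument simply fills in the details — identifying $O_2$ with the central $\langle -1\rangle$ (using that the top $.2$ acts as an outer automorphism of $PSU(4,2)$, so no larger normal $2$-subgroup can exist) and matching the quotient with the index-$2$ "half" of $Weyl(A_2)\times Weyl(E_6)$ by the product-of-determinants criterion. Nothing in your write-up deviates from or adds beyond what the paper's reading of Theorem \ref{CDdih6} requires.
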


The above results may be lifted to a statement about the
Monster as follows.
\begin{thm}\labtt{theorem3A}
Let $(x, y, z)$ be a triple of elements in the Monster such that $x, y\in 2A$, $xy\in 3A$, $z\in
2B \cap C_{\MM}(x,y)$. Then $(x,y,z)$ is unique up to the conjugation of $\MM$. Moreover, $C_{\MM}(x,y,z)$ has a homomorphism onto
$C_{O(\L)}(D)/O_2(C_{O(\L)}(D))$, which is isomorphic to ``half'' of the Weyl
group of type $A_2+ E_6$.
\end{thm}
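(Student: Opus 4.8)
The plan is to assemble the statement from the lattice- and VOA-theoretic facts already in hand, using the identification $Aut(V^\natural)\cong\MM$ together with the fact that, after conjugation, $x=\tau_{e_M}$, $y=\tau_{e_N}$ and $z$ is the standard $2B$-involution acting as $1$ on $V_\L^+$ and $-1$ on $V_\L^{T,+}$. Almost all of the substance resides in the earlier results, so the task is mainly to glue them together correctly.

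For the uniqueness assertion, I would first invoke Proposition \ref{dih6b} to replace an arbitrary triple $(x,y,z)$ by the standard representative $(\tau_{e_M},\tau_{e_N},z)$, where $(M,N)$ is an $EE_8$-pair in $\L$ with $M+N\cong\dih{6}{14}$. By Theorem \ref{unidih6} any two such pairs are conjugate under $O(\L)$. To transfer this $O(\L)$-conjugacy to an $\MM$-conjugacy of triples, I would use that the composite
\[
C_{\MM}(z)=C_{Aut(V^\natural)}(z)\overset{\mu}{\longrightarrow}Aut(V_\L^+)\overset{\xi}{\longrightarrow}O(\L)/\la\pm1\ra
\]
is surjective, both $\mu$ and $\xi$ being surjective by Notation \ref{zmuxi}. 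Lifting the isometry carrying one pair $(M,N)$ to another to an element of $C_{\MM}(z)$, and then correcting by the $\varphi$-automorphisms exactly as in the proof of Proposition \ref{dih6b}, produces an element of $\MM$ taking $e_M\mapsto e_{M'}$ and $e_N\mapsto e_{N'}$; this exhibits the required $\MM$-conjugacy.

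For the homomorphism, I would identify $C_{\MM}(x,y,z)$ with $C_{Aut(V^\natural)}(\tau_{e_M},\tau_{e_N},z)$ and apply Proposition \ref{2E8}, which gives a surjection
\[
C_{\MM}(x,y,z)\twoheadrightarrow C_{O(\L)}(t_M,t_N)/\la\pm1\ra=C_{O(\L)}(D)/\la\pm1\ra,
\]
where $D=\la t_M,t_N\ra$. By Theorem \ref{CDdih6}, $C_{O(\L)}(D)\cong(3\times2\times PSU(4,2)).2$, and comparing its order with that of $(3\times PSU(4,2)).2$ as in Corollary \ref{CDdih62} shows $O_2(C_{O(\L)}(D))$ has order $2$. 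Since $-1$ is central in $O(\L)$, the subgroup $\la\pm1\ra$ is a central normal $2$-subgroup of $C_{O(\L)}(D)$ of order $2$, so $O_2(C_{O(\L)}(D))=\la\pm1\ra$. Hence the surjection above is precisely onto $C_{O(\L)}(D)/O_2(C_{O(\L)}(D))$, which by Corollary \ref{CDdih62} is isomorphic to ``half'' of the Weyl group of type $A_2+E_6$.

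The hard part will not be the algebraic bookkeeping but the two genuinely new verifications: the identification $O_2(C_{O(\L)}(D))=\la\pm1\ra$, which the order comparison forces once one knows $-1$ is central, and the lifting step in the uniqueness argument. The latter is the more delicate point, since one must check that an isometry normalizing the whole configuration lifts to $C_{\MM}(z)$ and that, after correction by suitable $\varphi_\a$'s, it conjugates the \emph{ordered} triple and not merely the unordered pair of conformal vectors.
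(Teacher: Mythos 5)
Your proposal is correct and follows essentially the same route as the paper: the paper's proof of Theorem \ref{theorem3A} simply cites Proposition \ref{dih6b} and Theorem \ref{unidih6} for uniqueness and Corollary \ref{CDdih62}, Proposition \ref{2E8} and Corollary \ref{Czef} for the homomorphism, and your write-up fills in exactly the implicit steps (lifting the $O(\L)$-conjugacy through $\xi\circ\mu$ with a $\varphi_\a$-correction, and identifying $O_2(C_{O(\L)}(D))=\la\pm1\ra$) in the way the authors intend.
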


\pf That the triple is unique up to conjugation follows from Proposition \ref{dih6b} and \ref{unidih6}.
The last statement follows from Corollary \ref{CDdih62}, Proposition \refpp{2E8} and Corollary \refpp{Czef}.\qed

\subsection{Isometry groups of $Q\cong \dih{6}{14}$ and $ann_\L(Q)$}

Let $Q\cong \dih{6}{14}$ and $R=ann_\L(Q)$. We shall determine the isometry group of $Q$ and $R$ in this subsection.

\begin{nota}\labtt{QJ}
Let $M$ and $N$ be $EE_8$ sublattices of $\L$  such that $Q=M+N\cong
\dih{6}{14}$ and let $R=ann_\L(Q)$.

Let $g=t_Mt_N\in O(\L)$. Set $K:=Fix_\L(g)$, $J:= ann_\L(Fix_\L(g))$ and
$F:=M\cap N$.  Then $K\cong J\cong K_{12}$, $F\cong AA_2$ and $F$ is
orthogonal to $J$. In addition, we denote the order $3$ fixed point free isometry
in $O_3(O(K))$ by $g_1$.
\end{nota}

We shall use the embedding of $Q$ and $R$ in $\L$ as discussed in Appendix A.
Then $R$ can be obtained by gluing $AA_2^5$ with the glue code
\[
\mathcal{C} =span_{\mathbb{F}_4} \{ (1,1,1,1,0),  (1,0, \omega, \omega^2, 1)\}.
\]
Note that  $\mathcal{C}$ has $15$ codewords of weight $4$ and $|R(4)|=
15\times 2^6 + 5\cdot 6= 270$. Moreover, $ann_{R}(A)\cong A_2\otimes D_4$ for
an $AA_2$-sublattice $A$ in $AA_2^5$.

\begin{lem}\labtt{ORp}
$O(A_2\otimes D_4)\cong O(A_2) \circ O(D_4)$, which has order $2^8\cdot 3^3$.
\end{lem}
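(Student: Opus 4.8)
The plan is to realize the asserted group as the image of an injective ``tensor'' homomorphism and then to prove that this image exhausts $O(A_2\otimes D_4)$; the second part carries all the weight. First I would define $\Phi\colon O(A_2)\times O(D_4)\to O(A_2\otimes D_4)$ by $\Phi(g,h)=g\otimes h$. Because the form on $A_2\otimes D_4$ is the tensor product of the two forms, each $g\otimes h$ is an isometry and $\Phi$ is a homomorphism. An elementary argument shows $g\otimes h=1$ forces $g=\varepsilon$, $h=\varepsilon^{-1}$ for a scalar $\varepsilon=\pm 1$, so the kernel of $\Phi$ is $\langle(-1,-1)\rangle$ and $\Phi$ induces an injection of the central product $O(A_2)\circ O(D_4)$ into $O(A_2\otimes D_4)$. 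Using $|O(A_2)|=12=2^2{\cdot}3$ (the dihedral group of order $12$) and $|O(D_4)|=1152=2^7{\cdot}3^2$ (namely $W(D_4)$ extended by triality), the central product has order $12{\cdot}1152/2=2^8{\cdot}3^3$, the claimed value. It then remains to prove surjectivity.

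The first step toward surjectivity is to show that every vector of norm $4$ in $L:=A_2\otimes D_4$ is a pure tensor $a\otimes d$ with $a\in A_2(2)$ and $d\in D_4(2)$. Writing a general vector as $e_1\otimes u_1+e_2\otimes u_2$ for the two basis roots $e_1,e_2$ of $A_2$ and $u_1,u_2\in D_4$, and setting $N(u)=\langle u,u\rangle$, its norm is $2\bigl(N(u_1)-\langle u_1,u_2\rangle+N(u_2)\bigr)$; this is always $\ge 4$ for a nonzero vector, so norm $4$ is minimal and corresponds to $N(u_1)-\langle u_1,u_2\rangle+N(u_2)=2$. If one of $u_1,u_2$ is zero we get $e_i\otimes(\text{root})$; if both are nonzero then $N(u_1),N(u_2)\ge 2$, and combining with Cauchy--Schwarz forces $N(u_1)=N(u_2)=2$ and $\langle u_1,u_2\rangle=2$, whence $N(u_1-u_2)=0$, so $u_1=u_2$ is a root and the vector is $(e_1+e_2)\otimes u_1$. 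Since $\pm e_1,\pm e_2,\pm(e_1+e_2)$ are exactly the roots of $A_2$, all $72$ minimal vectors are pure tensors root$\otimes$root, and $O(A_2)\circ O(D_4)$ is already transitive on them.

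The decisive step is to recover the decomposition $\mathbb{R}^2\otimes\mathbb{R}^4$ from the configuration of minimal vectors, so that an arbitrary $\phi\in O(L)$ is forced to have the form $\phi_1\otimes\phi_2$. Here I would exploit that no two roots of $A_2$ are orthogonal: for $v=a_0\otimes d_0\in L(4)$ one gets $v^\perp\cap L(4)=\{a\otimes d\mid \langle d,d_0\rangle=0\}$, which depends only on $d_0$ up to sign. Declaring $w\approx w'$ when $w^\perp\cap L(4)=w'^\perp\cap L(4)$ then partitions $L(4)$ into $12$ classes, one for each root pair $\{\pm d\}$ of $D_4$ (a root of $D_4$ is determined up to sign by the set of roots orthogonal to it, which I would check directly, and the count $6\cdot 12=72$ cross-checks this); each class is the $A_2$-hexagon $A_2(2)\otimes d$ spanning the plane $\mathbb{R}^2\otimes\mathbb{R}d$. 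Any isometry permutes these $12$ planes, all of which carry the same intrinsic $A_2$-factor $\mathbb{R}^2$, so $\phi$ must preserve the decomposition and be of the form $\phi_1\otimes\phi_2$ with $\phi_1\in O(A_2)$ and $\phi_2\in O(D_4)$; hence $\phi$ lies in the image of $\Phi$. The main obstacle is precisely this rigidity step --- showing the tensor decomposition is canonical --- and the fact that all minimal vectors are pure tensors, combined with the orthogonality trick available because $A_2$ has no orthogonal root pair, is what makes it tractable; one may alternatively phrase the reconstruction as saying the pure tensors determine the Segre locus $\mathbb{P}^1\times\mathbb{P}^3$ inside $\mathbb{P}^7$, whose two factors any invertible isometry must respect (and which cannot be interchanged, since $A_2$ and $D_4$ have different ranks).
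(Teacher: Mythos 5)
Your proposal is correct and follows essentially the same route as the paper: the key fact in both is that every minimal vector of $A_2\otimes D_4$ is a pure tensor (root)$\otimes$(root), from which one concludes that every isometry factors through $O(A_2)\circ O(D_4)$. The paper merely cites \cite{GL} for the pure-tensor claim and leaves the reconstruction step implicit, whereas you prove the claim directly (the norm computation via Cauchy--Schwarz is right) and sketch the recovery of the twelve planes $\mathbb{R}^2\otimes\mathbb{R}d$ from orthogonality relations among minimal vectors, which is a sound way to make the final factorization rigorous.
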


\begin{proof}
It is clear that $O(A_2\otimes D_4)$ has a subgroup isomorphic to $O(A_2) \circ O(D_4)$. Since  the minimal vectors of $A_2\otimes D_4$ have the form
$\a\otimes \b$, where $\a$ and $\b$ are roots of $A_2$ and $D_4$, respectively \cite{GL},
we have $O(A_2\otimes D_4)\cong O(A_2) \circ O(D_4)$
\end{proof}

\begin{lem}\labtt{OR}
The order of $O(R)$ is  $2^8\cdot 3^5\cdot 5$.
\end{lem}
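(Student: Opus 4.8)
The plan is to compute $|O(R)|$ by an orbit--stabilizer argument on the set $\mathcal{L}$ of sublattices $A\le R$ with $A\cong AA_2$ spanned by minimal vectors; equivalently, the $\langle\omega\rangle$-invariant $AA_2$-sublattices $\ZZ[\omega]\a$ with $\a\in R(4)$. Since multiplication by $\omega$ is an isometry of $R$ (the glue code $\mathcal{C}$ is $\FF_4$-linear) and each such $A$ contains exactly the six minimal vectors $\{\pm\a,\pm\omega\a,\pm\omega^2\a\}$, the $|R(4)|=270$ minimal vectors partition into $270/6=45$ lines, so $|\mathcal{L}|=45=3^2\cdot5$. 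Granting that $O(R)$ is transitive on $\mathcal{L}$ and that $|Stab_{O(R)}(A)|=|O(A_2\otimes D_4)|=2^8\cdot3^3$ by Lemma \ref{ORp}, orbit--stabilizer gives $|O(R)|=45\cdot 2^8\cdot3^3=2^8\cdot3^5\cdot5$, as desired.

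Transitivity is the main conceptual obstacle, and it is where a frame-breaking isometry is needed. Under the coordinate frame $AA_2^5$ the $45$ lines fall a priori into two types: the $5$ lines coming from single coordinates ($30$ minimal vectors) and the $40$ lines coming from the $15$ weight-$4$ words of $\mathcal{C}$ ($240$ minimal vectors), and no isometry commuting with $\omega$ interchanges the two types. I would produce the required isometries exactly as in the Coxeter--Todd computation: identify $R$ with $ann_K(F)$, where $K\cong K_{12}$ and $F\cong AA_2$ is a fixed $g$-invariant line (Notation \ref{QJ}), and transport the transitivity of $C_{O(\L)}(g)$ on $g$-invariant pairs $AA_2\perp AA_2$ in $K$ (the analogue of Lemma \ref{tranAA2perpAA2}) together with the type-mixing maps of Lemma \ref{trans4}. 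Fixing $F$ then makes the stabilizer of $F$ transitive on the $45$ lines of $K$ orthogonal to $F$, i.e. on $\mathcal{L}$; since these isometries restrict to $O(R)$, transitivity of $O(R)$ on $\mathcal{L}$ follows. The point demanding care is the passage from unordered pairs (given by Witt's theorem on the discriminant form) to ordered pairs, which uses an isometry of $AA_2\perp AA_2$ swapping the two summands and extending over $K$.

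For the stabilizer I would use the orthogonal splitting $A\perp ann_R(A)\le R$, where for a coordinate line $A$ we have $ann_R(A)\cong A_2\otimes D_4$, and compare discriminants: $\det(AA_2)=12$, $\det(A_2\otimes D_4)=2^4\cdot3^4$ and $\det R=2^2\cdot3^5$, so $[R:A\perp ann_R(A)]=4$ and the glue group $R/(A\perp ann_R(A))\cong(\ZZ/2)^2$ injects into the $2$-part of each discriminant group. Restriction gives a homomorphism $Stab_{O(R)}(A)\to O(ann_R(A))$; its kernel is $\{1,t_A\}$, because $A$ is RSSD in $R$ (the glue has exponent $2$) so the SSD involution $t_A$ lies in $O(R)$ and acts trivially on $ann_R(A)$, while its image is the index-$2$ subgroup of $O(A_2\otimes D_4)$ stabilizing the order-$4$ glue subgroup $U$ of $\dg{ann_R(A)}$ (here $U$ is \emph{not} the full $2$-part of $\dg{ann_R(A)}$, whose order is $2^4$, so it is genuinely moved by half of $O(ann_R(A))$). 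Hence $|Stab_{O(R)}(A)|=2\cdot\tfrac12\,|O(A_2\otimes D_4)|=2^8\cdot3^3$, and combining with transitivity yields $|O(R)|=2^8\cdot3^5\cdot5$. I expect the delicate part to be precisely this bookkeeping of the glue --- verifying that $t_A$ accounts for the entire kernel and that exactly an index-$2$ subgroup of $O(A_2\otimes D_4)$ extends to $R$ --- the factor $2$ contributed by $t_A$ being balanced against the factor $\tfrac12$ from the glue stabilizer.
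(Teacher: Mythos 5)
Your proposal is correct and follows essentially the same route as the paper's proof: orbit--stabilizer on the $45=270/6$ invariant $AA_2$-lines of $R$, with $|Stab_{O(R)}(A)|$ computed by restricting to $ann_R(A)\cong A_2\otimes D_4$, the kernel being $\langle t_A\rangle$ and the image an index-$2$ subgroup of $O(A_2)\circ O(D_4)$ (the paper certifies the index $2$ by noting that the field automorphism $\omega\mapsto\omega^2$ of $ann_R(A)$ does not lift to $O(R)$, which is the same glue bookkeeping you describe). If anything you are more explicit than the printed proof on the one point it leaves implicit, namely the transitivity of $O(R)$ on the $45$ lines, which you correctly reduce to the transitivity statements of \refpp{trans4} and \refpp{tranAA2perpAA2}.
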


\begin{proof}

 Since $g_1$ is fixed point free on $R$,
$R$ has $45 (= 270/6)$ distinct $g_1$-invariant $AA_2$ sublattices.

Now let $A$ be an $AA_2$-sublattice in $AA_2^5$. Then the stabilizer $Stab_{O(R)} (A)$ of $A$
acts on the sublattice $R':=ann_{R}(A)\cong  A_2\otimes D_4$. Let $\varphi:
\dg{A} \to \dg{R'}$ be the gluing map. Then
\[
Stab_{O(R)} (A)= \{ (h, h') \in O(A)\times O(R')\mid \varphi\circ h = h'\circ \varphi\}.
\]
Let $p_{R'}: Stab_{O(R)} (A) \to O(R')\cong O(A_2)\circ O(D_4)$ be the restriction map. Then $\ker(p_{R'}) =\la t_A\ra\cong \ZZ_2$  and $Im(p_{R'})$ is an index $2$ subgroup of $O(R')$. Note that the field automorphism $\omega \to \omega^2$ defines an isometry on $R'$ but it does not lift to $O(R)$.

Therefore, $|Stab_{O(R)} (A)| = (|O(R')|\cdot 2)/2 = 2^8\cdot  3^3$ and hence $|O(R)|= |Stab_{O(R)} (A)|\cdot (45) = 2^8\cdot 3^5\cdot 5$.
\end{proof}

\begin{prop}
The isometry group $O(R)$ has the shape $ 6.PSU(4,2).2$.
\end{prop}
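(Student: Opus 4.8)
The plan is to leverage the order computation already in hand, $|O(R)|=2^8\cdot 3^5\cdot 5$ from Lemma \ref{OR}, and to note that this is exactly $|6.PSU(4,2).2|=6\cdot 25920\cdot 2$; since $\Omega_5(3)\cong PSp(4,3)\cong PSU(4,2)$ has order $25920$, it remains only to pin down the group shape. The strategy closely mirrors the treatment of the Coxeter--Todd lattice in Lemma \ref{auto}: I would study the action of $O(R)$ on its discriminant group, extract an orthogonal $\FF_3$-space, and read off the composition factors.

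First I would produce the relevant quadratic space directly from the Eisenstein structure. Using the fixed-point-free order-$3$ isometry $g_1$ of Notation \ref{Q}, which makes $R$ a $\ZZ[\omega]$-lattice with $g_1=\omega$, the quotient $\bar R:=R/(g_1-1)R\cong\FF_3^5$, since $(g_1-1)=(\omega-1)$ has norm $3$; the reduction of the form endows $\bar R$ with a non-degenerate rank-$5$ quadratic form over $\FF_3$. This gives a homomorphism $\rho: O(R)\to O_5(3)$. Exactly as in the $K_{12}$ case of Lemma \ref{auto}, where the scalar $\lambda_\omega$ generates the order-$3$ kernel of the action on $\dg{K_{12}}$, the scalar $g_1$ satisfies $g_1\equiv 1\pmod{(\omega-1)R}$, so $\la g_1\ra\cong\ZZ_3$ lies in $\ker\rho$, whereas $-1$ maps to the central $-1\in O_5(3)$, which is nontrivial and lies outside $SO_5(3)$ because $(-1)^5=-1$.

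Granting that $\ker\rho$ is no larger than $\la g_1\ra$, the order identity $|O(R)|=3\cdot|O_5(3)|$ forces $\rho$ to be onto, so $O(R)/\la g_1\ra\cong O_5(3)$. I would then reorganize the resulting series: $O_5(3)=\la -1\ra\times SO_5(3)$ with $SO_5(3)\cong\Omega_5(3).2\cong PSU(4,2).2$, so the central preimage $\la g_1,-1\ra\cong\ZZ_6$ is normal in $O(R)$ and satisfies $O(R)/\la g_1,-1\ra\cong SO_5(3)\cong PSU(4,2).2$. Reading the three factors from the bottom gives the shape $6.PSU(4,2).2$, where the bottom $6=\la g_1,-1\ra$ combines the order-$3$ scalar kernel with the central $-1$, the middle factor is $\Omega_5(3)\cong PSU(4,2)$, and the top $2$ is the spinor-norm quotient $SO_5(3)/\Omega_5(3)$.

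The main obstacle is the identification of $\ker\rho$ as precisely the order-$3$ scalar group $\la g_1\ra$, equivalently the surjectivity of $\rho$ onto the full $O_5(3)$, together with the verification that the induced form on $\bar R$ is non-degenerate of rank $5$. For the kernel I would argue as in Lemma \ref{auto}, showing that an isometry acting trivially on the discriminant data must preserve the $K_{12}$-type gluing and hence be a scalar; alternatively, surjectivity can be established by checking that the RSSD reflections $t_A$ attached to $g_1$-invariant $AA_2$-sublattices of $R$, supplemented by $-1$ and the conjugation isometry of Remark \ref{CRandRSSD}, already generate $O_5(3)$ modulo scalars, after which $|\ker\rho|=3$ follows from the order count. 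Care is also needed to track $-1$ correctly, since it is the central involution that, fused with $g_1$, produces the bottom factor $6$ rather than contributing to the top.
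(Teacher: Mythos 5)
Your route is genuinely different from the paper's, and its skeleton is sound: the identity $|O(R)|=2^8\cdot 3^5\cdot 5=3\cdot|O_5(3)|$ from Lemma \refpp{OR}, the Eisenstein reduction $R/(g_1-1)R\cong\FF_3^5$ (consistent with the $3$-part $3^5$ of $\dg{R}\cong\dg{Q}$), the observation that the scalars $\pm1,\pm\omega,\pm\omega^2$ map onto $\{\pm1\}\le O_5(3)$ with $-1\notin SO_5(3)$ in odd dimension, and the regrouping $O_5(3)=\la -1\ra\times\Omega_5(3).2$ giving the shape $6.PSU(4,2).2$ are all correct. However, there is a genuine gap at exactly the point you flag: the claim that $\ker\rho=\la g_1\ra$ (equivalently, that $\rho$ is onto $O_5(3)$) is assumed, not proved, and it carries essentially the whole content of the proposition. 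The order count by itself cannot exclude, say, a kernel of order $6$ or $9$ paired with a proper subgroup of $O_5(3)$ as image. Neither of your two suggested repairs is carried out: the appeal to the kernel computation for $K_{12}$ in Lemma \refpp{auto} does not transfer automatically (that kernel statement is itself quoted from Conway--Sloane for a different lattice), and the generation claim for the RSSD reflections together with $-1$ and the anti-automorphism of Remark \refpp{CRandRSSD} would require identifying their images in $O_5(3)$ and a genuine generation argument.

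The paper avoids this issue entirely by exhibiting a subgroup of $O(R)$ of the right order from the outside: it realizes $R=ann_{K_{12}}(A)$ for an $AA_2$-sublattice $A$, notes that $Stab_{O(K_{12})}(A)=C_{O(K_{12})}(t_A)$ acts on $R$ with kernel exactly $\la t_A\ra$, reads off $C_{O(K_{12})}(t_A)\cong 6.PSU(4,2).2\times 2$ from the ATLAS data for $O(K_{12})\cong 6.PSU(4,3).2^2$, and then concludes from $|6.PSU(4,2).2|=2^8\cdot 3^5\cdot 5=|O(R)|$ that this faithful subgroup is all of $O(R)$. In other words, the paper uses the order bound to show a known group is the \emph{whole} isometry group, whereas you use it to try to force surjectivity of a representation whose kernel you have not controlled; the former is logically self-contained given the ATLAS citation, the latter still needs its key lemma. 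If you want to keep your discriminant-form approach, the cleanest fix is to import the paper's subgroup $C_{O(K_{12})}(t_A)/\la t_A\ra$ and check that it already surjects onto $O_5(3)$, which then pins the kernel down to order $3$.
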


\begin{proof}
First, we note that $R=ann_{K_{12}}(A)$ for an $AA_2$ sublattice $A$ of
$K_{12}$. Hence, $Stab_{O(K_{12})}(A) = C_{O(K_{12})} (t_A)$ acts on $R$ with
the kernel $\la t_A\ra$.

Since $O(K_{12})\cong 6. PSU(4,3). 2^2$, by \cite[Page 52,53]{atlas},
$$C_{O(K_{12})} (t_A)\cong 6. PSU(4,2).2\times 2.$$
Hence $C_{O(K_{12})} (t_A)/ \la t_A\ra\cong 6.PSU(4,2).2$ acts faithfully on $R$.

Since $|6. PSU(4,2).2|= 2^8 \cdot 3^5\cdot 5$,  $O(R)$ has the shape $ 6. PSU(4,2).2$ by Lemma \refpp{OR}.
\end{proof}

\begin{thm}\labtt{OQ}
Let $Q\cong \dih{6}{14}$ and $J$ be defined as in \refpp{QJ}. Let $\varphi: O(Q)
\to O(J)$ be the restriction map. Then $Im(\varphi) \cong O(A_2)\circ O(E_6)$
and $Ker(\varphi)\cong Sym_3$. Therefore, $O(Q)$ has the shape
$Sym_3.(O(A_2)\circ O(E_6))$.
\end{thm}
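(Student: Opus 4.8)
The plan is to realize $\varphi$ as the projection of $O(Q)$ onto its action on the orthogonal summand $J$, once I check that $F=M\cap N\cong AA_2$ and $J=ann_Q(F)\cong K_{12}$ are \emph{characteristic} sublattices of $Q$. Since $\det J=3^6$ is odd, $J$ is $2$-adically unimodular, so $Q\otimes\ZZ_2$ splits as $(F\otimes\ZZ_2)\perp(J\otimes\ZZ_2)$ and the whole $2$-part of $\dg Q\cong 2^2\times 3^5$ sits on $F$ (see Notation \ref{Q}). Concretely $F\otimes\QQ$ is the $\QQ$-span of the norm $4$ vectors of $Q$ lying in $2Q^*$; a short local check at $2$ shows these are exactly the six roots of $F$, so this span is isometry-invariant, every $\psi\in O(Q)$ preserves $F$ and $J=Q\cap(F\otimes\QQ)^\perp$, and $\varphi$ is well defined. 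Because $F\otimes\QQ\perp J\otimes\QQ$ spans $Q\otimes\QQ$, each $\psi$ is determined by the pair $(\psi|_F,\psi|_J)\in O(F)\times O(J)$, subject to the single constraint that $\psi|_F\perp\psi|_J$ preserve the order-$3$ glue group $Q/(F\perp J)\cong C_3$.

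I would then compute $Ker(\varphi)$. An isometry trivial on $J$ is a pair $(\psi|_F,1)$, and compatibility with the identity on $\dg J$ forces $\psi|_F$ to act trivially on the $C_3=\dg F[3]\cong\ZZ_3$ carrying the glue. In $O(AA_2)\cong Dih_{12}=W(A_2)\times\langle -1\rangle$ the induced scalar on $\dg F[3]$ is $\mathrm{sgn}(w)\cdot\varepsilon$ on $(w,\varepsilon)$, so the subgroup acting trivially is $\{(w,\varepsilon):\varepsilon=\mathrm{sgn}(w)\}\cong Sym_3$; conversely each such element preserves the glue and extends by the identity on $J$ to an element of $O(Q)$. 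Hence $Ker(\varphi)\cong Sym_3$.

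For the image I would first show $Im(\varphi)=Stab_{O(J)}(C_3)$, where now $C_3\le\dg J$ is the glue subgroup: $\psi|_J$ extends to $O(Q)$ exactly when it stabilizes $C_3$, since both scalars $\pm1$ on $\dg F[3]$ are realized in $O(AA_2)$ and can be matched to the scalar by which $\psi|_J$ acts on $C_3$. Next I pin down the isometry type of $C_3$ in the $\FF_3$-form $\dg J$: taking the glue vector to be a norm-$4$ root $q$ of $M$ with orthogonal components $P_F(q)$ (the $AA_2$-minuscule, norm $4/3$) and $P_J(q)\in(M\cap J)^*=(EE_6)^*$ (the $EE_6$-minuscule, norm $8/3$), the glue element $P_J(q)+J$ has norm $8/3\equiv 2/3\not\equiv 0\pmod{2\ZZ}$, so $C_3$ is a nonsingular $1$-space of the minus-type space $\dg J\cong\dg{K_{12}}$ (cf.\ the discussion preceding Lemma \ref{auto}). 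By Lemma \ref{auto}, $O(J)=O(K_{12})$ induces $O^-(6,3)$ on $\dg J$ with an order-$3$ kernel of scalars (Remark \ref{lambda}); Witt's theorem gives $Stab_{O^-(6,3)}(C_3)\cong O_1(3)\times O_5(3)$, and pulling back through the kernel yields $|Im(\varphi)|=3\cdot 2\cdot|O_5(3)|=622080$. To name this group I would use $O_5(3)\cong GO_5(3)\cong W(E_6)\times\langle-1\rangle\cong O(EE_6)$, recording the action in the $EE_6$-direction, and fold in the action on $F\cong AA_2$ together with the order-$3$ scalars to obtain the central product $O(A_2)\circ O(E_6)$, of matching order $622080$.

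Assembling, the exact sequence $1\to Sym_3\to O(Q)\xrightarrow{\varphi}O(A_2)\circ O(E_6)\to 1$ gives $O(Q)\cong Sym_3.(O(A_2)\circ O(E_6))$. The main obstacle is this image computation: one must correctly determine the isometry type of the glue $1$-space $C_3$ inside the $\FF_3$-form $\dg{K_{12}}$ in order to apply Witt's theorem, and then reconcile the resulting extension $C_3.(O_1(3)\times GO_5(3))$ coming from the discriminant form with the concrete central-product description $O(A_2)\circ O(E_6)$ in terms of the sublattices $F\cong AA_2$ and $M\cap J\cong EE_6$ — in particular tracking how the order-$3$ kernel of $O(K_{12})\to O^-(6,3)$ supplies the rotation part $W(A_2)$ of the $O(A_2)$ factor and how the single negation $-1$ realizes the common center of the two factors.
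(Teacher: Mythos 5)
Your strategy is sound and, for the image, genuinely different from the paper's. The paper never touches the discriminant form of $J\cong K_{12}$ in this proof: using \refpp{EE8inQ} it notes that $O(Q)$ permutes the three $EE_8$-sublattices $g^iM$ of $Q$ and hence stabilizes $(M\cap J)+(gM\cap J)\cong A_2\otimes E_6$ inside $J$, and then it imports Theorem \refpp{CDdih6} (a Leech-lattice centralizer computation) to see that the full $O(E_6)$ is realized on each $g^iM\cap J$ while $\la t_M, t_{gM}\ra$ permutes the three factors; together these give $Im(\varphi)=O(A_2\otimes E_6)=O(A_2)\circ O(E_6)$. Your route --- $Im(\varphi)=Stab_{O(J)}(C_3)$ for the glue line $C_3\le \dg{J}$, anisotropy of that line because the $J$-component of a glue vector has norm $8/3\equiv 2/3 \pmod{2\ZZ}$, then Witt's theorem in $O^-(6,3)$ and the order-$3$ scalar kernel --- stays internal to $Q$ and correctly yields $|Im(\varphi)|=3\cdot 2\cdot|O_5(3)|=622080$. (Your $2$-adic argument that $F$ and $J$ are characteristic is also fine, and is more explicit than the paper, which leans on \refpp{EE8inQ}.)

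Two things need attention. The smaller one: $Weyl(A_2)$ acts trivially on the $3$-part of $\dg{F}$, because $w\lambda-\lambda$ lies in the root lattice for every Weyl group element $w$ and every weight $\lambda$; so the scalar induced by $(w,\varepsilon)$ on $\dg{F}[3]$ is $\varepsilon$, not $\mathrm{sgn}(w)\cdot\varepsilon$, and the kernel is $Weyl(A_2)\times\{1\}$ rather than your diagonal subgroup $\{(w,\mathrm{sgn}(w))\}$. This does not affect the statement ($Ker(\varphi)\cong Sym_3$ either way), and the corrected fact is exactly what the paper uses (``$Weyl(A_2)$ fixes all cosets in $2AA_2^*/AA_2$''). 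The larger one is the gap you flag yourself: equality of orders does not identify $Stab_{O(J)}(C_3)$, a priori only an extension of shape $3.(2\times O_5(3))$, with the central product $O(A_2)\circ O(E_6)$; as written, the isomorphism type of the image is asserted rather than proved. The cheapest repair is to borrow the paper's observation: $Im(\varphi)$ stabilizes $(M\cap J)+(gM\cap J)\cong A_2\otimes E_6$, whose minimal vectors are pure tensors, so $Im(\varphi)$ embeds in $O(A_2\otimes E_6)\cong O(A_2)\circ O(E_6)$ as in \refpp{ORp}; your order count then forces this embedding to be an equality. With that step added, your argument becomes a complete proof that is largely independent of the Leech-lattice machinery the paper invokes.
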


\begin{proof}
Let $A$ and $J$ be defined as in \refpp{QJ}.  Then $O(Q)$ stabilizes both $A$ and $J$.
Hence, by Lemma \refpp{EE8inQ}, $O(Q)$  stabilizes the set $\{ M\cap J, gM\cap
J, g^{2}M\cap J\}$ and the sublattice $M\cap J +gM \cap J\cong A_2\otimes
E_6$. Recall that $M\cap J \cong EE_6$.

By Theorem \refpp{CDdih6}, $O(Q)$ acts as the full isometry group $O(E_6)$ on
the sublattices $M\cap J$, $gM\cap J $ and $g^2 M \cap J$, respectively. On the
other hand, $\la t_M, t_{gM}\ra\cong Dih_6$ acts  as permutations on the set
$\{ M\cap J, gM\cap J, g^{2}M\cap J\}$. Thus, we have $Im(\varphi) \cong
O(A_2\otimes E_6)=O(A_2)\circ O(E_6)$.

Clearly, $Ker(\varphi)$ can be viewed as a subgroup of $O(A)\cong \la t_A\ra.
Weyl(A_2)$.   Since $ [Q: A\perp J]=3$,  $A$ is not an RSSD in $Q$ and thus
$t_A$ is not an isometry of $Q$. Hence we have $Ker(\varphi)< Weyl(A_2)$.

Let $p_J: Q\to \dg{J}$ and $p_A: Q \to \dg{A}\cong 2^2\cdot 3$ be the natural
maps. Let $\xi: p_A(Q) \to p_J(Q)$ be the gluing map from $A\perp J$ to $Q$.
Since $O(Q)$ stabilizes $A\perp J$, we have
\[
O(Q)=\{ y\in O(A)\times O(J)\mid y
\text{ preserves the gluing map, } i.e., y \xi
=\xi y\}.
\]
Thus $y\in Ker(\varphi)$ if and only if $y\in O(A)\times O(J)$ acts trivially on $J$
and $y \xi =\xi y$.  Therefore, we have $Ker(\varphi)= \{ y\in O(A)\mid  \xi =\xi y
\}$.

By the discussion in Case 1 of Appendix A, we know that $p_A(Q)= 2A^*/A$ is a
subgroup of order $3$. Since $Weyl(A_2)$ fixes all cosets in $2AA_2^*/AA_2$,
we have  $Ker(\varphi) \cong Weyl(A_2) \cong Sym_3$.
\end{proof}

\begin{rem} \labtt{rssdA}
We note that $\ZZ\a$  is an RSSD sublattice
in $Q$ for any norm $4$ vector $\a\in A$.
The reason  is as follows:

Let $\a\in A$ with $(\a,\a)=4$.   We want to show that  $2Q\le ann_Q(\a) + \ZZ \a$. That is equivalent to $(\a, Q)\le 2\ZZ$.
First we notice that  the index $|Q:J+A|=3$ since $\dg J \cong 3^6$, $\dg A\cong 2^2\times 3$ and  $det(Q)=2^23^5$. Moreover, we have $(\a, J+A)\leq 2\ZZ$ since $A\cong AA_2$ is doubly even.   Also, there is an  integer $r$ so that $(\a,Q)=r\ZZ$.   Since $3Q\le J+A$, $3r\ZZ \le 2\ZZ$, whence $r$ is even.   We conclude that
$(\a, Q ) \le 2\ZZ$.
\end{rem}

\section{6A-triples}

In this section, we consider a triple $(x,y,z)$ in $\MM$ such that  $x,y\in 2A$, $xy\in 6A$ and $z\in 2B\cap C_\MM(x,y)$.
We shall study the orbits of such triples under the action of $\MM$.

\begin{nota}\labtt{sa}
Let $\SA=\{ (x,y,z)\mid x, y\in 2A, xy\in 6A, z\in
2B \cap C_{\MM}(x,y)\}$. Note that the  Monster $\MM$ acts on $\SA$ by conjugation.
\end{nota}

Take $(x,y,z)\in \SA$. Then $z\in 2B$ and we may again assume $z$ acts as $1$ on $V_\L^+$ and as $-1$ on $V_\L^{T,+}$ by conjugation. Moreover, $x=\tau_e$ and $y=\tau_f$ for some \cvcch $e$ and $f$ in $V_\L^+$, by the Miyamoto bijection \cite{Ho,M4} . By our assumption, $xy\in 6A$ and thus $(xy)^3\in 2A$. There are two cases:

1. $(xy)^3\in O_2(C_\MM(z))\cong 2^{1+24}$;

2.  $(xy)^3\notin O_2(C_\MM(z))$.

\medskip

\noindent \textbf{Case 1 $(6A.1)$:}  $(xy)^3\in O_2(C_\MM(z))\cong 2^{1+24}$.

In this case, $\xi\circ \mu(xy)$ has order $3$ in $O(\L)/\{\pm1\}$. Thus, by the same arguments as in \refpp{xyztodih6a} and \refpp{dih6b}, we may assume $e= e_M$ and $f=\varphi_b(e_N)$ for some $EE_8$-pair $(M,N)$ and $b\in N^*$ such that $M+N\cong \dih{6}{14}$. Since $xy\in 6A$, we have $\la e,f \ra = \frac{5}{2^{10}}$ and hence $\la b, M\cap N\ra \notin 2\ZZ$ by \refpp{inner}.  Then $b=\frac{1}2\a \mod 2N$ for some $\a\in M\cap N(4)$.

\begin{thm}\labtt{6A1orbits}
Let $(x,y,z)\in \SA$. Suppose $(xy)^3\in O_2(C_\MM(z))$. Then $(x,y,z)$ is  conjugate to $(\tau_{e_M}, \tau_{\varphi_{\a/2}(e_N)}, z)$ for some $EE_8$-pair $(M,N)$  such that $M+N=\dih{6}{14}$ and $\a\in M\cap N(4)$.
\end{thm}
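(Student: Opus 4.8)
The plan is to follow the two-step reduction of Proposition \ref{dih6b}, but to run it along the second (the $\frac{5}{2^{10}}$) branch of Lemma \ref{inner}. First I would invoke the discussion preceding the statement: since $(xy)^3\in O_2(C_\MM(z))$ the image $\xi\circ\mu(xy)$ has order $3$, so exactly as in Lemma \ref{xyztodih6a} neither $e$ nor $f$ can be of $AA_1$-type and both are of $EE_8$-type, and exactly as in Proposition \ref{dih6b} I may conjugate inside $C_\MM(z)$ so that $e=e_M$ and $f=\varphi_b(e_N)$ for an $EE_8$-pair $(M,N)$ with $M+N\cong\dih{6}{14}$ and some $b\in N^*$. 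The hypothesis $xy\in 6A$ forces $\la e,f\ra=\frac{5}{2^{10}}$, so Lemma \ref{inner} gives $\la b,M\cap N\ra\notin 2\ZZ$; this inequality is the only new input.

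The substantive step is to refine $b$, recalling that $\varphi_b$ depends only on the class of $b$ modulo $N$. Writing $N^*=\frac12 N$ and $b=\frac12\eta$ with $\eta\in N$, I would split $\eta$ according to $N\supseteq (M\cap N)\perp ann_N(M\cap N)$ and remove its $ann_N(M\cap N)$-part by the mechanism of Proposition \ref{dih6b}: with $h=t_Nt_M$ of order $3$ and $h(N)=M$, a vector $\a_0=-h(\eta_\perp)\in M$ satisfies $P_N(\a_0)=\frac12\eta_\perp$ and $\la\a_0,M\ra\subseteq 2\ZZ$ (because $M\cong\EE$ is even, so $\varphi_{\a_0}$ fixes $e_M$). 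The automorphism of $V^\natural$ lying in $C_\MM(z)$ that restricts to $\varphi_{\a_0}$ on $V_\L^+$ — which exists by the exact sequences of Notation \ref{zmuxi} — then fixes $\tau_{e_M}$ and carries $\tau_{\varphi_b(e_N)}$ to $\tau_{\varphi_{b'}(e_N)}$ with $b'\equiv\frac12\a\pmod N$ for some $\a\in F:=M\cap N\cong AA_2$.

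It remains to place $\a$ in $F(4)=M\cap N(4)$. Reducing $\a$ modulo $2F$ does not change $\varphi_b$, since $2F\subset F\subset N$. The surviving constraint $\la b,F\ra\notin 2\ZZ$, that is $\la\a,F\ra\not\subseteq 4\ZZ$, excludes $\a\in 2F$ (as $\la 2F,F\ra\subseteq 4\ZZ$), so $\a$ lies in one of the three nonzero classes of $F/2F\cong 2^2$; and the six roots of $AA_2$ surject onto exactly those three classes, so I may take the representative $\a$ in $F(4)$. This yields the asserted normal form $(x,y,z)\sim(\tau_{e_M},\tau_{\varphi_{\a/2}(e_N)},z)$ with $\a\in M\cap N(4)$.

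I expect the main obstacle to be the bookkeeping in the refinement: confirming that the $ann_N(M\cap N)$-component is genuinely removable by a conjugation fixing both $\tau_{e_M}$ and $z$ — this is where the order-$3$ isometry $h=t_Nt_M$ and the projection identity $P_N(\a_0)=\frac12\eta_\perp$ borrowed from Proposition \ref{dih6b} do the work — together with the $F/2F$-coset count forcing $\a$ to be a root. Finally, although the statement only asserts existence of the normal form, one sees these triples form a single $\MM$-orbit (the orbit $6A.1$ of Main Theorem 2) by combining uniqueness of $(M,N)$ up to $O(\L)$ from Theorem \ref{unidih6} with the transitivity of $C_{O(\L)}(t_M,t_N)$ on $F(4)$, where by Theorem \ref{CDdih6} it acts as the full $O(AA_2)$ on $F$, lifted through the surjection of Proposition \ref{2E8}.
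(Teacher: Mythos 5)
Your proposal is correct and takes essentially the same route as the paper: the paper's proof of this theorem is precisely the ``Case 1 $(6A.1)$'' discussion preceding it, which reruns the reductions of Lemma \ref{xyztodih6a} and Proposition \ref{dih6b}, uses Lemma \ref{inner} to land on the $\frac{5}{2^{10}}$ branch (so $\la b, M\cap N\ra\notin 2\ZZ$), and concludes $b\equiv\frac{1}{2}\a$ for some $\a\in M\cap N(4)$. Your only addition is to spell out that last normalization --- removing the $ann_N(M\cap N)$-component of $b$ via the $h=t_Nt_M$ projection trick and then the $F/2F$ coset count placing $\a$ among the norm-$4$ vectors --- which the paper asserts without detail.
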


\begin{prop}\labtt{DIH122}
Let $(M, N)$ be an $\EE$-pair in $\Lambda$ such that $M+N\cong \dih{6}{14}$.
Let $\a \in N(4)$ such that $\la \frac{1}2\a , M\cap N\ra \notin 2\ZZ$.
Then
\[
\xi\circ \mu:  C_{Aut(V^\natural)}(\tau_{e_{M}},\tau_{\varphi_{\frac{\a}2(e_N)}},z)
\to C_{Stab_{O(\L)}(\ZZ\a)} (t_M, t_N)/ \la \pm 1\ra
\]
is surjective.
\end{prop}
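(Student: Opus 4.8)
The plan is to follow the template of Proposition \refpp{2E8} (which handled the $\dih{6}{14}$-case with $f=e_N$), making the adjustments forced by the facts that here $f=\varphi_{\a/2}(e_N)$, that the preserved inner product is now $\la e_M,f\ra=\frac{5}{2^{10}}$ rather than $\frac{13}{2^{10}}$, and that the codomain is the smaller group $C_{Stab_{O(\L)}(\ZZ\a)}(t_M,t_N)/\la\pm1\ra$. First I would take an element $s$ of the target and, invoking Lemma \refpp{f10} for each of $M$ and $N$, lift it to some $g\in C_{Aut(V^\natural)}(z)$ stabilizing both $V_M^+$ and $V_N^+$ with $\xi\circ\mu(g)=s$; this is legitimate because $s$ stabilizes $M$ and $N$, being a centralizer of $t_M$ and $t_N$. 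By the classification of \cvcch in $V_M^+$ \cite{O+102,LS} together with the projection argument of Proposition \refpp{dih6b} (valid since $\det M$ is prime to $\det\L$ and $M$ is a direct summand), I may correct $g$ by an element of $Hom(\L,\ZZ_2)$ so that $g(e_M)=e_M$ without disturbing $\xi\circ\mu(g)=s$.

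Next I would compute $g(f)$. Writing the action of $g$ on $V_N^+$ as a lift $\hat h$ of $h:=s|_N\in O(N)$ composed with a translation $\varphi_\delta$, $\delta\in N^*$, and using that $\hat h$ fixes the canonical vector $e_N$ and carries $\varphi_{\a/2}$ to $\varphi_{h(\a)/2}$, one obtains $g(f)=\varphi_{h(\a)/2+\delta}(e_N)$. This is the step where the hypothesis $s\in Stab_{O(\L)}(\ZZ\a)$ enters: it forces $h(\a)=\pm\a$, and since $\varphi_{-\a/2}=\varphi_{\a/2}$ we get $\varphi_{h(\a)/2}(e_N)=f$, so that $g(f)=\varphi_\delta(f)$ differs from $f$ only by the translation $\varphi_\delta$. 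As a consistency check, $\la e_M,g(f)\ra=\la e_M,f\ra=\frac{5}{2^{10}}$, so by Lemma \refpp{inner} the class of $g(f)$ lies in the ``odd'' coset $\la\,\cdot\,,M\cap N\ra\notin 2\ZZ$, exactly as for $f$.

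It then remains to absorb $\varphi_\delta$ by an element that fixes $e_M$ and lies in the kernel of $\xi\circ\mu$. Since both $h(\a)/2$ and $\a/2$ pair oddly with $M\cap N$ (using $h(\a)=\pm\a$ and the hypothesis $\la\frac12\a,M\cap N\ra\notin 2\ZZ$), the residual translation $\delta$ pairs \emph{evenly}, i.e.\ $\la\delta,M\cap N\ra\in 2\ZZ$. I would then reapply the projection argument of Proposition \refpp{dih6b} (using $t_Nt_M$ of order $3$ with $(t_Nt_M)(N)=M$, the reduction $P_{M\cap N}(\delta)\in 2(M\cap N)^*$, and passage to $ann_N(M\cap N)$) to produce $w\in M$ with $P_N(w)\equiv\delta$ in $N^*$. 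The automorphism $\varphi_w$ lies in $Hom(\L,\ZZ_2)=\ker\xi$, fixes $e_M$ because $w\in M\cong\EE$ forces $\la w,M\ra\subseteq 2\ZZ$, and acts on $V_N^+$ as $\varphi_{P_N(w)}=\varphi_\delta$. Hence the corrected element $g':=\varphi_w\,g$ satisfies $g'(e_M)=e_M$ and $g'(f)=f$, so $g'\in C_{Aut(V^\natural)}(\tau_{e_M},\tau_f,z)$ with $\xi\circ\mu(g')=s$, giving surjectivity.

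The main obstacle is the interplay in the second step: one must verify that stabilizing $\ZZ\a$ is precisely what lets the isometric part $\hat h$ be absorbed into a correctable translation. Without it, $\hat h$ could carry $f$ to $\varphi_{\a'/2}(e_N)$ for a norm $4$ vector $\a'\ne\pm\a$ of $N$, yielding an inequivalent \cvcch not reachable by any $P_N(M)$-translation; this is the conceptual reason the codomain must be cut down from $C_{O(\L)}(t_M,t_N)$ to $C_{Stab_{O(\L)}(\ZZ\a)}(t_M,t_N)$, and it is where the present argument genuinely departs from Proposition \refpp{2E8}. Once that reduction is in place, the surjectivity of $P_N|_M$ onto the even coset, already established inside Proposition \refpp{dih6b}, finishes the proof routinely.
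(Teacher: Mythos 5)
Your argument is correct, but it takes a genuinely different route from the paper's. The paper does not redo the lifting argument at all: it observes that, since $\la e_M,\varphi_{\a/2}(e_N)\ra=\frac{5}{2^{10}}$, the involutions $\tau_{e_M}$ and $\tau_{\varphi_{\a/2}(e_N)}$ generate a dihedral group of order $12$ with $(\tau_{e_M}\tau_{\varphi_{\a/2}(e_N)})^3=\tau_{\omega^{\pm}(\a)}$ by \cite{LYY2}, so that $\la \tau_{e_M},\tau_{\varphi_{\a/2}(e_N)},z\ra=\la \tau_{e_M},\tau_{e_N},\tau_{\omega^{+}(\a)},z\ra$; the proposition then follows by combining the surjectivity already proved in Proposition \refpp{2E8} with Theorem \refpp{aa1type}, which is what brings in $Stab_{O(\L)}(\ZZ\a)$. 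Your proof instead reruns the correction procedure of Proposition \refpp{2E8} directly on the twisted vector $\varphi_{\a/2}(e_N)$, and the computation is sound: the conjugation formula $g\varphi_{\a/2}g^{-1}=\varphi_{h(\a)/2}$ together with $h(\a)=\pm\a$ and $\varphi_{-\a/2}=\varphi_{\a/2}$ (valid because $\a\in N=2N^*$) reduces $g(f)$ to $\varphi_\delta(f)$, Lemma \refpp{inner} forces $\la\delta,M\cap N\ra\in 2\ZZ$, and the projection trick of Proposition \refpp{dih6b} kills $\varphi_\delta$ by an element of $\ker(\xi\circ\mu)$ fixing $e_M$ (strictly you should lift $\varphi_w$ into $C_{Aut(V^\natural)}(z)$, e.g.\ as $\tau_{\omega^+(w)}$, exactly as the paper does in \refpp{2E8}, but that is cosmetic). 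The paper's route buys brevity and makes transparent why the codomain is cut down to $Stab_{O(\L)}(\ZZ\a)$ (every element of the domain centralizes the central $\tau_{\omega^{\pm}(\a)}$); your route buys self-containedness and isolates the one step where $h(\a)=\pm\a$ is actually used, which is exactly the right conceptual point and is what your final paragraph correctly emphasizes.
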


\begin{proof}
By Lemma \refpp{inner}, $\la e_M, \varphi_{\frac{\a}2} (e_N)\ra =\frac{5}{2^{10}}$ and $ \tau_{e_{M}},\tau_{\varphi_{\frac{\a}2 (e_N)}}$ generate a dihedral group of order $12$ in $Aut(V^\natural)$. In this case, $(\tau_{e_{M}}\tau_{\varphi_{\frac{\a}2 (e_N)}})^3 = \tau_{\omega^+(\a)} $ or $\tau_{\omega^-(\a)}$ \cite{LYY2}. Thus, the subgroup generated by $ \tau_{e_{M}},\tau_{\varphi_{\frac{\a}2 (e_N)}},z$ is the same as the group generated by $ \tau_{e_{M}},\tau_{e_N}, \tau_{\omega^+(\a)}$ and $z$. Recall that $\tau_{\omega^+(\a)} z =\tau_{\omega^-(\a)}$ \cite{LS}.
The result now follows by Lemma \refpp{aa1type} and Proposition \refpp{2E8}.
\end{proof}

\medskip

\noindent \textbf{Case 2 $(6A.2)$:}  $(xy)^3\notin O_2(C_\MM(z))$.

Then $\xi\circ \mu(xy)$ has order $6$ in $O(\L)/\{\pm1\}$. By the analysis in \cite{GL}, $e=\varphi_a(e_M)$ and $f=\varphi_b(e_N)$
 for some $EE_8$-pair $(M,N)$ in $\L$ such that $M+N\cong \dih{12}{16}$ and $a\in M^*$, $b\in N^*$

As in Proposition \ref{dih6b}, we may also assume $e=e_M$, up to conjugation.  Since $M\cap N=0$ and $\dih{12}{16}$ is a direct summand of $\L$, there is a $\b\in \L$ such that  $P_{N}(\b)=b \mod 2N $ and   $\la P_{M}(\b), M\ra \in 2\ZZ$ by \refpp{M+N}. Then, $\varphi_\b(f)=\varphi_\b (\varphi_b(e_N)) =e_N$ and $\varphi_\b(e)=\varphi_\b(e_M)= e_M$.

Thus, up to conjugation, $e= e_M$ and $f=e_N$ for some $EE_8$-pair $(M,N)$ such that $M+N\cong \dih{12}{16}$.

\begin{thm}\labtt{6A2orbits}
Let $(x,y,z)\in \SA$. Suppose $(xy)^3\notin O_2(C_\MM(z))$. Then $(x,y,z)$ is  conjugate to $(\tau_{e_M}, \tau_{e_N}, z)$ for some $EE_8$-pair $(M,N)$ such that $M+N\cong \dih{12}{16}$.
\end{thm}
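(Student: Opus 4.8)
The plan is to transport the statement from the Monster down to the Leech lattice, exactly as in the $3A$ case, and then perform a two-stage normalization of the associated pair of conformal vectors. First I would fix the standard model for $z$: up to conjugacy we may assume $z$ acts as $1$ on $V_\L^+$ and as $-1$ on $V_\L^{T,+}$, so that $(V^\natural)^z=V_\L^+$. By the Miyamoto bijection \cite{Ho,M4} write $x=\tau_e$ and $y=\tau_f$; since $z$ centralizes $x$ and $y$, the conformal vectors $e,f$ are fixed by $z$ and therefore lie in $V_\L^+$. Because $xy$ has order $6$, neither $e$ nor $f$ can be of $AA_1$-type: the computation in Lemma \ref{xyztodih6a} shows that an $AA_1$-type factor forces $\tau_e\tau_f$ to have order $1$, $2$ or $4$. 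Hence both are of $\EE$-type, so $e=\varphi_a(e_M)$ and $f=\varphi_b(e_N)$ for $EE_8$-sublattices $M,N\subset\L$ and $a\in M^*$, $b\in N^*$ (Notation \ref{def:phisubx}).

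Next I would pin down the lattice type. Since $(xy)^2$ has order $3$ and cannot lie in the $2$-group $O_2(C_\MM(z))$, while by hypothesis $(xy)^3$ does not lie in it either, the image $g:=\xi\circ\mu(xy)=t_Mt_N$ in $O(\L)/\la\pm1\ra$ has order exactly $6$. Thus $\la t_M,t_N\ra\cong Dih_{12}$ and the pair spans a lattice of rank $16$. By the classification of $\EE$-pairs in \cite{GL}, this forces $M+N\cong\dih{12}{16}$; in particular $M\cap N=0$ and $M+N$ is a direct summand of $\L$.

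The remaining and technically delicate part is the normalization. As in Proposition \ref{dih6b}, using that $M$ is primitive in the unimodular lattice $\L$ (so that $P_M(\L)=M^*$), I would first conjugate by a suitable $\varphi_{\b_0}\in\mathrm{Hom}(\L,\ZZ_2)$ — an element of $C_\MM(z)$, so $z$ is preserved — to arrange $e=e_M$, at the cost of replacing $f$ by some $\varphi_b(e_N)$. The task is then to kill the twist on $f$ \emph{without disturbing} $e_M$: I need a single $\b\in\L$ with $P_N(\b)=b\ \mathrm{mod}\ 2N$, so that $\varphi_\b$ sends $\varphi_b(e_N)$ to $e_N$, and simultaneously $\la P_M(\b),M\ra\in2\ZZ$, so that $\varphi_\b$ fixes $e_M$. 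Conjugating by this $\varphi_\b$ then gives $(x,y,z)$ conjugate to $(\tau_{e_M},\tau_{e_N},z)$, as required.

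The main obstacle is precisely the existence of one $\b$ satisfying both projection conditions at once. This is where the structure of $\dih{12}{16}$ is essential: because $M\cap N=0$ and $M+N$ is a direct summand of $\L$, the prescribed coset condition on $P_N(\b)$ and the parity condition on $P_M(\b)$ are independent enough along the glue of $M+N$, and their simultaneous solvability is the content of the projection analysis \refpp{M+N}. I would expect the verification of that independence — rather than any Monster-theoretic input — to be the step requiring genuine care, in contrast to the $3A$ case of Proposition \ref{dih6b}, where the overlap $M\cap N\cong AA_2$ had to be controlled via the order-$3$ isometry $t_Nt_M$.
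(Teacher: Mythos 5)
Your proposal is correct and follows essentially the same route as the paper: reduce to conformal vectors in $V_\L^+$ via the Miyamoto bijection, rule out $AA_1$-type, read off $M+N\cong\dih{12}{16}$ from the order of $t_Mt_N$ and the classification in \cite{GL}, normalize $e=e_M$ as in Proposition \refpp{dih6b}, and then remove the twist on $f$ by a single $\varphi_\b$ whose existence is exactly Lemma \refpp{M+N} (using $M\cap N=0$ and that $M+N$ is a direct summand of the unimodular lattice $\L$). The step you flag as delicate is precisely the one the paper delegates to that appendix lemma, so there is no gap.
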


\begin{prop}\labtt{2E8DIH12}
Let $(M, N)$ be an $\EE$-pair in $\Lambda$ such that $M+N\cong \dih{12}{16}$. Then
\[
\xi\circ \mu:  C_{Aut(V^\natural)}(\tau_{e_{M}},\tau_{e_N},z)
\to C_{O(\L)}(t_M, t_N)/ \la \pm 1\ra
\]
is surjective.
\end{prop}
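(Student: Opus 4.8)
The plan is to run the proof of Proposition \refpp{2E8} essentially verbatim, feeding in the $\dih{12}{16}$ data in place of the $\dih{6}{14}$ data; the one structural change is that now $M\cap N=0$. First I would fix an arbitrary $s\in C_{O(\L)}(t_M,t_N)/\la\pm1\ra$ and apply Lemma \refpp{f10} (to $M$ and to $N$) to lift $s$ to some $g\in Aut(V^\natural)$ that stabilizes both $V_M^+$ and $V_N^+$ and satisfies $\xi\circ\mu(g)=s$. By the classification of \cvcch in $V_M^+$ \cite{O+102,LS} we get $g(e_M)=\varphi_a(e_M)$ for some $a\in M^*$. Since $(det(M),det(\L))=1$ and $M$ is a direct summand of $\L$, the projection argument of Proposition \refpp{dih6b} yields $\b_0\in\L$ with $\varphi_{\b_0}(\varphi_a(e_M))=e_M$; replacing $g$ by $\tilde g_0\,g$ for a lift $\tilde g_0$ of $\varphi_{\b_0}$ (which lies in $\ker\xi$, so does not change $s$), I may assume $g(e_M)=e_M$. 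As $g$ still stabilizes $V_N^+$, this forces $g(e_N)=\varphi_b(e_N)$ for some $b\in N^*$.

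The inner-product step is exactly where the $\dih{12}{16}$ case diverges from, and is in fact simpler than, the $\dih{6}{14}$ case. Because $M\cap N=0$, no norm $4$ vector of $M$ is the negative of a norm $4$ vector of $N$, so every $e^\a$--cross term in $\la e_M,\varphi_b(e_N)\ra$ vanishes and the pairing collapses to the Virasoro contribution $\frac{1}{256}\la\omega_M,\omega_N\ra$, which does not depend on $b$. Thus, in contrast with the role played by Lemma \refpp{inner} in Proposition \refpp{2E8}, the identity $\la g(e_M),g(e_N)\ra=\la e_M,e_N\ra$ imposes no constraint on $b$, and I must neutralize an arbitrary $b\in N^*$.

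To do so I would invoke \refpp{M+N}, precisely as in the analysis preceding Theorem \refpp{6A2orbits}: since $M+N\cong\dih{12}{16}$ is a direct summand of $\L$ and $M\cap N=0$, there is $\b\in\L$ with $P_N(\b)\equiv b\pmod{2N}$ and $\la P_M(\b),M\ra\in2\ZZ$. The first condition gives $\varphi_\b(g(e_N))=e_N$, and the second gives $\varphi_\b(e_M)=e_M$ (using that $M$ is generated by its norm $4$ vectors). As $\varphi_\b\in\mathrm{Hom}(\L,\ZZ_2)=\ker\xi$ and $\mu\colon C_{Aut(V^\natural)}(z)\to Aut(V_\L^+)$ is surjective, $\varphi_\b$ lifts to some $\tilde g\in C_{Aut(V^\natural)}(z)$ acting as $\varphi_\b$ on $V_\L^+$ (equivalently, realized by a Miyamoto involution of $AA_1$-type as in Remark \refpp{aa1inv} when $\b$ can be taken of norm $4$). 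Then $g':=\tilde g\,g$ fixes both $e_M$ and $e_N$, so $g'\in C_{Aut(V^\natural)}(\tau_{e_M},\tau_{e_N},z)$, while $\xi\circ\mu(g')=\xi(\varphi_\b)\,s=s$, establishing surjectivity.

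The main obstacle is the single step citing \refpp{M+N}: one must verify that for the specific $\dih{12}{16}$ gluing inside $\L$ the two projection conditions on $\b$ are simultaneously solvable, i.e.\ that one can adjust the $N$-projection of a Leech vector to hit an arbitrary coset of $2N$ while keeping the $M$-projection even. This is where the explicit embedding of Appendix A, together with the direct-summand and determinant-coprimality properties of $M$, $N$ and $M+N$, is indispensable. Once that independence of the two projections is in hand, everything else is a transcription of Proposition \refpp{2E8}, with the bonus that the vanishing of the cross terms renders Lemma \refpp{inner} unnecessary here.
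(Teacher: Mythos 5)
Your proposal is correct and follows essentially the same route as the paper: lift $s$ via Lemma \refpp{f10}, normalize $g(e_M)=e_M$ as in Proposition \refpp{2E8}, and then use Lemma \refpp{M+N} (which applies directly since $M\cap N=0$ and $M+N$ is a direct summand of $\L$) to produce $\b\in\L$ killing the residual $\varphi_b$ on $e_N$ without disturbing $e_M$, finally lifting $\varphi_\b$ to $C_{Aut(V^\natural)}(z)$. Your observation that the inner-product constraint of Lemma \refpp{inner} becomes vacuous here is accurate and matches the paper's silent omission of that step.
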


\pf Let $s\in C_{O(\L)}(t_M, t_N)/ \la \pm 1\ra$. Then by Lemma \refpp{f10}, there
is $g\in Aut(V^\natural)$ such that $g$ stabilizes both $V_M^+$ and $V_N^+$
and $\xi\circ \mu (g)=s$.

As in Proposition \ref{2E8}, we may assume $g(e_M)=e_M$ and $g(e_N)= \varphi_b(e_N)$ for some
$b\in N^*$.  Since $M\cap N=0$ and $\dih{12}{16}$ is a direct summand of $\L$, there is a $\b\in \L$ such that  $P_{N}(\b)=b \mod 2N $ and   $\la P_{M}(\b), M\ra \in 2\ZZ$ by \refpp{M+N}. Then, $\varphi_\b (g(e_N)) =\varphi_\b(\varphi_b(e_N))=e_N$ and $\varphi_\b(g(e_M))= \varphi_\b(e_M)=e_M$

Therefore,  $g'= \tau_{\omega^+(\b)} g \in C_{Aut(V^\natural)}(\tau_{e_{M}},\tau_{e_N},z)$ and
$\xi\circ \mu (g')=s$. \qed

\subsection{Case $6A.1$ and $M+N\cong \dih{6}{14}$}

First we consider the case $6A.1$. In this case, $M+N\cong \dih{6}{14}$ and $e=e_M$ and $f=\varphi_{\frac{\alpha}2}(e_N)$ for some $\alpha \in (M\cap N) (4)$.

\begin{prop}\labtt{dih62}
Let $M$ and $N$ be defined as in Notation \refpp{Q} and let $\a\in M\cap N$ be a norm 4 vector. Then $C_{Stab_{O(\L)}(\ZZ\a)} ( t_M, t_N) \cong 2\times PSU(4,2).2$.
\end{prop}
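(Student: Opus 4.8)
The plan is to realize the group as $Stab_G(\ZZ\a)$, where $G:=C_{O(\L)}(D)$ and $D=\la t_M,t_N\ra$, and to show that fixing the line $\ZZ\a$ inside the copy $F=M\cap N\cong AA_2$ exactly strips off the order‑$3$ factor of $G$, leaving the isometry group of the $EE_6$‑part. First I would record the identification
\[
C_{Stab_{O(\L)}(\ZZ\a)}(t_M,t_N)=Stab_{O(\L)}(\ZZ\a)\cap C_{O(\L)}(D)=Stab_G(\ZZ\a).
\]
By Theorem \ref{CDdih6}, $G$ has shape $(3\times 2\times PSU(4,2)).2$ and, with $F,J$ as in \refpp{QJ}, the two restriction maps $p\colon G\to O(F)\cong O(AA_2)\cong 2\times Sym_3$ and $q\colon G\to O(M\cap J)\cong O(EE_6)\cong 2\times Weyl(E_6)\cong 2\times PSU(4,2).2$ are both surjective. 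Since $\a$ is a norm $4$ vector of $F$, it is a root of this $AA_2$ and $\ZZ\a$ is one of its three root‑lines; as $p(G)\supseteq Weyl(A_2)\cong Sym_3$ is transitive on these three lines, $G$ is transitive on them, so $[G:Stab_G(\ZZ\a)]=3$ and $|Stab_G(\ZZ\a)|=|G|/3=103680=|O(EE_6)|$.

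The decisive step is to control $\ker q$, which has order $|G|/|O(EE_6)|=3$ and is the direct factor $3$ in the shape of $G$. I claim $\ker q$ acts on $F$ as an order‑$3$ rotation, hence cycles the three root‑lines and meets $Stab_G(\ZZ\a)$ trivially. To see this I would argue by contradiction: if $\ker q$ acted trivially on $F$, then $q$ would induce a surjection $\bar p\colon O(M\cap J)\twoheadrightarrow O(F)$ with $\bar p\circ q=p$, that is, a surjection $2\times PSU(4,2).2\twoheadrightarrow 2\times Sym_3$. This is impossible: its kernel would be a normal subgroup of index $12$ (order $8640$), whereas by simplicity of $PSU(4,2)$ the proper nontrivial normal subgroups of $2\times PSU(4,2).2$ have orders $2$, $25920$ and $51840$ only, none of which is $8640$. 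Hence $\ker q\cap Stab_G(\ZZ\a)=1$.

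Finally I would assemble the isomorphism. Because $\ker q\cap Stab_G(\ZZ\a)=1$, the map $q$ is injective on $Stab_G(\ZZ\a)$; since $|Stab_G(\ZZ\a)|=|O(EE_6)|$ this forces $q(Stab_G(\ZZ\a))=O(M\cap J)$, so $q$ restricts to an isomorphism $Stab_G(\ZZ\a)\xrightarrow{\ \sim\ }O(EE_6)\cong 2\times PSU(4,2).2$, the central factor $2$ being realized by $-1_\L\in Stab_G(\ZZ\a)$, which maps to $-1$ on $M\cap J$. I expect the only real obstacle to be the displayed claim about $\ker q$. An eigenvalue count shows that a generator $h$ of $\ker q$ fixes $M\cap J\cong EE_6$ pointwise and preserves $M\cong EE_8$, so on the rank‑$2$ complement $F=ann_M(M\cap J)$ it acts either as an order‑$3$ rotation or trivially; the trivial alternative would make $h$ fix $M$ pointwise, and it is exactly the normal‑subgroup count above that cleanly excludes this degenerate case. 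Everything else is bookkeeping with the orders and actions supplied by Theorem \ref{CDdih6}.
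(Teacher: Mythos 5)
Your proof is correct and follows essentially the same route as the paper: both deduce the result from Theorem \ref{CDdih6} via the two restriction actions of $C_{O(\L)}(t_M,t_N)$ on $M\cap N\cong AA_2$ and on $ann_M(M\cap N)\cong EE_6$. Your normal-subgroup count showing that the order-$3$ kernel of the action on the $EE_6$-part cannot fix $\ZZ\a$ is a detail the paper leaves implicit (it can also be seen directly: that kernel is $\la g_1\ra$, which acts fixed-point-freely on $K\supset M\cap N$ and hence cycles the three root-lines of $M\cap N$).
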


\begin{proof}
First, we note that $ t_M, t_N$ stabilize $\ZZ\a$ since $\a\in M\cap N$. Moreover, the group
\[
C_{Stab_{O(\L)}(\ZZ\a)} ( t_M, t_N)= \{ h\in C_{O(\L)}(t_M, t_N)\mid h (\ZZ\a)=\ZZ\a \}.
\]
By Theorem \refpp{CDdih6}, $C_{O(\L)}(t_M, t_N)$ has the shape $2.(3\times PSU(4,2)).2$ which acts as $2.S_3$ on $M\cap N$ and acts as the isometry group of $ann_M(M\cap N) \cong EE_6$. Thus, the subgroup that fixes $\ZZ\a$ has the shape $2\times PSU(4,2).2$.
\end{proof}

By Lemma \refpp{f10} and Proposition \refpp{DIH122}, we have the corollary.

\begin{coro}\labtt{corollary3A}
Let $M$, $N$ and $\a$ be defined as in Proposition \refpp{dih62}. Let $z$ be the automorphism of $V^\natural$
such that $z|_{V_\L^+}=1$ and $z|_{V_\L^{T,+}}=-1$ as defined as in \refpp{zmuxi}. Then there is a homomorphism that maps $C_{Aut(V^\natural)}( \tau_{e_M},  \tau_{\varphi_{\frac{\a}2}(e_N)}, z)$ onto $$C_{Stab_{O(\L)}(\ZZ\a)} ( t_M, t_N)/ O_2(C_{Stab_{O(\L)}(\ZZ\a)} ( t_M, t_N))\cong PSU(4,2).2.$$
The kernel $K$ is a $2$-group of order $2^{11}$ and we have an exact sequence
\[
1\to \la z\ra \to K \to \{\varphi_\b\mid \b\in \L \text{  and  } \la \b, M+N\ra \in 2\ZZ\} \to 1.
\]
\end{coro}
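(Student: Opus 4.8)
The plan is to obtain the homomorphism and its image directly from Propositions \refpp{DIH122} and \refpp{dih62}, and then to pin down the kernel by translating ``centralizes the two Miyamoto involutions'' into a congruence condition on $\L$. I would take the asserted homomorphism to be $\Theta:=\xi\circ\mu$ restricted to $C_{Aut(V^\natural)}(\tau_{e_M},\tau_{\varphi_{\a/2}(e_N)},z)$. By Proposition \refpp{DIH122} this maps \emph{onto} $C_{Stab_{O(\L)}(\ZZ\a)}(t_M,t_N)/\la\pm1\ra$, and by Proposition \refpp{dih62} the group $C_{Stab_{O(\L)}(\ZZ\a)}(t_M,t_N)$ has shape $2\times PSU(4,2).2$. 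Since $-1$ is central in $O(\L)$ (it fixes $\ZZ\a$ and centralizes $t_M,t_N$) while $PSU(4,2).2$ is almost simple and so has trivial $O_2$, the normal subgroup $\la\pm1\ra$ is exactly $O_2(C_{Stab_{O(\L)}(\ZZ\a)}(t_M,t_N))$. Hence the image is $PSU(4,2).2$ and equals $C_{Stab_{O(\L)}(\ZZ\a)}(t_M,t_N)/O_2(\cdots)$, which is the first assertion.

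Next I would compute $K=\ker\Theta$. By the exact sequences of Notation \refpp{zmuxi}, an element of $\ker(\xi\circ\mu)$ is a lift of some diagonal automorphism $\varphi_\b\in Hom(\L,\ZZ_2)$, and $\ker\mu=\la z\ra$. For a standard lift $\tilde\varphi_\b$, conjugation sends $\tau_e\mapsto\tau_{\tilde\varphi_\b(e)}$; because $e\mapsto\tau_e$ is injective and $\tilde\varphi_\b$ acts on $V_\L^+$ as $\varphi_\b$, the element $\tilde\varphi_\b$ centralizes $\tau_{e_M}$ iff $\varphi_\b(e_M)=e_M$, i.e. iff $\la\b,\gamma\ra\in2\ZZ$ for all $\gamma\in M(4)$, i.e. (as $M(4)$ spans $M$) iff $\la\b,M\ra\subseteq2\ZZ$. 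Since $\varphi_\b$ commutes with $\varphi_{\a/2}$, the same argument applied to $\varphi_{\a/2}(e_N)$ reduces to $\varphi_\b(e_N)=e_N$, that is $\la\b,N\ra\subseteq2\ZZ$. Thus $\tilde\varphi_\b$ lies in the centralizer exactly when $\la\b,M+N\ra\subseteq2\ZZ$, and conversely every such standard lift (which also fixes $z$) lies in $K$. Together with $K\cap\ker\mu=\la z\ra$ this yields the exact sequence
\[
1\to\la z\ra\to K\to\{\varphi_\b\mid\b\in\L,\ \la\b,M+N\ra\in2\ZZ\}\to1.
\]
Here Lemma \refpp{f10} is precisely this kernel computation carried out one summand at a time, and combining the two copies gives the displayed condition on $M+N$.

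Finally I would count. Write $Q=M+N$ and $Q^{(2)}=\{\b\in\L\mid\la\b,Q\ra\subseteq2\ZZ\}$, so the image group is $Q^{(2)}/2\L$ and $|K|=2\cdot|Q^{(2)}/2\L|$. The map $\L\to Hom(Q,\FF_2)$, $\b\mapsto\la\b,\cdot\ra\bmod 2$, has kernel $Q^{(2)}$; using that $Q$ is primitive in the unimodular lattice $\L$ (so that $P_{\QQ Q}(\L)=Q^*$, consistent with $det(ann_\L(Q))=det(Q)$) together with the perfectness of the $\FF_2$-pairing $Q^*/2Q^*\times Q/2Q\to\FF_2$, this map is surjective. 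Hence $[\L:Q^{(2)}]=|Q/2Q|=2^{14}$, so $|Q^{(2)}/2\L|=2^{24-14}=2^{10}$ and $|K|=2^{11}$; being an extension of an elementary abelian $2$-group by $\la z\ra$, $K$ is a $2$-group, as claimed.

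The step I expect to be the main obstacle is this order count: it is where one must genuinely invoke the primitivity of $Q=M+N$ in $\L$ and the discriminant pairing, rather than merely manipulating the exact sequences of Notation \refpp{zmuxi}. By contrast, the translation of the two centralizing conditions into $\la\b,M+N\ra\subseteq2\ZZ$ is routine once the identity $g\tau_eg^{-1}=\tau_{g(e)}$ and the injectivity of $e\mapsto\tau_e$ are available, and the identification of the image with $PSU(4,2).2$ is immediate from Proposition \refpp{dih62}.
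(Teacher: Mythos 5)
Your proposal is correct and follows essentially the same route as the paper: surjectivity comes from Proposition \refpp{DIH122}, the identification of the image with $PSU(4,2).2$ comes from Proposition \refpp{dih62} together with the observation that $O_2$ of $2\times PSU(4,2).2$ is the central $\la\pm1\ra$, and the kernel is computed exactly as in the paper, via the surjectivity of $\L\to Hom(M+N,\ZZ_2)$ (which the paper deduces from $M+N$ being a direct summand of the unimodular $\L$). The only difference is that you spell out the details the paper leaves implicit, in particular the order count $[\L:Q^{(2)}]=2^{14}$ giving $|K|=2^{11}$.
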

\pf
To compute the kernel, note that the natural map $\L \rightarrow Hom(M+N,\ZZ_2)$ is onto since $M+N$ is a direct summand and $det(\L )=1$.   \eop

\medskip

By Theorem \ref{6A1orbits} and Corollary \ref{corollary3A}, we have our main theorem as follows.

\begin{thm}\labtt{theorem6A1}
Let $(x,y,z)$ be a triple of elements in the Monster such that $x, y$ in $2A$, $xy$ in $6A$, $z\in
2B \cap C_{\MM}(x,y)$ and $(xy)^3\in O_2(C_\MM(z))$. Then such triples form one orbit under the conjugation action of $\MM$ and
$C_{\MM}(x,y,z)$ has a homomorphism onto
$$C_{Stab_{O(\L)}(\ZZ\a)} ( t_M, t_N)/ O_2(C_{Stab_{O(\L)}(\ZZ\a)} ( t_M, t_N))\cong PSU(4,2).2.$$
The kernel $K$ is a $2$-group of order $2^{11}$ and we have an exact sequence
\[
1\to \la z\ra \to K \to \{\varphi_\b\mid \b\in \L \text{  and  } \la \b, M+N\ra \in 2\ZZ\} \to 1.
\]
\end{thm}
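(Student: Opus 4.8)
The plan is to obtain the theorem by stitching together the reduction of Theorem \ref{6A1orbits} with the centralizer computation of Corollary \ref{corollary3A}, the only genuinely new ingredient being the single-orbit assertion. First I would invoke Theorem \ref{6A1orbits}: after conjugating in $\MM$ so that $z$ acts as $1$ on $V_\L^+$ and as $-1$ on $V_\L^{T,+}$, every triple $(x,y,z)$ in Case $6A.1$ is $\MM$-conjugate to a \emph{standard} triple $(\tau_{e_M},\tau_{\varphi_{\a/2}(e_N)},z)$, where $(M,N)$ is an $EE_8$-pair in $\L$ with $M+N\cong\dih{6}{14}$ and $\a\in (M\cap N)(4)$. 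Thus it suffices to show that all such standard triples are mutually $\MM$-conjugate, and then to read off the centralizer from Corollary \ref{corollary3A}.

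For the orbit count there are exactly two parameters to eliminate, the pair $(M,N)$ and the line $\ZZ\a$. By Theorem \ref{unidih6}, $(M,N)$ is unique up to the action of $O(\L)$. I would lift this as follows: an element of $O(\L)/\{\pm1\}$ has a preimage under $\xi\circ\mu$ inside $C_{Aut(V^\natural)}(z)$ (see \ref{zmuxi} and Lemma \ref{f10}), and such a preimage carries $V_M^+$ to $V_{M'}^+$ and hence $e_M$ to a \cvcch in $V_{M'}^+$, which by the classification of such vectors equals $\varphi_x(e_{M'})$ for some $x\in(M')^*$; the spurious inner twist $\varphi_x$ is then absorbed exactly by the projection argument of Proposition \ref{dih6b}. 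This reduces the problem to a single fixed pair $(M,N)$. For the line $\ZZ\a$, recall from Theorem \ref{CDdih6} (and the proof of Proposition \ref{dih62}) that $C_{O(\L)}(t_M,t_N)$ induces the full group $O(AA_2)\cong 2.S_3$ on $M\cap N\cong AA_2$, hence acts transitively on the three one-dimensional sublattices spanned by norm $4$ vectors; lifting again through $\xi\circ\mu$ and absorbing an inner twist as above carries $\varphi_{\a/2}(e_N)$ to $\varphi_{\a'/2}(e_N)$. Since $\varphi_{\a/2}=\varphi_{-\a/2}$ and $\ZZ\a=\ZZ(-\a)$, the sign of $\a$ is irrelevant. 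This establishes that the Case $6A.1$ part of $\SA$ is a single $\MM$-orbit.

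Finally, for the centralizer I would transport Corollary \ref{corollary3A} verbatim. Under the Miyamoto bijection, with $z$ in its standard action, one has $C_{\MM}(x,y,z)=C_{Aut(V^\natural)}(\tau_{e_M},\tau_{\varphi_{\a/2}(e_N)},z)$, so the homomorphism onto $C_{Stab_{O(\L)}(\ZZ\a)}(t_M,t_N)/O_2(C_{Stab_{O(\L)}(\ZZ\a)}(t_M,t_N))\cong PSU(4,2).2$, the identification of the kernel $K$ as a $2$-group of order $2^{11}$, and the exact sequence $1\to\la z\ra\to K\to\{\varphi_\b\mid \b\in\L,\ \la\b,M+N\ra\in2\ZZ\}\to 1$ all descend directly, the surjectivity onto the lattice quotient resting on the fact that $M+N$ is a direct summand of the unimodular lattice $\L$.

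The step I expect to be the main obstacle is the lifting in the middle paragraph: one must confirm that the $O(\L)$-transitivity results (Theorems \ref{unidih6} and \ref{CDdih6}) genuinely lift to $\MM$-conjugacies of the \emph{VOA} triples rather than merely of the lattice data. This requires tracking the images of $e_M$ and $\varphi_{\a/2}(e_N)$ under a chosen lift, using the classification of central-charge-$1/2$ conformal vectors in $V_M^+$ and $V_N^+$ to pin those images down up to an inner twist $\varphi_x$, and then clearing the twist by the projection argument of Proposition \ref{dih6b}. Care is also needed to ensure each lift commutes with $z$, which holds precisely because the relevant automorphisms are produced as preimages under $\mu$ inside $C_{Aut(V^\natural)}(z)$.
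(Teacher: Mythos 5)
Your proposal is correct and follows essentially the same route as the paper, which simply derives Theorem \ref{theorem6A1} by combining Theorem \ref{6A1orbits} with Corollary \ref{corollary3A}. The only difference is that you explicitly supply the single-orbit argument (lifting the $O(\L)$-transitivity of \ref{unidih6} and the $O(AA_2)$-action on $M\cap N$ through $\xi\circ\mu$ and clearing inner twists via the projection argument of \ref{dih6b}), which the paper leaves implicit in its citation of \ref{6A1orbits}; this is exactly the intended mechanism, as the analogous $3A$ case (Theorem \ref{theorem3A}) makes clear.
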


\subsection{Case $6A.2$ and $M+N\cong \dih{12}{16}$}

Next we consider the case $6A.2$. In this case, $M+N\cong \dih{12}{16}$, $e=e_M$ and $f=e_N$.

\begin{nota}[\cite{GL}]\labtt{Q'}
1. Let $M$ and $N$ be $EE_8$ sublattice of $\L$ such that $Q':=M+N$ is isometric to $\dih{12}{16}$ (Table 1 of \cite{GL}).
Then  $ann_M(N)\cong ann_N(M)\cong AA_2$.
 We also denote $R':=ann_{\Lambda}(Q')$.

2. Let $F':=ann_M(N)\perp ann_N(M)$ and  $J:=ann_{Q'}(F')$. Then  $J\cong K_{12}$
is isometric to the Coxeter-Todd lattice and $Q'$ contains a sublattice isometric to $F'\perp J$.

3. $\dg{Q'}\cong \dg{R'}\cong
6^4$.
\end{nota}

By explicit calculation in the Leech lattice (see Appendix A), one can show that $R'$ contains a sublattice isometric to $AA_2^{\perp 4}$ and $R'$ is isometric to
\[
span_\ZZ\{ AA_2^{\perp 4}, \frac{1}2 (\b_1, \b_1, \b_1, \b_1),
\frac{1}2(\b_2, \b_2, \b_2, \b_2)\},
\]
where $\b_1=\sqrt{2}\a_1$, $\b_2=\sqrt{2}\a_2$ and $\{\a_1,\a_2\}$ is a set of
fundamental roots for $A_2$.  In fact, $R'\cong A_2\otimes D_4$ (see \refpp{A2D4}).

\medskip

\begin{nota}[\cite{GL}] \labtt{dih12}
Let $t_M$ and $t_N$ be the SSD involutions associated to $M$ and $N$. Then the group
$\Delta:= \la  t_M, t_N\ra \cong Dih_{12}$.
Set $h:=t_Mt_N$, $g:= h^2$ and $u= h^3$. Then $h$ has order $6$, $g$ has order $3$ and $u$ has order $2$. The traces of $h$, $g$ and $u$ on $\L$ are $2$, $6$ and $8$, respectively. Note also that $\la u\ra =Z(\Delta)$ and $g=uh^{-1}$.
\end{nota}

\begin{prop}\labtt{unidih12}
Let $(M,N)$ be an $EE_8$-pair in $\L$ such that $M+N\cong \dih{12}{16}$. Then  the pair $(M,N)$ is unique up to  the action of $O(\L)$.
\end{prop}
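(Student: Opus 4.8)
The plan is to adapt the proof of Theorem \ref{unidih6}, using the rotation $g=(t_Mt_N)^2$ to push the whole problem into a fixed copy of $K_{12}$.

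First I would fix the order-$3$ data. With $h=t_Mt_N$, $g=h^2$ and $u=h^3$ as in Notation \ref{dih12}, the element $g$ has order $3$ and trace $6$ on $\L$. Since such isometries form a single $O(\L)$-conjugacy class (\cite{atlas}; cf. the proof of Lemma \ref{trans4}), after applying an element of $O(\L)$ I may assume $g$ is standard, so that $K:=Fix_\L(g)\cong K_{12}$ and its annihilator $ann_\L(K)\cong K_{12}$, together with the fixed-point-free order-$3$ isometry $g_1$ of $K$, are fixed once and for all.

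Next I would locate $(M,N)$. A one-line computation shows $h$ acts as $-1$ on $F':=ann_M(N)\perp ann_N(M)\cong AA_2\perp AA_2$, so $g=h^2$ fixes $F'$ and $F'\subset K$. The useful observation is that $(M,gM)$ is a genuine $\dih{6}{14}$-pair: writing $s=t_N$ one has $t_{gM}=gt_Mg^{-1}=h^4t_M$, hence $t_Mt_{gM}=h^{-4}=g$ has order $3$, and $M\cap gM=M\cap K=ann_M(N)=:A\cong AA_2$; likewise $(N,gN)$ meets in $B:=ann_N(M)$. Thus $A,B$ are the intersection lattices of honest $\dih{6}{14}$-pairs, so by the structure of that case (the form of Remark \ref{UniqE} symmetric in the two $K_{12}$-summands) each is a $g_1$-invariant $AA_2$ in $K$ and $F'=A\perp B$ is an orthogonal pair of such. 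Now the subgroup $H$ generated by the RSSD involutions $t_D$ of the $g_1$-invariant $AA_2$-sublattices $D\subset K$ is transitive on orthogonal pairs of them, by Lemma \ref{tranAA2perpAA2} applied to $K$ with $g_1$ in place of the moved $K_{12}$ and $g$; moreover $H\le C_{O(\L)}(g)$, since each $t_D$ is trivial on $ann_\L(K)$ while $g$ is trivial on $K$, so $t_D$ and $g$ commute. Hence $C_{O(\L)}(g)$ is transitive on the configurations $F'$, and I may normalize $F'$ to a standard $AA_2\perp AA_2$.

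Finally I would reconstruct the pair. By the form of Remark \ref{UniqE} symmetric in $K$ and $ann_\L(K)$ (with Lemma \ref{RSSDA2} for existence), the $EE_8$-sublattices meeting $K$ in $A$ are exactly $\{M,gM,g^2M\}$ and those meeting $K$ in $B$ are exactly $\{N,gN,g^2N\}$, so every pair realizing the normalized configuration is $(g^iM,g^jN)$; applying $g^{-i}\in C_{O(\L)}(g)$ (which is trivial on $K$, hence fixes $A,B,F'$) reduces this to the three candidates $(M,g^kN)$, $k=0,1,2$. A direct computation inside $\Delta$ gives $t_Mt_{gN}=u=h^3$, of order $2$, whereas $t_Mt_{g^2N}=h^{-1}$, of order $6$; so $k=1$ cannot give type $\dih{12}{16}$, and only $k=0,2$ survive. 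It then remains to check that $(M,N)$ and $(M,g^2N)$ lie in one $O(\L)$-orbit, which I would do by producing an isometry inverting $g$ (an element of $N_{O(\L)}(\la g\ra)\setminus C_{O(\L)}(g)$) that carries the normalized $F'$ to itself and swaps the two candidates. Establishing this last point is the main obstacle: because here $M\cap N=0$ (unlike the $\dih{6}{14}$-case, where the common sublattice $M\cap N$ pinned everything down at once), the two $EE_8$'s must be controlled through their separate glue data, and confirming both that $(M,g^2N)$ is again of type $\dih{12}{16}$ and that the required $g$-inverting isometry exists will use the involution $\nu\circ\phi$ of Remark \ref{CRandRSSD}, the discriminant-form identity $\dg{Q'}\cong 6^4$ of Notation \ref{Q'}, and the explicit Leech embedding of Appendix A.
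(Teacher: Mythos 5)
Your strategy coincides with the paper's for most of the argument: normalize $g=(t_Mt_N)^2$ and hence $K=Fix_\L(g)$ and $ann_\L(K)$; identify $ann_M(N)=M\cap K$ and $ann_N(M)=N\cap K$ as orthogonal $g_1$-invariant $AA_2$-sublattices of $K$; normalize this configuration using the transitivity of $C_{O(\L)}(g)$ (the $K$-analogue of Lemma \refpp{tranAA2perpAA2}); and then use \refpp{RSSDA2} together with the $K$-symmetric form of \refpp{UniqE} (equivalently \refpp{EE8inQ'}) to conclude that any pair realizing this data is $(g^iM,g^jN)$. Your $Dih_{12}$ computation eliminating $k=1$ via $t_Mt_{gN}=u$ is correct and consistent with the paper.

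The gap is the final step, which you explicitly leave open and for which you propose far heavier machinery than is needed. The isometry you are looking for --- one that inverts $g$, preserves the normalized $F'=A\perp B$, and carries $(M,N)$ to $(M,g^2N)$ --- is already sitting inside the group you have been computing with: it is $t_M$ itself. As an element of $O(\L)$, $t_M$ inverts $g$; it is $-1$ on $A=ann_M(N)\subset M$ and $+1$ on $B=ann_N(M)\perp M$, so it preserves $A$ and $B$ individually; it fixes $M$; and, writing $t_N=t_Mh$, one has $t_Mt_Nt_M=ht_M=t_Mh^{-1}=t_{g^2N}$, i.e.\ $t_M(N)=g^2N$. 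Hence $(M,g^2N)$ is literally the conjugate of $(M,N)$ by $t_M$, which also disposes for free of your worry about verifying that $(M,g^2N)$ has type $\dih{12}{16}$. This is exactly how the paper closes the proof: the surviving candidates $(g^iM,g^jN)$ form a single orbit under conjugation by the dihedral group $\la t_M,t_N\ra\le O(\L)$. No appeal to $\nu\circ\phi$, to $\dg{Q'}\cong 6^4$, or to the explicit Leech embedding of Appendix A is required for this step.
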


\begin{proof}
Let $(M,N)$ be such a pair. Then $g=(t_Mt_N)^2$ has order 3 and trace 6 on $\L$. Let $K:= Fix_{\L}(g) \cong K_{12}$ and $J:= ann_{\L}(K)\cong K_{12}$.
Since such a $g$ is unique up to conjugacy, $K$ and $J$ are uniquely determined, up to the action $O(\L)$.

Let $g'$ be the fixed point free order $3$ element in $O_3(O(K))$. Then, $F_M:=M\cap K$ and $F_N:=N\cap K$ are both $g'$-invariant $AA_2$-sublattices in $K$ and  $F_M$ is orthogonal to $F_N$. By Lemma \refpp{tranAA2perpAA2},   $F_M\perp F_N\perp J$ is unique up to the action of $O(\L)$. Since $M+N$ is a direct summand of $\L$, we have $ann_\L(ann_\L(F_M\perp F_N\perp J)) =M+N\cong \dih{12}{16}$.

By Lemma \refpp{RSSDA2}, there exists $EE_8$ sublattices $E^1$ and $E^2$ such that $E^1\cap K= F_M$ and $E^2\cap K= F_N$. Then $E^1+E^2\cong \dih{12}{16}$ (cf. \cite{GL}) and thus $E^1+E^2= M+N$.  Therefore, $E^1=g^i M$ and $E^2=g^j N$ for some $i,j=0,1,2$ (see \refpp{EE8inQ'}) and $(E^1, E^2)$ is conjugate to $(M,N)$ by the action of the dihedral group $\la t_M, t_N\ra$. Hence $(M,N)$ is unique up to the action of $O(\L)$.
\end{proof}

Next we recall few facts about the lattice $\dih{12}{16}$ from \cite{GL}.

\medskip
\begin{lem}[\cite{GL}] \labtt{gmn}
Let $h$, $g$ and $u$ be defined as in Notation \refpp{dih12}.  Then

1. $N \cap g^{-1} M \cong DD_4$ and $N + g^{-1} M \cong \dih{4}{12}$.

2. $u= t_N t_{g^{-1}M} $ is an SSD involution associated to an $EE_8$ sublattice $E$
of $N + g^{-1}M$
\end{lem}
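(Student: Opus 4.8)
The plan is to reduce both parts to the single group identity $t_N t_{g^{-1}M}=u$, after which the classification of $EE_8$-pairs in \cite{GL} does the rest. Since $g=(t_Mt_N)^2\in O(\L)$ is an isometry, $g^{-1}M$ is again an $EE_8$-sublattice of $\L$, and conjugation of SSD involutions gives $t_{g^{-1}M}=g^{-1}t_Mg$. Working inside the dihedral group $\Delta=\la t_M,t_N\ra$ with $h=t_Mt_N$, the relation $t_Mh=h^{-1}t_M$ yields $h^{-2}t_Mh^2=t_Mh^4$, hence $t_{g^{-1}M}=t_Mh^4$ and
\[
t_N t_{g^{-1}M}=t_Nt_M\,h^4=h^{-1}h^4=h^3=u .
\]
In particular $t_N t_{g^{-1}M}=u$ has order $2$, so $t_N$ and $t_{g^{-1}M}$ commute and $(N,g^{-1}M)$ is an $EE_8$-pair whose two SSD involutions generate a Klein four-group; this already fixes the dihedral order attached to $(N,g^{-1}M)$ as $4$.

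Next I would read off the rank of $N+g^{-1}M$ from $\tr(u)=8$ (Notation \refpp{dih12}). Decompose $\L\otimes\RR$ into the four simultaneous eigenspaces $V_{\pm\pm}$ of the commuting pair $(t_N,t_{g^{-1}M})$, with dimensions $p,q,r,s$ for $V_{++},V_{+-},V_{-+},V_{--}$ respectively. Then $r+s=\rank N=8$ and $q+s=\rank g^{-1}M=8$ force $q=r$, while $\tr(u)=p-q-r+s=8$ together with $p+q+r+s=24$ gives $q=r=s=4$ and $p=12$. Hence $N\cap g^{-1}M$ has rank $4$ and $N+g^{-1}M$ has rank $12$. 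Feeding ``order-$2$ dihedral with sum of rank $12$'' into the classification of \cite{GL}, the only matching type is $\dih{4}{12}$, whose intersection lattice is $DD_4$; this proves part (1).

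For part (2) the identity $u=t_N t_{g^{-1}M}$ is already established, so it only remains to produce the $EE_8$-sublattice $E$ with $t_E=u$. Here I would invoke the internal structure of a $\dih{4}{12}$-lattice from \cite{GL}: the product of the two SSD involutions of a $\dih{4}{12}$-pair is itself the SSD involution of a third $EE_8$-sublattice contained in the sum. Applied to $(N,g^{-1}M)$ this gives an $EE_8$-sublattice $E\subseteq N+g^{-1}M$ with $t_E=u$. As a consistency check, the $(-1)$-eigenspace of $u$ is $V_{+-}\oplus V_{-+}$, of dimension $8$, matching $\tr(u)=24-2\cdot 8=8$ and the rank of an $EE_8$-lattice.

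The main obstacle is not the computation but the bookkeeping that pins $(N,g^{-1}M)$ to the $\dih{4}{12}$ case rather than to another order-$2$ configuration; this is exactly what $\tr(u)=8$ settles, since an orthogonal pair $E_1\perp E_2$ would instead give trace $-8$ and rank $16$. The one point I would explicitly borrow from \cite{GL}, rather than re-derive, is that the third $EE_8$-lattice $E$ sits inside $N+g^{-1}M$ integrally and not merely rationally.
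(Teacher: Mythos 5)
The paper gives no proof of this lemma --- it is imported wholesale from \cite{GL} --- so there is no in-text argument to compare against; what you have written is a self-contained reconstruction, and it is correct. The group-theoretic core checks out: $t_{g^{-1}M}=g^{-1}t_Mg=h^{-2}t_Mh^2=t_Mh^4$, hence $t_Nt_{g^{-1}M}=h^{-1}h^4=u$, and the eigenspace bookkeeping from $\tr(u)=8$ (Notation \refpp{dih12}) correctly forces $\dim V_{--}=4$, so $N+g^{-1}M$ has rank $12$ and $\la t_N,t_{g^{-1}M}\ra$ is a Klein four-group. Two small points are worth making explicit. First, your appeal to the classification table of \cite{GL} to pin down $\dih{4}{12}$ tacitly uses that $N+g^{-1}M$ is rootless; this is automatic here because it sits inside $\L$, which has minimal norm $4$, but you should say so since the table only covers rootless pairs. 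Second, the two facts you import from \cite{GL} --- that the pair (order of the dihedral group, rank of the sum) determines the isometry type with intersection $DD_4$, and that a $\dih{4}{12}$-lattice contains exactly three $EE_8$-sublattices whose SSD involutions multiply pairwise to the third --- are exactly the inputs the present paper itself leans on elsewhere (e.g.\ in \refpp{EE8inQ'}), so borrowing them is consistent with the paper's own practice. Your consistency check via the $(-1)$-eigenspace $V_{+-}\oplus V_{-+}$ of dimension $8$ is a nice confirmation that $E$ has the right rank.
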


\begin{lem} [Proposition 6.44 of \cite{GL}] \labtt{Ng-1M}
Let $P=g^{-1}M\cap J$ and let $P^-(t_N)$ be the $(-1)$-eigenspace of $t_N$ in
$P$. Then $P^-(t_N) = N\cap g^{-1}M \cong  DD_4$ and $N\cap g^{-1}M < J$.
\end{lem}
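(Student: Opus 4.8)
The plan is to deduce the first assertion from the second and to put all the real work into proving the inclusion $N\cap g^{-1}M\subseteq J$. First I would observe that $P^-(t_N)=\{v\in P:\ t_Nv=-v\}=P\cap(N\otimes\RR)$, since $t_N$ acts as $-1$ exactly on $N\otimes\RR$ and as $+1$ on its orthogonal complement. Because $N$ is the $(-1)$-eigenlattice of $t_N$, it is primitive, so $\L\cap(N\otimes\RR)=N$ and hence $P\cap(N\otimes\RR)=(g^{-1}M\cap J)\cap N=N\cap g^{-1}M\cap J$. Thus, once I know $N\cap g^{-1}M\subseteq J$, I get $P^-(t_N)=N\cap g^{-1}M$, and $N\cap g^{-1}M\cong DD_4$ is exactly Lemma \ref{gmn}(1). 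So the whole statement collapses to proving $N\cap g^{-1}M\perp K$, where $K=Fix_\L(g)=ann_\L(J)$.

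The hard part is this orthogonality, and the key idea is that one should \emph{not} try to use $g$ directly: $N\cap g^{-1}M$ is not $g$-invariant (indeed $g$ carries it to $gN\cap M$), so a $g$-eigenvalue argument is unavailable. Instead I would exploit the commuting central involution $u=h^3$ from Notation \ref{dih12}. Since $g=h^2$ and $u$ commute, $u$ preserves $K=Fix(g)$ and acts on $K\otimes\RR$ as an involution, giving an orthogonal splitting $K\otimes\RR=V_+\oplus V_-$ into its $(\pm1)$-eigenspaces. I would then identify both pieces. The $+1$ part is $V_+=Fix(u)\otimes\RR\cap Fix(g)\otimes\RR=Fix(h)\otimes\RR$, because $\la u,g\ra=\la h\ra$, and I claim $Fix(h)=R'=ann_\L(M+N)$: the inclusion $R'\subseteq Fix(h)$ is clear since $t_M$ and $t_N$ fix $R'$ pointwise; conversely $hv=v$ forces $t_Nv=t_Mv$, so the orthogonal projections of $v$ to $M\otimes\RR$ and to $N\otimes\RR$ coincide and therefore lie in $M\otimes\RR\cap N\otimes\RR=0$ (here I use that $M+N$ has rank $16=8+8$), whence $v\perp M+N$.

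For the $-1$ part, I would use Lemma \ref{gmn}(2): $u=t_Nt_{g^{-1}M}$ is the SSD involution of an $EE_8$-sublattice $E$, so the $(-1)$-eigenspace of $u$ is $E\otimes\RR$, and hence $V_-=E\otimes\RR\cap K\otimes\RR\subseteq E\otimes\RR$. Now the two orthogonality checks are elementary and independent. On one hand $N\cap g^{-1}M\subseteq N\subseteq M+N$, so it is orthogonal to $R'=ann_\L(M+N)$, and therefore to $V_+\subseteq R'\otimes\RR$. On the other hand both $t_N$ and $t_{g^{-1}M}$ negate $N\cap g^{-1}M$, so $u$ fixes it pointwise, giving $N\cap g^{-1}M\subseteq Fix(u)=ann_\L(E)$, hence orthogonality to $E$, and therefore to $V_-\subseteq E\otimes\RR$. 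Since $V_++V_-=K\otimes\RR$, I conclude $N\cap g^{-1}M\perp K$, i.e. $N\cap g^{-1}M\subseteq J$, and the lemma follows.

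The main obstacle, then, is precisely the step $N\cap g^{-1}M\perp K$; everything else is bookkeeping. What makes the argument go through is the substitution of the commuting involution $u$ for $g$: on $K$ the two $u$-eigenspaces turn out to be genuinely recognizable lattices, namely $R'=ann_\L(M+N)$ and a sublattice of $E$, and $N\cap g^{-1}M$ is orthogonal to each of these for a trivial reason (membership in $M+N$, respectively being fixed by $u$). I would expect the only points needing care to be the primitivity of $N$ (used in the reduction) and the clean identification $Fix(h)=ann_\L(M+N)$, both of which are handled above without any eigenvalue-multiplicity computation.
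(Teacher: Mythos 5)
Your argument is correct. Note first that the paper does not actually prove this statement: it is imported verbatim as Proposition 6.44 of \cite{GL}, so you have supplied an independent proof rather than a variant of one in the text. The reduction to the single claim $N\cap g^{-1}M\subseteq J$ is clean (granting \refpp{gmn}(1) for the $DD_4$ identification), and the key step --- splitting $K\otimes\RR$ into the $u$-eigenspaces $V_+=R'\otimes\RR$ and $V_-\subseteq E\otimes\RR$ and checking orthogonality of $N\cap g^{-1}M$ to each piece separately --- is sound: the dimension count $8+4=12$ works out, the identification $Fix(h)=ann_\L(M+N)\otimes\RR$ follows exactly as you say from $M\otimes\RR\cap N\otimes\RR=0$, and $u$ fixes $N\cap g^{-1}M$ pointwise because both factors of $u=t_Nt_{g^{-1}M}$ negate it, so that $N\cap g^{-1}M\subseteq ann_\L(E)$. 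Two small points deserve one more sentence of justification, though neither is a gap: (i) the primitivity of $N$ in $\L$ is not a formal consequence of calling $N$ ``the $(-1)$-eigenlattice'' --- that is precisely what primitivity asserts --- but it does hold, because every proper even overlattice of $EE_8$ inside $N\otimes\RR$ contains a vector of norm $2$ while $\L$ is rootless; (ii) you implicitly use $R'\subseteq K$, equivalently $Fix_{\RR}(h)\subseteq Fix_{\RR}(g)$, which is immediate since $g$ is a power of $h$. With those remarks added, the proof stands on its own and could replace the external citation.
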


\begin{lem}\labtt{CD1}
Let $\Delta=\la t_M, t_N\ra$ be defined as in Notation \refpp{dih12}.
The centralizer $C_{O(\L)}(\Delta)$ has the shape
$$ (3\times2 \times 2.(Alt_4\times Alt_4).2). 2,$$
which is an index 2 subgroup of $2.(Dih_6\times O(D_4))$.
\end{lem}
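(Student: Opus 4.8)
The plan is to study $C := C_{O(\L)}(\Delta) = C_{O(\L)}(t_M,t_N)$ via its restriction to $R' = ann_\L(Q')$. Any $\sigma\in C$ fixes the $(-1)$-eigenlattices $M$ and $N$, hence also stabilizes $Q' = M+N$, $R'$, $F' = ann_M(N)\perp ann_N(M)$ and $J = ann_{Q'}(F')$. First I would record that $R'\cong A_2\otimes D_4$ and that, by \refpp{ORp}, $O(R')\cong O(A_2)\circ O(D_4)$ of order $2^8{\cdot}3^3 = 6912$; since $O(A_2)\cong \la -1\ra\times Dih_6$ with central $-1$, this central product is exactly $Dih_6\times O(D_4)$. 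Restriction then gives a homomorphism $\pi\colon C\to O(R')\cong Dih_6\times O(D_4)$, and the whole problem reduces to computing $\ker\pi$ and $Im\,\pi$.

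For the kernel, note that $u = h^3\in\Delta$ acts trivially on $R' = ann_\L(Q')$ and lies in $Z(\Delta)\subseteq Z(C)$, so $\la u\ra\le\ker\pi$. Conversely, if $\sigma\in\ker\pi$ then $\sigma$ fixes $R'$ pointwise, hence acts trivially on $\dg{R'}$, and therefore (through the glue identifying $\dg{Q'}\cong\dg{R'}$, using $det(\L)=1$ and $R'=ann_\L(Q')$) trivially on $\dg{Q'}$; thus $\sigma|_{Q'}$ centralizes $\Delta$ and is trivial on $\dg{Q'}$. Here I would use the explicit $\dih{12}{16}$ data of \cite{GL}: on $F'$ one has $\la t_M|_{F'}, t_N|_{F'}\ra\cong 2^2$ with $h|_{F'} = -1_{F'}$, so $u|_{F'} = -1_{F'}$, while $g=h^2$ is fixed-point free on $J$. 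Verifying that the only such $\sigma|_{Q'}$ besides the identity is $u|_{Q'}$ yields $\ker\pi = \la u\ra\cong 2$.

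For the image, \refpp{ORp} already guarantees every isometry of $R'$ respects the tensor structure, so $Im\,\pi\le O(A_2)\circ O(D_4)$ with no extra work. I would then show $[\,O(R'):Im\,\pi\,]=2$: an element of $O(R')$ lifts to $C$ precisely when its action on $\dg{R'}\cong\dg{Q'}$ is induced, via the glue, by an element of $C_{O(Q')}(\Delta)$. The complex conjugation $\omega\mapsto\bar\omega$ on the $A_2$-factor, which generates the outer part of $Dih_6$, does not lift on its own — this is the ``field automorphism does not lift'' phenomenon of \refpp{OR} — but it does lift together with an outer (triality) element of $O(D_4)$. Hence $Im\,\pi$ is the diagonal index-$2$ subgroup of $Dih_6\times O(D_4)$, of shape $(3\times 2.(Alt_4\times Alt_4).2).2$, writing $O(D_4)=(2.(Alt_4\times Alt_4).2).2$ and $Dih_6 = 3{:}2$.

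Finally I would assemble the pieces: $|C| = |\ker\pi|\,|Im\,\pi| = 2\cdot 3456 = 6912 = 2^8{\cdot}3^3$. Since $u\in Z(C)$ with $\pi(u)=1$ while $\pi(-1)=-1_{R'}\ne 1$ (so $-1\notin\ker\pi$), the sequence $1\to\la u\ra\to C\to Im\,\pi\to 1$ is a central extension by $\la u\ra$; realizing it as the preimage of the index-$2$ subgroup $Im\,\pi$ inside the double cover $2.(Dih_6\times O(D_4))$ exhibits $C$ as an index-$2$ subgroup of $2.(Dih_6\times O(D_4))$ of the stated shape $(3\times 2\times 2.(Alt_4\times Alt_4).2).2$, the extra central $2$ being $\la u\ra$. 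The main obstacle is the two sharp determinations that genuinely need the explicit glue vectors of $\dih{12}{16}$ and of $A_2\otimes D_4$ from Appendix A and \cite{GL}: that $\ker\pi$ is no larger than $\la u\ra$, and that $Im\,\pi$ is exactly the diagonal index-$2$ subgroup rather than all of $O(R')$.
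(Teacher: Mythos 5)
Your route is genuinely different from the paper's. The paper does not work with the restriction to $R'=ann_\L(Q')$ at all: it observes that $M+gM\cong\dih{6}{14}$, so that $C_{O(\L)}(\Delta)=C_{C_1}(u)$ where $C_1=C_{O(\L)}(\la t_M,t_{gM}\ra)\cong(3\times 2\times PSU(4,2)).2$ is already known from Theorem \refpp{CDdih6}; it then identifies the action of $u=t_E$ on $g^{-1}M\cap J\cong EE_6$ as $-t_{N\cap g^{-1}M}$ (trace $-2$) and reads off $|C_{C_1}(u)|$ from the character table of $PSU(4,2)$. The advantage of that descent is that $C_1$ is completely known, so one gets the exact order of the centralizer (upper \emph{and} lower bound) in a single step. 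Your scheme --- compute $\ker\pi$ and $Im\,\pi$ for the restriction $\pi\colon C\to O(R')\cong Dih_6\times O(D_4)$ --- is structurally attractive and the numerology is consistent, and your kernel argument is essentially sound (it does quietly rely on knowing that the kernel of $O(Q')\to O(\dg{Q'})$ is exactly $\Delta$, which the paper only records later and without proof, but this can be supplied).

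The genuine gap is the determination of $Im\,\pi$, and it is not a detail: the index-$2$ statement \emph{is} the content of the lemma. Your lifting criterion (an element of $O(R')$ lies in $Im\,\pi$ iff its action on $\dg{R'}$ is glue-matched by some element of $C_{O(Q')}(\Delta|_{Q'})$ acting on $\dg{Q'}$) is correct, but applying it requires knowing $C_{O(Q')}(\Delta)$ together with its image in $O(\dg{Q'})\cong O^+(4,2)\times O^+(4,3)$ --- a computation you do not carry out and which is of essentially the same difficulty as the original problem. The specific assertion that the field automorphism $\omega\mapsto\bar\omega$ on the $A_2$ tensor factor fails to lift alone but lifts paired with an outer element of $O(D_4)$ is only an analogy with \refpp{OR}; there the ambient lattice was $K_{12}$ and the relevant group was a point stabilizer, so the analogy needs its own proof here. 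Note also that \emph{both} directions are at stake: you must exhibit enough isometries of $\L$ centralizing $\Delta$ to show the image is as large as the claimed index-$2$ subgroup (the paper gets this for free because every element of $C_1$ is already an isometry of $\L$), and you must show the ``diagonal'' coset is genuinely missing. As written, your argument yields only the a priori bound $|C|\le 2\cdot|O(R')|$ plus an unverified membership criterion; the two facts you flag as ``the main obstacle'' are precisely the theorem.
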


\begin{proof}
Set $G_1:= C_{O(\L)}(\Delta)$.

Let $L=M+gM$. Then $L\cong \dih{6}{14}$ and the group $\la t_M, t_{gM} \ra \cong Dih_{6}$ is a subgroup of $\Delta$. By the analysis of $\dih{6}{14}$, the centralizer
\begin{equation}\labtt{C1}
C_1:= C_{O(\L)}(\la t_M, t_{gM} \ra )\cong (3\times 2\times PSU(4,2)).2
\end{equation}
and it stabilizes $g^i M\cap J$ for each $i=0,1,2$  and acts as $O(EE_6)(\cong 2\times PSU(4,2).2)$ on each $g^i M\cap J$, where $J= ann_{\L} (K)$ and $K=Fix_{\L}(g)$.

Note that $G_1$ also centralizes $t_N$. Thus  $G_1$ stabilizes $N$ and $N\cap
g^{-1}M \cong DD_4$.  By Lemma \ref{Ng-1M},  $N\cap g^{-1}M< J\cap
g^{-1}M\cong EE_6$.  Therefore, $G_1$ acts as the stabilizer of $N\cap g^{-1}M
\cong DD_4$ on $g^{-1}M\cap J$, which is isomorphic to $2 . O(DD_4)\cong
2.O(D_4)$. Note that $O(D_4)\cong Weyl(F_4)\cong O^+(4,3)$ and has the
shape $2.(Alt_4\times Alt_4).2^2$ (see \refpp{WF4} and Appendix B of
\cite{gl5A}).

Since $u$ commutes with $D=\la t_M, t_{gM}\ra$, we have
\[
G_1 = C_{O(\L)}(D_1) = C_{O(\L)}( \la u, t_M, t_{gM} \ra)= C_{C_1} (u).
\]

Next we study the action of $u$ on $(g^{-1} M)\cap J$. First we note that  $u=
t_E$ acts as $-1$ on $g^{-1}M \cap K\cong AA_2$ and $E\cap g^{-1}M
\cong DD_4$. Thus $E\cap g^{-1}M \cap J \cong 2A_2$. Notice that  $E\cap g^{-1}M \cap J $ is the $(-1) $-eigenlattice of $u$ in $g^{-1}M\cap J$  and $N\cap g^{-1}M$ is the fixed point sublattice of $u$ in $g^{-1}M\cap J$. Hence $u$ acts as  $-t_{N\cap g^{-1}M}$ on $(g^{-1} M)\cap J$. Thus we have
$$C_{C_1} (u)/ \la -1, g_1 \ra = C_{C_1}(t_{N\cap g^{-1}M})/\la -1, g_1\ra $$
since $-1$ is in the center of $C_1$.   Recall that $g_1$ is an order 3 isometry of $\L$ as defined in (5) of
\refpp{Q}, which acts fixed point free on $K$ and trivially on $J$. Note also that
 $C_1/\la -1, g_1\ra \cong PSU(4,2).2 \cong Weyl(E_6)$
by \refpp{C1}.
Since $t_{N\cap g^{-1}M}$ has trace $-2$ on $g^{-1}M\cap J$, by the character table of $PSU(4,2)$ \cite[Page 26]{atlas}, we know that   $C_{C_1} (u)/ \la -1,
g_1\ra$ has the order $2\times 576$ and has the shape $2.(Alt_4\times Alt_4).2^2$.

Thus, $C_{O(\L)}(\Delta)$ has the order $12 \cdot 576= 2^9\cdot
3^3$ and has the shape $$(3\times2 \times 2.(Alt_4\times Alt_4).2). 2$$
as desired.
\end{proof}

By Lemma \refpp{f10}, Theorem \ref{6A2orbits} and Proposition \ref{2E8DIH12}, we deduce the main theorem of this section.

\begin{thm}\labtt{theorem6A2}
Let $(x,y,z)$ be a triple of elements in the Monster such that $x, y$ in $2A$, $xy$ in $6A$, $z\in
2B \cap C_{\MM}(x,y)$ and $(xy)^3\notin O_2(C_\MM(z))$. Then such triples form one orbit under the conjugation action of $\MM$ and $C_{\MM}(x,y,z)$
has a homomorphism onto
$$C_{O(\L)}(\Delta)/\la \pm 1\ra \cong (3\times 2.(Alt_4\times Alt_4).2).2.$$
The kernel $\tilde{K}$ is a group of order $2^{9}$ and the sequence
\[
1\to \la z\ra \to \tilde{K} \to \{\varphi_\b\mid \b\in \L \text{  and  } \la \b, M+N\ra \in 2\ZZ\} \to 1
\]
is exact.
\end{thm}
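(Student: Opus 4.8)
The plan is to transport the lattice computation of Lemma~\refpp{CD1} to $V^\natural$ through the orbifold dictionary of \refpp{zmuxi}, \refpp{f10} and \refpp{2E8DIH12}, in close parallel with the $6A.1$ argument of Theorem~\refpp{theorem6A1}. The one substantive ingredient, the shape of $C_{O(\L)}(\Delta)$, is already in hand from Lemma~\refpp{CD1}, so the task here is to assemble the three pieces and track two central involutions. For the orbit statement I would first apply Theorem~\refpp{6A2orbits}, which puts every triple with $(xy)^3\notin O_2(C_\MM(z))$ into the form $(\tau_{e_M},\tau_{e_N},z)$ for an $\EE$-pair with $M+N\cong\dih{12}{16}$; Proposition~\refpp{unidih12} shows $(M,N)$ is unique up to $O(\L)$, and realizing that $O(\L)$-conjugacy as Monster conjugacy via the surjectivity of $\xi\circ\mu$ (exactly as in the proof of Theorem~\refpp{theorem3A}) yields a single $\MM$-orbit.

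For the image of $\th$, the key observation is that $\th$ is nothing but the restriction to $C_\MM(x,y,z)=C_{Aut(V^\natural)}(\tau_{e_M},\tau_{e_N},z)$ of the composite $\xi\circ\mu$, under the identification $C_\MM(z)/O_2(C_\MM(z))\cong O(\L)/\la\pm1\ra\cong Co_1$. Indeed, by \refpp{zmuxi} the kernel of $\xi\circ\mu$ on $C_\MM(z)$ is the preimage under $\mu$ of $\mathrm{Hom}(\L,\ZZ_2)$, which is exactly $O_2(C_\MM(z))$. Proposition~\refpp{2E8DIH12} then asserts that $\xi\circ\mu$ maps $C_\MM(x,y,z)$ onto $C_{O(\L)}(t_M,t_N)/\la\pm1\ra=C_{O(\L)}(\Delta)/\la\pm1\ra$, and Lemma~\refpp{CD1} identifies this quotient: factoring out the central $\la-1\ra$ (which lies in $C_{O(\L)}(\Delta)$ because $-1$ is central in $O(\L)$) turns $(3\times2\times2.(Alt_4\times Alt_4).2).2$ into $(3\times2.(Alt_4\times Alt_4).2).2$, the claimed image.

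For the kernel, $\tilde K=\ker\th$ consists of the $k\in C_\MM(x,y,z)$ with $\xi\circ\mu(k)=1$, that is, with $\mu(k)=\varphi_\b\in\mathrm{Hom}(\L,\ZZ_2)$; so $\tilde K\le O_2(C_\MM(z))$ and $\mu$ restricts on $\tilde K$ to a map with kernel $\la z\ra$. Since $z$ acts trivially on $V_\L^+$, which contains $e_M$ and $e_N$ (see \refpp{def:phisubx}), such a $k$ centralizes $\tau_{e_M}$ and $\tau_{e_N}$ precisely when $\varphi_\b$ fixes $e_M$ and $e_N$; by injectivity of $e\mapsto\tau_e$ and the formula $e_M=\frac1{16}\omega_M+\frac1{32}\sum_{\a\in M(4)}e^\a$ this holds iff $\la\b,\a\ra\in2\ZZ$ for all $\a\in M(4)\cup N(4)$, i.e. iff $\la\b,M+N\ra\in2\ZZ$ (this is just the intersection of the two single-lattice kernels of Lemma~\refpp{f10}). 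Hence
\[
1\to\la z\ra\to\tilde K\to\{\varphi_\b\mid\b\in\L\text{ and }\la\b,M+N\ra\in2\ZZ\}\to1
\]
is exact. For the order I would use that $M+N$ is a direct summand of $\L$ with $\det(\L)=1$, so the reduction map $\L/2\L\to\mathrm{Hom}(M+N,\ZZ_2)\cong2^{16}$ is onto and its kernel has order $2^{24-16}=2^8$; therefore $|\tilde K|=2\cdot2^8=2^9$.

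Granting Lemma~\refpp{CD1}, the only delicate point is bookkeeping with the two distinguished involutions. I must confirm that the $\la-1\ra$ divided out of $C_{O(\L)}(\Delta)$ really is the kernel $\la\pm1\ra$ of $\xi$ (both are $-\mathrm{id}_\L$), and that the $2^9$ elements of $\tilde K$ split as the Monster involution $\la z\ra$ against the $2^8$ lattice part, with no collapse between $z$ and the $\varphi_\b$; this is guaranteed because $\mu(z)=1$ while the $\varphi_\b$ have nontrivial $\mu$-image, so $\la z\ra$ meets the $\varphi_\b$-part trivially.
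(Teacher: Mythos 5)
Your proposal is correct and follows essentially the same route as the paper: the paper deduces the theorem from Lemma \refpp{f10}, Theorem \refpp{6A2orbits} and Proposition \refpp{2E8DIH12} (with the group shape from Lemma \refpp{CD1}), and computes the kernel "by the same argument as in Corollary \refpp{corollary3A}," i.e.\ via surjectivity of $\L \to \mathrm{Hom}(M+N,\ZZ_2)$ because $M+N$ is a direct summand and $\det(\L)=1$, exactly as you do. Your write-up simply makes explicit the bookkeeping of the two central involutions and the identification of $\th$ with $\xi\circ\mu$, which the paper leaves implicit.
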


\pf The kernel can be computed by the same argument as in Corollary \refpp{corollary3A}. \qed

\subsection{Isometry groups of $Q'\cong \dih{12}{16}$ and $ann_\L(Q')$}

\begin{nota}\labtt{Q'R'}
Let $Q'= \dih{12}{16}$ and $R'=ann_{\L}(Q')$.  Let
$M$ and $N$ be $EE_8$ sublattices of $Q'$ such that $Q'=M+N$.
As in Notation \refpp{dih12}, we set $h=t_Mt_N$, $g= h^2$ and $u= h^3$.
Then $\la u\ra$ is the center of $\la t_M, t_N\ra$.
Let $E$ be the $(-1)$-eigenlattice of $u$ in
$Q'$.
Then $E$ is also an $EE_8$-sublattice of $Q'$ and $E\cap M \cong E\cap N\cong DD_4$ (see
\cite{GL}).

Set $F_M:=ann_M(N)$ and $F_N:= ann_N(M)$. Then by \cite{GL}, we have
$F_M\cong F_N\cong AA_2$ and $J:= ann_{Q'}( F_M\perp F_N)\cong K_{12}$. In
addition, $E\cap J\cong AA_2\perp AA_2$.
\end{nota}

\begin{lem}\labtt{A2D4OR'}
The isometry group  $O(R')\cong O(A_2) \circ O(D_4)$ and has the order
$2^8\cdot 3^3$.
\end{lem}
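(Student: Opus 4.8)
The plan is to deduce this lemma directly from Lemma \refpp{ORp}, which already determines the isometry group of the abstract tensor lattice, once $R'$ has been identified with $A_2\otimes D_4$. That identification is available: the explicit description of $R'$ recorded just before Notation \refpp{Q'R'}, as the $\ZZ$-span of $AA_2^{\perp 4}$ together with the two diagonal glue vectors $\frac12(\beta_1,\beta_1,\beta_1,\beta_1)$ and $\frac12(\beta_2,\beta_2,\beta_2,\beta_2)$, gives $R'\cong A_2\otimes D_4$ (see \refpp{A2D4}). So first I would record this isometry, and then invoke Lemma \refpp{ORp} to conclude $O(R')\cong O(A_2\otimes D_4)\cong O(A_2)\circ O(D_4)$.

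It then remains only to confirm the order, which I would read off as follows. Here $O(A_2)\cong 2\times Sym_3$ has order $12=2^2\cdot 3$, while $O(D_4)\cong Weyl(F_4)\cong O^+(4,3)$ has order $1152=2^7\cdot 3^2$, the triality automorphisms supplying the factor $Sym_3$ over $Weyl(D_4)$ (compare the shape $2.(Alt_4\times Alt_4).2^2$ used in Lemma \refpp{CD1}). Both factors contain the central involution acting as $-1$ on their respective root lattice, and since $(-1)\otimes 1$ and $1\otimes(-1)$ induce the same isometry of the tensor product, these two central involutions are amalgamated in the central product. Hence $|O(R')|=12\cdot 1152/2=2^8\cdot 3^3$, in agreement with the count already obtained in Lemma \refpp{ORp}.

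I do not expect any genuine obstacle inside this lemma, since both of its ingredients are in hand; the substance lies entirely in those ingredients. The first is the lattice-level identification $R'\cong A_2\otimes D_4$ carried out in Appendix A. The second, and the only delicate one, is the input to Lemma \refpp{ORp}: its force is that every minimal (norm $4$) vector of $A_2\otimes D_4$ is a pure tensor $\alpha\otimes\beta$ of an $A_2$-root with a $D_4$-root, so that any isometry must respect the tensor decomposition and therefore lie in $O(A_2)\circ O(D_4)$ rather than in some larger group. The one place a careful reader might want reassurance is that adjoining the diagonal glue vectors to $AA_2^{\perp 4}$ does not produce minimal vectors outside this pure-tensor family; but this is exactly the norm bookkeeping underlying Lemma \refpp{ORp}, and once it is granted the present statement is immediate.
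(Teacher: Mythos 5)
Your proof is correct and follows the paper's own argument exactly: the paper's proof is the one-line deduction that $R'\cong A_2\otimes D_4$ by Remark \refpp{A2D4} and hence $O(R')\cong O(A_2)\circ O(D_4)$ of order $2^8\cdot 3^3$ by Lemma \refpp{ORp}. Your additional order bookkeeping ($12\cdot 1152/2=6912=2^8\cdot 3^3$, with the two central $-1$'s amalgamated in the central product) is accurate and consistent with Lemma \refpp{WF4}.
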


\begin{proof}
Since $R'\cong A_2\otimes D_4$ \refpp{A2D4}, we have $O(R')\cong
O(A_2\otimes D_4)\cong O(A_2)\circ O(D_4)$ by \refpp{ORp}.
\end{proof}

\begin{prop}
Let $M,N$ be defined as in \refpp{Q'R'}. Then the image of $\la t_M, t_N\ra$ in $O(Q')$ is normal in $O(Q')$.
\end{prop}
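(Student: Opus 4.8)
The plan is to realize $\Delta=\la t_M,t_N\ra$ as the subgroup of $O(Q')$ generated by a conjugation-stable set of involutions, so that normality becomes automatic. Let $\mathcal P$ be the set of $\EE$-sublattices $P\subseteq Q'$ that are RSSD in $Q'$, i.e. those for which $t_P$ restricts to an isometry of $Q'$. The key formal point is that $\mathcal P$ is intrinsic to the isometry type of $Q'$: if $\sigma\in O(Q')$ and $P\in\mathcal P$, then $\sigma P$ is again an $\EE$-sublattice and $t_{\sigma P}=\sigma t_P\sigma^{-1}\in O(Q')$, so $\sigma P\in\mathcal P$. Hence $O(Q')$ permutes $\mathcal P$ and, by conjugation, permutes the involutions $\{t_P\mid P\in\mathcal P\}$. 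Consequently the subgroup $W:=\la t_P\mid P\in\mathcal P\ra$ is normal in $O(Q')$, with no knowledge of $O(Q')$ required.

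It then remains to identify $W$ with $\Delta$. The inclusion $\Delta\le W$ is immediate: since $\Delta$ stabilizes $Q'=M+N$, both $t_M$ and $t_N$ restrict to isometries of $Q'$, so $M,N\in\mathcal P$. Recall that $\Delta\cong Dih_{12}$ has exactly seven involutions, namely the six reflections, falling into the two $\la h\ra$-orbits $\{t_M,t_{gM},t_{g^2M}\}$ and $\{t_N,t_{gN},t_{g^2N}\}$ with $g=h^2$, together with the central involution $u=h^3=t_E$. Here each of $gM,g^2M,gN,g^2N$ is an $\EE$-sublattice of $Q'$ because $g\in O(\L)$ stabilizes $Q'$, and $E\subseteq Q'$ is the $(-1)$-eigenlattice of $u$ from Notation \refpp{Q'R'}; all seven are RSSD, since their SSD involutions lie in $\Delta\le O(Q')$. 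Thus these seven sublattices are distinct members of $\mathcal P$ whose involutions generate $\Delta$.

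The substantive work is the reverse inclusion $W\le\Delta$, i.e. showing that these seven are the only members of $\mathcal P$ (equivalently, that every $P\in\mathcal P$ has $t_P\in\Delta$). Any such $t_P$ is a trace-$0$ involution of $Q'$ whose negated lattice $P\cong EE_8$ has rank $8$. Since $P$ is generated by its $240$ norm-$4$ vectors, classifying $\mathcal P$ amounts to locating the sub-configurations of type $EE_8$ among the norm-$4$ vectors of $Q'$. I would carry this out in the explicit coordinates of Appendix A, where $Q'$ and $R'\cong A_2\otimes D_4$ are presented concretely; the norm-$4$ vectors of $Q'$ can then be enumerated and organized relative to the reference sublattices $F_M\perp F_N$ (rank $4$) and $J=ann_{Q'}(F_M\perp F_N)\cong K_{12}$ of Notation \refpp{Q'R'}. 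Each candidate $P$ is constrained by how $P\cap J$ and the projection of $P$ into $\QQ(F_M\perp F_N)$ sit inside these pieces, and together with Lemma \refpp{gmn}, Lemma \refpp{Ng-1M}, and the uniqueness statement Proposition \refpp{unidih12} this should leave exactly the seven listed sublattices.

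The main obstacle is precisely this last classification: proving that $Q'$ carries no RSSD $\EE$-sublattice beyond the seven attached to $\Delta$, so that $W=\Delta$ rather than some strictly larger normal subgroup. Everything else is formal --- that $W$ is automatically normal, and that $\Delta\le W$. An alternative to the direct enumeration would be to first show $J\cong K_{12}$ is a characteristic sublattice of $Q'$ and pass to the restriction map $O(Q')\to O(J)$, but I expect the explicit count via Appendix A to be the most transparent route. Once $W=\Delta$ is established, normality of $\Delta$ in $O(Q')$ follows at once from the first paragraph.
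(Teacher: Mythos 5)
Your proposal is correct and takes essentially the same route as the paper: normality follows because $O(Q')$ permutes the $EE_8$-sublattices of $Q'$ and hence conjugates their SSD involutions among themselves, and the whole argument reduces to the classification of $EE_8$-sublattices of $Q'$ as exactly the seven lattices $E$, $g^iM$, $g^iN$. The classification you flag as the main remaining obstacle is precisely Proposition \refpp{EE8inQ'}, which the paper proves in Appendix A by a case analysis on the possible intersections $E\cap Y$ (using the classification of $EE_8$-pairs in \cite{GL}) rather than by the direct enumeration of norm-$4$ vectors you sketch.
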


\begin{proof}
It follows from the classification of $EE_8$-sublattices in $Q'$ ( Lemma
\refpp{EE8inQ'}).
\end{proof}

\begin{prop}
Let $y\in O(Q')$. Then $y$ normalizes the subgroup $\la g\ra$ and thus it stabilizes the sublattice $J=ann_{Q'}(Fix_{Q'}(g))$.
\end{prop}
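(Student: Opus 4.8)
The plan is to reduce this to the preceding Proposition, which states that the image $\bar\Delta$ of $\Delta=\la t_M,t_N\ra$ in $O(Q')$ is normal in $O(Q')$, combined with the internal structure of the dihedral group $\Delta$. First I would record that the restriction map $\Delta\to O(Q')$ is injective: any element of $\Delta$ acting trivially on $Q'=M+N$ also fixes $R'=ann_\L(Q')$ pointwise, because $t_M$ and $t_N$ act as $+1$ on $M^\perp\supseteq R'$ and on $N^\perp\supseteq R'$. Such an element therefore acts trivially on the finite-index sublattice $Q'\perp R'$ (the ranks sum to $24$), hence on all of $\L$, so it is the identity. Thus $\bar\Delta\cong Dih_{12}$ by Notation \refpp{dih12}, and the image $\bar g$ of $g=h^2$ has order $3$ and generates the order-$3$ subgroup of $\bar\Delta$.

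The key group-theoretic step is then that in a dihedral group of order $12$ the subgroup of order $3$ is unique: it is the Sylow $3$-subgroup, sitting inside the unique (hence characteristic) cyclic rotation subgroup $\la h\ra\cong C_6$. Consequently $\la\bar g\ra$ is characteristic in $\bar\Delta$. Since a characteristic subgroup of a normal subgroup is normal, and $\bar\Delta\trianglelefteq O(Q')$ by the previous Proposition, I conclude $\la\bar g\ra\trianglelefteq O(Q')$. In particular every $y\in O(Q')$ satisfies $y\la g\ra y^{-1}=\la g\ra$, which is the first assertion.

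For the second assertion I would note that normalizing $\la g\ra$ forces $ygy^{-1}\in\{g,g^{-1}\}$, and since $Fix_{Q'}(g)=Fix_{Q'}(g^{-1})$ one has $y\bigl(Fix_{Q'}(g)\bigr)=Fix_{Q'}(ygy^{-1})=Fix_{Q'}(g)$. Hence $y$ stabilizes $Fix_{Q'}(g)$ and therefore also its annihilator $J=ann_{Q'}(Fix_{Q'}(g))$. The only point that requires genuine care is the faithfulness of $\Delta\to O(Q')$, which guarantees that $\bar g$ really has order $3$ so that $\la\bar g\ra$ is the order-$3$ subgroup picked out above; the remaining ingredients — uniqueness of the order-$3$ subgroup in $Dih_{12}$ and the transitivity of normality through characteristic subgroups — are routine, so I do not anticipate a serious obstacle.
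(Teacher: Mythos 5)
Your proposal is correct and follows essentially the same route as the paper: the paper's one-line proof likewise combines normality of the image of $\la t_M, t_N\ra$ in $O(Q')$ with the fact that $\la g\ra$ is the unique (hence characteristic) subgroup of order $3$ in $Dih_{12}$. Your additional verification that the restriction $\Delta\to O(Q')$ is faithful is a reasonable piece of due diligence that the paper leaves implicit.
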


\begin{proof}
Since $y$ normalizes $\la t_M, t_N\ra$ and  $\la g\ra$ is the unique subgroup of order $3$ in $\la t_M, t_N\ra\cong Dih_{12}$,
$y$ also normalizes $\la g\ra$.
\end{proof}

\begin{rem}\labtt{R''}
By the discussion in Case 2 of
Appendix A, it is easy to see that $R'':=ann_{Q'}(E)= ann_{J}(E\cap J)\cong A_2\otimes
D_4\cong R'$.
\end{rem}

\begin{thm}
Let $Q'\cong \dih{12}{16}$ and $E$ be  defined as in
Notation \refpp{Q'R'}. Let $R''= ann_{Q'}(E)$ and let $\varphi': O(Q') \to O(R'')$ be the restriction map.
Then $Im(\varphi')=O(R'') \cong O(A_2)\circ O(D_4)$ and $Ker(\varphi')\cong
2.O^+(4,2)$. Therefore, $O(Q')$ has the shape  $2.O^+(4,2).(O(A_2)\circ
O(D_4))$.
\end{thm}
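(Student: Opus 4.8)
The plan is to analyze $\varphi'$ through the gluing of $Q'$ from the orthogonal sum $E\perp R''$, where $E\cong\EE$ is the $(-1)$-eigenlattice of $u$ and $R''=ann_{Q'}(E)\cong A_2\otimes D_4$ (Remark \refpp{R''}). First I would verify that $\varphi'$ is well defined: since $\la t_M,t_N\ra\cong Dih_{12}$ is normal in $O(Q')$ (proved above) and $u=h^3$ generates its center, every $y\in O(Q')$ fixes $u$, hence stabilizes the $(-1)$-eigenlattice $E$ of $u$ and therefore stabilizes $R''=ann_{Q'}(E)$. I then record the two facts about the factors. By Lemma \refpp{A2D4OR'}, $O(R'')\cong O(A_2)\circ O(D_4)$ has order $2^8\cdot 3^3$. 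And since $E\cong\EE$, its isometry group is $Weyl(E_8)$, which surjects onto the orthogonal group $O^+(8,2)$ of the nonsingular $\FF_2$-quadratic space $\dg{E}\cong E_8/2E_8$, equipped with $q(x)=\ha\la x,x\ra \bmod 2$, with kernel exactly $\la -1\ra=\la t_E\ra$ (cf.\ Theorem \refpp{thm:E8}).

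Next I would pin down the glue. Comparing determinants, $|\dg{E}|=2^8$ while $|\dg{R''}|=|\dg{Q'}|=6^4$ (Notation \refpp{Q'}) force $[Q':E\perp R'']=2^4$, so $G:=Q'/(E\perp R'')$ is a $2$-group of order $2^4$ lying isotropically in $\dg{E}\times\dg{R''}$. Because $E$ and $R''$ are primitively embedded in $Q'$, both projections of $G$ are injective; as $G$ is a $2$-group of order $2^4$ and the $2$-part of $\dg{R''}$ is $2^4$, the projection onto $\dg{R''}$ is an isomorphism onto that $2$-part. Hence $G$ is the graph of an $\FF_2$-isometry $\lambda$ from the $2$-part of $\dg{R''}$ onto a $4$-dimensional subspace $W:=p_E(G)\subseteq\dg{E}$, with $q_E|_W\cong -\,q_{R''}|_{\{2\text{-part}\}}$. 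The key point, and the main obstacle, is the isometry type of $W$. Working $2$-adically, $A_2$ is unimodular, so $R''=A_2\otimes D_4\cong D_4\perp D_4$ over $\ZZ_2$ and the $2$-part of $\dg{R''}$ is $\dg{D_4}\oplus\dg{D_4}$. Each $\dg{D_4}\cong 2^2$ has all three nonzero classes of norm $1$, i.e.\ is an anisotropic (minus-type) plane; since $(-)\cdot(-)=(+)$, the $2$-part of $\dg{R''}$, and hence $W$, is a nonsingular $O^+(4,2)$-space, so $W^\perp$ is of plus type as well.

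With $W$ identified I can finish both halves. For the kernel, $\psi\in Ker(\varphi')$ fixes $R''$ pointwise, hence preserves $E$ and is determined by $\sigma:=\psi|_E\in O(E)$; preservation of the glue with trivial action on $\dg{R''}$ forces $\bar\sigma$ to fix $W$ pointwise. Thus $Ker(\varphi')$ is the preimage in $Weyl(E_8)$ of the pointwise stabiliser of $W$ in $O^+(8,2)$, which equals $O(W^\perp)\cong O^+(4,2)$; since $O(E)\to O^+(8,2)$ has kernel $\la t_E\ra$ of order $2$, we get $Ker(\varphi')\cong 2.O^+(4,2)$, the central $2$ being $\la t_E\ra$. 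For surjectivity, given $\rho\in O(R'')$ its action preserves the $2$-part of $\dg{R''}$; transporting through $\lambda$ gives an isometry of $W$, which extends by Witt's theorem to an element of $O^+(8,2)=O(\dg{E})$ and lifts to some $\sigma\in O(E)$. By construction $\sigma\perp\rho$ preserves the isotropic glue $G$ (the $3$-part of $\dg{R''}$ is not glued, so it imposes no condition), hence preserves $Q'$ and restricts to $\rho$; thus $Im(\varphi')=O(R'')$. Combining the two computations, $O(Q')$ is an extension $2.O^+(4,2).(O(A_2)\circ O(D_4))$, and as a numerical check its order is $144\cdot 6912=2^{12}\cdot 3^5$.
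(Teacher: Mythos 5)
Your proof is correct in all its conclusions and reaches them by a somewhat different route than the paper, which is worth noting. For surjectivity of $\varphi'$, the paper works ``from below'': it quotes Lemma \refpp{CD1} to see that $C_{O(\L)}(\Delta)$ already acts as the full $O(D_4)$ on $R''\cap N\cong DD_4$, and then uses the $Dih_6$ quotient of $\la t_M,t_N\ra$ permuting $\{R''\cap N, R''\cap gN, R''\cap g^2N\}$ to generate all of $O(A_2)\circ O(D_4)$ in the image. You instead argue ``from above'': any $\rho\in O(R'')$ acts on the $2$-part of $\dg{R''}$, is transported through the glue isomorphism $\lambda$ to an isometry of $W=p_E(Q')$, extended to $O(\dg E)\cong O^+(8,2)$ (here Witt is not even needed, since $\dg E=W\perp W^\perp$ and you may extend by the identity on $W^\perp$), and lifted to $O(E)$; the resulting $\sigma\perp\rho$ preserves the glue and hence $Q'$. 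Your argument is self-contained within $Q'$ and does not import isometries of $\L$, which is a genuine simplification; the paper's argument has the advantage of reusing work already done. The kernel computations are essentially identical (pointwise stabilizer of a nondegenerate plus-type $4$-space in $O^+(8,2)$, pulled back through $Weyl(E_8)\to O^+(8,2)$ with kernel $\la t_E\ra$).

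One step deserves more care. You identify the $2$-part of $\dg{R''}$ with $\dg{D_4}\oplus\dg{D_4}$ on the grounds that $A_2$ is $2$-adically unimodular, ``so $A_2\otimes D_4\cong D_4\perp D_4$ over $\ZZ_2$.'' Unimodularity of the first factor does not by itself give such a splitting: for instance $A_2\otimes A_1=AA_2$ has $2$-adic determinant $12$, while $A_1\perp A_1$ has determinant $4$, and these differ by the non-square unit $3$, so the analogous claim fails in rank one. For $D_4$ the determinant obstruction happens to vanish and your conclusion --- that the $2$-part of $\dg{R''}$, hence $W$, is a nonsingular plus-type space (an orthogonal sum of two anisotropic planes) --- is correct and agrees with the paper, which obtains it by identifying $p_E(Q')$ with the $2$-part of $\dg{AA_2\perp AA_2}$ coming from $E\cap J\cong AA_2\perp AA_2$. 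Since the sign of the type is exactly what distinguishes $O^+(4,2)$ (order $72$) from $O^-(4,2)$ (order $120$) in your kernel computation, you should either justify the $2$-adic splitting for $D_4$ specifically or compute the discriminant form directly from the explicit description of $R'$ in Appendix A. With that repaired, everything else goes through, and your order check $144\cdot 6912=2^{12}\cdot 3^5$ is consistent with the paper's later statement that $O(Q')$ acts on $\dg{Q'}$ as $O^+(4,2)\times O^+(4,3)$ with kernel $Dih_{12}$.
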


\begin{proof}
First we note that  $O(Q')$ stabilizes the $EE_8$ sublattice $E$ and hence it also
stabilizes $R''= ann_{Q'}(E) \cong A_2\otimes D_4$. Let $p_E: Q' \to \dg {E}$ and
$p_{R''}: Q' \to \dg {R''}$ be the natural maps and let $\xi': p_E(Q') \to
p_{R''}(Q')$ be the gluing map from $E\perp R''$ to $Q'$.  Then
\[
O(Q')=\{ y\in O(E)\times O(R'')\mid  y \xi'
=\xi ' y\}.
\]

By Lemma \refpp{CD1},  we know that $$R'' \cap N=R''\cap g^{-1}M = N\cap
g^{-1}M \cong DD_4$$ and $O(Q')$ acts as the full isometry group $O(D_4)$ on
$R''\cap N$. Similarly, we also have  $R''\cap g^i N (= R''\cap g^{i-1} M) \cong DD_4$ and $O(Q')$ acts
as $O(D_4)$ on each of $R''\cap g^i N$ for $i=0,1,2$. Moreover, the dihedral
group $\la t_N, t_M\ra $ acts on $R''$ as $Dih_6$ with the kernel $\la t_E\ra$.
More precisely, $\la t_N, t_M\ra $ acts as permutations on the set
$\{ R''\cap M , R''\cap gM, R''\cap g^2M\} = \{ R''\cap N, R''\cap gN , R''\cap g^2N\}$.
Hence, $O(Q')$ acts on $R''$ as the full isometry group $O(R'')\cong O(A_2)\circ
O(D_4)$ and we have
\[
Im(\varphi')= O(R'') \cong O(A_2)\circ O(D_4).
\]
In this case, the kernel of $\varphi'$ is given by
\[
Ker(\varphi') =\{ y\in O(E)\mid  y \text{ fixes }  p_E(Q') \text{ pointwise }\}.
\]
By our discussion in Case 2 of Appendix A, $[Q': E\perp R'']= 2^4 $ and $p_E(Q')$
forms a $4$-dimensional non-degenerate quadratic spaces of $(+)$-type; in fact,
it is isometric to the $2$-part of $\dg{AA_2\perp AA_2}$ and is a direct sum of
two non-singular 2-dimensional quadratic spaces of $(-)$-type. Recall that
$O(E)\cong 2. O^+(8,2)$ and hence the subgroup $Ker(\varphi')$ that fixes $p'(Q')$  pointwise is, by Witt's theorem,
isomorphic to $2.O^+(4,2)$.
This group $Ker(\varphi')$ is actually isomorphic to a direct product $2\times O^+(4,2)$ (for if $Y$ denotes the normal subgroup of $Ker(\varphi')$ generated by reflections on $R$, $Y\cong Dih_6 \times Dih_6$.   Thus $Ker(\varphi')/Y$ has order 4.  Now use \refpp{liftinvollatticemod2}).
\end{proof}

On the lattice $ann_{Q'}(J)\cong AA_2\perp AA_2$, $t_M$ negates one of the
$AA_2$ summands and fixes the other, while $t_N$ behaves analogously,
negating the summand which $t_M$ fixes. It is clear from the analysis of
$Ker(\varphi ')$ that there exists an element of $Ker(\varphi ')$ which
interchanges the two $AA_2$ summands. Therefore, since
 $\la t_M, t_N \ra$ is normal in $O(Q')$,
$O(Q')$ induces by conjugation the full automorphism group of $\la t_M, t_N \ra$ ($Aut(Dih_{12})\cong Dih_{12}$).

\begin{coro} \labtt{shapeO(Q')}
$O(Q')$ leaves invariant the sublattice $E\perp R''$ and the restriction maps to $E, R'$ give an embedding of
$O(Q')$ in   $(2.[Dih_6 \times O^+(4,2)])  \times (Dih_6 \times O^+(4,3))$.
\end{coro}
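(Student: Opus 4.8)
The plan is to upgrade the analysis of the restriction $\varphi'\colon O(Q')\to O(R'')$ from the preceding theorem into a faithful action of $O(Q')$ on the orthogonal sum $E\perp R''$, and then to read off each coordinate image. First I would verify that $E\perp R''$ is $O(Q')$-invariant: by the normality of $\la t_M,t_N\ra$ in $O(Q')$ established above, its center $\la u\ra$ (with $u=h^3$, Notation \refpp{Q'R'}) is characteristic, so every $y\in O(Q')$ centralizes the order-$2$ element $u$ and hence preserves its two eigenlattices in $Q'$, namely the $(-1)$-eigenlattice $E$ and the $(+1)$-eigenlattice $ann_{Q'}(E)=R''$. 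To make this an embedding I would note that the combined restriction $O(Q')\to O(E)\times O(R'')$ is faithful, since by Case 2 of Appendix A one has $[Q':E\perp R'']=2^4<\infty$, so an isometry trivial on both $E$ and $R''$ is trivial on a finite-index sublattice and hence on $Q'$.

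For the $R''$-coordinate, the preceding theorem gives that restriction to $R''$ is onto $O(R'')\cong O(A_2)\circ O(D_4)$. Since $-1\in O(D_4)$ while $O(A_2)\cong Weyl(A_2)\times\la -1\ra$, the central product identifies the two central involutions, so $O(A_2)\circ O(D_4)\cong Weyl(A_2)\times O(D_4)\cong Dih_6\times O^+(4,3)$, using $O(D_4)\cong O^+(4,3)$. Via the isometry $R''\cong R'$ of Remark \refpp{R''} this is exactly the claimed second factor.

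For the $E$-coordinate, let $G_E$ be the image in $O(E)\cong Weyl(E_8)$. Because $O(Q')$ also stabilizes $J$ (shown above) and $E$, it stabilizes $E\cap J\cong AA_2\perp AA_2$ (Notation \refpp{Q'R'}), so $G_E\le Stab_{O(E)}(E\cap J)$. The normal subgroup $Ker(\varphi')\cong 2\times O^+(4,2)$ acts trivially on $R''$, hence injects into $O(E)$; since $-1$ acts trivially on the $\FF_2$-space $p_E(Q')$, the factor $O^+(4,2)$ is realized as the full isometry group of the nonsingular $(+)$-type quadratic space $p_E(Q')\cong 2^4$ through which $G_E$ acts on the discriminant of $E$. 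The dihedral quotient $\la t_M,t_N\ra/\la u\ra\cong Dih_6$ is induced on $E$ (modulo $\pm1$, as $u$ restricts to $-1$ on $E$) and is the same $Dih_6$ acting on $R''$. Assembling the normal subgroup $Ker(\varphi')|_E\cong 2\times O^+(4,2)$ with this $Dih_6$ quotient places $G_E$ inside a subgroup of shape $2.[Dih_6\times O^+(4,2)]$, the first factor, completing the embedding.

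The step I expect to be the main obstacle is this last identification of $G_E$: showing the $E$-image is contained in (indeed equal to) $2.[Dih_6\times O^+(4,2)]$ requires matching the $O^+(4,2)$-action on the gluing space $p_E(Q')$ with the $O(R'')$-actions permitted by the gluing, i.e. checking the compatibility that realizes $O(Q')$ as the fiber product of the two coordinate images over their common $Dih_6$. The order identity $|O(Q')|=2^{12}{\cdot}3^{5}$ against $|2.[Dih_6\times O^+(4,2)]|{\cdot}|Dih_6\times O^+(4,3)|/6$ then serves as the final consistency check, the index $6$ reflecting the shared $Dih_6$.
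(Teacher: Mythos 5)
Your first three steps are sound and match what the paper (which offers no explicit proof of this corollary beyond the preceding theorem) implicitly relies on: invariance of $E\perp R''$ via the characteristic center $\la u\ra$ of the normal dihedral group, faithfulness of the combined restriction from $[Q':E\perp R'']=2^4<\infty$, and the identification $O(R'')\cong O(A_2)\circ O(D_4)\cong Dih_6\times O^+(4,3)$. The genuine gap is exactly at the step you flag. Exhibiting the normal subgroup $Ker(\varphi')|_E\cong 2\times O^+(4,2)$ together with the $Dih_6$ induced by $\la t_M,t_N\ra$ only shows that the $E$-image $G_E$ \emph{contains} a subgroup of shape $2.[Dih_6\times O^+(4,2)]$; to bound $G_E$ from above you must compute the kernel $K_E$ of the restriction $O(Q')\to O(E)$ and verify $|O(Q')/K_E|\le 864$. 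You never compute $K_E$.

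When one does compute it, the step fails. An element of $O(Q')$ trivial on $E$ is determined by its restriction to $R''$, and $(1_E,y)$ preserves $Q'$ exactly when $y$ fixes pointwise the glue group $p_{R''}(Q')/R''$, which is the full $2$-part $A_2/2A_2\otimes \dg{D_4}\cong 2^4$ of $\dg{R''}$ (since $[Q':E\perp R'']=2^4$). Under $O(R'')\cong O(A_2)\circ O(D_4)$ the action on this $2^4$ is $\bar\sigma\otimes\bar\tau$; its image is $Sym_3\times Sym_3$ (one $Sym_3$ from $O(A_2)$ acting on $A_2/2A_2$, one from the triality action of $O(D_4)$ on $\dg{D_4}$) and its kernel has order $6912/36=192$. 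Hence $|K_E|=192$ and $|G_E|=2^{12}3^5/192=5184$, whereas $|2.[Dih_6\times O^+(4,2)]|=864$. Concretely, elements of $O(Q')$ inducing triality on the $D_4$ tensor factor of $R''$ move the glue group and therefore cannot act trivially on $E$, so $G_E/(Ker(\varphi')|_E)\cong Sym_3\times Sym_3$ of order $36$, not the $Dih_6$ of order $6$ that your assembly supplies. Your closing order check $864\cdot 6912/6=|O(Q')|$ does not detect this, because the true subdirect product has coordinate images $5184$ and $6912$ amalgamated over a quotient of order $36$, which yields the same total order. So the obstacle you identified is real: the containment in the stated first factor cannot be established by this route, and the computation of $K_E$ indicates that the first factor must be enlarged by the triality $Sym_3$ (to a group of order $5184$) before the asserted embedding can hold.
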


\begin{rem}\labtt{shapeO(Q')}
We have $O(D_4)\cong Weyl(F_4)\cong O^+(4,3)$.

The group $O(Q')$ acts on $\dg{Q'}\cong 6^4\cong
2^4\times  3^4$ as the full isometry group $O^+(4,2)\times O^+(4,3)$ and the
kernel of the action is given by $\la t_M, t_N\ra \cong
Dih_{12}$.
\end{rem}

\appendix
\section{Embeddings of $\dih{6}{14}$ and $\dih{12}{16}$ into the Leech lattice}

Let $L\cong AA_2^{12}$ be the orthogonal sum of $12$ copies of $AA_2$ and let $\mathcal{H}$ be the hexacode over $\FF_4=\{0,1,\omega, \omega^2\}$  with the generating matrix
\[
\begin{pmatrix}
1&1&1&1&0&0\\
0&0&1&1&1&1\\
1&\omega &0 &\omega^2 & 0&\omega^2
\end{pmatrix}
\]

Let $\mathcal{D}$ be the ternary code generated by
\[
\left(
\begin{array}{cccccccccccc}
1&1 &0&0 &0&0 &1&0 &0&0 &0&0 \cr
1&2 &0&0 &0&0 &0&1 &0&0 &0&0 \cr
0&0 &1&1 &0&0 &0&0 &1&0 &0&0 \cr
0&0 &1&2 &0&0 &0&0 &0&1 &0&0 \cr
0&0 &0&0 &1&1 &0&0 &0&0 &1&0  \cr
0&0 &0&0 &1&2 &0&0 &0&0 &0&1
\end{array}
\right).
\]
Then $\mathcal{D}$ is isomorphic to a sum of 3 ternary tetra-codes.

We can construct the Leech lattice by using $L$ and $\mathcal{D}$ and $\mathcal{H}
\perp \mathcal{H}$ as glue codes~\cite{KLY}.

\textbf{Case 1:} $\dih{6}{14}$.

Let $X$ be the sum of the first 5 copies of $AA_2$. Then by the above construction,
\[
Q:=\dih{6}{14} \cong ann_\Lambda( X).
\]
Moreover,
\[
R:=ann_\Lambda(Q)= ann_\Lambda(S),
\]
where $S$ is the sum of the last 7 copies of $AA_2$.

Note that $[R:AA_2^5]=2^4$. In fact, one can glue $AA_2^5$ to $R$ by using the
glue code generated by
\[
\begin{pmatrix}
1&1&1&1&0\\
1&\omega &\omega^2 & 0&\omega^2
\end{pmatrix}
\]

\[
\begin{array}{ccccc}
&& \Lambda & &\\
& \swarrow & &\searrow&\\
&EE_8^3 &  &  J' \perp J&\\
\mathcal{D}& \searrow& &\swarrow& \mathcal{C}' \perp \mathcal{C}\\
&& AA_2^{12}&&
\end{array}
\]

\textbf{Case 2:} $\dih{12}{16}$.

Let $Q':=\dih{12}{16}$ and $R':=ann_{\Lambda}(Q')$. Then $\dg{Q'}\cong \dg{R'}\cong
6^4$.

Let $X'$ be the sum of the 1st, 2nd,
3rd and 4th copies of $AA_2$. Then
\[
 \dih{12}{16}\cong ann_{\Lambda}( X').
\]

By explicit calculation in the Leech lattice, one can show that $R'$ contains a
sublattice isometric to $AA_2^{\perp 4}$ and $R'$ is isometric to
\[
span_\ZZ\{ AA_2^{\perp 4}, \frac{1}2 (\b_1, \b_1, \b_1, \b_1),
\frac{1}2(\b_2, \b_2, \b_2, \b_2)\},
\]
where $\b_1=\sqrt{2}\a_1$, $\b_2=\sqrt{2}\a_2$ and $\{\a_1,\a_2\}$ is a set of
fundamental roots for $A_2$.  Note that $\dg{AA_2}=2\cdot 6$.

By the analysis in \cite{GL}, $Q'$ contains a sublattice
\[
AA_2\perp AA_2 \perp J,
\]
where $J$ is isometric to the Coxeter-Todd lattice.

\begin{rem}\labtt{A2D4}
The lattice $R'$ is isometric to $A_2\otimes D_4$.
\end{rem}

\begin{proof}
Recall that  $R'$ is isometric to
\[
span_\ZZ\{ AA_2^{\perp 4}, \frac{1}2 (\b_1, \b_1, \b_1, \b_1),
\frac{1}2(\b_2, \b_2, \b_2, \b_2)\},
\]
where $\b_1=\sqrt{2}\a_1$, $\b_2=\sqrt{2}\a_2$ and $\{\a_1,\a_2\}$ is a set of
fundamental roots for $A_2$.  Then the sublattices
\[
\begin{split}
D^1= &span\{ (\b_1,0, 0,0), (0, \b_1, 0,0), (0,0,\b_1,0), \frac{1}2 (\b_1, \b_1, \b_1, \b_1) \} \quad \text{ and } \\
D^2= &span\{ (\b_2,0, 0,0), (0, \b_2, 0,0), (0,0,\b_2,0), \frac{1}2 (\b_2, \b_2, \b_2, \b_2)\}
\end{split}
\]
are both isometric to $DD_4$ and $D^2 =g(D^1)$, where $g=(\sigma,  \sigma,
\sigma,  \sigma)$ and $\sigma\in O(AA_2)$ is given by  $\b_1\to \b_2 \to
-(\b_1+\b_2) \to \b_1$.  Then $D^1+D^2\cong A_2\otimes D_4$ by Lemma 3.1
\cite{GL}. Hence $R'\cong  A_2\otimes D_4$ since they have the same
determinant.
\end{proof}

In this appendix, we shall determine all $EE_8$-sublattices in $Q'\cong\dih{12}{16}$ and $Q\cong \dih{6}{14}$.

\begin{lem}\labtt{CThasnoEE8}
Let $J$ be isometric to the rank 12 Coxeter-Todd lattice $K_{12}$. Then $J$ contains no sublattices isometric to $EE_8$.
\end{lem}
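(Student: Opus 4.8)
The plan is to assume $\EE$ embeds in $J\cong K_{12}$ and derive a numerical contradiction from discriminant forms, exploiting the clash between the pure $3$-group $\dg{K_{12}}\cong 3^6$ and the $2$-group discriminant of $\EE$. First I would record the two relevant invariants. Since $\EE=\sqrt2E_8$ and $E_8$ is unimodular, we have $\det\EE=2^8$ and $\dg{\EE}\cong \frac{1}{\sqrt2}E_8/\sqrt2 E_8\cong E_8/2E_8\cong(\ZZ/2)^8$, a $2$-group; on the other side $\dg J\cong 3^6$ is a pure $3$-group, as recorded before Lemma \refpp{auto}. The strategy is to compare $3$-primary parts of discriminant forms across a primitive embedding, where the $3$-part of $\dg J$ must be ``carried'' entirely by the rank-$4$ annihilator, which is too small to support it.

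Concretely, suppose $N:=\EE\le J$ and pass to the primitive closure $\bar N:=(\QQ\otimes N)\cap J$. The glue $\bar N/N$ is an isotropic subgroup of $\dg N$, hence a $2$-group, so $\det\bar N=2^8/[\bar N:N]^2$ is again a power of $2$; in particular $\dg{\bar N}$ is a $2$-group and its $3$-part vanishes. Let $P:=ann_J(\bar N)$, a primitive sublattice of $\rank 4$. Applying the standard discriminant-form formalism for primitive embeddings (see \cite{gal}) to $\bar N\perp P\le J$, the glue group $\Gamma:=J/(\bar N\perp P)$ injects into each of $\dg{\bar N}$ and $\dg P$ (because $\bar N$ and $P$ are primitive), is the graph of an anti-isometry between its two projections, and satisfies $\dg J\cong\Gamma^\perp/\Gamma$ inside $\dg{\bar N}\oplus\dg P$. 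Decomposing into $p$-primary parts and using $(\dg{\bar N})_3=0$, one gets $\Gamma_3=0$ and hence $(\dg J)_3\cong(\dg P)_3$, i.e. $(\dg P)_3\cong 3^6$.

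This is the desired contradiction: $(\ZZ/3)^6$ needs $6$ generators, so $\dg P$ would require at least $6$ generators, whereas the discriminant group of any lattice of rank $r$ is generated by at most $r$ elements (Smith normal form, or Nikulin's bound $\ell(\dg P)\le\rank P$), and here $\rank P=12-8=4<6$. Hence no copy of $\EE$ sits inside $K_{12}$. I expect the only delicate point to be the passage to the primitive closure in the second step: one must be sure that $\bar N$ does not pick up $3$-torsion in its discriminant, and this is precisely guaranteed by the fact that $\dg{\EE}$ is a $2$-group, so the glue $\bar N/N$ and therefore $\det\bar N$ stay $2$-local. Everything after that is routine bookkeeping with prime-primary decompositions of finite quadratic forms.
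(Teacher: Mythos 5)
Your argument is correct and is in essence the same as the paper's: both obtain the contradiction by weighing the $3$-primary part of $\dg{J}\cong 3^6$ against the available ranks, the paper by citing Lemma A.3 of \cite{GL} to force the $3$-rank of the discriminant group of a rank-$8$ sublattice to be at least $6+8-12=2$, which is impossible since $\dg{\EE}\cong 2^8$. Your gluing computation through the primitive closure and its rank-$4$ annihilator amounts to a self-contained proof of that cited inequality (the contradiction merely being relocated from $\dg{Y}$ to $\dg{P}$), so the two proofs coincide in substance.
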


\pf Suppose $Y\cong EE_8$ is a sublattice of $J$. Then by Lemma A.3 of
\cite{GL}, the $3$-rank of  $\dg{Y}$ is at least $6+8-12=2$. It is a
contradiction since $\dg{Y}\cong 2^8$. Note that the $3$-rank of $\dg{J}=6$,
$rank(J)=12$ and $rank(E)=8$. \qed

\begin{lem}\labtt{A}
Let $A$ be a rank $2$  even lattice with $\dg{A}\cong 2^2\cdot 3$ and minimal
norm at least 4. Then $A\cong AA_2$.
\end{lem}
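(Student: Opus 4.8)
The plan is to reduce the statement to an elementary classification of even binary quadratic forms by Gauss--Lagrange reduction. First I would fix a basis $\{u,v\}$ of $A$ and write the Gram matrix as $\begin{pmatrix} 2a & b \\ b & 2c \end{pmatrix}$ with $a,b,c\in\ZZ$; evenness of $A$ forces the diagonal entries to be even (so they are genuinely of the form $2a,2c$), while the hypothesis $\dg{A}\cong 2^2\cdot 3$ gives $|\dg{A}|=12$ and hence $\det A = 4ac-b^2 = 12$. I would record at the outset that only the \emph{order} of $\dg{A}$, not its isomorphism type, enters the argument.

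Next I would invoke reduction. Choose $u$ to be a vector of minimal norm, so that $\langle u,u\rangle = 2a \ge 4$ by the minimal-norm hypothesis, i.e.\ $a\ge 2$. Replacing $v$ by $v+ku$ for a suitable $k\in\ZZ$ and then by $-v$ if necessary, one arranges $0\le b\le a$; and since $u$ has minimal norm, every nonzero vector, in particular $v$, satisfies $2c=\langle v,v\rangle \ge 2a$, so $c\ge a$. The determinant identity then yields the chain $12 = 4ac - b^2 \ge 4a^2 - a^2 = 3a^2$, whence $a^2\le 4$ and therefore $a=2$. In particular the minimal norm is exactly $4$.

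Finally, with $a=2$ the determinant equation becomes $8c - b^2 = 12$ with $0\le b\le 2$. The cases $b=0$ and $b=1$ give non-integral $c$, while $b=2$ gives $c=2$; thus the Gram matrix is $\begin{pmatrix} 4 & 2 \\ 2 & 4\end{pmatrix}$, which after $v\mapsto -v$ becomes the matrix $\begin{pmatrix} 4 & -2 \\ -2 & 4\end{pmatrix}$ of $AA_2$, so $A\cong AA_2$. I do not anticipate a genuine obstacle: once reduction is set up, the argument is a finite case-check, and the only point deserving a remark is \emph{where} the hypotheses are used. The minimal-norm bound is what does the separating work: the competing even rank $2$ lattice $\begin{pmatrix}2&0\\0&6\end{pmatrix}$ has the same determinant $12$ and indeed the same discriminant group $2^2\cdot 3$, yet minimal norm $2$, so it is excluded precisely by the hypothesis that the minimal norm is at least $4$.
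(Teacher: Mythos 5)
Your proof is correct, but it takes a genuinely different route from the paper's. You run Gauss--Lagrange reduction directly on the even binary form: a minimal vector gives $a\ge 2$, the reduction inequalities give $3a^2\le 12$, and a three-case check on $b$ pins down the Gram matrix $\bigl(\begin{smallmatrix}4&2\\2&4\end{smallmatrix}\bigr)$. The paper instead first observes from $4ab-c^2=12$ that the off-diagonal entry is even, so $A'=\frac{1}{\sqrt 2}A$ is an \emph{integral} lattice of determinant $3$; it then quotes the Hermite bound $H(2,3)=2$ to force a norm-$2$ vector $u$ in $A'$, computes that $ann_{A'}(u)$ must have determinant $6$ (integrality ruling out $3/2$), and exhibits the glue vector $w=\frac12(u+v)$ to recognize $A'\cong A_2$. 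Your argument is more self-contained (no Hermite constant, no gluing computation) and is a finite verification; the paper's argument exposes the structural reason the answer is $\sqrt 2$ times a root lattice, which fits the $\sqrt2$-rescaling theme running through the article. Both proofs use only the order $12$ of the discriminant group rather than its isomorphism type, and your closing remark correctly identifies $\bigl(\begin{smallmatrix}2&0\\0&6\end{smallmatrix}\bigr)$ --- which has the same discriminant group --- as the competitor killed by the minimal-norm hypothesis; the paper's proof excludes it at the Hermite/annihilator step instead.
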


\begin{proof}
Let $\binom{ 2a\ \ c}{\ c \ \ 2b}$ be the Gram matrix of $A$. Then $4 ab
-c^2=12$ or $ab-(c/2)^2=3$. Hence, $c/2$ is  an integer. Therefore, $A'=
\frac{1}{\sqrt{2}} A$ is integral and $\det(A')=3$.

Let $H(n,d)= (\frac{4}3)^{(n-1)/2} d^{1/n}$ be the Hermite function (cf. \cite{gal,Kn}). Then
$H(2,3)=2.000$. Since $A$ has minimal norm at least 4, the minimal norm of
$A'$ is at least 2. Thus  $A'$ has a norm 2 vector $u$. Then $ann_{A'}(u)$ has
determinant $\frac{1}2\det(A')=3/2$ or $2\det(A')=6$. Since $A'$ is integral,
$ann_{A'}(u)$ has determinant $6$. Let $v$ be a basis of $ann_{A'}(u)$. Then
$w:=\frac{1}2(u+v)\in A'$. Moreover, $w$ has norm $\frac{1}4(2+6)=2$ and $\la
u, -w\ra=- 1$. Thus $A'\cong A_2$ and $A\cong AA_2$.
\end{proof}

\begin{nota}\labtt{Q'R'2}
Let $Q'\cong \dih{12}{16}$.  Let
$M$ and $N$ be $EE_8$ sublattices of $Q'$ such that $Q'=M+N$. In this case,  the
$EE_8$-involutions $t_M$, $t_N$ generate a dihedral group of order $12$.
As in Notation \refpp{dih12}, we set $h=t_Mt_N$, $g= h^2$ and $u= h^3$.
Then $\la u\ra$ is the center of $\la t_M, t_N\ra$. Let $E$ be the $(-1)$-eigenlattice of $u$ in
$Q'$.
Then $E$ is also an $EE_8$-sublattice of $Q'$ and $E\cap M \cong E\cap N\cong DD_4$ (see
\cite{GL}).

Set $F_M:=ann_M(N)$ and $F_N:= ann_N(M)$. Then by \cite{GL}, we have
$F_M\cong F_N\cong AA_2$ and $J:= ann_{Q'}( F_M\perp F_N)\cong K_{12}$. In
addition, $E\cap J\cong AA_2\perp AA_2$.
\end{nota}

\begin{lem}
 Let $Y$ be an $EE_8$ sublattice of $Q'$. Then either $Y=E$ or $E\cap Y \cong DD_4$.
\end{lem}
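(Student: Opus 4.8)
If $Y=E$ there is nothing to prove, so assume $Y\neq E$, i.e. $t_Y\neq t_E$. Write $u=t_E$, the $SSD$ involution with negated lattice $E$, and recall from \refpp{R''} that $R'':=ann_{Q'}(E)\cong A_2\otimes D_4$. Since $Y\cong\EE$ is $SSD$, $t_Y$ is an isometry of $Q'$, so $\la t_E,t_Y\ra$ is dihedral and $(E,Y)$ is an $\EE$-pair; hence $E+Y$ is isometric to one of the lattices of Table~1 of \cite{GL}, necessarily of the form $\dih{n}{k}$ with $k=\mathrm{rank}(E+Y)=16-\mathrm{rank}(E\cap Y)$. The entire statement is thus equivalent to showing that this type is $\dih{4}{12}$, since for that type $E\cap Y\cong DD_4$ by Lemma \refpp{gmn}.

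The plan is to reduce to the case where $t_E$ and $t_Y$ commute, i.e. where $\la t_E,t_Y\ra$ is a four-group and $E+Y$ has type $\dih{4}{k}$. Granting the commutation, the type $\dih{4}{15}$ is impossible, because $\dih{4}{15}$ is the unique $\EE$-pair in Table~1 of \cite{GL} that does not embed into $\L$, whereas $E,Y\subseteq Q'\subseteq\L$. The orthogonal type ($E\perp Y$, so $E\cap Y=0$) is also impossible: it would force $Y\subseteq ann_{Q'}(E)=R''\cong A_2\otimes D_4$, but $R''$ has rank $8$ and $\det(A_2\otimes D_4)=2^4\cdot 3^4>2^8=\det(\EE)$, so $R''$ contains no full-rank (hence no) copy of $\EE$. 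The remaining four-group type in Table~1 of \cite{GL} that embeds into $\L$ and is non-orthogonal is $\dih{4}{12}$, whence $E\cap Y\cong DD_4$, as claimed.

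The main obstacle is precisely the reduction step: proving that $t_E$ commutes with $t_Y$ for an arbitrary $\EE$-sublattice $Y$. I would deliberately avoid invoking the fact that $u=t_E$ is central in $O(Q')$: that fact is deduced in the analysis of $O(Q')$ from the normality of $\la t_M,t_N\ra$, which in turn rests on the classification of $\EE$-sublattices of $Q'$ that the present lemma is meant to support, so using it here would be circular. Instead I would pin down the order of $t_Et_Y$ directly. One route is to compute the inner product $\la e_E,e_Y\ra$ of the associated conformal vectors and read off the dihedral order from the dictionary of \cite{LYY2}; the point would be that the only value compatible with both $E$ and $Y$ lying in $Q'$ is the one giving order $2$. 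A second, more hands-on route is to determine the $(-1)$-eigenlattice $E\cap Y$ of $u$ restricted to $Y$ explicitly in the hexacode coordinates of Appendix~A, showing it has rank $4$; this forces $t_Et_Y$ to have order $2$ and identifies the eigenlattice with $DD_4$.

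Once the order of $t_Et_Y$ is shown to be $2$, the second paragraph finishes the argument, and I expect the coordinate computation there to be the only genuinely laborious part.
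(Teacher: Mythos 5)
There is a genuine gap, in two places. First, the step you yourself flag as ``the main obstacle'' --- showing that $t_E$ and $t_Y$ commute for an \emph{arbitrary} $EE_8$-sublattice $Y$ of $Q'$ --- is essentially the whole content of the lemma, and you leave it as two unexecuted sketches. Neither route is clearly viable: computing $\la e_E,e_Y\ra$ requires knowing the relative position of $E$ and $Y$, which is exactly what is being determined, and an explicit hexacode computation of $E\cap Y$ presupposes a list of the $EE_8$-sublattices of $Q'$, which is what Proposition \refpp{EE8inQ'} (resting on this lemma) is meant to produce. The paper never makes this reduction. Instead it reads off from the classification in \cite{GL} that $E\cap Y\in\{0,\,AA_1,\,AA_1^2,\,AA_2,\,DD_4\}$ and eliminates the first four possibilities one at a time by arguments internal to $Q'$: for $E\cap Y=0$ a $3$-rank count in $\dg{K_{12}}$ (note that $E+Y=Q'\cong\dih{12}{16}$ is itself a rank-$16$ possibility here, so this case is \emph{not} just the orthogonal one); for $E\cap Y\cong AA_1$ the absence of norm-$4$ vectors in $\sqrt6 D_4$; for $E\cap Y\cong AA_1^2$ the absence of $DD_6$ in $R'\cong A_2\otimes D_4$; for $E\cap Y\cong AA_2$ a comparison of $\sqrt6E_6^*$ with $AA_2^3$.

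Second, even granting the commutation, your enumeration of the commuting types is incomplete: Table~1 of \cite{GL} also contains $\dih{4}{14}$, with $E\cap Y\cong AA_1^2$ and $ann_Y(E)\cong DD_6$; it embeds into $\L$ and is not orthogonal, so your list ``$\dih{4}{12}$, $\dih{4}{15}$, orthogonal'' misses it. Ruling it out is not free --- the paper does so by observing that every norm-$4$ vector of $R'\cong A_2\otimes D_4$ has the form $\a\otimes\b$ with $\a,\b$ roots, so a $DD_6\supset AA_1^6$ inside $R'$ would force six pairwise orthogonal roots in $D_4$, which is impossible. Your determinant argument for the orthogonal case is fine, and your instinct to avoid the centrality of $t_E$ in $O(Q')$ as circular is sound, but as written the proposal defers the decisive work and then closes an incomplete list of cases.
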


\begin{proof}
Suppose $Y\neq E$. Then by the classification of $EE_8$-pairs \cite{GL}, we have $E\cap Y \cong 0,
AA_1$, $AA_1^2$, $AA_2$ or $DD_4$.
\medskip

\noindent \textbf{Case 1:} $E\cap Y=0$. In this case, $E+Y$ is a full rank sublattice of $Q'$. By
the classification of $EE_8$-pairs and $\dg{Q'}=6^4$, the only possible case is $E+Y= Q'$. Then we
have $F_E:=ann_{E}(Y)\cong AA_2$, $F_Y:=ann_Y(E)\cong AA_2$ and $ann_{E}(F_E)\cong EE_6$.
Note also that $F_Y\subset ann_{Q'}(E)\cong R'$ and $ann_{R'}(AA_2)\cong AA_2^3$. Thus, we obtain a
full sublattice of type $EE_6\perp AA_2^3$ in $ann_{Q'} (F_E\perp F_Y)\cong K_{12}$. It is
impossible since the 3-rank of $\dg{EE_6\perp AA_2^3}$ is $4$ and $\dg{K_{12}}=3^6$.

\medskip

\noindent \textbf{Case 2:} $E\cap Y\cong AA_1$. Then $E+Y\cong \dih{8}{15}$.
In this case, the $EE_8$ involutions $t_E, t_Y$ generate a dihedral group of
order $8$. Let $z=(t_Et_Y)^2$ and let $Z$ be the $(-1)$-eigenlattice of $z$ in
$Q'$. Let $F_E$ and $F_Y$ be the fixed point sublattices  of $z$ in $E$ and $Y$,
respectively. By the analysis in \cite{GL}, we know that $F_E\cap F_Y\cong
AA_1$, $F_E\cong F_Y\cong DD_4$ and $Z\cong EE_8$.  Then $ann_{E+Y}(
F_E\perp Z) \supset ann_{F_Y}(F_E\cap F_Y)\cong AA_1^3$. However,
$ann_{Q'}(F_E\perp Z) \cong \sqrt{6}D_4$  has no norm 4 vectors. It is a
contradiction.

\medskip

\noindent \textbf{Case 3:} $E\cap Y\cong AA_1^2$. Then $R' \supset ann_Y(E) \cong DD_6$ but $R'
\cong A_2\otimes D_4$ doesn't have such a sublattice.

Recall that the norm $4$ vectors of $R'$ have the form
$ \a\otimes \b$, where $\a, \b$ are roots of $A_2$ and $D_4$, respectively.
Suppose $R'$ contains a sublattice isometric to $DD_6$. Since $DD_6\supset AA_1^6$, there exist
roots $\a_1, \dots, \a_6 \in A_2$ and $\b_1, \dots, \b_6\in D_4$ such that
\[
\la \a_i\otimes \b_i, \a_j\otimes \b_j\ra =  \la \a_i, \a_j\ra \cdot \la \b_i, \b_j\ra = 4\delta_{i,j},
\]
for any $i,j=1, \dots, 6$.  Since $\la \a_i, \a_j\ra \neq 0 $ for any roots $\a_i, \a_j\in A_2$, we must have
$ \la \b_i, \b_j\ra=2 \delta_{i,j}$ for $i,j=1, \dots, 6$. It is impossible because $D_4$ has rank $4$.

\medskip

\noindent \textbf{Case 4:} $E\cap Y\cong AA_2$. Then $E+Y\cong \dih{6}{14}$
and $ann_{E+Y}(E\cap Y)\cong K_{12}$. By the analysis in \cite{GL}, we have
\begin{equation}\labtt{aa2}
ann_{E}(E\cap Y)\cong EE_6 \quad \text{  and } \quad (E+Y)\cap R'=ann_{E+Y}(E)\cong
\sqrt{6}E_6^*.
\end{equation}

Let $A=ann_{Q'}(E+Y)$. Then $\dg{A}\cong 2^2\cdot 3$. Then $A\cong
AA_2$ by Lemma \ref{A}. Then  $ann_{R'}(A)\cong AA_2^3$. However,
$ann_{R'}(A)= (E+Y)\cap R' \cong \sqrt{6}E_6^*$ by \refpp{aa2}. It is  a
contradiction.

Therefore, the only possible case is $E\cap Y\cong DD_4$. Note that such a case
occurs since $E\cap M\cong E\cap N \cong DD_4$ (see \cite{GL}).
\end{proof}

\begin{prop}\labtt{EE8inQ'}
We use the same notation as in Notation \refpp{Q'R'}. Let $Y$ be an $EE_8$-sublattice
in $Q'$. Then $Y=E$, $g^i(M)$ or $g^i(N)$, where $g=(t_Mt_N)^2$ and $i=0,1,2$.
\end{prop}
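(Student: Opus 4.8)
The plan is to combine the previous lemma with a count of the $EE_8$-sublattices meeting $E$ in a copy of $DD_4$. By that lemma, any $EE_8$-sublattice $Y$ of $Q'$ either equals $E$, in which case we are done, or satisfies $E\cap Y\cong DD_4$; so assume the latter. First I would record that the six lattices $g^i(M),g^i(N)$ $(i=0,1,2)$ are genuine candidates. Since $g=h^2$ and $u=h^3$ commute, $g$ preserves the eigenlattices of $u=t_E$ (Lemma \refpp{gmn}), so $gE=E$ and hence $g^i(M)\cap E=g^i(M\cap E)\cong DD_4$, and similarly for $N$; these six lattices are pairwise distinct. The goal is to show they are all of them.

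Next I would pin down the shape of an arbitrary such $Y$. Because $E\cap Y\cong DD_4$, the classification of $EE_8$-pairs (cf. \cite{GL}) forces $E+Y\cong\dih{4}{12}$, so $t_Et_Y$ has order $2$; as $t_E=u$ on $Q'$, this means $t_Y$ commutes with $u$ and therefore preserves both $E$ and $R''=ann_{Q'}(E)\cong A_2\otimes D_4$ (Notation \refpp{Q'R'}, Remark \refpp{R''}). Writing $D:=E\cap Y$ and $D'':=R''\cap Y$, each is the $(-1)$-eigenlattice of $t_Y$ on the corresponding summand, so $D\cong D''\cong DD_4$, and, $Y$ being a direct summand of $Q'$, $Y$ is the primitive closure in $Q'$ of $D\perp D''$. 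Thus $Y$ is determined by the pair $(D,D'')$.

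Finally I would count the admissible pairs. The $u$-partner $Y'$, defined by $t_{Y'}=u\,t_Y$, satisfies $R''\cap Y'=R''\cap Y$ (since $u$ acts trivially on $R''$) and $E\cap Y'=ann_E(D)$; so $u$-partners share their $R''$-part and carve $E$ into complementary $DD_4$-halves. By the argument of Lemma \refpp{CD1} the sets $\{R''\cap g^iM\}$ and $\{R''\cap g^iN\}$ coincide and consist of exactly three $DD_4$-sublattices, namely the pure-tensor lattices $\alpha\otimes D_4$ attached to the three roots $\alpha$ of $A_2$ (the minimal vectors of $A_2\otimes D_4$ are pure tensors, \refpp{ORp}), permuted as $Sym_3$ by $\la t_M,t_N\ra$. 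Hence $D''$ can only be one of these three, and each arises from a single $u$-partner pair, namely $\{M,gN\}$, $\{gM,g^2N\}$, $\{g^2M,N\}$; these are partners because conjugating $u=t_Nt_{g^{-1}M}$ of Lemma \refpp{gmn} by powers of $g$ gives $t_Mt_{gN}=u$, etc. Matching $D''$ and then the two complementary choices of $D$ inside $E$ identifies $Y$ with one of $g^i(M),g^i(N)$.

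The main obstacle is the double count in the last paragraph: showing that $D''=R''\cap Y$ can only be one of the three pure-tensor lattices $\alpha\otimes D_4$ (ruling out any ``diagonal'' $DD_4$ in $A_2\otimes D_4$), and that for a fixed $D''$ the evenness and discriminant constraints ($\dg{Y}\cong 2^8$ together with the fixed gluing $[Q':E\perp R'']=2^4$ from Case 2 of Appendix A) leave exactly the two completions realized by the $u$-partner pair. Everything else is formal once $Y$ has been reduced to the pair $(D,D'')$.
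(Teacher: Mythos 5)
Your proposal is correct and follows essentially the same route as the paper: reduce via the preceding lemma to $E\cap Y\cong DD_4$, identify $Y$ by its $DD_4$-intersection with $ann_{Q'}(E)\cong A_2\otimes D_4$, use that there are only three such $DD_4$-sublattices (the pure tensors $\alpha\otimes D_4$), and then count the completions over $E$. The two facts you flag as the ``main obstacle'' are precisely the ones the paper's proof also takes as given from the classification in \cite{GL} --- that $A_2\otimes D_4$ contains exactly three $DD_4$-sublattices and that a $\dih{4}{12}$ contains exactly three $EE_8$-sublattices --- so your argument is no less complete than the published one.
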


\begin{proof}
 Let $Y$ be an $EE_8$ sublattice in $Q'$ and $E\neq Y$. Then $E\cap Y\cong DD_4$ and  $E+Y \cong
\dih{4}{12}$. Moreover, we have $Y\cap R'= ann_{Y}(Y\cap E) \cong DD_4$. Since there are only 3
$DD_4$-sublattices in $R'$, we must have $Y\cap R'= g^i(M) \cap R'$ for some $i=0,1,2$.  It now
follows $E+Y= E+g^i(M)$ and we have the desired conclusion since $\dih{4}{12}$ has only 3 $EE_8$
sublattices in it.
\end{proof}

\begin{prop}\labtt{EE8inQ}
Let $Q\cong \dih{6}{14}$. Then $Q$ contains exactly three $EE_8$-sublattices.
\end{prop}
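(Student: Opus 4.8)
The plan is to exhibit three $EE_8$-sublattices explicitly and then use the classification of $EE_8$-pairs from \cite{GL}, together with a rank count and the $3$-rank obstruction, to show no others can occur, in close parallel to the proof of \refpp{EE8inQ'}.

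First I would produce at least three. With $g=t_Mt_N$ the order-$3$ isometry of \refpp{Q}, the dihedral group $D=\la t_M,t_N\ra\cong Dih_6$ has exactly three involutions, $t_M$, $t_N$ and $t_Mt_Nt_M$, permuted cyclically by conjugation by $g$; their $(-1)$-eigenlattices in $Q$ are $M$, $N=gM$ and $P:=g^2M=t_M(N)$. Since $g$ fixes $F=M\cap N$ pointwise, all three contain $F$, are pairwise distinct, and (using that $g$ stabilizes $Q$) satisfy $M+N=N+P=P+M=Q$. Hence $Q$ contains at least three $EE_8$-sublattices, and the content of the statement is that there are no more.

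Next, let $Y$ be any $EE_8$-sublattice with $Y\ne M$. Because $Y\subseteq Q$ we have $\mathrm{rank}(M+Y)=16-\mathrm{rank}(M\cap Y)\le \mathrm{rank}(Q)=14$, so $\mathrm{rank}(M\cap Y)\ge 2$; this already rules out the intersection types $0$ and $AA_1$. For the remaining candidates allowed by the classification of $EE_8$-pairs (Table 1 of \cite{GL}) — $AA_1^2$, $AA_2$, $DD_4$, and any type of higher rank — I would invoke the $3$-rank obstruction underlying \refpp{CThasnoEE8} (Lemma A.3 of \cite{GL}): if $S$ is the primitive closure of $M+Y$ in $Q$, then, since $\dg{Q}\cong 2^2\times 3^5$ has $3$-rank $5$, the $3$-rank of $\dg{S}$ is at least $\mathrm{rank}(S)-9$. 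Comparing this lower bound with the known discriminant $3$-ranks of the lattices $\dih{n}{k}$ of the relevant rank eliminates every possibility except $M\cap Y\cong AA_2$, for which $M+Y\cong\dih{6}{14}$ has full rank $14$; then $\det(M+Y)=\det(Q)=2^2\cdot 3^5$ forces $M+Y=Q$.

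Finally I must identify $Y$ with $N$ or $P$. Now $(M,Y)$ is an $EE_8$-pair with $M+Y=Q$, so $t_Y\in O(Q)$ and $g_Y:=t_Mt_Y$ is an order-$3$ isometry of $Q$; writing $t_Y=t_Mg_Y$, it suffices to prove $g_Y\in\la g\ra$, for then $g_Y\in\{g,g^2\}$ gives $t_Y\in\{t_N,\,t_Mt_Nt_M\}$ and hence $Y\in\{N,P\}$. This last identification is the main obstacle, since a priori the order-$3$ element $g_Y$ need not lie in $\la g\ra$. I would resolve it exactly as in \refpp{EE8inQ'}, where the analogous enumeration reduced to the fact that $R'$ has only three $DD_4$-sublattices: here one reduces to counting the $EE_8$-sublattices of $Q$ completing $M$, and checks via the explicit embedding of \refpp{Q} and Appendix A — or via the structure of $O(Q)$ computed in \refpp{OQ} — that these are precisely $N$ and $P$. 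Transitivity of $O(\L)$ on ordered $\dih{6}{14}$-pairs (\refpp{unidih6}) shows these completions form a single $Stab_{O(\L)}(M)$-orbit, and the bounded count then pins the orbit size at $2$, giving exactly three $EE_8$-sublattices in all.
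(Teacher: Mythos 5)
Your proposal takes a genuinely different route from the paper, and its first two stages are essentially sound: exhibiting the orbit $\{M, gM, g^2M\}$ of the three involutions of $\la t_M,t_N\ra$, and then using the rank count plus the $3$-rank/discriminant comparison against $\dg{Q}\cong 2^2\times 3^5$ to force any further $EE_8$-sublattice $Y$ to satisfy $M\cap Y\cong AA_2$ and $M+Y=Q$ (the lattices $M+Y$ arising from the intersection types $AA_1^2$ and $DD_4$ have $2$-power determinant, so they cannot sit in $Q$ with the required index). By contrast, the paper's own proof is a one-line reduction: it realizes $Q$ as $M+gM$ inside $Q'\cong\dih{12}{16}$ and quotes \refpp{EE8inQ'}, which had already listed all seven $EE_8$-sublattices of $Q'$; the counting there ultimately rests on the fact that $R'\cong A_2\otimes D_4$ contains exactly three $DD_4$-sublattices.

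The genuine gap is in your final step. Having reduced to: ``count the $Y\cong EE_8$ with $M+Y=Q$ and $M\cap Y\cong AA_2$,'' you assert that one ``checks'' these are precisely $N$ and $t_M(N)$, and that ``the bounded count then pins the orbit size at $2$'' --- but no upper bound on this count is ever established, and the check is exactly the statement being proved. Transitivity of $O(\L)$ on ordered $\dih{6}{14}$-pairs \refpp{unidih6} does show the completions of $M$ inside $Q$ form a single orbit under $Stab_{O(\L)}(M)\cap Stab_{O(\L)}(Q)$, with stabilizer $C_{O(\L)}(t_M,t_N)$, so the number of completions is the index $[\,Stab_{O(\L)}(M)\cap Stab_{O(\L)}(Q) : C_{O(\L)}(t_M,t_N)\,]$; but you have no independent handle on the order of $Stab_{O(\L)}(M)\cap Stab_{O(\L)}(Q)$ (it is not $C_{O(\L)}(t_M)$, and \refpp{OQ} describes $O(Q)$, not the image of $Stab_{O(\L)}(Q)$ intersected with $C_{O(\L)}(t_M)$), so the index could a priori exceed $2$. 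You need a concrete counting device here --- either the explicit coordinates of Appendix A, or the paper's detour through $\dih{12}{16}$ where the three $DD_4$-sublattices of $A_2\otimes D_4$ do the counting, or some replacement such as showing that $g_Y=t_Mt_Y$ fixes $R\perp(M\cap Y)$ pointwise and is therefore conjugate into $\la g\ra$ in a controlled way. As written, the last third of the argument is a restatement of the goal rather than a proof of it.
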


\begin{proof}
Let $M$ and $g$ be defined as in Notation \ref{Q'R'2}. Then $M+gM\cong
\dih{6}{14}$. Thus we can view $Q\cong \dih{6}{14}$ as a sublattice of $Q'$.  The
result now follows from Proposition \refpp{EE8inQ'}.
\end{proof}

\section{The containment $C_{O(\L )}(\la t_M, t_N \ra)$ in $C_{O(\L )}(t_E)$}

The group $C_{O(\L )}(t_E)$ has the form $2.\brw 4 \cong (2\times 2^{1+8})\Omega^+(8,2)$.   The groups $\brw d$ may be analyzed by the methods of \cite{gal}.  We give some discussion of $C_{O(\L )}(t_E)$ and $C_{O(\L )}(\la t_M, t_N \ra)$ from the BRW-viewpoint.

\begin{lem}\labtt{ginc(z)}
Let $u$ and $g$ be defined as in \refpp{Q'R'}.
The element $g\in C_{O(\L )}(u)$ acts on the Frattini quotient of $O_2(C_{O(\L )}(u))$ with 4-dimensional fixed points.  In the notation of \cite{G12}, $g$ is in the class of $\th_2$.
\end{lem}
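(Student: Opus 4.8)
The plan is to reduce the statement to a single computation of a fixed space on the natural module for $\Omega^+(8,2)$, and then to match the resulting invariants with the class list of \cite{G12}. Recall from the start of this appendix that $C_{O(\L)}(u)=C_{O(\L)}(t_E)\cong 2.\brw{4}$, so that the normal extraspecial subgroup $R:=2^{1+8}$ of this group has Frattini quotient $\overline R:=R/Z(R)\cong \FF_2^8$, the natural orthogonal module for the quotient $\Omega^+(8,2)$. Here $O_2(C_{O(\L)}(u))=\la -1\ra\times R$, and $g$ fixes the central direct factor $\la -1\ra$; the assertion "$4$-dimensional fixed points" is the statement that $g$ fixes a $4$-dimensional subspace of the essential part $\overline R$. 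Since $g=h^2$ and $u=h^3$ are both powers of $h$, conjugation by $g$ stabilizes $R$ and $Z(R)$, so $g$ acts $\FF_2$-linearly on $\overline R$; this is the action whose fixed space I must compute.

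First I would carry out the computation on the lattice side. Because $g$ centralizes $u=t_E$, it preserves the $(-1)$-eigenlattice $E\cong\EE$ of $t_E$ and acts on it as an order-$3$ isometry. Since $g$ is semisimple of order $3$, we have $E\otimes\RR=(E^g\otimes\RR)\perp(\,(E\cap J)\otimes\RR\,)$, where $E^g\otimes\RR=(E\otimes\RR)\cap Fix_\L(g)$ and the moving part equals $(E\otimes\QQ)\cap(J\otimes\QQ)=(E\cap J)\otimes\QQ$ because $J=ann_\L(Fix_\L(g))$ lies in the moving space of $g$. By Notation \refpp{Q'R'} we have $E\cap J\cong AA_2\perp AA_2$, of rank $4$, so $E^g=E\cap K$ has rank $8-4=4$. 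Now the natural module $\overline R$ is, as a $g$-module, the natural module realized through the action of $C_{O(\L)}(t_E)$ on $E$, namely $E/2E\cong E_8/2E_8\cong \dg{E}$. Since $|g|=3$ is invertible modulo $2$, Maschke (coprime action) gives $E\otimes\FF_2=(E^g\otimes\FF_2)\oplus(M\otimes\FF_2)$ with no $g$-fixed vectors in $M\otimes\FF_2$, whence $\dim_{\FF_2}(E/2E)^g=\rank_\ZZ E^g=4$. Therefore $g$ fixes a $4$-dimensional subspace of $\overline R$, as claimed.

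Finally, to name the class I would record the finer invariant that the $g$-moving space $E\cap J\cong AA_2\perp AA_2$ is a sum of two $\omega$-planes, equivalently that the $4$-dimensional moving space of $g$ on $\overline R$ is of the orthogonal type forced by this splitting; comparing with the definitions of the order-$3$ classes $\th_i$ in \cite{G12} singles out $\th_2$. The main obstacle is precisely this identification: the group $\Omega^+(8,2)\cong D_4(2)$ carries three inequivalent $8$-dimensional modules that are permuted by triality, so one must be certain that the Frattini quotient $\overline R$ is the module on which $g$ has a $4$-dimensional fixed space of the type matching $\th_2$, rather than one of its triality images. This is resolved by the analysis of \cite{G12}, together with the isometry type of the $g$-fixed sublattice $E\cap K$ and of the $g$-moving sublattice $E\cap J\cong AA_2\perp AA_2$, which fixes the type of both the fixed and moving $4$-spaces and thereby pins the class down to $\th_2$; the numerical value $4$ itself is unambiguous from the coprime-action argument above.
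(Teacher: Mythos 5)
Your lattice computation is correct as far as it goes: $E$ is $g$-invariant, the moving part of $g$ on $E\otimes\QQ$ is exactly $(E\cap J)\otimes\QQ$ of dimension $4$, so $E^g=E\cap K$ has rank $4$, and coprime reduction gives $\dim_{\FF_2}(E/2E)^g=4$. But this computes the fixed points of $g$ on the wrong module, and the step you yourself flag as ``the main obstacle'' is a genuine gap, not a detail. The extraspecial group $R=2^{1+8}$ is precisely the kernel of the restriction $C_{O(\L)}(u)\to O(E)$, so $E/2E$ is an $8$-dimensional $\FF_2$-module for the quotient $\Omega^+(8,2)$, while $\overline R=R/Z(R)$ is a different $8$-dimensional module for the same quotient (the conjugation module); these are each one of the three modules permuted by triality, and for an order-$3$ element whose class is not triality-stable the fixed-space dimensions on the three modules genuinely differ. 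Your proposed resolution --- that the isometry types of $E\cap K$ and $E\cap J$ ``pin the class down'' --- is circular: the classes $\th_i$ of \cite{G12} are defined by the action on $\overline R$ (equivalently by the trace on the $16$-dimensional module), so knowing the action of $g$ on $E$ does not determine the class without first proving that $E/2E\cong\overline R$ as $g$-modules, which you never do. The appeal to ``the analysis of \cite{G12}'' does not supply this.

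The paper's proof sidesteps the issue entirely by working on the $16$-dimensional module $\L^+(u)=Fix_\L(u)$, on which $R$ acts irreducibly, so there is no triality ambiguity: the trace of $g$ there determines $|C_{\overline R}(g)|$ via $|\tr(g)|^2=|C_{\overline R}(g)|$. Concretely, since $\la g,u\ra=\la h\ra\cong\ZZ_6$ and the traces of $1,h,g,u$ on $\L$ are $24,2,6,8$ \refpp{dih12}, orthogonality gives $rank(Fix(g)\cap\L^+(u))=48/6=8$, hence $\tr(g|_{\L^+(u)})=8-\tfrac12(16-8)=4$, which matches only $\th_2$ in the trace list $-8,4,-2,1$ and forces the $4$-dimensional fixed space on $\overline R$. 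If you want to keep your $E$-side computation, you would need to add an argument identifying $E/2E$ (equivalently $\dg E\cong\dg{(ann_\L E)}$) with $\overline R$ as modules for $C_{O(\L)}(u)/O_2$, or at least show that the class of $\bar g$ is triality-stable; as written, the numerical agreement is a coincidence you have not explained.
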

\pf
An element of order 3 in the class of $\th_i$ acts on $Fix(u)$ with trace $-8, 4, -2, 1$ for $i=1,2,3,4$, respectively.

By \refpp{dih12}, the sum of the traces in $\la g, u\ra$ is $48$.  By the orthogonality relations, $rank(Fix(g)\cap \L^+(u))=8$ whence
$rank(Fix(g)\cap \L^-(u))=4$.   This implies that the trace of $g$ on $\L^+(u)$ is  $8-\half (16-8)=4$, whence $i=2$, as claimed.
\eop

\begin{lem}\labtt{tinc(z)}
Let $t\in \{t_M, t_N\}$.   Then $t$ acts with trace 8 on $\L^+(u)$ and trace 0 on $\L^-(u)$.
\end{lem}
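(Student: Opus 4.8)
The plan is to decompose the trace of $t$ over the eigenspaces of $u$ and to evaluate each contribution by identifying the relevant eigenlattices. First I would recall from Lemma \refpp{gmn} that $u=t_E$ is the SSD involution attached to the $EE_8$-sublattice $E$; hence the $(-1)$-eigenlattice $\L^-(u)=\L\cap(\QQ\otimes E)$ has rank $8$ and $\L^+(u)=ann_\L(E)$ has rank $16$, consistent with $\tr_\L(u)=-8+16=8$ recorded in Notation \refpp{dih12}.

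Next, since $\la u\ra$ is the center of $\Delta=\la t_M,t_N\ra\cong Dih_{12}$, the involution $t\in\{t_M,t_N\}$ commutes with $u$ and therefore stabilizes both $\L^+(u)$ and $\L^-(u)$. This yields the additivity $\tr_\L(t)=\tr_{\L^+(u)}(t)+\tr_{\L^-(u)}(t)$. Because $t$ is itself the SSD involution of a rank $8$ sublattice ($M$ or $N$), it acts as $-1$ on an $8$-dimensional space and as $+1$ on its $16$-dimensional complement, so $\tr_\L(t)=8$. It thus suffices to prove $\tr_{\L^-(u)}(t)=0$, after which $\tr_{\L^+(u)}(t)=8$ follows at once.

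The core of the argument is the computation of $t$ on $\QQ\otimes E=\QQ\otimes\L^-(u)$. Taking $t=t_M$, I would identify its $(-1)$-eigenspace on $\QQ\otimes E$ with $(\QQ\otimes E)\cap(\QQ\otimes M)=\QQ\otimes(E\cap M)$; by Notation \refpp{Q'R'} we have $E\cap M\cong DD_4$, so this eigenspace is $\QQ\otimes DD_4$ and has dimension $4$. Since $t_M$ restricts to an involution on the $8$-dimensional space $\QQ\otimes E$, its $(+1)$-eigenspace there also has dimension $8-4=4$, giving $\tr_{\QQ\otimes E}(t_M)=-4+4=0$. The case $t=t_N$ is identical, using $E\cap N\cong DD_4$.

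I expect the only step needing care to be the identification $(\QQ\otimes E)\cap(\QQ\otimes M)=\QQ\otimes(E\cap M)$: clearing denominators shows that any common rational vector is an integer multiple of a vector in $E\cap M$, so the two spans agree. The fact that $t_M$ genuinely preserves $\QQ\otimes E$ is automatic once we know it commutes with $u$, and everything else is routine trace bookkeeping for a pair of commuting involutions.
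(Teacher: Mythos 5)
Your argument is correct and is essentially the computation the paper itself makes: both hinge on $\mathrm{rank}(M\cap E)=4$ together with additivity of the trace over the $u$-eigenspaces and $\tr_\L(t)=8$, you merely evaluating on $\L^-(u)=\QQ\otimes E$ first where the paper counts on $\L^+(u)$ (trace $12-4=8$) first. The paper's proof additionally records a group-theoretic fact (via $U=O_2(C_{O(\L)}(u))$, that the trace on $\L^+(u)$ is $0$ or $\pm 2^d$ and is nonzero here), but that is groundwork for the later lemmas of the appendix rather than something your direct computation is missing.
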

\pf
Define $U:=O_2( C_{O(\L )}(u) )$.
The trace of $t$ on $\L^+(u)$ is 0 if and only if $t$ is conjugate in $C_{O(\L )}(u)$ to $tu$ and otherwise the trace has the form $\pm 2^d$ where $2d$ is the dimension of the fixed points of $t$ in its action on $U/U'$.

We claim that $t$ does not have trace 0.  Supposing otherwise, we see that $t$ has trace 8 on $\L^-(u)$, which means that $t\in U$.  This is impossible since $t$ inverts $g$.   The claim follows.

Suppose $t=t_M$.   The rank of $M\cap E $ is 4  by \refpp{gmn}. Therefore,
$\L^+(u)\cap M$ has rank 4.    This leads to trace $12-4=8$ on $\L^+(u)$. Trace 0
on $\L^-(u)=E$ follows. \eop

\begin{lem}\labtt{WF4}
$Weyl(F_4)\cong O(D_4)\cong O^+(4,3)$.
\end{lem}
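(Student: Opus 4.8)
The plan is to establish the two isomorphisms $O(D_4)\cong Weyl(F_4)$ and $O(D_4)\cong O^+(4,3)$ separately, after which the third follows by transitivity. The unifying observation is that all three groups have order $1152$, so in each case it suffices to produce an injective homomorphism between groups of this common order. I first record the relevant orders: $O(D_4)$ is the automorphism group of the $D_4$ root system, namely $Weyl(D_4)$ extended by the $Sym_3$ of Dynkin-diagram (triality) symmetries, so $|O(D_4)|=192\cdot 6=1152$; and both $|Weyl(F_4)|=1152$ and $|O^+(4,3)|=2\cdot 3^{2}\cdot(3^{2}-1)^2=1152$ are standard.

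For $O(D_4)\cong Weyl(F_4)$ I would use the classical realization of the $F_4$ root system, in which the $24$ long roots span a copy of the $D_4$ lattice (and the short roots another). Since $Weyl(F_4)$ preserves root lengths, it acts on this long-root $D_4$, giving a homomorphism $Weyl(F_4)\to O(D_4)$. An element acting trivially on the long roots fixes a full-rank sublattice and is therefore the identity on $\RR^4$, so the map is injective; equality of orders makes it an isomorphism. This recovers the standard fact $Weyl(F_4)\cong \Aut(D_4)$; for details one may consult \cite{cs} or Appendix B of \cite{gl5A}.

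For $O(D_4)\cong O^+(4,3)$ I would reduce modulo $3$. Because $D_4$ is even, $q(\bar v):=\tfrac12\langle v,v\rangle \bmod 3$ is a well-defined $\FF_3$-valued quadratic form on $\overline{D_4}:=D_4/3D_4\cong \FF_3^4$, whose associated bilinear form is the reduction of $\langle\,,\,\rangle$; it is non-degenerate since $\det(D_4)=4\not\equiv 0 \pmod 3$. The key computation is the type: the discriminant is $\det(D_4)=4\equiv 1$, a square in $\FF_3$, and for a $4$-dimensional space ($n=2$) plus type is equivalent to $(-1)^n\det$ being a square; here $(-1)^2\cdot 1=1$ is a square, so $\overline{D_4}$ has plus type and $O(\overline{D_4})\cong O^+(4,3)$. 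The reduction map $O(D_4)\to O^+(4,3)$ is injective, since its kernel consists of isometries congruent to $1$ modulo $3$, and by Minkowski's lemma the principal congruence subgroup mod $3$ of $GL_4(\ZZ)$ is torsion-free while $O(D_4)$ is finite. Comparing orders once more yields an isomorphism.

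The only genuinely computational point, and the one I would be most careful about, is the plus-type determination of $\overline{D_4}$; everything else is an order count together with the injectivity of reduction mod $3$, both routine. As a cross-check independent of sign conventions in the discriminant criterion, I would decompose $\overline{D_4}$ as an orthogonal sum of two anisotropic (hence minus-type) planes over $\FF_3$, whose orthogonal sum is necessarily of plus type, confirming the type computation.
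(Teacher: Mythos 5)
Your argument is correct, and its backbone --- reduce the $D_4$-lattice modulo $3$, verify that the resulting quadratic space is nondegenerate of plus type, and finish with an order count --- is exactly the paper's. The differences lie in how two sub-steps are certified. For the type, the paper exhibits a totally singular $2$-space concretely: taking an orthogonal quadruple of roots $p,q,r,s$ spanning an $A_1^4$ sublattice (of index $2$, hence surjecting onto $D_4/3D_4$), the vectors $p+q+r$ and $q-r+s$ span a totally singular direct summand. You instead invoke the discriminant criterion, $(-1)^n\det$ a square for plus type in dimension $2n$, which with $\det(D_4)=4\equiv 1\pmod 3$ gives the same conclusion; your proposed cross-check also goes through painlessly, since the same $A_1^4$ splits $D_4/3D_4$ into the two planes spanned by $\{\bar p,\bar q\}$ and $\{\bar r,\bar s\}$, on each of which the form is $a^2+b^2$, anisotropic over $\FF_3$. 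For injectivity, the paper argues inside $Weyl(F_4)$: every nontrivial normal subgroup contains the central $-1$, which acts nontrivially on $D_4/3D_4$; you instead quote Minkowski's lemma that the mod-$3$ congruence kernel in $GL_4(\ZZ)$ is torsion-free. Your version is slightly more general (it shows $O(D_4)\to O^+(4,3)$ itself is injective, not merely its restriction to the reflection group), and you also write out the identification $Weyl(F_4)\cong O(D_4)$ via the action on the long roots, which the paper asserts without proof. Either route is sound; the paper's is self-contained at the level of explicit root vectors, while yours leans on two standard general lemmas.
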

\pf
The group $Weyl(F_4)$ acts faithfully as isometries on the $D_4$-lattice, which is spanned by the long roots of $F_4$.
This lattice has determinant 4, whence its reduction modulo 3 is nonsingular.
The $\FF_3$-valued form is split since the sublattice of type $A_1^4$ maps onto $D_4/3D_4$ (take an orthogonal set of roots $p,q,r,s$ and note that
$p+q+r, q-r+s$ generates a direct summand of $\ZZ p+\ZZ q+\ZZ r+\ZZ s$ and that their image in
$D_4/3D_4$ is totally singular).
We therefore get a homomorphism $Weyl(F_4)\rightarrow O^+(4,3)$.  Both groups have order $2^73^2$ and
this map is monic because any normal subgroup of $Weyl(F_4)$ contains the central involution which acts as $-1$ on $D_4/3D_4$.
\eop

\begin{nota}\labtt{contain1}
Let $G:=\brw 4$,
$U:=O_2(G)\cong 2^{1+8}_+$, $\tau \in G$ an involution of trace 8 on the natural module (dimension 16) and $\th \in G$ an element of order 3 such that
$[U, \th ]/U'$ has rank 4.
Define $G_1:=C_G([U,\th ])$, $G_2:=C_G(G_1)$.   Then $G_1\cong G_2\cong \brw 2$.
\end{nota}

\begin{lem}\labtt{contain2}
$[U,\tau ]\le [U,\th]$.
\end{lem}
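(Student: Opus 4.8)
The plan is to reduce everything to the tensor picture attached to $\theta$ and then to use the one extra input coming from the lattice: that $\tau$ inverts $\theta$. Indeed, with $h=t_Mt_N$ and $g=h^2$ as in \refpp{dih12}, and with $\tau=t_M$, $\theta=g$, one computes directly $t_M\,g\,t_M=(t_Mt_N)^{-2}=g^{-1}$, so $\tau\theta\tau^{-1}=\theta^{-1}$. First I would record the decomposition furnished by \refpp{contain1}: since $G_1=C_G([U,\theta])=C_G(U_2)$ and $G_2=C_G(G_1)$ are both isomorphic to $\brw 2$, the natural module splits as an orthogonal tensor product $V=V_1\otimes V_2$ of two $4$-dimensional spaces, with $G_i$ acting on $V_i$, with $U_i:=O_2(G_i)$ extraspecial acting irreducibly on $V_i$, and with $U=U_1\circ U_2$ and $U_2=[U,\theta]$. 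Because $[U,\theta]=U_2$, the element $\theta$ centralizes $U_1$, hence by Schur's lemma acts as a scalar on $V_1$; as this scalar is orthogonal of order dividing $3$ it is $1$, so $\theta=1_{V_1}\otimes\theta_2$ with $\theta_2$ of order $3$ acting without fixed points on $U_2/U'$ and nontrivially on $V_2$.

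Next I would exploit the inverting relation. Since $\tau$ normalizes $U$ and $\tau\theta\tau^{-1}=\theta^{-1}$, we get $\tau\,U_2\,\tau^{-1}=\tau[U,\theta]\tau^{-1}=[U,\theta^{-1}]=[U,\theta]=U_2$, so $\tau$ normalizes $U_2$, hence also normalizes $G_1=C_G(U_2)$ and $U_1=O_2(G_1)$. Normalizing each tensor factor, $\tau$ preserves the decomposition and therefore acts as $\tau=\tau_1\otimes\tau_2$ for orthogonal involutions $\tau_i$ on $V_i$ (well defined up to the sign ambiguity $\tau_1\otimes\tau_2=(-\tau_1)\otimes(-\tau_2)$). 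Restricting $\tau\theta\tau^{-1}=\theta^{-1}$ to the second factor gives $\tau_2\theta_2\tau_2^{-1}=\theta_2^{-1}$, so $\tau_2$ genuinely inverts the order-$3$ map $\theta_2$; in particular $\tau_2\neq\pm 1$, whence its $(-1)$-eigenspace on $V_2$ has dimension $1,2$ or $3$ and $\mathrm{tr}_{V_2}(\tau_2)\in\{2,0,-2\}$.

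The trace hypothesis then finishes the argument. From $V=V_1\otimes V_2$ we have $8=\mathrm{tr}_V(\tau)=\mathrm{tr}_{V_1}(\tau_1)\,\mathrm{tr}_{V_2}(\tau_2)$ with $\mathrm{tr}_{V_1}(\tau_1)\in\{-4,-2,0,2,4\}$. The value $\mathrm{tr}_{V_2}(\tau_2)=0$ is impossible, while $\mathrm{tr}_{V_2}(\tau_2)=\pm 2$ forces $\mathrm{tr}_{V_1}(\tau_1)=\pm 4$, i.e.\ $\tau_1=\pm 1$ is scalar. Hence $\tau$ acts as a scalar on $V_1$ and so centralizes $U_1$. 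Writing a general element of $U=U_1\circ U_2$ as $x_1x_2$ with $x_i\in U_i$, and using $[x_1,\tau]=1$ together with $\tau U_2\tau^{-1}=U_2$, we obtain $[x_1x_2,\tau]=[x_2,\tau]\in U_2$, so $[U,\tau]\le U_2=[U,\theta]$, as required.

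The step I expect to be the main obstacle is the second paragraph: turning ``$\tau$ normalizes both $U_1$ and $U_2$'' into the honest factorization $\tau=\tau_1\otimes\tau_2$ so that the traces multiply, and keeping track of the residual $\pm 1$ ambiguity so that the eigenvalue count on $\tau_2$ is legitimate. Once that factorization is in hand the trace count is immediate and the conclusion rests solely on the inverting relation $t_M\,g\,t_M=g^{-1}$ of \refpp{dih12}, not on any finer feature of $\tau$; this is precisely why a generic trace-$8$ involution (for instance one acting as $\tau_1\otimes 1$) would instead satisfy $[U,\tau]\le U_1$ and fail the stated containment.
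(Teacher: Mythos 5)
Your proof is correct, and it reaches the containment by a genuinely different mechanism than the paper does. The paper's argument stays entirely on the Frattini quotient $U/U'$ viewed as an $\FF_2$-quadratic space: trace $8$ translates into ``$[U,\tau ]/U'$ has rank $2$,'' and since $\tau$ inverts $\th$, which acts fixed-point-freely on the rank-$4$ space $[U,\th ]/U'$ (an $\FF_4$-structure on which $\tau$ is semilinear), $\tau$ already has commutator rank $2$ on that summand, so it must centralize the complementary summand $C_U(\th )/U'$ and the containment follows. You instead pass to the $16$-dimensional real module, factor it as $V_1\otimes V_2$ for the central product $G_1G_2$ (a decomposition the paper itself invokes in \refpp{contain4.5}), and convert trace $8$ into the statement that $\tau_1$ is scalar, whence $\tau$ centralizes $U_1$ and $[U,\tau ]\le U_2=[U,\th ]$. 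Both arguments consume exactly the same two inputs --- the trace and the inverting relation $t_Mgt_M=g^{-1}$, which you rightly make explicit since Notation \refpp{contain1} omits it --- but yours trades the mod-$2$ rank count for a characteristic-zero trace computation; what it buys is that every step is checkable by multiplying traces, at the cost of having to justify the factorization $\tau =\tau_1\otimes \tau_2$. On that point there is one small loose end: from $\tau^2=1$ you only get $\tau_1^2=\epsilon$, $\tau_2^2=\epsilon$ for a sign $\epsilon$, and you should note that $\epsilon =-1$ would force $\tr_{V_1}(\tau_1)=\tr_{V_2}(\tau_2)=0$ and hence $\tr_V(\tau )=0\ne 8$, so both factors really are involutions before you enumerate their eigenvalues. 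With that one line added the argument is complete.
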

\pf
Since the trace of $\tau$ is 8, $[U,\tau]/U'$ has rank 2.    Since $\tau$ inverts $\th$ by conjugation and leaves invariant $[U,\th]$, the containment follows.
\eop

\begin{coro}\labtt{contain3}
$\tau \in G_2$.
\end{coro}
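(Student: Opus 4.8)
The plan is to exploit the central-product structure of $U=O_2(G)$ induced by $\th$ together with the trace hypothesis on $\tau$. Write $U_1:=[U,\th]$ and $U_2:=C_U(\th)$. Under the coprime action of the order-$3$ element $\th$ on the extraspecial group $U\cong 2^{1+8}_+$, one has $U=U_1U_2$ with $U_1\cap U_2=U'=Z(U)=\la -1\ra$ and $[U_1,U_2]=1$, so $U=U_1\ast U_2$ is a central product of two copies of $2^{1+4}_+$; here $U_1/U'$ and $U_2/U'$ are the $4$-dimensional $[\,\cdot\,,\th]$-space and fixed space of $\th$ on $U/U'$. As recorded in the proof of Lemma \ref{contain2}, $\tau$ inverts $\th$ and leaves $U_1=[U,\th]$ invariant; since $U_2=C_U(\th)=C_U(\th^{-1})$ and $\tau$ normalizes $U$, it also normalizes $U_2$.

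Next I would pass to the natural $16$-dimensional module $V$. Because $U=U_1\ast U_2$ acts on $V$ with each $U_i$ acting absolutely irreducibly on a $4$-dimensional factor, one gets $V=V_1\otimes V_2$ with $U_i$ acting as $2^{1+4}_+$ on $V_i$. An element normalizing both $U_1$ and $U_2$ must respect this factorization: a Skolem--Noether argument (using that each $U_i$ spans $\mathrm{End}(V_i)$) gives $\tau=\tau_1\otimes\tau_2$ with $\tau_i\in N_{GL(V_i)}(U_i)$. I would then identify the target group as $G_2=C_G(U_2)$: indeed $G_1=C_G(U_1)$ consists of the elements of $G$ of the form $1\otimes B$, so $G_2=C_G(G_1)$ is the set of elements of the form $A\otimes 1$, which is exactly $C_G(U_2)$. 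Thus it suffices to prove that $\tau$ centralizes $U_2$, equivalently that $\tau_2$ is a scalar.

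Finally comes the trace input. By Lemma \ref{contain2}, $[U,\tau]\le U_1$, hence $[U_2,\tau]\le U_1\cap U_2=U'$; so $\tau_2$ acts trivially on $U_2/U'$ and therefore lies in the product of the scalars with $U_2$, say $\tau_2=sc$ with $s$ scalar and $c\in U_2$. A non-central element of the extraspecial group $U_2$ has trace $0$ on $V_2$, while a central one has trace $\pm 4$; hence $\tr(\tau_2)=0$ unless $c\in U'$. But $\tr(\tau)=\tr(\tau_1)\tr(\tau_2)=8\neq 0$ forces $\tr(\tau_2)\neq 0$, so $c\in U'$ and $\tau_2$ is a scalar. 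Then $\tau=\pm(\tau_1\otimes 1)\in C_G(U_2)=G_2$, as desired. The trace bookkeeping is routine; the main obstacle I anticipate is rigorously justifying the tensor factorization $\tau=\tau_1\otimes\tau_2$ and the identification $G_2=C_G(U_2)$ from the central-product description, i.e.\ turning the ``BRW-viewpoint'' of \cite{gal} into the precise double-centralizer statements used above.
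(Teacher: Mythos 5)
Your proof is correct and follows essentially the same route as the paper's: both use the $\th$-induced central-product splitting $U=[U,\th]\ast C_U(\th)$ to place $\tau$ in $G_1G_2$, use Lemma \ref{contain2} to force the $G_1$-component of $\tau$ into $C_U(\th)$, and then derive a contradiction from the fact that a non-central element of an extraspecial group has trace $0$ (the paper phrases this as $\tau$ being conjugate to $-\tau$, you as $\tr(\tau_1)\tr(\tau_2)=8\neq 0$). The only difference is presentational: you make the tensor factorization $V=V_1\otimes V_2$ and the Skolem--Noether step explicit, where the paper argues directly with the stabilizer of the orthogonal decomposition of $U/U'$.
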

\pf
Note that the direct product decomposition $[U,\th ]/U' \times C_U(\th)/U'$ is orthogonal in the sense of the natural quadratic form on $U/U'$.   The stabilizer of one summand stabilizes both and that stabilizer is $G_1 G_2$, a central product.
So $\tau \in G_1G_2$ and since $\tau$ stabilizes both summands and has commutator rank 2 on the quadratic space, $\tau \in UG_2$.   If $\tau \notin G_2$, there exists $x\in (U\cap G_1) \cap G_2 \setminus U'$ so that $\tau \in xG_2$.   There exists $y \in U\cap G_1$ so that $[x,y]$ generates $U'$.    Then $[y,\tau]$ generates $U'$.   This means that $\tau $ has trace 0 on the natural module, a contradiction.
\eop

The next two results apply to $\th, \tau \in G_2\cong Weyl(F_4)$ acting on the $D_4$-lattice.

\begin{lem}\labtt{contain4}
In $O(D_4)$, if $P\cong 3^2$ is a Sylow 3-group and $H$ is its normalizer,
$H= Z(O_2(O(D_4))) \times H_1\times H_2$,
for a unique pair of dihedral subgroups $H_1, H_2$
generated by reflections.
Furthermore, an element of order 3 in an
$H_i$ has trivial fixed points on the Frattini quotient of $O_2(O(D_4))$.
Elements of order 3 in $P\setminus (H_1\cup H_2)$ have rank 2
fixed points on the $D_4$-lattice.
\end{lem}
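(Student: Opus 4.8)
The plan is to carry out the entire analysis inside $O(D_4)\cong \weff\cong O^+(4,3)$, the identification furnished by Lemma \refpp{WF4}, using in addition the central product description $\Omega^+(4,3)\cong SL_2(3)\circ SL_2(3)$. First I would record the relevant $2$-local data. Writing $U:=O_2(O(D_4))$, one checks that $U$ is extraspecial of order $2^5$, namely $U\cong 2^{1+4}$ with centre $Z(U)=\langle -1\rangle$ (the central inversion of $D_4$) and $U=Q_8\circ Q_8$, the central product of the two quaternion groups $O_2(SL_2(3))$ coming from the two factors of $\Omega^+(4,3)$. Correspondingly a Sylow $3$-subgroup is $P\cong 3^2$, and in each $SL_2(3)$-factor the normalizer of its $C_3$ is $C_6=C_3\times\langle-1\rangle$, so that $N_{\Omega^+(4,3)}(P)=P\times Z(U)$ has order $18$.

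Next I would pass to the full normalizer using the concrete geometric picture. I would exhibit $P=\langle c_1,c_2\rangle$, where $c_1,c_2$ are the Coxeter elements of a perpendicular pair of $A_2$-subsystems of the $F_4$-root system, one long (a $D_4$-subsystem) and one short; these commute and generate $3^2$. Each $c_i$ is inverted by the reflections of the corresponding $A_2$, which lie in $O^+(4,3)\setminus SO^+(4,3)$; a reflection inverting $c_i$ acts trivially on the orthogonal summand and hence centralizes the opposite factor and $Z(U)$. Setting $H_i:=Weyl(A_2^{(i)})\cong Dih_6$ (generated by reflections) this yields three mutually commuting subgroups and the internal direct product $H=Z(U)\times H_1\times H_2$ of order $72$; an order count (the number of Sylow $3$-subgroups is $n_3=[\,O(D_4):H\,]=16\equiv 1\pmod 3$) confirms that this is all of $N(P)$. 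For uniqueness I would observe that the reflections of $D_4$ contained in $H$ are exactly the $3+3$ reflections lying in $H_1\cup H_2$: a product across the two factors, or a product with $-1$, fixes a subspace of codimension greater than $1$ and so is not a reflection. Thus the two maximal reflection subgroups of $H$ are $H_1$ and $H_2$, and the unordered pair $\{H_1,H_2\}$ is intrinsic to $P$.

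Finally I would perform the two fixed-point computations on the two different $4$-dimensional modules. For the Frattini quotient I let an order-$3$ element of $H_i$ act on $U/\Phi(U)=U/Z(U)\cong (Q_8/Z)\oplus(Q_8/Z)\cong \FF_2^4$. The generator $c_i$ of $H_i\cap P$ fixes a $2$-plane of $D_4\otimes\RR$, so in the central-product picture both of its projections to the $SL_2(3)$-factors are nontrivial of order $3$; hence $c_i$ acts freely (as the triality automorphism of $Q_8$) on each of the two $\FF_2^2$-summands and has no nonzero fixed vector, which gives the asserted trivial fixed points on $U/\Phi(U)$. For the remaining order-$3$ elements, namely those in $P\setminus(H_1\cup H_2)$ — the two order-$3$ subgroups of $P$ other than $H_1\cap P,\,H_2\cap P$, equivalently the Sylow $3$-subgroups of the individual $SL_2(3)$-factors — I would write them as explicit signed permutations in the orthogonal $A_2\perp A_2$ model and compute the fixed sublattice directly, reading off its rank on the $D_4$-lattice.

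The delicate point, and the step I expect to be the main obstacle, is that the two modules in play are genuinely non-isomorphic as $\weff$-modules: the Frattini quotient $U/\Phi(U)$ over $\FF_2$ and the reflection module $D_4\otimes\RR$ (equivalently the reduction $D_4/2D_4$) carry opposite fixed-point behaviour across the order-$3$ classes of $P$ — indeed one checks that a Coxeter element is fixed-point-free on $U/\Phi(U)$ yet fixes a $2$-dimensional subspace of $D_4\otimes\RR$, while the factor elements behave oppositely (for instance $\det(c-1)=9$ is odd for a fixed-point-free $c$, so such $c$ has invertible reduction modulo $2$). The heart of the proof is therefore the careful bookkeeping that attaches each order-$3$ subgroup of $P$ — the long-$A_2$ Coxeter class, the short-$A_2$ Coxeter class, and the remaining class — to its fixed points on the correct module; once the central-product picture and the explicit matrices are in place, the two displayed fixed-point assertions follow.
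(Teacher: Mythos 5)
Your route is essentially the paper's: both proofs exhibit the reflection subgroup $Weyl(A_2)\times Weyl(A_2)$ (one long, one short $A_2$) inside $O(D_4)\cong Weyl(F_4)$, identify $U:=O_2(O(D_4))\cong 2^{1+4}_+$, and use the fixed-point-free action of $P$ on $U/\Phi(U)$ to pin down $N(P)$; your added explicitness via $\Omega^+(4,3)\cong SL_2(3)\circ SL_2(3)$ is a harmless and clarifying elaboration, and your identification of the Coxeter elements $c_i$ as the ``diagonal'' elements, together with the uniqueness argument by counting reflections in $H$, is correct. One small repair: the congruence $n_3=16\equiv 1\pmod 3$ does not by itself force $H=N(P)$ (it leaves $n_3=4$ open); but you already have what you need, since $|N_{O(D_4)}(P)|\le [O(D_4):\Omega^+(4,3)]\cdot |N_{\Omega^+(4,3)}(P)|=4\cdot 18=72=|H|$.

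The substantive issue is the final fixed-point claim, which you defer (``reading off its rank'') rather than compute. Your own setup decides it: an element of $P\setminus(H_1\cup H_2)$ lies in a single $SL_2(3)$-factor, so on $D_4\otimes\CC\cong V_2\otimes V_2'$ it has eigenvalues $\omega,\omega,\bar\omega,\bar\omega$, trace $-2$, and therefore fixed sublattice of rank $0$ on the $D_4$-lattice --- not rank $2$. Its rank-$2$ fixed points live on the Frattini quotient $U/\Phi(U)$ (it acts as triality on one $Q_8$ and trivially on the other). This is exactly what the paper's own proof records (``rank 2 fixed points on the Frattini factor and 0 fixed point sublattice''), and it means the last sentence of the lemma as printed is misstated: ``on the $D_4$-lattice'' should read ``on the Frattini quotient of $O_2(O(D_4))$''. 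If you carry out the signed-permutation computation expecting rank $2$ on the lattice, you will not get it; you should state the corrected dichotomy explicitly rather than leaving the rank unspecified.
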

\pf
We observe that $Weyl(F_4)$ contains a natural $Weyl(A_2)\times Weyl(A_2)$ generated by reflections (at roots of different lengths).   Since $P$, a Sylow 3-group of this (also a Sylow 3-group of $O(D_4)$), acts without fixed points on the Frattini factor of $O_2(O(D_4))$, $H\cong Z(O_2(O(D_4))) \times Dih_6 \times Dih_6$.
The subgroup of this generated by roots is the group $Weyl(A_2)\times Weyl(A_2)$ mentioned above.

For the second, let $x$ be  an element of order 3 in $H_1$.
Since $x$ has trace 1 on the lattice,  $x$ has trivial fixed points on  the Frattini quotient of $O_2(O(D_4))$.   The remaining four elements of order 3 in $P$ have rank 2 fixed points on the Frattini factor and
0 fixed point sublattice.

Under conjugacy by $H$, the elements of order 3 are partitioned into orbits $\{ x, x^{-1}\}$, $\{ y, y^{-1} \}$ and the remaining set of four elements of order 3 (the ones which have nontrivial fixed points on  the Frattini quotient of $O_2(O(D_4))$.
The final statement follows.
\eop

\begin{coro} \labtt{contain4.5} We use the notation of \refpp{contain4}.
Let  $\{i,j\}=\{1,2\}$.
If $\th \in H_i$, then
$C_{O(D_4)}(\th ) = Z(O_2(O(D_4))) \times \la \th \ra  \times H_j$.
Also,
$\tau \in H_i$ and
$C_{O(D_4)}(\la \th, \tau \ra) = Z(O_2(O(D_4)))H_j$.
\end{coro}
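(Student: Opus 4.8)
The plan is to derive everything from a single computation of $C_{O(D_4)}(\th)$. Write $G=O(D_4)$ and let $z_0$ generate $Z(O_2(G))$ (the $-1$ isometry), and recall from \refpp{contain4} that $N_G(P)=H=\la z_0\ra\times H_1\times H_2$ with $H_1\cong H_2\cong Dih_6$ generated by reflections, that $P\cong 3^2$ is a Sylow $3$-subgroup of $G$, and that $\th$ is the unique subgroup-generator of order $3$ in $H_i$, with trace $1$ on $D_4$. The inclusion $\la z_0\ra\times\la\th\ra\times H_j\subseteq C_G(\th)$ is immediate: $z_0$ is central in $G$, $\la\th\ra$ centralizes $\th$, and $H_j$ centralizes $H_i\ni\th$ since $H$ is an internal direct product. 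For the reverse inclusion I would run a Sylow argument pinned down by an eigenspace bound. Since $\th\in P$ and $P$ is abelian, $P\le C_G(\th)$, so $P$ is a Sylow $3$-subgroup of $C_G(\th)$ and the $3$-part of $|C_G(\th)|$ is exactly $9$. Moreover $N_{C_G(\th)}(P)=C_G(\th)\cap N_G(P)=C_H(\th)=\la z_0\ra\times\la\th\ra\times H_j$ has order $36$, using that the centralizer of the rotation $\th$ inside $H_i\cong Dih_6$ is just $\la\th\ra$. Writing $|C_G(\th)|=2^a\cdot 9$, the containment $C_H(\th)\le C_G(\th)$ gives $a\ge 2$, while Sylow's theorem gives $n_3(C_G(\th))=2^{a-2}\equiv 1\pmod 3$, so $a$ is even.

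To force $a=2$ I would bound the $2$-part of $|C_G(\th)|$ via eigenspaces. Decompose $D_4\otimes\RR=V_+\oplus V_-$ into the $\th$-fixed space and the rotation plane; each is $2$-dimensional, since $\th$ has order $3$ and trace $1$ forces eigenvalues $1,1,\o,\o^2$. Any $g\in C_G(\th)$ preserves $V_\pm$, hence the rank $2$ lattices $L_\pm:=D_4\cap V_\pm$, and the map $C_G(\th)\to O(L_+)\times O(L_-)$ is injective because its kernel acts trivially on the finite-index sublattice $L_+\perp L_-$ and therefore on $D_4$. On $V_-$ the element $g$ commutes with the nontrivial rotation $\th|_{V_-}$, so $g|_{V_-}\in SO(L_-)$. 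Since the isometry group of a rank $2$ lattice is dihedral, $|O(L_+)|_2\le 4$ and $|SO(L_-)|_2\le 2$, so the $2$-part of $|C_G(\th)|$ is at most $2^3$, i.e. $a\le 3$. With $a$ even this gives $a=2$ and $|C_G(\th)|=36=|C_H(\th)|$, whence $C_G(\th)=C_H(\th)=\la z_0\ra\times\la\th\ra\times H_j$, the first assertion. I expect this $2$-part bound (equivalently, ruling out $n_3=4$) to be the main obstacle; the point of the eigenspace embedding is that it needs only the dihedral shape of rank $2$ isometry groups, not an explicit identification of $L_\pm$.

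For the second assertion the crux is $\tau\in H_i$, after which the formula is a one-line calculation. To locate $\tau$ I would first compute its trace on $D_4$: by \refpp{contain1} $\tau$ has trace $8$ on the $16$-dimensional natural module of $G=\brw 4$, and under the central product $G_1\circ G_2$ acting as $4\otimes 4$ with $\tau\in G_2$ this trace equals $4\cdot\tr(\tau|_{D_4})$, so $\tr(\tau|_{D_4})=2$ and $\tau$ is a reflection. Since $\tau$ inverts $\th$ (as in the proof of \refpp{contain2}), it lies in the nontrivial coset $s\,C_G(\th)$ of $C_G(\th)$ in $N_G(\la\th\ra)$, where $s$ is any reflection of $H_i$; a short trace count shows that the only involutions in $s\,(\la z_0\ra\times\la\th\ra\times H_j)$ of trace $2$ on $D_4$ are the three reflections $s,\th s,\th^2 s$ of $H_i$, since a nontrivial $\la z_0\ra$-component negates the trace and a nontrivial $H_j$-component produces a product of two orthogonal reflections of trace $0$. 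As $\tau$ is exactly such an involution, $\tau\in H_i$. Finally $C_G(\la\th,\tau\ra)=C_{C_G(\th)}(\tau)$, and for $z\th^a h$ with $z\in\la z_0\ra$ and $h\in H_j$ one has $\tau(z\th^a h)\tau=z\th^{-a}h$, because $\tau$ centralizes $z_0$ and $H_j$ and inverts $\th$; this is fixed iff $\th^{2a}=1$, i.e. $a=0$. Hence $C_G(\la\th,\tau\ra)=\la z_0\ra\times H_j=Z(O_2(O(D_4)))H_j$, completing the proof.
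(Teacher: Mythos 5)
Your strategy is sound and, for the first assertion, genuinely different from the paper's. The paper stays inside the group $O(D_4)$ of shape $2^{1+4}.(Sym_3\times Sym_3)$: since \refpp{contain4} gives that $\th$ acts with trivial fixed points on the Frattini quotient of $O_2(O(D_4))$, one gets $C_{O(D_4)}(\th)\cap O_2(O(D_4))=Z(O_2(O(D_4)))$ and then a coprime-action order count forces $|C_{O(D_4)}(\th)|=36=|C_H(\th)|$. You replace this by Sylow counting inside $C_G(\th)$ pinned down by the eigenlattice embedding $C_G(\th)\hookrightarrow O(L_+)\times SO(L_-)$. For the second assertion the two arguments are close: both extract $\tr(\tau|_{D_4})=2$ from the tensor decomposition of the $16$-dimensional module in \refpp{contain1}; the paper first places $\tau$ in $Z(O_2(O(D_4)))r$ via a quaternion-group trace argument, whereas you place it in the coset $sC_G(\th)$ and run a direct trace count over that coset. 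That half of your proof is correct and arguably cleaner, granted the first half.

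There is, however, a genuine gap in the step ``since the isometry group of a rank $2$ lattice is dihedral, $|O(L_+)|_2\le 4$ and $|SO(L_-)|_2\le 2$.'' Dihedrality alone only gives $|O(L)|_2\le 8$ and $|SO(L)|_2\le 4$: the square lattice has $O(L)\cong Dih_8$ and $SO(L)\cong \ZZ_4$. The bound for $L_-$ is easily repaired, since $SO(L_-)$ is cyclic of order at most $6$ and contains the order-$3$ rotation $\th|_{V_-}$, hence is $\ZZ_6$ with $2$-part $2$; but then your estimate only yields $2^a\le 8\cdot 2=16$, so $a\in\{2,4\}$ and the Sylow argument does not close. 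The bound $|O(L_+)|_2\le 4$ is true but cannot be had for free: it amounts to ruling out an order-$4$ rotation of $L_+=Fix_{D_4}(\th)$, i.e.\ exactly the identification of $L_+$ that you explicitly disclaim needing. The fix is a one-line computation: in the standard model $D_4=\{x\in\ZZ^4\mid \sum x_i\in 2\ZZ\}$ with $\th$ a Coxeter element of the $A_2$ spanned by $(1,1,0,0)$ and $(0,-1,1,0)$, the fixed lattice is spanned by $(-1,1,1,1)$ and $(0,0,0,2)$ with Gram matrix $\left[\begin{smallmatrix}4&2\\2&4\end{smallmatrix}\right]$, so $L_+\cong AA_2$ and $O(L_+)\cong Dih_{12}$ has $2$-part $4$. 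With that inserted, $a\le 3$, hence $a=2$, and the rest of your argument goes through.
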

\pf
Since $\th$ centralizes just $Z(O_2(O(D_4)))$ on $O_2(O(D_4))$, the form of $C(\th)$ follows from \refpp{contain4}.
An element of $H$ which inverts $P$ has trace 0 on the lattice since it induces outer automorphisms on each quaternion group in $O_2(O(D_4))$.    Therefore,
$\tau \in Z(O_2(O(D_4)))r$, where $r\in H_i$ is a reflection.   Since  $r$ has trace 2 and since the central involution acts as $-1$ on the lattice, $\tau$ has trace $\pm 2$.
Since $\tau$ has trace 8 on the rank 16 representation \refpp{contain1} and this module is a tensor product for the central product $G_1G_2$, it follows that in the action of $G_2$ on $D_4$, $\tau$ has trace 2 and $\tau$  is in fact a reflection.
 \eop

\begin{nota}\labtt{contain5}
Let $t_M, t_N, t_E, h, g$ be as in Section (on $DIH_{12}(16)$).
Then $u:=t_E$ is the central involution of $\la t_M, t_N \ra$.

Let $\psi$ be a homomorphism of $C_{O(\L )}(u)$ onto $\brw 4$.
Denote $\tau :=\psi (t_M), \th :=\psi (g)$.
 Note that
\refpp{ginc(z)}, \refpp{tinc(z)} imply that Notation \refpp{contain1} applies to $\tau, \th$.

The centralizer  in $G:=\brw 4$ of $\la \tau, \th \ra$ has been discussed in
\refpp{contain4.5}.
Note that
$Ker(\psi )=\la -u \ra$ and that $\psi$ maps $C_{O(\L )}(\la t_M, t_N \ra)$ to a
subgroup of $C_G( \la \tau, \th \ra )\cong O^+(4,3)\times Dih_6$. According to
\refpp{CD1}, $C_{O(\L )}(\la
t_M, t_N \ra)$ has order $2^83^3$, the same as $O^+(4,3)\times Dih_6$.  This
means that $\psi (C_{O(\L )}(\la t_M, t_N \ra))$ has index 2 in $C_G( \la \tau, \th
\ra )\cong O^+(4,3)\times Dih_6$.
\end{nota}

\section{Trivial action on lattices mod 2 }

\begin{lem}
Suppose that the involution $t$ acts on the  abelian group $L$ which has no 2-torsion.
 Assume that $t$ is trivial on $L/2L$.   Then $L$ is the direct sum of eigenlattices for $t$.
\end{lem}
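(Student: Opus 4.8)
The plan is to exhibit the two eigenlattices
$$L^+ := \{x \in L \mid t(x) = x\} \qquad \text{and} \qquad L^- := \{x \in L \mid t(x) = -x\}$$
explicitly and to show that every element of $L$ decomposes uniquely as a sum of one element from each. The only tools needed beyond the definitions are the hypothesis that $(t-1)L \subseteq 2L$ (which is precisely what triviality of $t$ on $L/2L$ means) and the absence of $2$-torsion (so that division by $2$, when it can be carried out inside $L$, has a unique result).

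First I would record the key divisibility. For any $x \in L$, write $x + t(x) = 2x + (t(x) - x)$ and $x - t(x) = -(t(x) - x)$. Since $t(x) - x \in (t-1)L \subseteq 2L$ and $2x \in 2L$, both $x + t(x)$ and $x - t(x)$ lie in $2L$. Hence there exist $u, v \in L$ with $x + t(x) = 2u$ and $x - t(x) = 2v$, and these $u, v$ are unique because $L$ has no $2$-torsion.

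Next I would check that $u \in L^+$ and $v \in L^-$. Applying $t$ and using $t^2 = 1$ gives $2\,t(u) = t(x) + x = 2u$ and $2\,t(v) = t(x) - x = -2v$; cancelling the $2$ (no $2$-torsion) yields $t(u) = u$ and $t(v) = -v$. Adding the two defining equations gives $2x = 2(u+v)$, hence $x = u + v$, so $L = L^+ + L^-$. Finally, the sum is direct: if $w \in L^+ \cap L^-$ then $w = t(w) = -w$, so $2w = 0$ and thus $w = 0$. Therefore $L = L^+ \oplus L^-$.

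The argument is entirely routine and I anticipate no real obstacle. The single point that must not be overlooked is that the hypothesis ``$t$ trivial on $L/2L$'' is exactly what makes the \emph{symmetric} combination $x + t(x)$, and not merely the antisymmetric one $x - t(x)$, divisible by $2$ inside $L$; this is what allows $u = \tfrac{1}{2}(x + t(x))$ to be an honest element of $L$ rather than of $\tfrac{1}{2}L$. Without this hypothesis one obtains only a splitting of $2L$, not of $L$.
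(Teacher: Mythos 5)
Your proof is correct and is essentially the paper's argument in different clothing: the paper writes $t = 1-2a$ for an endomorphism $a$ of $L$ (so $a = \tfrac{1}{2}(1-t)$), proves $a$ is idempotent from $t^2=1$ and the absence of $2$-torsion, and splits $L$ as the sum of $\ker(a)$ and $\ker(1-a)$, which are exactly your $L^+$ and $L^-$ with $u=(1-a)x$ and $v=ax$. Both arguments rest on the same two facts --- the mod-$2$ triviality of $t$ makes the projections $\tfrac{1}{2}(1\pm t)$ land in $L$, and the absence of $2$-torsion makes halving unique --- so nothing needs to change.
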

\pf There exists an endomorphism $a$ of $L$ so that $t=1-2a$.  Then
$1=t^2=(1-2a)^2=1-4a+4a^2=1+4(a^2-a)$ and absence of 2-torsion imply that
$0=a^2-a=a(a-1)$.

For $x\in L$, $x=ax + (1-a)x$, whence $L$ is the sum of subgroups $Ker(1-a)$ and
$Ker(a)$.

Let $x\in Ker(a)$.  Then $tx=(1-2a)x=x$.   If $y\in Ker(1-a)$,
$ty=(1-2a)y=(-1+2(1-a))y=-y$.  Therefore $L$ is the sum of the 1-eigenlattice and
$(-1)$-eigenlattice of $t$.   Their intersection is 0 since $L$ is free of 2-torsion.
\eop

\begin{coro}\labtt{tL2L}
Suppose that the involution $t$  is an isometry of the orthogonally
indecomposable rational lattice $L$ such that $t$ acts trivially on $L/2L$.   Then
$t=1$ or $t=-1$.
\end{coro}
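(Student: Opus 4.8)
The plan is to deduce this corollary directly from the preceding lemma, whose hypotheses are exactly met here: a rational lattice $L$ is free of $2$-torsion, and $t$ is an isometric involution acting trivially on $L/2L$. So the lemma already gives the decomposition $L = L^+ \oplus L^-$, where $L^+ = \ker(t-1)$ and $L^- = \ker(t+1)$ are the $(+1)$- and $(-1)$-eigenlattices of $t$. The only work remaining is to upgrade this group-theoretic direct sum to an \emph{orthogonal} direct sum and then invoke orthogonal indecomposability.

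First I would establish the orthogonality $L^+ \perp L^-$. For $x \in L^+$ and $y \in L^-$, since $t$ is an isometry one computes $\la x, y\ra = \la tx, ty\ra = \la x, -y\ra = -\la x, y\ra$, forcing $\la x, y\ra = 0$. Hence the decomposition $L = L^+ \oplus L^-$ from the lemma is in fact an orthogonal decomposition $L = L^+ \perp L^-$.

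Next I would apply the hypothesis that $L$ is orthogonally indecomposable: one of the two summands must be the zero lattice. If $L^- = 0$, then $L = L^+$, so $t$ acts as the identity and $t = 1$. If instead $L^+ = 0$, then $L = L^-$, so $t$ acts as negation and $t = -1$. This exhausts the possibilities and completes the argument.

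There is essentially no genuine obstacle here, since the substantive content, the existence of the eigenlattice decomposition in the absence of $2$-torsion, is furnished entirely by the preceding lemma; the one point to be careful about is that I am using $t$ being an \emph{isometry} (not merely an automorphism of the abelian group) to obtain orthogonality, which is the single ingredient beyond the lemma that makes orthogonal indecomposability applicable.
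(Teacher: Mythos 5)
Your proof is correct and follows exactly the paper's own argument: apply the preceding lemma to get the eigenlattice decomposition, observe that the isometry property makes it orthogonal, and invoke orthogonal indecomposability to kill one summand. You merely spell out the orthogonality computation that the paper leaves implicit.
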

\pf By the Lemma, $L$ is a direct sum of eigenlattices for $t$.   Since $t$ is an
isometry of $L$, this is an orthogonal direct sum, whence indecomposability of
$L$ implies that one of the summands is 0. \eop

\begin{lem}\labtt{liftinvollatticemod2}  Let $L$ be a finite rank positive definite orthogonally indecomposable lattice.
Suppose that $u\in O(L)$  has the property that $u^2$ acts trivially on $L/2L$ and
$(u-1)(L/2L)$ has dimension less than $\frac 12 rank(L)$.   Then $u$ is an involution.
\end{lem}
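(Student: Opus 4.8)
The plan is to combine the congruence constraints forced by $u^2\equiv 1 \pmod{2L}$ with Corollary \ref{tL2L}, and then convert the rank hypothesis into a numerical contradiction using the ramification of $2$ in a cyclotomic ring. First I would record that, since $L$ is positive definite, $O(L)$ is finite, so $u$ has finite order. Setting $w:=u^2$, the hypothesis says $w$ acts trivially on $L/2L$, i.e. $(w-1)L\subseteq 2L$, so $w=1+2h$ with $h\in \operatorname{End}(L)$. Hence each eigenvalue $\zeta$ of $w$ is a root of unity with $\tfrac{\zeta-1}2$ an algebraic integer, so $1-\zeta$ is divisible by $2$ in $\ZZ[\zeta]$; but the norm of $1-\zeta$ equals $\Phi_m(1)$, which is odd whenever the order $m$ of $\zeta$ is not a power of $2$. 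This forces every eigenvalue of $w$ to have $2$-power order, so $w$, and therefore $u$, is a $2$-element. If $|u|\le 2$ we are done, so suppose $|u|=2^a$ with $a\ge 2$.

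Next I would apply Corollary \ref{tL2L}. Let $t:=u^{2^{a-1}}$ be the unique involution of the cyclic group $\la u\ra$. Since $t=(u^2)^{2^{a-2}}$ is a power of $w$, it acts trivially on $L/2L$; as $L$ is orthogonally indecomposable, Corollary \ref{tL2L} gives $t=\pm 1$, and since $t\ne 1$ we must have $u^{2^{a-1}}=-1$. Consequently every eigenvalue $\lambda$ of $u$ satisfies $\lambda^{2^{a-1}}=-1$, so $\lambda$ is a primitive $2^a$-th root of unity; being of finite order, $u$ is semisimple, and since its characteristic polynomial lies in $\QQ[x]$ its minimal polynomial over $\QQ$ is $\Phi_{2^a}$. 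Thus $\ZZ[u]\cong \ZZ[x]/(\Phi_{2^a})=\ZZ[\zeta_{2^a}]=:\mathcal O$, the full ring of integers, and $L$ becomes a finitely generated torsion-free---hence projective---$\mathcal O$-module of $\mathcal O$-rank $s$, where $\rank(L)=2^{a-1}s$.

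The final step is the dimension count, and this is where the strict inequality is used. In $\mathcal O$ the prime $2$ is totally ramified: $2\mathcal O=\mathfrak p^{\,2^{a-1}}$ with $\mathfrak p=(1-\zeta_{2^a})$ and residue field $\FF_2$. Localizing at $\mathfrak p$ (so freeness is automatic over the discrete valuation ring $\mathcal O_{\mathfrak p}$), the $\FF_2$-space $L/2L=L/\mathfrak p^{\,2^{a-1}}L\cong (\mathcal O/\mathfrak p^{\,2^{a-1}})^s$, and $u-1$ acts as multiplication by the uniformizer $\zeta_{2^a}-1$, whose image on each factor $\mathcal O/\mathfrak p^{\,2^{a-1}}$ is $\mathfrak p/\mathfrak p^{\,2^{a-1}}$, of $\FF_2$-codimension $1$. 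Hence $\dim_{\FF_2}(u-1)(L/2L)=(2^{a-1}-1)s$, which for $a\ge 2$ satisfies $(2^{a-1}-1)s\ge 2^{a-2}s=\tfrac12\rank(L)$, contradicting the hypothesis that this dimension is strictly less than $\tfrac12\rank(L)$. Therefore $a\le 1$ and $u^2=1$.

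I expect the main obstacle to be this last paragraph: identifying $L$ as a module over the ramified ring $\ZZ[\zeta_{2^a}]$ and computing $\dim_{\FF_2}(u-1)(L/2L)$ without assuming $L$ is globally free. The clean way around non-freeness is to localize at the unique prime $\mathfrak p$ above $2$, since $L/2L$ is $\mathfrak p$-torsion and $L_{\mathfrak p}$ is free over the discrete valuation ring $\mathcal O_{\mathfrak p}$, after which the codimension-$1$ computation for multiplication by a uniformizer on $\mathcal O/\mathfrak p^{\,e}$ applies verbatim. It is precisely the equality case $a=2$ (eigenvalues $\pm i$, where $\dim_{\FF_2}(u-1)(L/2L)=\tfrac12\rank(L)$) that shows the strict inequality in the hypothesis is necessary and cannot be weakened.
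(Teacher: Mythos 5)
Your proof is correct, and its core strategy coincides with the paper's: invoke Corollary \ref{tL2L} to pin down the eigenvalues, then get a contradiction from a dimension count over a cyclotomic ring in which $2$ is totally ramified. The paper's version is much shorter: it applies Corollary \ref{tL2L} directly to $u^2$ to conclude $u^2=\pm 1$, and in the case $u^2=-1$ observes that $L$ is a free $\ZZ[i]$-module (PID), so that the relevant $\FF_2$-dimension is exactly $\tfrac12\rank(L)$, contradicting the strict inequality. Note, however, that Corollary \ref{tL2L} is stated for an \emph{involution} acting trivially on $L/2L$, so the paper's one-line invocation tacitly presupposes $u^4=1$; your preliminary cyclotomic-norm step is precisely what justifies this, so your argument is in that respect more complete. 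In fact, if you push that norm computation slightly further --- for $\zeta$ of order $2^k$ one needs $N(2)=2^{\phi(2^k)}=2^{2^{k-1}}$ to divide $N(1-\zeta)=\Phi_{2^k}(1)=2$, which forces $k\le 1$ --- you get $u^4=1$ outright, after which the only surviving case is $u^2=-1$ and the general $\ZZ[\zeta_{2^a}]$ machinery (projectivity, localization at $\mathfrak p$) collapses to the paper's simpler $\ZZ[i]$ endgame. What your longer route buys is robustness (no reliance on the unstated $u^4=1$, and a uniform treatment of all $2$-power orders) together with the useful observation that $a=2$ realizes equality $\dim_{\FF_2}(u-1)(L/2L)=\tfrac12\rank(L)$, showing the strict inequality in the hypothesis is sharp.
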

\pf Since $L$ is an orthogonally indecomposable lattice, $u^2=\pm 1$ by
\refpp{tL2L}.    Suppose that $u^2=-1$.   Then $L$ is a torsion free module for
$\ZZ [u]\cong \ZZ [\sqrt {-1}]$, a PID.  Therefore,  $L$ is a free module and
$L/(u-1)L$ has $\FF_2$-dimension exactly $\half rank(L)$, a contradiction. \eop

\section{Facts about discriminant groups}

\begin{lem}[Lemma A.9 of \cite{gl5A}]\labtt{N*}
Let $X$, $Y$ be sublattices of the lattice $L$ where $Y$ is a direct summand of $L$, $L=X+Y$ and
$(det(Y), det(L))=1$.  Then the  natural map of $X$ to the discriminant group $\dg Y=Y^*/Y$ is onto.
\end{lem}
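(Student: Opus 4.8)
The plan is to prove the stronger statement that the orthogonal projection $P_Y$ carries $L$ \emph{onto} the full dual lattice $Y^*$; the asserted surjectivity onto $\dg Y = Y^*/Y$ then follows by reducing mod $Y$. First I would pin down the natural map. Assuming, as throughout, that $L$ is integral, each $x\in X\subseteq L$ gives a $\ZZ$-valued functional $y\mapsto \la x,y\ra$ on $Y$, i.e. an element of $Y^*$; under the identification $Y^*=\{v\in \QQ\otimes Y\mid \la v,Y\ra \subseteq \ZZ\}$ this element is exactly $P_Y(x)$, so the natural map is $x\mapsto P_Y(x)+Y$. Since $P_Y(Y)=Y$ maps to $0$ and $L=X+Y$ gives $P_Y(L)=P_Y(X)+Y$, the image of $X$ in $\dg Y$ equals the image of all of $L$, namely $P_Y(L)/Y$. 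Hence it suffices to show $P_Y(L)=Y^*$.

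Next I would reduce to a prime-by-prime check. The lattice $P_Y(L)$ lies between $Y$ and $Y^*$, so $Y^*/P_Y(L)$ is a quotient of $\dg Y$ and is finite of order dividing $\det(Y)$; it vanishes if and only if its $p$-primary part vanishes for every prime $p\mid \det(Y)$. Fix such a $p$. The hypothesis $(\det(Y),\det(L))=1$ forces $p\nmid \det(L)$, so the localization $L_p=\ZZ_p\otimes_\ZZ L$ is $\ZZ_p$-unimodular, that is $L_p=L_p^*$.

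The key local step uses both hypotheses at once. Because $Y$ is a direct summand of $L$, the quotient $L/Y$ is free, so $L_p/Y_p$ is a free $\ZZ_p$-module and $Y_p$ is a $\ZZ_p$-direct summand of $L_p$. Consequently the restriction map $\mathrm{Hom}_{\ZZ_p}(L_p,\ZZ_p)\to \mathrm{Hom}_{\ZZ_p}(Y_p,\ZZ_p)=Y_p^*$ is surjective. Unimodularity of $L_p$ identifies $\mathrm{Hom}_{\ZZ_p}(L_p,\ZZ_p)$ with $L_p$ via $\ell\mapsto \la \ell,-\ra$, and under this identification the restriction map becomes $P_Y\otimes \ZZ_p\colon L_p\to Y_p^*$. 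Therefore $P_Y(L)\otimes \ZZ_p = Y_p^*=(Y^*)\otimes \ZZ_p$, so the $p$-part of $Y^*/P_Y(L)$ is trivial. Letting $p$ range over all primes dividing $\det(Y)$ yields $P_Y(L)=Y^*$, which completes the argument begun in the first paragraph.

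The heart of the proof is the local surjectivity of $P_Y$ onto $Y_p^*$, and I expect this to be the main obstacle: it requires the two inputs simultaneously, namely unimodularity of $L_p$ (from coprimality of the determinants) in order to dualize the inclusion via the form, and freeness of $L_p/Y_p$ (from the direct-summand hypothesis) in order to extend functionals from $Y_p$ to $L_p$. By contrast, identifying the natural map and reassembling the local data are routine. One could instead run a global index count, comparing $[L:Y\perp ann_L(Y)]$ with $\det(Y)$ and $\det(ann_L(Y))$ via $[L:Y\perp ann_L(Y)]^2\det(L)=\det(Y)\det(ann_L(Y))$, but that route is messier, so I would favor the localization.
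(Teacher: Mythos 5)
Your argument is correct; the paper gives no proof here (it simply cites Lemma A.9 of \cite{gl5A}), and your localization argument is essentially the standard one: the direct-summand hypothesis makes restriction of functionals surjective, and coprimality of the determinants kills the cokernel $Y^*/P_Y(L)$, which sits between $Y$ and $Y^*$. One small simplification: since the restriction $L^*\to Y^*$ is already onto globally, $Y^*/P_Y(L)$ is simultaneously a quotient of $\dg L$ and of $\dg Y$, so its order divides $(det(L),det(Y))=1$ and no localization is needed.
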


\begin{lem}\labtt{M+N}
Let $X$ and $Y$ be sublattices of the unimodular lattice $L$ such that $X\cap Y=0$ and $X+Y$ is a direct summand of $L$. Then for any $y\in Y^*$, there exists $\b\in L$ such that
(1)  $P_{Y}(\b)=y \mod 2Y $ and  (2) $\la P_{X}(\b), X\le   2\ZZ$.
\end{lem}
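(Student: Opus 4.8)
The plan is to reformulate both requirements as conditions on the single functional $\langle \beta, -\rangle$, restricted to $X$ and to $Y$, and then to realize the prescribed functional by exploiting the self-duality of $L$. First I would record two elementary reductions. For any $\beta \in L$ and $x \in X$ one has $\langle P_X(\beta), x\rangle = \langle \beta, x\rangle$ (since $\beta - P_X(\beta) \perp \QQ X$), and for $y' \in Y$ one has $\langle P_Y(\beta), y'\rangle = \langle \beta, y'\rangle$; moreover $P_Y(\beta)\in Y^*$ because it pairs integrally with $Y$ (see \refpp{proj}). Hence condition (2) says precisely $\langle \beta, X\rangle \subseteq 2\ZZ$, and condition (1) says that the functional $y' \mapsto \langle \beta, y'\rangle$ on $Y$ equals $y'\mapsto \langle y, y'\rangle$ up to an element of $\langle 2Y, -\rangle$.

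Next I would exploit $X \cap Y = 0$. Since the intersection is trivial, $S := X+Y$ is the internal direct sum $S = X \oplus Y$ of $\ZZ$-modules, so $\QQ X \cap \QQ Y = 0$ and the restriction map $S^* = \mathrm{Hom}(S,\ZZ) \to \mathrm{Hom}(X,\ZZ) \oplus \mathrm{Hom}(Y,\ZZ) = X^* \oplus Y^*$ is an isomorphism. Thus I may prescribe the restrictions to $X$ and to $Y$ of a functional on $S$ completely independently. The engine of the argument is the surjectivity of the orthogonal projection $P_{S} : L \to S^*$: because $S$ is a direct summand of the unimodular lattice $L$, any $f \in S^*$ extends to a functional on $L$ (extend by $0$ on a $\ZZ$-module complement of $S$), and this extension is represented by a vector $\beta \in L$ since $L = L^*$; then $P_{S}(\beta)$ represents $f$. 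I would apply this to the functional $\sigma \in S^*$ determined, via the isomorphism above, by $\sigma|_X = 0$ and $\sigma|_Y = \langle y, -\rangle$, obtaining $\beta \in L$ with $\langle \beta, s\rangle = \sigma(s)$ for all $s \in S$.

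It then remains only to read off the two conditions. For $x \in X$ we get $\langle \beta, x\rangle = \sigma(x) = 0$, so $\langle P_X(\beta), X\rangle = \langle \beta, X\rangle = 0 \subseteq 2\ZZ$, which is (2); for $y' \in Y$ we get $\langle \beta, y'\rangle = \sigma(y') = \langle y, y'\rangle$, so $P_Y(\beta) = y$ in $Y^*$, which gives (1) with exact equality, hence a fortiori modulo $2Y$. The one nonformal ingredient is the surjectivity of $P_{S}$, i.e.\ the standard fact that for a primitive (direct-summand) sublattice $S$ of a unimodular lattice the natural map $L \to S^*$ is onto; this is proved exactly as the companion statement \refpp{N*}, and I expect it to be the only step needing genuine justification, the remainder being bookkeeping with the splitting $S = X \oplus Y$. (I note in passing that this approach actually yields the stronger conclusion $\langle\beta,X\rangle = 0$ and $P_Y(\beta)=y$, of which the stated mod-$2$ assertions are weakenings.)
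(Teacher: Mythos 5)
Your proof is correct and follows essentially the same route as the paper's: both define a functional on $X+Y$ that vanishes on $X$ and agrees with $\la y,\cdot\ra$ on $Y$ (using $X\cap Y=0$ to split the sum), and both realize it by a vector $\b\in L$ via the surjectivity of $L\to (X+Y)^*$, which holds because $X+Y$ is a direct summand of the unimodular lattice $L$. The only difference is cosmetic: the paper phrases the target functional mod $2$, while you work with the integral functional and observe that the conclusion holds with exact equality.
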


\pf Since $X\cap Y=0$, Conditions (1) and (2) define uniquely a $\ZZ$-linear map $\varphi: X+Y \to \ZZ_2$ such that $\varphi(\a)= \la y, \a\ra \mod 2$ for $\a \in Y$ and $\varphi(\a)=0$ for $\a \in X$. Moreover, $X+Y$ is a direct summand of $L$ and thus the natural map from $L$ to $(X+Y)^*$ is a surjection. Hence there is a $\b\in L$ such that $\varphi(\a)= \la \b, \a\ra \mod 2$ for all $\a\in X+Y$ and $\b$ satisfies Conditions (1) and (2). \qed


\begin{thebibliography}{99}

\bibitem{c1}  J.H. Conway, A simple construction for the Fischer-Griess Monster
  group. {\it Invent. Math.} {\bf 79} (1985), 513--540.

\bibitem{atlas}
  J.H. Conway, R.T. Curtis, S.P. Norton, R.A. Parker and  R.A. Wilson,
  ATLAS of finite groups. Clarendon Press, Oxford, 1985.

\bibitem{cs} J.\,H.\,Conway and N.\,J.\,A.\,Sloane, Sphere packings, lattices and groups,   Third edition.
With additional contributions by E. Bannai, R. E. Borcherds, J. Leech, S. P. Norton,
A. M. Odlyzko,  R. A. Parker, L. Queen and B. B. Venkov. Grundlehren der
Mathematischen Wissenschaften [Fundamental Principles of Mathematical
Sciences], 290. Springer-Verlag, New York, 1999.

\bibitem{CS2} J.\,H.\,Conway and N.\,J.\,A.\,Sloane, The Coxeter-Todd lattice, the Mitchell group, and related sphere packings, \emph{Math. Proc. Camb. Phil. Soc.} (1983), 93, 421-440.

\bibitem{FLM}
  I.\,B.\,Frenkel, J. Lepowsky and A. Meurman, Vertex Operator Algebras
  and the Monster, Academic Press, New York, 1988.

\bibitem{dlmn}
  C. Dong, H. Li, G. Mason and S.P. Norton, Associative
  subalgebras of Griess algebra and related topics.
  Proc. of the Conference on the Monster and Lie algebra at the Ohio State
  University, May 1996, ed. by J. Ferrar and K. Harada, Walter de
  Gruyter, Berlin - New York, 1998.

\bibitem{gn}
G. Glauberman and S.\,P.\,Norton, On McKay's connection between the affine
$E\sb 8$ diagram and the Monster,  Proceedings on Moonshine and related
topics (Montreal, QC, 1999),  37--42, CRM Proc. Lecture Notes, {\bf 30}, Amer.
Math. Soc., Providence, RI, 2001.

\bibitem{O+102}  R.\,L.\,Griess, Jr.,   A vertex operator algebra related
to $E\sb 8$ with automorphism group ${\rm O}\sp +(10,2)$. {\it The Monster
and Lie algebras} (Columbus, OH, 1996),  43--58, Ohio State Univ. Math. Res.
Inst. Publ., 7, de Gruyter, Berlin, 1998.

\bibitem{G12} R.\,L.\,Griess, Jr., Twelve Sporadic Groups, Springer Monographs in Mathematics, 1998, Springer-Verlag.

\bibitem{gre8} R.\,L.\,Griess, Jr.,   Positive definite
lattices of rank at most 8,  Journal of Number Theory, 103 (2003), 77-84.

\bibitem{gal} R.\,L.\,Griess, Jr., ``An introduction to groups and lattices: finite groups and positive definite rational lattices"; to be published simultaneously by Higher Education Press (in China) and by the International Press (Boston) for the rest of the world.


\bibitem{GL}   R.\,L.\,Griess, Jr. and C.\,H.\,Lam, Dihedral groups and pairs of $EE_8$ lattices, Pure and applied mathematics quarterly, 7(2011), no. 3, 621-743.


\bibitem{gl3cpath}  R.\,L.\,Griess, Jr. and C.\,H.\,Lam,  A moonshine path from $E_8$ to the Monster, J. Pure and Applied Algebra, 215(2011), 927-948.  

\bibitem{gl5A}  R.\,L.\,Griess, Jr. and C.\,H.\,Lam, A moonshine path for  and associated lattices of ranks 8 and 16, J. Algebra, 331(2011), 338-361.

\bibitem{Ho}
  G. H\"{o}hn, The group of symmetries of the shorter moonshine module.
  {\it Abhandlungen aus dem Mathematischen Seminar der Universit\"at Hamburg\/}
 {\bf 80} (2010), 275--283, {\tt arXiv:math/0210076}.

\bibitem{Kn} M. Kneser, Theorie der Kristalgitter, Math. Ann.
127,  105-106 (1954).

\bibitem{KLY}
  M. Kitazume, C.\,H.\,Lam and  H. Yamada, 3-state Potts model,
  Moonshine vertex operator algebra, and 3A-elements of the
  Monster group. {\em  Internat. Math. Res. Notices} {\bf 23} (2003),
  1269--1303.

\bibitem{LS} C.\,H.\,Lam and H. Shimakura, Ising  vectors in the vertex operator
algebra  $V_{\Lambda}^+$  associated with the Leech lattice $\Lambda$,
\emph{Int. Math. Res. Not.} (2007),  Vol. 2007: article ID rnm132, 21 pages.

\bibitem{LYY2}
  C.H. Lam, H. Yamada and H. Yamauchi, McKay's observation and vertex
  operator algebras generated by two conformal vectors of central
  charge 1/2. {\it Internat. Math. Res. Papers} {\bf 3} (2005), 117--181.

\bibitem{Mc}
  J. McKay, Graphs, singularities, and finite groups.
  \emph{Proc. Symp. Pure Math.}, Vol. \textbf{37}, Amer. Math. Soc.,
  Providence, RI, 1980, pp. 183--186.


\bibitem{M4}
M. Miyamoto, A new construction of the Moonshine vertex operator algebra
over the real number field, {\it Ann.\ of Math.} {\bf 159} (2004), 535--596.


\end{thebibliography}
\end{document}